\theoremstyle{plain}
\newtheorem{theorem}{Theorem}[section]
\newtheorem*{theorem*}{Theorem}
\newtheorem{corollary}[theorem]{Corollary}
\newtheorem{lemma}[theorem]{Lemma}
\theoremstyle{definition}
\newtheorem{definition}[theorem]{Definition}
\newtheorem{remark}[theorem]{Remark}
\newtheorem{example}[theorem]{Example}
\newtheorem*{example*}{Example}
\DeclarePairedDelimiterX{\set}[1]{\lbrace}{\rbrace}{\,#1\,}
\DeclarePairedDelimiterX{\setcond}[2]{\lbrace}{\rbrace}{\,#1\,\mathclose{}\delimsize :\mathopen{}\,#2\,}
\DeclarePairedDelimiter{\braket}{\langle}{\rangle}
\DeclarePairedDelimiter{\norm}{\lVert}{\rVert}
\DeclarePairedDelimiter{\abs}{\lvert}{\rvert}
\DeclareMathOperator{\Hom}{Hom} 
\DeclareMathOperator{\End}{End} 
\DeclareMathOperator{\Ext}{Ext}
\DeclareMathOperator{\Diff}{Diff} 
\DeclareMathOperator{\ind}{ind} 
\DeclareMathOperator{\dom}{Dom} 
\DeclareMathOperator{\ran}{Ran}
\DeclareMathOperator{\Ker}{Ker} 
\DeclareMathOperator{\supp}{supp} 
\DeclareMathOperator{\imag}{Im} 
\DeclareMathOperator{\Span}{Span} 
\DeclareMathOperator{\Op}{Op} 
\DeclareMathOperator{\diag}{diag} 
\DeclareMathOperator{\Res}{Res} 
\numberwithin{equation}{section}
\title{Relative \texorpdfstring{\(K\)}{K}-homology of higher-order differential operators}
\author{Magnus Fries \\ 
\href{mailto:magnus.fries@math.lth.se}{\nolinkurl{magnus.fries@math.lth.se} } \\ 
\href{https://orcid.org/0009-0001-4197-4501}{ORCID: 0009-0001-4197-4501} \\ 
Centre for Mathematical Sciences, Lund University, Sweden}
\date{\today}
\begin{document}

\maketitle

\begin{abstract}

We extend the notion of a spectral triple to that of a higher-order relative spectral triple, which accommodates several types of hypoelliptic differential operators on manifolds with boundary. The bounded transform of a higher-order relative spectral triple gives rise to a relative \(K\)-homology cycle. In the case of an elliptic differential operator on a compact smooth manifold with boundary, we calculate the \(K\)-homology boundary map of the constructed relative \(K\)-homology cycle to obtain a generalization of the Baum-Douglas-Taylor index theorem.

{
~\newline \noindent 
\textbf{MSC Classification: } 
\href{https://mathscinet.ams.org/mathscinet/msc/msc2020.html?t=19K56}{Index theory [19K56]}, 
\href{https://mathscinet.ams.org/mathscinet/msc/msc2020.html?t=58J32}{Boundary value problems on manifolds [58J32]},
\href{https://mathscinet.ams.org/mathscinet/msc/msc2020.html?t=19K35}{Kasparov theory (\(KK\)-theory) [19K35]},
\href{https://mathscinet.ams.org/mathscinet/msc/msc2020.html?t=58B34}{Noncommutative geometry (à la Connes) [58B34]}, 
\href{https://mathscinet.ams.org/mathscinet/msc/msc2020.html?t=47G30}{Pseudodifferential operators [47G30]}.
}

\end{abstract}

\tableofcontents

\section{Introduction}

In this paper, we study the index theory of differential operators from the viewpoint of non-commutative geometry. The index theory of elliptic differential operators on closed manifolds is fully described by the seminal Atiyah-Singer index theorem \cite{Atiyah_Singer_1963}. For manifolds with boundary, several approaches can be found in the literature, such as Atiyah-Bott\cite{Atiyah_Bott_1964}, index theory in the Boutet de Monvel calculus \cite{Boutet_de_Monvel_1971}, relative index theory in \cite{Gromov_Lawson_1983}, \cite{Bandara_2022} and \cite{Bunke_1995},  Atiyah-Patodi-Singer \cite{Atiyah_Patodi_Singer_1975b, Atiyah_Patodi_Singer_1975c,Atiyah_Patodi_Singer_1976a} and the related works by Agranovich-Dynin. Our aim in this paper is to abstract methodology in the literature beyond ellipticity and restrictions to first-order operators.

Atiyah observed that the wrong functoriality of \(K\)-theory was used in the proof of the Atiyah-Singer index theorem, motivating the introduction of a notion of abstract elliptic operators in \cite{Atiyah_1970} as cycles for what is now called analytic \(K\)-homology -- the homology theory dual to \(K\)-theory. The ideas of Atiyah were later formalized in Kasparov's \(KK\)-theory for \ensuremath{C^*}-algebras \cite{Kasparov_1980}. Moreover, the Atiyah-Singer index theorem of an elliptic differential operator can be reduced to that of Dirac operators \cite{Baum_Douglas_1982, Baum_van_Erp_2016}. Dirac operators are first-order elliptic differential operators constructed from a \(\text{spin}^c\)~ structure of a manifold, providing the foundation for Baum-Douglas' geometric K-homology \cite{Baum_Douglas_1982} as well as forming the prototypical example of a \emph{spectral triple} in Connes' non-commutative geometry \cite{Connes_1994}. Considering its path of development, \(K\)-homology is the natural framework for index theory of elliptic differential operators which can be seen in for example \cite{Baum_van_Erp_2018, Baum_van_Erp_2016, Baum_van_Erp_2021, Baum_Douglas_1982, Baum_Douglas_1982a, Roe_1993, Haskell_Wahl_2009, Connes_1994, Cuntz_Meyer_Rosenberg_2007} and \cite{Higson_Roe_2000}. \(K\)-homology is still the natural tool for index theory beyond first-order elliptic differential operators and \(\text{spin}^c\)~ geometry, such as for higher-order operators and the hypoelliptic operator arising on Carnot geometries. For instance, in \cite{Baum_van_Erp_2014} they solve the index theorem for hypoelliptic operators on contact manifolds using geometric \(K\)-homology. The rigidity and ubiquity of Kasparov's analytic \(K\)-homology naturally places the study of index theory for higher-order operators in analytic \(K\)-homology, without any assumption on order nor the ordinary notion of ellipticity.

This paper will describe how to place higher-order differential operators on manifolds with boundary into \(K\)-homology. The starting point is the relative \(K\)-homology developed in \cite{Baum_Douglas_Taylor_1989}, where they study elliptic first-order operators on manifolds with boundary using their boundary map in \(K\)-homology. Relative \(K\)-homology provides the necessary tools for calculating the boundary map in \(K\)-homology. A result we will extend to higher-order differential operators is the Baum-Douglas-Taylor index theorem \cite{Baum_Douglas_Taylor_1989} which says that
\begin{equation}
    \label{eq:intro_BDT_index_theorem}
    \partial [\Omega] = [\partial \Omega]
\end{equation}
for a compact \(\text{spin}^c\)~ manifold \(\overline{\Omega}\) with boundary (see also \cite[Proposition 11.2.15]{Higson_Roe_2000}). Here \([\Omega]\) and \([\partial \Omega]\) are the fundamental classes in \(K^*(C_0(\Omega))\) and \(K^{*-1}(C(\partial \Omega))\), respectively. The fundamental classes are described by \(\text{spin}^c\)~ Dirac operators constructed from the \(\text{spin}^c\)~-structure. In particular, \eqref{eq:intro_BDT_index_theorem} is an obstruction to lift relative \(K\)-homology cycle constructed from \(\text{spin}^c\)~ Dirac operators to \(K^*(C(\overline{\Omega}))\).

In \cite{Forsyth_Goffeng_Mesland_Rennie_2019}, they rephrased the relative \(K\)-homology cycles in \cite{Baum_Douglas_Taylor_1989} in the language of non-commutative geometry by constructing relative spectral triples that model first-order operators on manifolds with boundary. A relative spectral triple \((\mathcal{J}\vartriangleleft \mathcal{A}, \mathcal{H}, \mathcal{D})\) for an ideal inclusion \(J \vartriangleleft A\) of \ensuremath{C^*}-algebras is a loosening the requirement of the operator \(\mathcal{D}\) in a spectral triple to be self-adjoint, instead requiring \(\mathcal{D}\) to be symmetric and that \(j\dom \mathcal{D}^*\subseteq \dom \mathcal{D}\) for all \(j\in\mathcal{J}\) where \(\mathcal{J}\subseteq \mathcal{A}\cap J\) is a \(*\)-algebra dense in \(J\). To allow spectral triples to be constructed from higher-order differential operators, one would need to loosen the requirement of bounded commutators between \(\mathcal{D}\) and \(\mathcal{A}\). The approach found in \cite[Appendix A]{Goffeng_Mesland_2015} (an idea previously present in literature, for example in \cite[Lemma 51]{Grensing_2012} and \cite[Appendix 5.1]{Wahl_2009}) amounts to requiring that \([\mathcal{D},a](1+\mathcal{D}^2)^{-\frac{1}{2}+\frac{1}{2m}}\) extends to an element in \(\mathbb{B}(\mathcal{H})\) for any \(a\in \mathcal{A}\) for some \(m>0\), as opposed to \([\mathcal{D},a]\). 

To obtain a generalization of the Baum-Douglas-Taylor index theorem \cite{Baum_Douglas_Taylor_1989} for higher-order differential operators, we combine the generalizations of spectral triples in \cite{Forsyth_Goffeng_Mesland_Rennie_2019} and \cite[Appendix A]{Goffeng_Mesland_2015} and define \emph{higher-order relative spectral triples}. For the \(K\)-homology boundary map to be feasible to compute, we begin by assuring ourselves that a higher-order relative spectral triple gives rise to a relative \(K\)-homology cycle.

\begin{theorem}[Relative \(K\)-homology cycle defined from a higher-order relative spectral triple]
    \label{thm:intro_horst_defines_kcykle}
    Let \((\mathcal{A}\vartriangleleft \mathcal{J}, \mathcal{H}, \mathcal{D})\) be a higher-order relative spectral triple for \(J\vartriangleleft A\) (see~\Cref{def:higher_order_relative_spectral_triple}) and let \(F\coloneqq \mathcal{D}(1+\mathcal{D}^*\mathcal{D})^{-\frac{1}{2}}\) be the bounded transform of \(\mathcal{D}\). Then 
    \begin{equation}
        \rho(j)(F-F^*),\rho(j)(1-F^2),[F,\rho(a)]\in \mathbb{K}(\mathcal{H})
    \end{equation}
    for any \(j\in J\) and \(a\in A\). In particular, \((\rho, \mathcal{H}, F)\) defines a relative \(K\)-homology cycle for \(J\vartriangleleft A\) and give a class in relative \(K\)-homology \(K^*(J\vartriangleleft A)\). We denote the class of \((\rho, \mathcal{H}, F)\) as \([\mathcal{D}]\in K^*(J\vartriangleleft A)\) when no confusion arises.
\end{theorem}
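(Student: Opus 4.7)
The plan is to verify the three compactness conditions separately, drawing on the three defining features of a higher-order relative spectral triple: the compact resolvent condition $\rho(j)(1+\mathcal{D}^*\mathcal{D})^{-1}\in \mathbb{K}(\mathcal{H})$ for $j\in\mathcal{J}$, the domain inclusion $\rho(\mathcal{J})\dom\mathcal{D}^*\subseteq \dom\mathcal{D}$ controlling non-self-adjointness, and the higher-order commutator bound $\lVert[\mathcal{D},a](1+\mathcal{D}^*\mathcal{D})^{-1/2+1/(2m)}\rVert<\infty$ for $a\in\mathcal{A}$. Throughout I would convert statements about the bounded transform into statements about resolvents via the integral representation
\[
(1+\mathcal{D}^*\mathcal{D})^{-1/2} = \frac{1}{\pi}\int_0^\infty \lambda^{-1/2}(1+\lambda+\mathcal{D}^*\mathcal{D})^{-1}\,d\lambda,
\]
and its analogue for $(1+\mathcal{D}\mathcal{D}^*)^{-1/2}$, so that all algebraic manipulations can be carried out on common cores.

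For $\rho(j)(F-F^*)$, on an appropriate core I would write $\rho(j)F = \rho(j)\mathcal{D}(1+\mathcal{D}^*\mathcal{D})^{-1/2}$ and $\rho(j)F^* = \rho(j)(1+\mathcal{D}\mathcal{D}^*)^{-1/2}\mathcal{D}^*$, using the domain condition to push $\rho(j)$ past $\mathcal{D}^*$ at the cost of a bounded commutator controlled by the higher-order bound; the remaining factor then carries an explicit compact resolvent. Density of $\mathcal{J}$ in $J$ together with norm continuity in $j$ would extend the conclusion to all $j\in J$. Using the identity $1-F^2 = (1+\mathcal{D}^*\mathcal{D})^{-1} - (F-F^*)F$, compactness of $\rho(j)(1-F^2)$ then follows from the compact resolvent hypothesis combined with what has just been shown.

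For $[F,\rho(a)]$, I would substitute the integral representation and distribute via
\[
[(1+\lambda+\mathcal{D}^*\mathcal{D})^{-1},\rho(a)] = -(1+\lambda+\mathcal{D}^*\mathcal{D})^{-1}[\mathcal{D}^*\mathcal{D},\rho(a)](1+\lambda+\mathcal{D}^*\mathcal{D})^{-1},
\]
splitting $[\mathcal{D}^*\mathcal{D},\rho(a)] = \mathcal{D}^*[\mathcal{D},\rho(a)] + [\mathcal{D}^*,\rho(a)]\mathcal{D}$ on the core. The higher-order estimate is calibrated so that, after absorbing $\mathcal{D}$ and $\mathcal{D}^*$ into the resolvents, the integrand carries strictly more than $1/(2m)$ powers of $(1+\lambda+\mathcal{D}^*\mathcal{D})^{-1}$, yielding both norm convergence of the $\lambda$-integral and an explicit compact factor. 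Density of $\mathcal{A}$ in $A$ and norm continuity then extend compactness of $[F,\rho(a)]$ from $a\in\mathcal{A}$ to all $a\in A$.

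The main obstacle will be the domain bookkeeping forced by the non-self-adjointness of $\mathcal{D}$: the operators $\mathcal{D}^*\mathcal{D}$, $\mathcal{D}\mathcal{D}^*$ and their bounded transforms must be kept scrupulously distinct, and each formal identity verified on a suitable core before being extended by closability and density. Once this structural work is in place, the integral estimates and the invocation of the higher-order commutator bound are essentially routine.
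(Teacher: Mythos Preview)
Your overall architecture---integral formula for the bounded transform, resolvent commutator manipulations, the identity $1-F^2=(1+\mathcal{D}^*\mathcal{D})^{-1}-(F-F^*)F$, and extension by density---is exactly the paper's. The gap is in the step you flag as ``routine once the domain bookkeeping is done'': it is not.

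Concretely, your splitting
\[
[\mathcal{D}^*\mathcal{D},\rho(a)]=\mathcal{D}^*[\mathcal{D},\rho(a)]+[\mathcal{D}^*,\rho(a)]\mathcal{D}
\]
requires $\rho(a)$ to preserve $\dom\mathcal{D}^*$ (so that $[\mathcal{D}^*,\rho(a)]$ is even defined on $\ran\mathcal{D}(\mu+\mathcal{D}^*\mathcal{D})^{-1}$). In the higher-order setting this fails: the hypotheses only give $\rho(a)\dom\mathcal{D}\subseteq\dom\mathcal{D}$, and for $m>1$ one can have $\rho(a)\dom\mathcal{D}^*\not\subseteq\dom\mathcal{D}^*$. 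The paper gives an explicit counterexample (Remark~3.7) with $\mathcal{D}=\Delta_{\min}$ on the disc and $a=|z|^2$, where $a\cdot\Ker\Delta_{\max}\not\subseteq\dom\Delta_{\max}$. For the same reason your preceding identity $[(\mu+\mathcal{D}^*\mathcal{D})^{-1},a]=-(\mu+\mathcal{D}^*\mathcal{D})^{-1}[\mathcal{D}^*\mathcal{D},a](\mu+\mathcal{D}^*\mathcal{D})^{-1}$ is not legitimate on any core: it presupposes $a\cdot\dom\mathcal{D}^*\mathcal{D}\subseteq\dom\mathcal{D}^*\mathcal{D}$.

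The paper's fix is to avoid $[\mathcal{D}^*,a]$ entirely. One first writes $[(\mu+\mathcal{D}^*\mathcal{D})^{-1},a]$ as a difference of two bounded operators (each well-defined using only $a\dom\mathcal{D}\subseteq\dom\mathcal{D}$), then rewrites the troublesome half as the \emph{adjoint} of an expression involving $[\mathcal{D},a^*]$ rather than $[\mathcal{D}^*,a]$; see~\eqref{eq:commutator_inverse_and_operator_a}. Since $[\mathcal{D},a^*](\mu+\mathcal{D}^*\mathcal{D})^{-1/2}$ is bounded by hypothesis, the adjoint of the whole composite exists as a bounded operator and the estimate goes through. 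A similar device (using only $[\mathcal{D},j]$, never $[\mathcal{D}^*,j]$) is what makes the $j(F-F^*)$ argument work; your phrase ``push $\rho(j)$ past $\mathcal{D}^*$ at the cost of a bounded commutator'' glosses over the same obstruction. The repair is not difficult once you see it, but it is a genuine missing idea, not bookkeeping.
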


For the definition of higher-order relative spectral triples, see \Cref{def:higher_order_relative_spectral_triple}. \Cref{thm:intro_horst_defines_kcykle} can be found as \Cref{thm:horst_defines_kcykle} in the bulk of the text. To provide examples of higher-order relative spectral triples, we present a localization procedure of a (non-relative) higher-order spectral triple where the operator acts in some sense locally, as made precise in \eqref{eq:intro_abstract_local_condition}, later presented as \Cref{thm:localization_horst}.

\begin{theorem}
    \label{thm:intro_localization_horst}
    Let \((\mathcal{A},\mathcal{H},\mathcal{D})\) be a higher-order spectral triple for \(A\) such that \(A\) is unital or \([\mathcal{D}, a](1+\mathcal{D}^2)^{-\frac{1}{2}}\in \mathbb{K}(\mathcal{H})\) for all \(a\in \mathcal{A}\). If \(J\vartriangleleft A\) is an ideal and \(\mathcal{J}\subseteq J\cap \mathcal{A}\) a \(*\)-ideal such that such that for each \(j\in \mathcal{J}\) there is a \(k\in \mathcal{J}\) such that 
    \begin{equation}
        \label{eq:intro_abstract_local_condition}
        j = kj \text{ and } \mathcal{D} j = k \mathcal{D} j,
    \end{equation}
    then 
    \begin{equation}
        (\mathcal{J}\vartriangleleft (\mathcal{A}/\mathcal{J}^\perp), \overline{\mathcal{J} \mathcal{H}}, \overline{\mathcal{D}|_{\mathcal{J}\dom \mathcal{D}}})
    \end{equation}
    defines a higher-order relative spectral triple for \(J\vartriangleleft (A/J^\perp)\), where \(\mathcal{J}^\perp = J^\perp\cap \mathcal{A}\)
\end{theorem}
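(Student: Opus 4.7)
The plan is to verify the three defining axioms of a higher-order relative spectral triple: (i) the data form a closed, densely defined, symmetric operator on $\mathcal{H}_J := \overline{\mathcal{J}\mathcal{H}}$ together with a well-posed representation of $A/J^\perp$, (ii) the relative domain condition $j\dom\overline{\mathcal{D}_0}^* \subseteq \dom\overline{\mathcal{D}_0}$ for $j \in \mathcal{J}$, where $\mathcal{D}_0 := \mathcal{D}|_{\mathcal{J}\dom\mathcal{D}}$, and (iii) the higher-order bounded commutator estimate.

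For (i), since $J^\perp \cdot J = 0$ the representation of $A$ vanishes on $J^\perp$ when restricted to $\mathcal{H}_J$, and therefore descends to $A/J^\perp$. The subspace $\mathcal{J}\dom\mathcal{D}$ is dense in $\mathcal{H}_J$ by approximating $j\psi$ by $j\psi_n$ with $\psi_n \in \dom\mathcal{D}$, and the locality hypothesis \eqref{eq:intro_abstract_local_condition} is exactly what ensures $\mathcal{D}_0$ takes values in $\mathcal{H}_J$: for $\xi = j\phi \in \mathcal{J}\dom\mathcal{D}$, the relation $\mathcal{D}j = k\mathcal{D}j$ yields $\mathcal{D}_0\xi = k\mathcal{D}(j\phi) \in k\mathcal{H} \subseteq \mathcal{H}_J$; symmetry of $\mathcal{D}_0$ is inherited from the self-adjointness of $\mathcal{D}$. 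For (iii), for $a \in \mathcal{A}$ and $\xi = j\phi \in \mathcal{J}\dom\mathcal{D}$, the fact that $\mathcal{J}$ is an ideal of $\mathcal{A}$ gives $a\xi = (aj)\phi \in \mathcal{J}\dom\mathcal{D}$, so $[\mathcal{D}_0, a]\xi = [\mathcal{D}, a]\xi$ inherits the ambient bound on $\|[\mathcal{D}, a](1+\mathcal{D}^2)^{-1/2+1/(2m)}\|$; a standard spectral-theoretic comparison of $(1+\overline{\mathcal{D}_0}^*\overline{\mathcal{D}_0})^{-s}$ with $(1+\mathcal{D}^2)^{-s}|_{\mathcal{H}_J}$, valid because $\overline{\mathcal{D}_0}$ is a restriction of $\mathcal{D}$, then transfers this to the required estimate.

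The main obstacle is (ii). I would extract two operator identities from \eqref{eq:intro_abstract_local_condition}: first, combining $j = kj$ and $\mathcal{D}j = k\mathcal{D}j$ yields $[\mathcal{D}, j] = k[\mathcal{D}, j]$ on $\dom\mathcal{D}$; second, applying the locality to $j^* \in \mathcal{J}$ with partner $k' \in \mathcal{J}$ and taking adjoints on $\dom\mathcal{D}$ yields $j\mathcal{D} = j\mathcal{D}(k')^*$ (equivalently $(1-k')\mathcal{D}j^* = 0$ dualized). Given $\eta \in \dom\mathcal{D}_0^*$, set $\eta_\epsilon := (1+\epsilon\mathcal{D}^2)^{-1}\eta \in \dom\mathcal{D}$ and $\xi_\epsilon := j\eta_\epsilon \in \mathcal{J}\dom\mathcal{D}$, so that $\xi_\epsilon \to j\eta$ in $\mathcal{H}_J$. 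To show $\mathcal{D}\xi_\epsilon$ is Cauchy in $\mathcal{H}_J$, I would split $\mathcal{D}\xi_\epsilon = [\mathcal{D}, j]\eta_\epsilon + j\mathcal{D}(k')^*\eta_\epsilon$; the second summand equals $j\mathcal{D}_0((k')^*\eta_\epsilon)$ since $(k')^*\eta_\epsilon \in \mathcal{J}\dom\mathcal{D}$, and its limit is identified using $\eta \in \dom\mathcal{D}_0^*$, while the first summand is controlled by factoring $[\mathcal{D},j] = k[\mathcal{D},j](1+\mathcal{D}^2)^{-s}(1+\mathcal{D}^2)^s$ with $s = \tfrac{1}{2}-\tfrac{1}{2m}$ and combining the higher-order commutator bound with the resolvent smoothing. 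The technical heart, and the principal difficulty, is that in the higher-order regime the two summands are not individually convergent in the way that bounded commutators would furnish, and the locality identities must be used to produce cancellation so that the sum converges.
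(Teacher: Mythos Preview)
Your handling of parts (i) and (iii) is adequate and aligns with the paper, which indeed says those conditions are immediate once one uses that $\overline{\mathcal{D}_0}\subseteq \mathcal{D}$ together with the interpolation identity \eqref{eq:range_interpolation_space}. The substantive content is entirely in (ii), and here your proposal has a genuine gap rather than just a different route.

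Your regularization-and-split strategy, setting $\eta_\epsilon=(1+\epsilon\mathcal{D}^2)^{-1}\eta$ and writing $\mathcal{D} j\eta_\epsilon=[\mathcal{D},j]\eta_\epsilon+j\mathcal{D}(k')^*\eta_\epsilon$, is precisely the argument that works when $m=1$; the paper records this in \Cref{remark:firstorder_interior_regularity_simplificatoin}. For $m>1$, however, neither summand converges in $\mathcal{H}_J$ as $\epsilon\to 0$: the commutator $[\mathcal{D},j]$ is only controlled as a map $\mathcal{H}^{m-1}\to \mathcal{H}$, while $\eta_\epsilon$ is merely bounded in $\mathcal{H}$; and the second summand requires $(k')^*\eta\in\dom\overline{\mathcal{D}_0}$, which is the very statement you are proving. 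You acknowledge this and appeal to ``cancellation produced by the locality identities,'' but no such cancellation is exhibited, and I do not see how to manufacture one from the single-commutator split alone. One commutator identity cannot absorb an order-$(m-1)$ loss.

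The paper's argument is not a refinement of yours but a different mechanism. It introduces the Sobolev-type scale $\mathcal{H}^s=\dom(1+\mathcal{D}^2)^{s/2m}$ and the operator classes $op^m_{l,h}$, and then, for a fixed $j\in\mathcal{J}$, iteratively builds a \emph{localized parametrix} $\mathcal{Q}\in op^{-m}_{-m,0}$ with $1-\mathcal{Q}\mathcal{D}\in op^{-m}_{0,0}$ and $j\mathcal{Q}=j\mathcal{Q} k$ for some $k\in\mathcal{J}$ (\Cref{thm:localized_parametrix}). The construction starts from $q=\mathcal{D}(1+\mathcal{D}^2)^{-1}$ and corrects it by commutators with a chain $j_0,\dots,j_{2m-1}\in\mathcal{J}$ furnished by the locality hypothesis, composing $m$ such approximate inverses so that the error $(1-q_1\mathcal{D})\cdots(1-q_m\mathcal{D})$ gains a full order $m$. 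With this in hand and the description of $\dom\mathcal{D}_\mathcal{J}^*$ in \Cref{thm:abstract_maximal_domain_description} (which gives $k\mathcal{D} u\in\mathcal{H}$), the proof of (ii) is a one-line identity:
\[
ju=j(1-\mathcal{Q}\mathcal{D})u+j\mathcal{Q} k\mathcal{D} u\in j\dom\mathcal{D}\subseteq \dom\mathcal{D}_\mathcal{J}.
\]
The missing idea in your proposal is this parametrix construction; the locality hypothesis is used not to produce a cancellation in a two-term split but to make a parametrix of $\mathcal{D}$ that is itself local with respect to $\mathcal{J}$.
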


Examples of (non-relative) higher-order spectral triples can be constructed from differential operators that are in some sense elliptic. This does not have to be ellipticity in the classical sense, but for example also ellipticity in the Shubin calculus on \(\mathbb{R}^n\) \cite[Chapter IV]{Shubin_2001} or the Heisenberg calculus on a compact filtered manifold (also called a Carnot manifold) \cite{van_Erp_Yuncken_2019, Dave_Haller_2022}. This is discussed further in \Cref{sec:host_examples}. As differential operators act locally in the sense of \eqref{eq:intro_abstract_local_condition}, we also obtain examples of higher-order relative spectral triples.

\begin{corollary}[Geometric examples of higher-order relative spectral triples]
    \label{thm:intro_diff_op_horst}
    Let \(M\) be a smooth manifold and \(D\colon C^\infty(M; E)\to C^\infty(M; E)\) be formally self-adjoint differential operator that is elliptic in any pseudo-differential calculus presented in \Cref{sec:host_examples}, then for any open set \(\Omega\subseteq M\) the collection
    \begin{equation}
        (C_c^\infty(\Omega)\vartriangleleft C_c^\infty(\overline{\Omega}), L^2(\Omega;E), D_{\min})
    \end{equation}
    defines a higher-order relative spectral triple of order \(m\) for \(C_0(\Omega)\vartriangleleft C_0(\overline{\Omega})\) where \(\dom D_{\min}\) is the closure of \(C_c^\infty(\Omega;E)\) with the graph-norm of \(D\).
\end{corollary}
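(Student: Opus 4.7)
The plan is to obtain the corollary by applying \Cref{thm:intro_localization_horst} to a higher-order spectral triple on the whole manifold $M$, with the ideal data supplied by $\Omega$. The required ambient triple $(C_c^\infty(M),L^2(M;E),D)$ of order $m$ for $C_0(M)$ is furnished by \Cref{sec:host_examples}: in each of the listed calculi, formal self-adjointness together with ellipticity of $D$ gives essential self-adjointness on $C_c^\infty(M;E)$ as well as the order-lowering commutator bound $[D,a](1+D^2)^{-\frac{1}{2}+\frac{1}{2m}}\in \mathbb{B}(L^2(M;E))$ that defines order $m$. The non-unital hypothesis of the localization theorem, namely $[D,a](1+D^2)^{-1/2}\in \mathbb{K}(L^2(M;E))$ for $a\in C_c^\infty(M)$, then follows because $[D,a](1+D^2)^{-1/2}$ is a pseudodifferential operator of strictly negative order in the relevant calculus with support contained in $\supp a$, and hence compact by the Rellich-type theorems available in each calculus.

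With this in hand I would invoke \Cref{thm:intro_localization_horst} with $\mathcal{A}=C_c^\infty(M)$, $J=C_0(\Omega)$ and $\mathcal{J}=C_c^\infty(\Omega)$. The abstract locality condition \eqref{eq:intro_abstract_local_condition} is automatic from the local nature of $D$ as a differential operator: for $j\in C_c^\infty(\Omega)$ pick any $k\in C_c^\infty(\Omega)$ identically $1$ on a neighbourhood of $\supp j$; then $kj=j$, and because $\supp(Dj)\subseteq\supp j$, also $kDj=Dj$. Identifying the outputs of the theorem is then algebraic: from $J^\perp=C_0(M\setminus\overline{\Omega})$ one obtains $A/J^\perp\cong C_0(\overline{\Omega})$, and at the level of smooth subalgebras $\mathcal{A}/\mathcal{J}^\perp\cong C_c^\infty(\overline{\Omega})$, while $\overline{\mathcal{J}\mathcal{H}}=\overline{C_c^\infty(\Omega)\cdot L^2(M;E)}=L^2(\Omega;E)$.

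The only substantive remaining step, and the point I expect to be the main obstacle, is the identification of the closure $\overline{D|_{C_c^\infty(\Omega)\dom D}}$ with $D_{\min}$ on $L^2(\Omega;E)$. One inclusion is immediate from $C_c^\infty(\Omega;E)\subseteq C_c^\infty(\Omega)\cdot\dom D$. For the reverse inclusion, given $f\in C_c^\infty(\Omega)$ and $u\in\dom D$, both $fu$ and $D(fu)=fDu+[D,f]u$ lie in $L^2$ with support inside $\supp f\Subset \Omega$, so a Friedrichs mollification (carried out in a local chart containing $\supp f$) produces $\chi_\varepsilon\ast(fu)\in C_c^\infty(\Omega;E)$ with $\chi_\varepsilon\ast(fu)\to fu$ in $L^2$, and Friedrichs' commutator lemma yields $D(\chi_\varepsilon\ast(fu))\to D(fu)$ in $L^2$. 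This provides graph-norm approximation by elements of $C_c^\infty(\Omega;E)$, so $fu\in\dom D_{\min}$, completing the identification and therefore the proof.
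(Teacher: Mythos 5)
Your proposal follows exactly the route the paper takes: construct the ambient higher-order spectral triple on \(M\) from \Cref{sec:host_examples}, verify the extra compactness hypothesis of \Cref{thm:intro_localization_horst} using proper support and a Rellich-type theorem (this is precisely \Cref{remark:extra_commutator_is_compact_condition}), check the locality condition \eqref{eq:intro_abstract_local_condition} for differential operators, and identify \(C_0(M)/C_0(\Omega)^\perp \cong C_0(\overline{\Omega})\), \(C_c^\infty(M)/C_c^\infty(M\setminus\overline{\Omega})\cong C_c^\infty(\overline{\Omega})\), and \(\overline{C_c^\infty(\Omega)\cdot L^2(M;E)} = L^2(\Omega;E)\). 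The only point where you supply more detail than the paper does is in verifying the identification \(\overline{D|_{C_c^\infty(\Omega)\dom D}} = D_{\min}\), which the paper leaves entirely implicit in passing from \Cref{thm:localization_horst} to \Cref{thm:diff_op_horst}; this extra care is a genuine improvement.

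One caveat on your last step. Invoking ``Friedrichs' commutator lemma'' to push mollification through an operator of order \(m>1\) is not the cleanest justification: the classical Friedrichs lemma is formulated for first-order operators, and for the non-classical calculi (Shubin, Heisenberg) a convolution mollifier in a Euclidean chart is not obviously the right smoothing device in the calculus. A uniform argument that avoids this works directly from the abstract package and is worth knowing. Since \(D\) is essentially self-adjoint on \(C_c^\infty(M;E)\), this is a core for \(D\); given \(j\in C_c^\infty(\Omega)\) and \(u\in \dom D\), pick \(u_n\in C_c^\infty(M;E)\) with \(u_n\to u\) in \(\norm{\cdot}_D\). By \Cref{remark:domain_inclusion_is_continuous} multiplication by \(j\) is bounded \(\dom D\to\dom D\) in graph norm, so \(ju_n\to ju\) in \(\norm{\cdot}_D\); and \(ju_n\in C_c^\infty(\Omega;E)\). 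Hence \(\mathcal{J}\dom D\subseteq \dom D_{\min}\), and combined with the obvious reverse inclusion \(C_c^\infty(\Omega;E)\subseteq \mathcal{J}\dom D\) (cut off by a \(j\equiv 1\) near the support) one gets \(\overline{D|_{\mathcal{J}\dom D}} = D_{\min}\) uniformly in all the calculi considered, with no appeal to chart-level mollification.
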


The \(K\)-homology class \([D_{\min}]\) is shown in \Cref{thm:independent_of_lot} to be independent of perturbations of low order and is hence only dependent on an appropriate notion of principal symbol of \(D\).

For a differential operator \(D\colon C^\infty(M; E)\to C^\infty(M; F)\) that is elliptic in an appropriate pseudo-differential calculus, we introduce the shortened notation 
\begin{equation}
    [D] \coloneqq  \left[\begin{pmatrix}0 & D^\dagger_{\min} \\ D_{\min} & 0\end{pmatrix}\right] \in K^0(C_0(\Omega)\vartriangleleft C_0(\overline{\Omega}))
\end{equation}
where \(D^\dagger\) denotes the formal adjoint of \(D\).

The \(K\)-homology class of a higher-order relative spectral triple \((\mathcal{A}\vartriangleleft \mathcal{J}, \mathcal{H}, \mathcal{D})\) for \(J\vartriangleleft A\) is independent of extension of \(\mathcal{D}\), in the sense that for any extension \(\mathcal{D}_e\) of \(\mathcal{D}\) such that \(\mathcal{D}\subseteq \mathcal{D}_e\subseteq \mathcal{D}^*\), the bounded transform of \(\mathcal{D}_e\) defines a \(K\)-homology cycle for \(J\) and
\begin{equation}
    \label{eq:intro_independence_of_extension}
    [(\rho, \mathcal{H}, \mathcal{D}_e(1+\mathcal{D}_e^*\mathcal{D}_e)^{-\frac{1}{2}})] = [(\rho, \mathcal{H}, \mathcal{D}(1+\mathcal{D}^*\mathcal{D})^{-\frac{1}{2}})] \quad\text{in}\quad  K^*(J).
\end{equation}
By excision, \(K^*(J\vartriangleleft A)\cong K^*(J)\) for any \(A\), so \eqref{eq:intro_independence_of_extension} give us freedom to choose an extension of \(\mathcal{D}\) when looking for a preimage in \(K^*(A)\) of the \(K\)-homology cycle of a higher-order relative spectral triple. For differential operators, a natural preimage to look for would be realizations corresponding to well-behaved boundary conditions. By exactness of \(K\)-homology, the boundary map \(\partial\) fits into the exact sequence \(\begin{tikzcd}[column sep=small] K^0(C_0(\overline{\Omega})) \ar[r] & K^0(C_0(\Omega)) \ar[r,"\partial"] & K^0(C_0(\partial \Omega))\end{tikzcd}\). Therefore, the image \(\partial [D]\) under the boundary map is an obstruction to the existence of ''well-behaved'' boundary conditions for \(D\), such as Shapiro-Lopatinskii elliptic boundary conditions for classically elliptic differential operators (see discussion towards the end of this section).

For differential operator \(D\) that is elliptic in an appropriate pseudo-differential calculus, under the assumption that \(\ran D^\dagger_{\min}\) is closed, the \(K\)-homology boundary map can be calculated as
\begin{equation}
    \label{eq:intro_boundary_map_image_is_kerDmax}
    \partial [D] = [P_{\Ker D_{\max}}] \in K^1(C_0(\partial \Omega)).
\end{equation}
Here, the class \([P_{\Ker D_{\max}}]\in K^1(C_0(\partial \Omega))\) is defined by the Busby invariant obtained from the orthogonal projection \(P_{\Ker D_{\max}}\) onto the generalized Bergmann space \(\Ker D_{\max}=\setcond{f\in L^2(\Omega; E)}{Df=0 \text{ in }\Omega}\). Noting that \([D]=0\) in \(K^0(C_0(\Omega))\)
if \(D\) is formally self-adjoint, we obtain the following index theorem.

\begin{theorem}
    \label{thm:intro_generalized_bergmann_toeplitz_index_theorem}
    Let \(\Omega\subseteq M\) be an open set with compact boundary in a smooth manifold \(M\) and \(D\colon C^\infty(M; E)\to C^\infty(M; E)\) a formally self-adjoint differential operator that is elliptic in any pseudo-differential calculus presented in \Cref{sec:host_examples}. Assuming that \(\ran D^\dagger_{\min}\) is closed, then
    \begin{equation}
        \ind (P_{\Ker D_{\max}} \alpha P_{\Ker D_{\max}}) = 0
    \end{equation}
    as an operator on \((\Ker D_{\max})^N\) for any \(\alpha\in M_N(C_0(\overline{\Omega}))\) such that \(\alpha|_{\partial \Omega}\) is invertible.

    If \(D_{\min}\) and \(D^\dagger_{\min}\) has compact resolvent, as in the Shubin calculus, \(\partial \Omega\) does not need to be compact and we can let \(\alpha\in M_N(\mathbb{C} + C_0(\overline{\Omega}))\).
\end{theorem}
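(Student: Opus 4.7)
The plan is to reduce the index vanishing to a K-homology statement already prepared earlier in the paper. The three ingredients I will use are: (i) the class \([D] \in K^0(C_0(\Omega)\vartriangleleft C_0(\overline{\Omega}))\) produced by \Cref{thm:intro_diff_op_horst}; (ii) the identification \(\partial[D] = [P_{\Ker D_{\max}}]\) from \eqref{eq:intro_boundary_map_image_is_kerDmax}; and (iii) the observation recorded just before the theorem that \([D] = 0\) in \(K^0(C_0(\Omega))\) whenever \(D\) is formally self-adjoint.

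First I would express the Fredholm index as a K-theoretic pairing. Since \(\alpha|_{\partial\Omega}\) is invertible, it determines \([\alpha|_{\partial\Omega}] \in K_1(C(\partial\Omega))\), while the Busby invariant associated with \(P_{\Ker D_{\max}}\) represents \([P_{\Ker D_{\max}}] \in K^1(C_0(\partial\Omega))\). The standard identification of the Kasparov pairing with a Toeplitz-type index reads
\begin{equation*}
\ind\bigl(P_{\Ker D_{\max}}\,\alpha\,P_{\Ker D_{\max}}\bigr) \;=\; \bigl\langle [\alpha|_{\partial\Omega}],\, [P_{\Ker D_{\max}}] \bigr\rangle,
\end{equation*}
so it suffices to show \([P_{\Ker D_{\max}}] = 0\). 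Under the excision isomorphism \(K^0(C_0(\Omega)\vartriangleleft C_0(\overline{\Omega})) \cong K^0(C_0(\Omega))\), the formally self-adjoint hypothesis trivializes the image of \([D]\) in \(K^0(C_0(\Omega))\); heuristically, the bounded transform of the off-diagonal operator with \(D_{\min}\) in both entries has the symmetric shape \(\begin{pmatrix} 0 & T \\ T & 0 \end{pmatrix}\), which is trivialized by the standard operator-homotopy argument for even K-homology doubles. Applying \(\partial\) and invoking \eqref{eq:intro_boundary_map_image_is_kerDmax} yields \([P_{\Ker D_{\max}}] = \partial[D] = 0\), which completes the first statement via the pairing above.

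For the Shubin-calculus extension, compactness of the resolvents of \(D_{\min}\) and \(D^\dagger_{\min}\) lets the cycle be viewed over the unitization \(\mathbb{C}+C_0(\overline{\Omega})\), so the same exactness argument accommodates matrix symbols \(\alpha \in M_N(\mathbb{C}+C_0(\overline{\Omega}))\) and permits \(\partial\Omega\) to be non-compact; no new idea is required, only a bookkeeping check that the unitization interacts correctly with the boundary map.

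The main obstacle I anticipate is not any single step in isolation but rather making the chain of identifications fully rigorous: one must check that the Busby-invariant construction of \([P_{\Ker D_{\max}}]\) represents the same class one obtains from \(\partial[D]\) with matching sign conventions, so that the Kasparov product really does equal \(\ind(P_{\Ker D_{\max}}\,\alpha\,P_{\Ker D_{\max}})\). Once that compatibility and the stated vanishing \([D]=0\) are secured, exactness of K-homology carries out the remainder of the proof mechanically.
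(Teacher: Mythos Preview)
Your proposal is correct and follows essentially the same approach as the paper: the paper derives the theorem directly from the sentence preceding it, namely that \([D]=0\) in \(K^0(C_0(\Omega))\) for formally self-adjoint \(D\), combined with \eqref{eq:intro_boundary_map_image_is_kerDmax} to get \([P_{\Ker D_{\max}}]=\partial[D]=0\), and then reads off the vanishing index from the pairing with \([\alpha|_{\partial\Omega}]\). Your identification of the vanishing mechanism (the even cycle built from \(\begin{pmatrix}0 & D_{\min}\\ D_{\min} & 0\end{pmatrix}\) is a ``double'' and hence null in \(K^0\)) and your treatment of the Shubin case match the paper's reasoning.
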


A similar result to \Cref{thm:intro_generalized_bergmann_toeplitz_index_theorem} would also be obtained for any differential operator that has a ''well-behaved'' boundary conditions that can be used to lift \([D]\in K^0(C_0(\Omega))\) to an element in \(K^0(C_0(\overline{\Omega}))\).

For the special case when \(D\) is elliptic in the classical sense and \(\overline{\Omega}\) is a compact smooth manifold with boundary, we can rewrite \([P_{\Ker D_{\max}}]\) from \eqref{eq:intro_boundary_map_image_is_kerDmax} in terms of boundary data only using the Calderón projector, and we obtain a generalization of the Baum-Douglas-Taylor index theorem in \eqref{eq:intro_BDT_index_theorem} for higher-order differential operators.

\begin{theorem}
    \label{thm:boundary_index_thm}
    Let \(\overline{\Omega}\) be a compact smooth manifold with boundary and \(D\colon C^\infty(\overline{\Omega};E) \newline \to C^\infty(\overline{\Omega};F)\) be a classically elliptic differential operator, then
    \begin{equation}
        \partial [D] = \varphi_*\left([E_+(D)]\cap [S^*\partial \Omega]\right) \quad\text{in}\quad K^1(C(\partial \Omega)),
    \end{equation}
    where \(\varphi\colon S^*\partial \Omega \to \partial \Omega\) is the fiber map, \(E_+(D)\to S^*\partial \Omega\) is a vector bundle relating to the Calderón projector constructed from the principal symbol of \(D\) at \(\partial \Omega\) (see~\Cref{def:calderon_bundle_decomposition}) defining a \(K\)-theory class in \(K_0(C(S^* \partial \Omega))\), \(\cap\) is the cap product (see~\Cref{sec:cap_product}) and \([S^*\partial \Omega]\in K^1(C(S^*\partial \Omega))\) is the fundamental class of \(S^*\partial \Omega\) (see~\Cref{sec:cap_product}) defined from the \(\text{spin}^c\)~-structure induced from \(T^*\partial \Omega\).
\end{theorem}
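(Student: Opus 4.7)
The plan is to apply~\eqref{eq:intro_boundary_map_image_is_kerDmax}, which reduces the theorem to showing that $[P_{\Ker D_{\max}}] = \varphi_*([E_+(D)]\cap [S^*\partial\Omega])$ in $K^1(C(\partial\Omega))$. Since $\overline{\Omega}$ is compact and $D$ is classically elliptic, $D^\dagger_{\min}$ is semi-Fredholm with closed range, so the hypothesis of~\eqref{eq:intro_boundary_map_image_is_kerDmax} is satisfied. The bridge from interior to boundary is the Calderón--Seeley Poisson operator $K$ of $D$, which realises a topological isomorphism from $\ran P_+$ (the range of the Calderón projector $P_+$, on a suitable Sobolev scale) onto $\Ker D_{\max}$; by~\Cref{def:calderon_bundle_decomposition}, the principal symbol of $P_+$ is precisely the projection onto the pull-back of $E_+(D)\to S^*\partial\Omega$.

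First I would show that the $K^1$-class of the generalised Bergman projection $P_{\Ker D_{\max}}$ on $L^2(\Omega;E)$ agrees with that of the Calderón projector $P_+$ on $L^2(\partial\Omega; E|_{\partial\Omega})$. The Busby invariant of $[P_{\Ker D_{\max}}]$ sends $f\in C(\partial\Omega)$ to the class modulo compacts of $P_{\Ker D_{\max}} M_{\tilde f} P_{\Ker D_{\max}}$, where $\tilde f\in C(\overline{\Omega})$ extends $f$. Conjugating by a normalised version of $K$ and exploiting that multiplication by a function in $C(\overline{\Omega})$ vanishing on $\partial\Omega$ sends $\Ker D_{\max}$ into $\dom D_{\min}$, one can show via a Rellich-type compactness argument that this invariant is homotopic to $f\mapsto [P_+ M_f P_+]$, proving $[P_{\Ker D_{\max}}] = [P_+]$ in $K^1(C(\partial\Omega))$.

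Next I would identify $[P_+]$ with the cap product formula. The Toeplitz-type extension $0\to\mathbb K(\ran P_+)\to \mathcal{T}_{P_+}\to C(\partial\Omega)\to 0$ is the Kasparov realisation of the image of the symbol class $[E_+(D)]\in K^0(C(S^*\partial\Omega))$ under the Gysin map $K^0(C(S^*\partial\Omega))\to K^1(C(\partial\Omega))$. Using the description of $[S^*\partial\Omega]\in K^1(C(S^*\partial\Omega))$ via the Dirac operator for the canonical $\text{spin}^c$-structure on $T^*\partial\Omega$ adopted in~\Cref{sec:cap_product}, this Gysin map coincides with $x\mapsto \varphi_*(x\cap [S^*\partial\Omega])$, completing the argument.

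The main obstacle lies in the first step: the Poisson operator $K$ is only a bounded isomorphism between specific Sobolev spaces, not between $L^2(\partial\Omega; E|_{\partial\Omega})$ and $L^2(\Omega;E)$, so it must be regularised to a bounded operator (for instance by composing with a suitable power of $(1+\Delta_{\partial\Omega})$) without changing the underlying extension class. Controlling this Sobolev shift while keeping the Busby invariants compatible modulo compacts is the technical heart of the argument; the second step is more routine but demands careful bookkeeping of the sign and orientation conventions for the cap product and the $\text{spin}^c$ fundamental class fixed in~\Cref{sec:cap_product}.
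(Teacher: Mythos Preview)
Your proposal is correct and follows essentially the same route as the paper: reduce to $[P_{\Ker D_{\max}}]$ via \Cref{thm:boundary_of_diff_op_is_PkerDmax}, intertwine with the Calder\'on data on the boundary, and then identify the resulting Toeplitz-type class with $\varphi_*([E_+(D)]\cap[S^*\partial\Omega])$ via the pseudodifferential extension. The paper runs the intertwiner in the opposite direction, using the order-reduced trace $\lambda_{1/2}^{-1}\gamma\colon \Ker D_{\max}\to \widehat{H}_D$ as a Fredholm map and invoking \Cref{thm:similar_busby_invariants}, rather than the Poisson operator $K$; the one technical refinement you should note is that for order $m>1$ the Calder\'on projector lives in the Douglis--Nirenberg calculus on the mixed-order space $\mathbb{H}^{-1/2}(\partial\Omega;E\otimes\mathbb{C}^m)$, so the regulariser cannot be a single power of $(1+\Delta_{\partial\Omega})$ but must be the diagonal matrix $\lambda_{1/2}=\diag(\Lambda_{1/2},\ldots,\Lambda_{m-1/2})$ of varying orders (see \Cref{thm:orderreduction_is_pseudo_and_has_same_symbol}).
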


Note that \Cref{thm:boundary_index_thm} relies on results that are specific for classically elliptic differential operators. From \Cref{thm:boundary_index_thm} and \eqref{eq:intro_boundary_map_image_is_kerDmax} we obtain the following analog to Boutet de Monvel's index theorem.

\begin{theorem}
    \label{thm:intro_maximal_kernel_index_theorem}
    Let \(\overline{\Omega}\) be a compact smooth manifold with boundary and \(D\colon C^\infty(\Omega;E) \newline \to C^\infty(\Omega;F)\) be a classically elliptic differential operator, then
    \begin{equation}
        \ind (P_{\Ker D_{\max}} \alpha P_{\Ker D_{\max}}) = \int_{S^*\partial \Omega} \textup{ch}(\varphi^*(\alpha|_{\partial \Omega})) \wedge \textup{ch}(E_+(D)) \wedge \textup{Td}(S^*\partial \Omega)
    \end{equation}
    as an operator on \((\Ker D_{\max})^N\) for any \(\alpha\in C(\Omega; M_N(\mathbb{C}))\) such that \(\alpha|_{\partial \Omega}\) is invertible, where \(\varphi\colon S^*\partial \Omega \to \partial \Omega\) is the fiber map, \(\textup{ch}(\cdot)\) is the Chern character of a vector bundle and \(\textup{Td}(S^*\partial \Omega)\) is the Todd class of \(S^*\partial \Omega\).
\end{theorem}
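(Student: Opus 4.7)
The plan is to combine the boundary-map computation of \Cref{thm:boundary_index_thm} with the Toeplitz-index interpretation of~\eqref{eq:intro_boundary_map_image_is_kerDmax} and the Atiyah--Singer formula for the pairing with a \(\text{spin}^c\)~fundamental class.

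First I would verify that~\eqref{eq:intro_boundary_map_image_is_kerDmax} is applicable here: for a classically elliptic \(D\) on the compact manifold \(\overline{\Omega}\), standard elliptic boundary regularity shows that \(D^\dagger_{\min}\) is semi-Fredholm on its graph-norm domain, so \(\ran D^\dagger_{\min}\) is closed. Hence \(\partial[D] = [P_{\Ker D_{\max}}]\) in \(K^1(C(\partial \Omega))\). Next, the invertible matrix \(\alpha|_{\partial \Omega} \in M_N(C(\partial \Omega))\) defines a class \([\alpha|_{\partial \Omega}] \in K_1(C(\partial \Omega))\), and the standard Toeplitz-type identification of the \(K_1\otimes K^1\to\mathbb{Z}\) index pairing with the Busby-invariant cycle represented by \(P_{\Ker D_{\max}}\) yields
\begin{equation*}
    \ind\bigl(P_{\Ker D_{\max}} \alpha P_{\Ker D_{\max}}\bigr) = \bigl\langle [\alpha|_{\partial \Omega}], [P_{\Ker D_{\max}}] \bigr\rangle = \bigl\langle [\alpha|_{\partial \Omega}], \partial[D] \bigr\rangle.
\end{equation*}

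Substituting \Cref{thm:boundary_index_thm} and applying naturality of the index pairing under \(\varphi_*\), together with the adjunction between cap product and index pairing, the right-hand side becomes
\begin{equation*}
    \bigl\langle \varphi^*[\alpha|_{\partial \Omega}] \cdot [E_+(D)], [S^*\partial \Omega] \bigr\rangle,
\end{equation*}
where \(\varphi^*[\alpha|_{\partial \Omega}] \cdot [E_+(D)] \in K_1(C(S^*\partial \Omega))\) and \([S^*\partial \Omega] \in K^1(C(S^*\partial \Omega))\) is the \(\text{spin}^c\)~fundamental class of the odd-dimensional manifold \(S^*\partial \Omega\).

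Finally, since \(S^*\partial \Omega\) is a closed \(\text{spin}^c\)~manifold with the \(\text{spin}^c\)~structure induced from \(T^*\partial \Omega\), pairing with \([S^*\partial \Omega]\) is computed by the Atiyah--Singer index theorem in its cohomological form for \(\text{spin}^c\)~Dirac operators, giving the integral of the Chern character of the representing K-theory class against \(\textup{Td}(S^*\partial \Omega)\); this produces exactly the stated formula. The main obstacle I anticipate is purely bookkeeping: ensuring that the orientation conventions, the induced \(\text{spin}^c\)~structure on \(S^*\partial \Omega\) used in \Cref{sec:cap_product}, the sign of the \(K\)-homology boundary map, and the parity shift in going from the cap product to the index pairing are all consistent, so that no extraneous sign appears. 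Once these conventions are matched, the argument is a formal concatenation of \eqref{eq:intro_boundary_map_image_is_kerDmax}, \Cref{thm:boundary_index_thm}, and the Atiyah--Singer formula.
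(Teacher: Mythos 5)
Your proposal is correct and follows essentially the same route as the paper: the paper's proof is exactly the chain \(\ind(P_{\Ker D_{\max}}\alpha P_{\Ker D_{\max}}) = \braket{[\alpha|_{\partial\Omega}],[P_{\Ker D_{\max}}]}\), then substitute \eqref{eq:PkerDmax_equal_to_boundary_data} (which is \Cref{thm:boundary_index_thm} combined with \Cref{thm:boundary_of_diff_op_is_PkerDmax}, exactly as you use them), then apply functoriality of the pairing under \(\varphi_*\) together with the cap-product adjunction, and finally the cohomological index formula \eqref{eq:atiyah_singer_index_theorem_thing} for pairing with the \(\text{spin}^c\) fundamental class \([S^*\partial\Omega]\). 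The one detail you spell out that the paper leaves implicit — that compactness of \(\overline{\Omega}\) gives closed range of \(D^\dagger_{\min}\), via \Cref{remark:finite_dim_kernel} — is correctly identified and needed for \eqref{eq:intro_boundary_map_image_is_kerDmax} to apply.
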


Using \Cref{thm:boundary_index_thm}, one can show that under Poincaré duality \(K^1(C(\partial \Omega))\cong K_1(C_0(T^*\partial \Omega))\) the class \(\partial[D]\) is mapped to the Atiyah-Bott obstruction \cite{Atiyah_Bott_1964} of the existence of Shapiro-Lopatinskii elliptic boundary conditions for \(D\). Conversely, if \(B\) is a Shapiro-Lopatinskii elliptic boundary condition for \(D\) then one can show that the bounded transform of the associated realization \(D_B\) defines a \(K\)-homology cycle for \(C(\Omega)\) which implies that \(\partial[D]=0\) by independence of extension of $D$ and exactness of the boundary map. The technical details for proving these statements go beyond the scope of this work and will appear elsewhere.

The content of the article is organized as follows.

In \Cref{sec:geometric_setting} we start by fixing notation for differential operators, defining (non-relative) higher-order spectral triple and providing examples of calculi in which any elliptic operator gives rise to such a triple. We then define a higher-order relative spectral triple and provide an abstract localization procedure of a higher-order spectral triple with operators acting locally with respect to an ideal to produce our main examples of higher-order relative spectral triples.

In \Cref{sec:horst} we present a detailed proof that higher-order relative spectral triples produce relative \(K\)-homology cycles. We also describe a representative of the class of boundary map applied to such relative \(K\)-homology cycles. Throughout \Cref{sec:horst} we keep in mind the prototypical example of a higher-order relative spectral triple constructed in \Cref{sec:geometric_setting}.

In \Cref{sec:even_boundary_map_geometric_setting} we restrict our attention to differential operators elliptic in the classical sense on smooth manifolds with boundary and prove \Cref{thm:boundary_index_thm}. \Cref{sec:even_boundary_map_geometric_setting} can in large be read separately from the previous sections as it only deals with rewriting \([P_{\Ker D_{\max}}]\) from \eqref{eq:intro_boundary_map_image_is_kerDmax} for a classically elliptic differential operator in terms of data on the boundary.

\subsection*{Comments on notation}

Note that the word \emph{elliptic} will be used to mean having a parametrix in a certain pseudo-differential calculus, which is not necessarily the classical pseudo-differential calculus.

When writing \(\norm{ T_1 f}_1 \lesssim \norm{T_2 f }_2\) for \(T_1, T_2\) operators with range in some normed spaces, we mean that there is a \(C>0\) independent of \(f\) such that \(\norm{ T_1 f}_1 \leq C \norm{T_2 f }_2\). For a closed operator \(T\) on a Hilbert space, we let \(\norm{f}_T^2\coloneqq \norm{f}^2 + \norm{Tf}^2\) denote the graph norm of \(T\) with which $\dom T$ is a Hilbert space.

We let \(C_c(X)\) denote compactly supported continuous functions, \(C_0(X)=\overline{C_c(X)}^{\norm{\cdot}_\infty}\) and \(C_b(X)=\setcond{a\in C(X)}{\norm{a}_\infty<\infty}\) where \(\norm{\cdot}_\infty\) is the supremum norm.

We will use \(D\) for differential operators, and \(\mathcal{D}\) for unbounded operators on an abstract Hilbert space.

\subsection*{Acknowledgement}

I wish to thank my Ph.D. supervisor Magnus Goffeng for seeding the ideas for this article, providing enlightening discussions and thoughtful proofreading. I would also like to thank the anonymous referees for their insightful comments that helped improve the paper.

\section{Constructing higher-order relative spectral triples}
\label{sec:geometric_setting}

\subsection{Notation for manifolds and differential operators}

Let \(M\) denote a smooth manifold (without boundary) and fix a smooth volume density on \(M\). If \(M\) is compact we call it a closed manifold. Denote smooth complex valued functions \(M\) as \(C^\infty(M)\) and those with compact support as \(C_c^\infty(M)\).

For Hermitian vector bundles \(E, F\to M\), let \(\Diff(M; E, F)\) denote the set of differential operators with smooth coefficients, or simply \(\Diff(M; E)\) if \(E=F\). Note that a differential operator \(D\in \Diff(M;E,F)\) acts locally in the sense that \(\supp Df\subseteq \supp f\) for any \(C^\infty(M;E)\). As a map 
\begin{equation}
    D\colon C_c^\infty(M;E)\to C_c^\infty(M;F),
\end{equation}
\(D\) defines a continuous linear operator where we endow compactly supported smooth sections of vector bundles with the locally convex topology of test functions. Denote \(D^\dagger\in \Diff(M;F,E)\) for the unique differential operator such that
\begin{equation}
    \label{eq:formal_adjoint_pairing}
    \braket{Df, g}_{L^2(M;F)} = \braket{f, D^\dagger g}_{L^2(M;E)}
\end{equation}
which is obtained through partial integration. Using \(D^\dagger\) we can extend \(D\) to a continuous map
\begin{equation}
    D\colon \mathcal{D}'(M;E)\to \mathcal{D}'(M;F) \quad\text{by}\quad \braket{Df, g}_{L^2(M;F)} \coloneqq  \braket{f, D^\dagger g}_{L^2(M;E)}
\end{equation}
for any \(f\in \mathcal{D}'(M;E)\) and \(g\in C_c^\infty(M;F)\). Noting that the inclusions
\begin{equation}
    C_c^\infty(M;E) \xhookrightarrow{} L^2(M;E) \xhookrightarrow{} \mathcal{D}'(M;E)
\end{equation}
are continuous and dense, we can view \(D\) as an unbounded operator from \(L^2(M;E)\) to \(L^2(M;F)\) with dense domain \(C_c^\infty(M;E)\). Define the minimal extension and maximal extension of \(D\) as 
\begin{equation}
    \dom D_{\min}\coloneqq \overline{C^\infty_c(M;E)}^{\norm{\cdot}_D}
\end{equation}
where \(\norm{\cdot}_D\) denotes the graph norm of \(D\) on \(L^2(M;E)\) and
\begin{equation}
    \dom D_{\max} \coloneqq  \setcond{f\in L^2(M;E)}{Df\in L^2(M;F)},
\end{equation} 
where \(D\) acts in the distributional sense. These are closed extensions of \(D\) as an unbounded operator \(L^2(M;E)\to L^2(M;F)\) and
\begin{equation}
    \label{eq:max_is_adjoint_of_min_of_formal_adjoint}
    D_{\max}=(D^\dagger_{\min})^*
\end{equation}
\cite[Section 1.4]{Grubb_1996}. We say that a realization of \(D\) is a closed extension \(D_e\) of \(D|_{C_c^\infty(M;E)}\) that acts as \(D\) in the distributional sense. Note that \(D_{\min}\subseteq D_e \subseteq D_{\max}\) for any realization \(D_e\) of \(D\).

For an open set \(\Omega\subseteq M\), denote \(\partial \Omega \coloneqq  \overline{\Omega}\setminus \Omega\) for the boundary of \(\Omega\). Note that \(\Omega\) itself is a smooth manifold without boundary, and we can form \(C^\infty(\Omega; E)\) and \(C_c^\infty(\Omega; E)\) where we implicitly use the restriction of vector bundle \(E\) to \(\Omega\). Identifying \(C_c^\infty(\Omega; E)\) as a subset of \(C_c^\infty(M; E)\) by extending by each section zero and using that differential operators act locally, we can restrict elements of \(\Diff(M; E, F)\) to elements of \(\Diff(\Omega; E, F)\). When regarding the maximal and minimal extension of a differential operator together with an open set \(\Omega\), we will mean to the maximal and minimal extension of the element in \(\Diff(\Omega; E, F)\).

We also define sections \(C_c^\infty(\overline{\Omega};E)\) as restrictions of elements in \(C_c^\infty(M;E)\). Note that even though we cannot interpret \(C_c^\infty(\overline{\Omega}; E)\) as a subset of \(C_c^\infty(M; E)\) we can use that differential operators act locally to form the map
\begin{equation}
    D\colon C^\infty(\overline{\Omega};E)\to C^\infty(\overline{\Omega};F)
\end{equation}
from an operator \(D_M\in\Diff(M;E,F)\) as \(Df = D_M f_M|_\Omega\in C^\infty(\Omega;F)\) for \(f\in C^\infty(\Omega;E)\) and \(f_M\in C^\infty(M;E)\) is any function such that \(f_M|_\Omega=f\). 

We define a smooth manifold with boundary as an open set \(\Omega\subseteq M\) with smooth boundary, in the sense that \(\partial \Omega\) is locally the graph of a smooth function. Note that then \(\partial \Omega\) is itself a smooth manifold without boundary and the inclusion \(\partial \Omega\xhookrightarrow{} M\) is an embedding. This definition is equivalent to defining smooth manifolds with boundary using charts mapping to open subsets of the upper half-plane by the process of doubling the manifold, as commented on in \cite[p. 333]{Taylor_2011}.

\subsection{Examples of higher-order spectral triples}
\label{sec:host_examples}

In this section, we present examples of (non-relative) higher-order spectral triples coming from differential operators.

\begin{definition}[Higher-order spectral triple]
    \label{def:host}
    Let \(A\) be a \ensuremath{C^*}-algebra. Define a \emph{higher-order spectral triple} \((\mathcal{A}, \mathcal{H}, \mathcal{D})\) of order \(m > 0\) for \(A\) as consisting a \(\mathbb{Z}/2\)-graded separable Hilbert space \(\mathcal{H}\), an even representation \(\rho\colon A\to \mathbb{B}(\mathcal{H})\), a dense sub-\(*\)-algebras \(\mathcal{A}\subseteq A\) and an odd self-adjoint unbounded operator \(\mathcal{D}\) on \(\mathcal{H}\) such that for each \(a\in \mathcal{A}\)
    \begin{enumerate}
        \item \(\rho(a)\dom \mathcal{D}\subseteq \dom \mathcal{D}\);
        \item \(\rho(a)(1+\mathcal{D}^2)^{-\frac{1}{2}}\in \mathbb{K}(\mathcal{H})\);
        \item  \([\mathcal{D},\rho(a)](1+\mathcal{D}^2)^{-\frac{1}{2}+\frac{1}{2m}}\) extends to an element in \(\mathbb{B}(\mathcal{H})\).
    \end{enumerate}
    An \emph{odd higher-order spectral triple} is defined in the same way except with \(\mathcal{H}\) trivially graded and \(\mathcal{D}\) is not required to be odd.

    By the standard abuse of notation, we will often omit writing out \(\rho\).
\end{definition}

\Cref{def:host} is analogus to a \(\epsilon\)-unbounded \(KK\)-cycles in \cite[Appendix A]{Goffeng_Mesland_2015} for \(\epsilon=\frac{1}{m}\).

The commutator condition in \Cref{def:host} will be understood in examples using complex interpolation of Sobolev spaces. For a continuous inclusion of Banach spaces \(W\subseteq V\) we let \([V,W]_\theta\) denote the complex interpolation spaces of \(V\) and \(W\) for \(0\leq \theta \leq 1\) \cite[Chapter 4, (2.9)]{Taylor_2011}. For any closed densely defined operator \(T\) on a Hilbert space \(\mathcal{H}\)
\begin{equation} 
    \label{eq:range_interpolation_space}
    (1+T^*T)^{-\frac{\theta}{2}}\colon \mathcal{H} \to [\mathcal{H}, \dom T]_\theta
\end{equation}
are isomorphisms of Hilbert spaces for \(0<\theta\leq 1\). The case when \(\theta=1\), then \eqref{eq:range_interpolation_space} is unitary which can be shown directly using norm estimates. For other \(\theta\), this result can be found in \cite[Theorem 3]{Seeley_1971} and \cite[Chapter 4, Proposition 2.2]{Taylor_2011}.

\begin{example}
    \label{ex:classical_diff_op_as_host}
    Let \(M\) be a closed manifold and \(D\in \Diff(M; E)\) be a formally self-adjoint (classically) elliptic differential operator, then we want to show that 
    \begin{equation}
        (C^\infty(M), L^2(M;E), D)
    \end{equation}
    is a higher-order spectral triple. To do this, consider the classical Sobolev spaces \(H^s(M;E)\) for \(s\in \mathbb{R}\) \cite[Chapter I]{Shubin_2001} where \(H^0(M;E) = L^2(M;E)\) which satisfy the complex interpolation property that 
    \([H^s(M;E), H^{s+t}(M;E)]_\theta=H^{s + \theta t}(M;E)\)
    \cite[Proposition 3.1]{Taylor_2011}. If \(D\) is order \(m\) then it extends to a continuous operator 
    \begin{equation}
        D\colon H^s(M;E)\to H^{s-m}(M;E)
    \end{equation}
    for any \(s\in \mathbb{R}\), and similarly each \(a\in C^\infty(M)\) extends to a continuous map on \(H^s(M;E)\) since \(a\) is a differential operator of order zero. Furthermore, the commutator \([D, a]\) is a differential operator of order at most \(m-1\) using the product rule, hence extends to a continuous map 
    \begin{equation}
        \label{eq:commutator_sobolev_spaces}
        [D, a]\colon H^s(M;E)\to H^{s-(m-1)}(M;E).
    \end{equation}
    
    If \(D\) is elliptic (of order \(m\)) then we can construct a parametrix \(Q\) of \(D\), a pseudodifferential operator \(Q\colon C^\infty(M;F)\to C^\infty(M;E)\) that extends to a continuous operator \(Q\colon H^s(M;F)\to H^{s+m}(M;E)\) such that \(1-QD\) is smoothing \cite[Theorem 5.1]{Shubin_2001}. Since \(f=QDf + (1-QD)f\), we can deduce that 
    \begin{equation}
        \label{eq:elliptic_regularity}
        Df\in H^s(M;F) \text{ implies that } f\in H^{s+m}(M;E)
    \end{equation}
    for any \(f\in \mathcal{D}'(M;E)\). The statement \eqref{eq:elliptic_regularity} is called elliptic regularity, and in particular, we obtain that that \(D\) has a unique closed realization since \(H^m(M; E) = \dom D_{\min} = \dom D_{\max}\). Let us identify \(D\) with its unique realization, then we can combine \eqref{eq:range_interpolation_space} and \eqref{eq:commutator_sobolev_spaces} to obtain the commutator condition in \Cref{def:host}. Lastly, by Rellich's theorem \cite[Proposition 3.4]{Taylor_2011} \(H^m(M; E)\subseteq L^2(M; E)\) is a compact inclusion, giving us the compact resolvent condition in \Cref{def:host}.
\end{example}

In other pseudo-differential calculi than the classical one, the same arguments as presented in \Cref{ex:classical_diff_op_as_host} can be used to show that operators elliptic those calculi give rise to higher-order spectral triples. We now present some examples of such pseudo-differential calculi. The details can be found in the references. We reiterate that ellipticity here refers to ellipticity within a certain calculi.

\begin{example}
    Let \(M\) be a Riemannian manifold with bounded geometry, that is, \(M\) has a positive injectivity radius and bounded curvature. Then any formally self-adjoint elliptic properly supported pseudo-differential operator with bounded symbol as defined in \cite{Kordyukov_1991} gives rise to a higher-order spectral triple. Complex interpolation for the Sobolev spaces and a local version of the Rellish theorem is inherited from the classical Sobolev spaces \(H^s(\mathbb{R}^n)\). This calculus includes all differential operators having bounded coefficients with bounded derivatives, such as the Laplace-Beltrami operator for example.
\end{example}

\begin{example}
    Any formally self-adjoint elliptic properly supported operators in the Shubin calculus on \(\mathbb{R}^n\) \cite[Chapter IV]{Shubin_2001} gives rise to a higher-order spectral triple. For differential operators in the Shubin calculus on \(\mathbb{R}^n\), polynomial growth of coefficients counts towards degrees in the calculus. For example, the harmonic oscillator is elliptic in the Shubin calculus. Complex interpolation of the Sobolev spaces can be obtained using the harmonic oscillator.
\end{example}

\begin{example}
    Any formally self-adjoint elliptic properly supported operator in the Heisenberg calculus on a closed filtered manifold (also called a Carnot manifold) \cite{van_Erp_Yuncken_2019} gives rise to a higher-order spectral triple, which is apparent when considering the Sobolev scale introduced in \cite{Dave_Haller_2022}. Complex interpolation of the Sobolev spaces can be obtained using the results in \cite{Dave_Haller_2020}.
\end{example}

\begin{example}
    \label{ex:construct_formally_selfadjoint_diff_op_host}
    For a differential operator \(D\in \Diff(M; E, F)\) that is not necessarily formally self-adjoint, we instead consider the formally self-adjoint differential operator \(\begin{pmatrix}0 & D^\dagger \\ D & 0\end{pmatrix}\in \Diff(M; E\oplus F)\) which is odd with respect to the grading on \(L^2(M; E\oplus F)=L^2(M; E)\oplus L^2(M; F)\) where \(L^2(M; E)\) is positively graded and \(L^2(M; F)\) is negatively graded. If \(D\) is elliptic in one of the pseudo-differential calculi presented previously in this section, then 
    \begin{equation}
        \left(C_c^\infty(M), L^2(M;E\oplus F), \begin{pmatrix}0 & D^\dagger \\ D & 0\end{pmatrix}\right)
    \end{equation}
    is an even higher-order spectral triple for \(C_0(M)\), where we identify \(D\) and \(D^\dagger\) with their unique closed realizations.
\end{example}

\begin{remark}
    \label{remark:extra_commutator_is_compact_condition}
    All of the examples of higher-order spectral triples presented in this section satisfy the additional property that \([\mathcal{D}, a](1+\mathcal{D}^2)^{-\frac{1}{2}}\in \mathbb{K}(\mathcal{H})\) for all \(a\in \mathcal{A}\), which will be needed for the localization procedure presented later. To see this, note that for a properly supported operator \(D\) acting on sections \(C_c^\infty(M; E)\) and a function \(a\in C_c^\infty(M)\), one can find a \(\chi\in C_c^\infty(M)\) such that \([D, a]=\chi[D, a]\), and we obtain compactness from Rellich's theorem (or an appropriate analog in other calculi).
\end{remark}

\subsection{Higher-order relative spectral triples}

In this section, we will define \emph{higher-order relative spectral triples} and present examples. The main type of example is a \emph{localization} of a (non-relative) higher-order spectral triple presented in \Cref{def:localization_horst}, which works well for differential operators as clarified in \Cref{thm:diff_op_horst}. We also present a second type of example in \Cref{ex:dirac_and_higson_corona} which is constructed from a Dirac operator and the Higson compactification.

\begin{definition}[Higher-order relative spectral triple]
    \label{def:higher_order_relative_spectral_triple}
    
    Let \(A\) be a \ensuremath{C^*}-algebra and \(J\vartriangleleft A\) a closed ideal. Define an \emph{even higher-order relative spectral triple} \((\mathcal{J}\vartriangleleft \mathcal{A}, \mathcal{H}, \mathcal{D})\) of order \(m > 0\) for \(J\vartriangleleft A\) as consisting of a \(\mathbb{Z}/2\)-graded separable Hilbert space \(\mathcal{H}\), an even representation \(\rho\colon A\to\mathbb{B}(\mathcal{H})\), two dense sub-\(*\)-algebras \(\mathcal{A}\subseteq A\) and \(\mathcal{J}\subseteq J\cap \mathcal{A}\), and an odd symmetric closed densely defined unbounded operator \(\mathcal{D}\) on \(\mathcal{H}\) satisfying the following conditions:
    \begin{enumerate}
        
        \item \(\rho(a)\dom \mathcal{D}\subseteq \dom \mathcal{D}\) for any \(a\in \mathcal{A}\); \label{item:condition_a_conserves_domain}
        
        \item \(\rho(j)\dom \mathcal{D}^*\subseteq \dom \mathcal{D}\) for any \(j\in \mathcal{J}\); \label{item:condition_j_maps_to_domain}
        
        \item \(\rho(j)(1+\mathcal{D}^*\mathcal{D})^{-\frac{1}{2}}\in \mathbb{K}(\mathcal{H})\) for any \(j\in \mathcal{J}\); \label{item:condition_j_compact_resolvent}
        
        \item \([\mathcal{D},\rho(a)](1+\mathcal{D}^*\mathcal{D})^{-\frac{1}{2}}\in \mathbb{K}(\mathcal{H})\) for any \(a\in\mathcal{A}\); \label{item:condition_D_commutator_a_compact}
    
        \item  \([\mathcal{D},\rho(a)](1+\mathcal{D}^*\mathcal{D})^{-\frac{1}{2}+\frac{1}{2m}}\) extends to an element in \(\mathbb{B}(\mathcal{H})\) for any \(a\in\mathcal{A}\). \label{item:condition_D_commutator_a_bounded}
        
    \end{enumerate}
    
    An \emph{odd higher-order relative spectral triple} is defined in the same way except with \(\mathcal{H}\) trivially graded and \(\mathcal{D}\) is not required to be odd.
    
    If \(m=1\) then we call \((\mathcal{J}\vartriangleleft \mathcal{A}, \mathcal{H}, \mathcal{D})\) a \emph{relative spectral triple}, analogously to relative unbounded Kasparov module introduced in \cite[Definition 2.1]{Forsyth_Goffeng_Mesland_Rennie_2019}. A relative
    unbounded Kasparov module assumes that condition \ref{item:condition_j_compact_resolvent} holds for any \(a\in \mathcal{A}\) and replaces condition \ref{item:condition_D_commutator_a_compact} with that \(\rho(a)(1-P_{\Ker \mathcal{D}^*})(1 - \mathcal{D}\mathcal{D}^*)^{-\frac{1}{2}}\in \mathbb{K}(\mathcal{H})\) for any \(a\in \mathcal{A}\).
    
    By the standard abuse of notation, we will often omit writing out \(\rho\).
\end{definition}

Note that if \(J=A\), \(\mathcal{J}=\mathcal{A}\) and \(\mathcal{D}\) is self-adjoint in \Cref{def:higher_order_relative_spectral_triple}, then without requiring condition \ref{item:condition_D_commutator_a_compact} we recover the definition of a higher-order spectral triple in \Cref{def:host}. Also, note that a higher-order spectral triple of order \(m>0\) is also of order \(m'\) if \(m'>m\).

\begin{remark}
\label{remark:domain_inclusion_is_continuous}
The maps \(a\colon \dom \mathcal{D}\to\dom \mathcal{D}\) and \(j\colon \dom \mathcal{D}^*\to\dom \mathcal{D}\) in conditions \ref{item:condition_a_conserves_domain} and \ref{item:condition_j_maps_to_domain} are automatically continuous with respect to the graph norms. This comes from that if \(A\in \mathbb{B}(\mathcal{H})\) and \(T,S\) are closed densely defined unbounded operators on \(\mathcal{H}\) such that \(A \cdot \dom T\subseteq \dom S\), then \(S A (1+T^*T)^{-\frac{1}{2}}\) is closed and with domain \(\mathcal{H}\), hence bounded by the closed graph theorem \cite[Theorem 2.9]{Brezis_2011} which implies that \(A\colon \dom T \to \dom S\) is continuous.
\end{remark}

\begin{remark}
\label{remark:finite_dim_kernel}
In examples where \((1+\mathcal{D}^*\mathcal{D})^{-\frac{1}{2}}\) is compact, such as those that are constructed from differential operators on pre-compact subsets, \(\mathcal{D}\) is semi-Fredholm, meaning that \(\Ker \mathcal{D}\) is finite-dimensional and \(\ran \mathcal{D}\) is closed. 

Note that if \((1+\mathcal{D}^*\mathcal{D})^{-\frac{1}{2}}\) is compact, then condition \ref{item:condition_D_commutator_a_compact} follows directly from condition \ref{item:condition_D_commutator_a_bounded}.
\end{remark}

The next example will provide a way to localize a higher-order spectral triple to an ideal given that the operator acts locally with respect to this ideal. Such a localization produces a higher-order relative spectral triple and is the main type of example of higher-order relative spectral triples. Concrete examples of localizations for differential operators will be presented in \Cref{thm:diff_op_horst,,ex:construct_formally_selfadjoint_diff_op}.

\begin{definition}
    \label{def:localization_horst}
    For a higher-order spectral triple \((\mathcal{A},\mathcal{H},\mathcal{D})\) we say that \(\mathcal{D}\) is local with respect to a \(*\)-ideal \(\mathcal{J}\) of \(\mathcal{A}\) if for each \(j\in \mathcal{J}\) there is a \(k\in \mathcal{J}\) such that \(j = kj\) and \(\mathcal{D} j = k \mathcal{D} j\). Note that since \(\mathcal{D}\) is self-adjoint we also have that for each \(j\in \mathcal{J}\) there is a \(k\in \mathcal{J}\) such that \(j = jk\) and \(j \mathcal{D} = j \mathcal{D} k\) on \(\dom \mathcal{D}\). 
    
    If \(\mathcal{D}\) is local with respect to \(\mathcal{J}\), then \(\mathcal{D}_\mathcal{J}\coloneq \overline{\mathcal{D}|_{\mathcal{J} \dom \mathcal{D}}}\) can be seen as an operator on \(\mathcal{H}_\mathcal{J}\coloneq \overline{\mathcal{J} \mathcal{H}}\subseteq \mathcal{H}\) and we call the triple
    \begin{equation}
        \label{eq:localization_horst}
        (\mathcal{J}\vartriangleleft \mathcal{A}, \mathcal{H}_\mathcal{J}, \mathcal{D}_\mathcal{J})
    \end{equation}
    the localization of \((\mathcal{A},\mathcal{H},\mathcal{D})\) with respect to \(\mathcal{J}\).
\end{definition}

Another approach to localization in the case when \(m=1\) can be found in \cite[Example 3.23]{Forsyth_Goffeng_Mesland_Rennie_2019} and is discussed further in \Cref{remark:differens_to_FGMR_localization}.

\begin{theorem}
    \label{thm:localization_horst}
    Let \((\mathcal{A},\mathcal{H},\mathcal{D})\) be a higher-order spectral triple for \(A\) such that \(A\) is unital or \([\mathcal{D}, a](1+\mathcal{D}^2)^{-\frac{1}{2}}\in \mathbb{K}(\mathcal{H})\) for all \(a\in \mathcal{A}\). If \(\mathcal{D}\) is local with respect to a \(*\)-ideal \(\mathcal{J}\) of \(\mathcal{A}\), then the localization \((\mathcal{J}\vartriangleleft \mathcal{A}, \mathcal{H}_\mathcal{J}, \mathcal{D}_\mathcal{J})\) of \((\mathcal{A},\mathcal{H},\mathcal{D})\) as in \Cref{def:localization_horst} defines a higher-order relative spectral triple with the additional property that \(a(1+\mathcal{D}_\mathcal{J}^*\mathcal{D}_\mathcal{J})^{-\frac{1}{2}}\in \mathbb{K}(\mathcal{H}_\mathcal{J})\) for all \(a\in \mathcal{A}\).
\end{theorem}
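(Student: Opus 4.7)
First I would verify well-posedness of the triple $(\mathcal{J}\vartriangleleft \mathcal{A}, \mathcal{H}_\mathcal{J}, \mathcal{D}_\mathcal{J})$. Since $\mathcal{A}\mathcal{J}\subseteq\mathcal{J}$, the representation of $\mathcal{A}$ on $\mathcal{H}$ preserves $\mathcal{J}\mathcal{H}$ and hence extends to $\mathcal{H}_\mathcal{J}$; by continuity and density this yields an action of $A$ on $\mathcal{H}_\mathcal{J}$, and the grading on $\mathcal{H}_\mathcal{J}$ is inherited since $\mathcal{J}\subseteq\mathcal{A}$ consists of even elements. The locality relation $\mathcal{D}j = k\mathcal{D}j$ with $k\in\mathcal{J}$ shows $\mathcal{D}(\mathcal{J}\dom\mathcal{D})\subseteq \mathcal{J}\mathcal{H}\subseteq\mathcal{H}_\mathcal{J}$, and $\mathcal{J}\dom\mathcal{D}$ is dense in $\mathcal{H}_\mathcal{J}$ since any $j\eta\in\mathcal{J}\mathcal{H}$ is approximated by $j\eta_n$ with $\eta_n\to\eta$ in $\dom\mathcal{D}$. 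The closure $\mathcal{D}_\mathcal{J}$ is therefore a closed, symmetric, odd, densely defined operator on $\mathcal{H}_\mathcal{J}$ that is a restriction of $\mathcal{D}$, so $\dom\mathcal{D}_\mathcal{J}\subseteq\dom\mathcal{D}$ with graph norms agreeing on this subspace.

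Conditions (1), (3), (4), (5) and the additional compactness follow from rather direct arguments. For (1), given $a\in\mathcal{A}$ and $\xi\in\dom\mathcal{D}_\mathcal{J}$ approximated in graph norm by $\xi_n\in\mathcal{J}\dom\mathcal{D}$, the element $a\xi_n\in\mathcal{J}\dom\mathcal{D}$ satisfies $\mathcal{D}(a\xi_n) = a\mathcal{D}\xi_n + [\mathcal{D}, a]\xi_n$, and both terms converge because the input commutator condition guarantees $[\mathcal{D}, a]\colon\dom\mathcal{D}\to\mathcal{H}$ is bounded. For (3)--(5) and the extra compactness, the key tool is the continuous inclusion $\dom\mathcal{D}_\mathcal{J}\hookrightarrow\dom\mathcal{D}$, which via complex interpolation, together with \eqref{eq:range_interpolation_space}, shows that $(1+\mathcal{D}^2)^{s}(1+\mathcal{D}_\mathcal{J}^*\mathcal{D}_\mathcal{J})^{-s}\colon\mathcal{H}_\mathcal{J}\to\mathcal{H}$ is bounded for $s\in[0, 1/2]$. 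Any of the required $\mathcal{H}_\mathcal{J}$-level operators then factors as a composition of this bounded interpolation operator with the corresponding $\mathcal{H}$-level operator supplied by the higher-order spectral triple assumptions, e.g.\
\[
[\mathcal{D}_\mathcal{J}, a](1+\mathcal{D}_\mathcal{J}^*\mathcal{D}_\mathcal{J})^{-1/2+1/(2m)} = [\mathcal{D}, a](1+\mathcal{D}^2)^{-1/2+1/(2m)}\cdot (1+\mathcal{D}^2)^{1/2-1/(2m)}(1+\mathcal{D}_\mathcal{J}^*\mathcal{D}_\mathcal{J})^{-1/2+1/(2m)}.
\]
The extra hypothesis (\emph{$A$ unital, or $[\mathcal{D}, a](1+\mathcal{D}^2)^{-1/2}\in\mathbb{K}(\mathcal{H})$}) is exactly what is needed to promote the relevant $\mathcal{H}$-level operator to a compact one in each instance, including the additional property $a(1+\mathcal{D}_\mathcal{J}^*\mathcal{D}_\mathcal{J})^{-1/2}\in\mathbb{K}(\mathcal{H}_\mathcal{J})$.

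The main obstacle is condition (2), the inclusion $j\dom\mathcal{D}_\mathcal{J}^*\subseteq\dom\mathcal{D}_\mathcal{J}$ for $j\in\mathcal{J}$. My strategy is to use dual locality $j = jk$ with $k\in\mathcal{J}$ to reduce this to showing that $k\xi\in\dom\mathcal{D}$ for $\xi\in\dom\mathcal{D}_\mathcal{J}^*$, since then $j\xi = j(k\xi)\in\mathcal{J}\dom\mathcal{D}\subseteq\dom\mathcal{D}_\mathcal{J}$. As $\mathcal{D}$ is self-adjoint, $k\xi\in\dom\mathcal{D}$ amounts to bounding $|\langle\mathcal{D}\eta, k\xi\rangle|\leq C\|\eta\|$ for $\eta\in\dom\mathcal{D}$. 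Exploiting locality $k^*\mathcal{D} = k^*\mathcal{D}m$ and $k^*m = k^*$ with $m\in\mathcal{J}$, and noting $k^*\eta\in\mathcal{J}\dom\mathcal{D}\subseteq\dom\mathcal{D}_\mathcal{J}$, one rewrites
\[
\langle\mathcal{D}\eta, k\xi\rangle = \langle k^*\mathcal{D}(m\eta), \xi\rangle = \langle \mathcal{D}_\mathcal{J}(k^*\eta), \xi\rangle - \langle [\mathcal{D}, k^*](m\eta), \xi\rangle = \langle \eta, k\mathcal{D}_\mathcal{J}^*\xi\rangle - \langle [\mathcal{D}, k^*](m\eta), \xi\rangle.
\]
The first summand is bounded by $\|\eta\|\cdot\|k\mathcal{D}_\mathcal{J}^*\xi\|$, but the commutator pairing is delicate: the bound provided by condition (5) only gives a $\dom|\mathcal{D}|^{1-1/m}$-estimate on $\eta$ rather than an $\|\eta\|$-estimate. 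The expected resolution is to combine the two locality relations into the derived identity $[\mathcal{D}, k^*](1-m) = 0$ on $\dom\mathcal{D}$, and use this together with the adjoint structure of $\mathcal{D}_\mathcal{J}^*$ to reinterpret the commutator pairing as a $\mathcal{D}_\mathcal{J}^*$-duality pairing that is linear in $\eta$ and controlled by $\|\eta\|$. This is the step where the interplay between the locality assumption and the higher-order commutator bound is essential, and it constitutes the genuine content of the theorem.
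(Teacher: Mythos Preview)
Your treatment of well-posedness and of conditions (1), (3), (4), (5) is fine and matches the paper's own stance that these are ``immediate'' from $\mathcal{D}_\mathcal{J}\subseteq\mathcal{D}$. The interpolation factorization you write down is exactly the right mechanism.

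The gap is in condition (2). Your pairing computation
\[
\langle\mathcal{D}\eta, k\xi\rangle = \langle \eta, k\mathcal{D}_\mathcal{J}^*\xi\rangle - \langle [\mathcal{D}, k^*](m\eta), \xi\rangle
\]
is the correct starting point, and for $m=1$ it finishes the argument because $[\mathcal{D},k^*]$ is bounded; the paper itself records this shortcut (Remark~\ref{remark:firstorder_interior_regularity_simplificatoin}). For $m>1$, however, your ``expected resolution'' does not work. The identity $[\mathcal{D},k^*](1-m)=0$ only says $[\mathcal{D},k^*](m\eta)=[\mathcal{D},k^*]\eta$, which does not improve the estimate. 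Trying to move the commutator to the $\xi$ side gives $\langle\eta,-[\mathcal{D},k]\xi\rangle$, but $[\mathcal{D},k]\xi = \mathcal{D}(k\xi)-k\mathcal{D}\xi$ requires knowing $\mathcal{D}(k\xi)\in\mathcal{H}$, which is precisely the statement you are trying to prove. There is no way to close this loop by pairing manipulations alone: you are missing one full order of regularity, and no finite amount of commutator shuffling recovers it.

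The paper resolves this with a genuinely different idea (\Cref{sec:interior_regularity}): it builds an abstract Sobolev scale $\mathcal{H}^s=\dom(1+\mathcal{D}^2)^{s/2m}$ and associated operator classes $op^m_{l,h}$, then constructs a \emph{localized parametrix} $\mathcal{Q}\in op^{-m}_{-m,0}$ satisfying $1-\mathcal{Q}\mathcal{D}\in op^{-m}_{0,0}$ and $j\mathcal{Q}=j\mathcal{Q}k$ for suitable $k\in\mathcal{J}$ (\Cref{thm:localized_parametrix}). The construction iterates $m$ times, at each step using locality to insert a factor of $\mathcal{J}$ and the commutator estimate to gain one order. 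With $\mathcal{Q}$ in hand and the description of $\dom\mathcal{D}_\mathcal{J}^*$ in \Cref{thm:abstract_maximal_domain_description}, one writes $ju = j(1-\mathcal{Q}\mathcal{D})u + j\mathcal{Q}k\mathcal{D}u$ and reads off that both terms lie in $j\dom\mathcal{D}\subseteq\dom\mathcal{D}_\mathcal{J}$. This parametrix step is the substantive content of the theorem for $m>1$, and your proposal does not contain it.
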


Since \(\mathcal{D}_\mathcal{J}\subseteq \mathcal{D}\), the only condition that is not immediate when proving \Cref{thm:localization_horst} is that \(j\in \mathcal{J}\) maps \(\dom \mathcal{D}_\mathcal{J}^*\) to \(\dom \mathcal{D}_\mathcal{J}\). \Cref{sec:interior_regularity} is dedicated to proving this condition. In case \(m=1\) it is significantly simpler to show this condition as explained in \Cref{remark:firstorder_interior_regularity_simplificatoin}.

Note that if \((\mathcal{J}\vartriangleleft \mathcal{A}, \mathcal{H}_\mathcal{J}, \mathcal{D}_\mathcal{J})\) is a higher-order relative spectral triple for \(J\vartriangleleft A\) and \(\mathcal{H}_\mathcal{J} = \overline{\mathcal{J} \mathcal{H}_J}\), then the action of \(A\) on \(\mathcal{H}_\mathcal{J}\) factors through \(A/K\) for any ideal \(K\vartriangleleft A\) such that \(K\cap J=0\). An appropriate choice of \(K\) could be \(J^\perp = \setcond{k\in A}{kj=0=jk \text{ for all } j\in J}\), the orthogonal complement of \(J\) in \(A\) as a \ensuremath{C^*}-module. In particular, we obtain that 
\begin{equation}
    \label{eq:localization_horst_with_K}
    (\mathcal{J}\vartriangleleft (\mathcal{A}/(J^\perp\cap \mathcal{A})), \mathcal{H}_\mathcal{J}, \mathcal{D}_\mathcal{J})
\end{equation}
is a higher-order relative spectral triple for \(J\vartriangleleft (A/J^\perp)\).  In particlar, if we consider \(C_0(\Omega)\vartriangleleft C_0(M)\) for an open set \(\Omega\subseteq M\) in a smooth manifold \(M\), we have that \(C_0(M)/C_0(\Omega)^\perp =  C_0(\overline{\Omega})\). Also, any \(D\in \Diff(M;E)\) on is local with respect to \(C_c^\infty(\Omega)\) since for any \(j\in C_c^\infty(\Omega)\) we can find a \(k\in C_c^\infty(\Omega)\) such that \(k=1\) on the support of \(j\) and then \(jD=jDk\) on any distribution. As such, we arrive at the motivating example of a higher-order relative spectral triple. Note that the extra property needed in \Cref{thm:localization_horst} is commented on in \Cref{remark:extra_commutator_is_compact_condition}.

\begin{corollary}
    \label{thm:diff_op_horst}
    Let \(M\) be a smooth manifold and \(D\in \Diff(M; E)\) be formally self-adjoint that is elliptic in any pseudo-differential calculus presented in \Cref{sec:host_examples}, then for any open set \(\Omega\subseteq M\) the collection
    \begin{equation}
        (C_c^\infty(\Omega)\vartriangleleft C_c^\infty(\overline{\Omega}), L^2(\Omega;E), D_{\min})
    \end{equation}
    defines a higher-order relative spectral triple of order \(m\) for \(C_0(\Omega)\vartriangleleft C_0(\overline{\Omega})\).
\end{corollary}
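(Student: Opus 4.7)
The plan is to deduce the corollary from the localization result \Cref{thm:localization_horst}, applied to the (non-relative) higher-order spectral triple on \(M\) built from \(D\), with the ideal \(C_c^\infty(\Omega)\) playing the role of \(\mathcal{J}\).

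\textbf{Setup.} First I take the higher-order spectral triple \((\mathcal{A},\mathcal{H},\mathcal{D}) = (C_c^\infty(M), L^2(M;E), D)\) for \(C_0(M)\) provided by the relevant example from \Cref{sec:host_examples} (classical, bounded-geometry, Shubin or Heisenberg calculus), where \(D\) is identified with its unique closed realization coming from elliptic regularity in that calculus. Note that the hypothesis of \Cref{thm:localization_horst} about the additional compactness \([D,a](1+D^2)^{-1/2}\in \mathbb{K}(\mathcal{H})\) holds by \Cref{remark:extra_commutator_is_compact_condition}, which also covers the case of non-unital \(C_0(M)\) that is relevant here.

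\textbf{Locality.} Next I verify that \(D\) is local with respect to \(\mathcal{J} = C_c^\infty(\Omega)\) in the sense of \Cref{def:localization_horst}. Given \(j\in C_c^\infty(\Omega)\), I choose \(k\in C_c^\infty(\Omega)\) with \(k\equiv 1\) on an open neighborhood of \(\supp j\); then \(kj=j\). For any \(f\in \dom D\) the section \(jf\) is supported in \(\supp j\), so because \(D\) is a differential operator \(D(jf)\) is also supported in \(\supp j\), which lies in the set where \(k\equiv 1\). Hence \(D(jf)=k\,D(jf)\), giving \(Dj = kDj\) as required.

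\textbf{Applying the localization theorem.} Now \Cref{thm:localization_horst}, together with the quotient improvement in \eqref{eq:localization_horst_with_K}, yields a higher-order relative spectral triple
\[
\bigl(C_c^\infty(\Omega)\vartriangleleft C_c^\infty(M)/(C_0(\Omega)^\perp\cap C_c^\infty(M)),\ \overline{C_c^\infty(\Omega)L^2(M;E)},\ \overline{D|_{C_c^\infty(\Omega)\dom D}}\bigr)
\]
for \(C_0(\Omega)\vartriangleleft C_0(M)/C_0(\Omega)^\perp\). What remains is to identify each of these objects with those in the statement. Because \(\Omega\) is open, \(C_0(\Omega)^\perp\) inside \(C_0(M)\) consists of the functions that vanish on \(\overline{\Omega}\), so \(C_0(M)/C_0(\Omega)^\perp \cong C_0(\overline{\Omega})\) and on smooth functions this identifies the quotient with \(C_c^\infty(\overline{\Omega})\) as defined in the notation section. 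Multiplying an \(L^2\)-section by an increasing sequence in \(C_c^\infty(\Omega)\) converging pointwise to \(\mathbf{1}_\Omega\) shows that \(\overline{C_c^\infty(\Omega)L^2(M;E)}=L^2(\Omega;E)\).

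\textbf{The operator identification.} The expected main technicality is to check that \(\overline{D|_{C_c^\infty(\Omega)\dom D}} = D_{\min}\) on \(L^2(\Omega;E)\). For one inclusion, given any \(g\in C_c^\infty(\Omega;E)\) I pick \(j\in C_c^\infty(\Omega)\) with \(j\equiv 1\) on \(\supp g\); then \(g=jg\in C_c^\infty(\Omega)\cdot\dom D\), so \(C_c^\infty(\Omega;E)\) lies in the algebraic domain under consideration, and taking the closure in the graph norm yields \(D_{\min}\subseteq \overline{D|_{C_c^\infty(\Omega)\dom D}}\). For the reverse inclusion, each element \(jf\) with \(j\in C_c^\infty(\Omega)\) and \(f\in \dom D\) is compactly supported in \(\Omega\) and lies in \(\dom D\), so by elliptic regularity it belongs to \(H^m_{\mathrm{loc}}\) with compact support in \(\Omega\); a standard mollifier argument then approximates it by elements of \(C_c^\infty(\Omega;E)\) in the graph norm, placing it in \(\dom D_{\min}\). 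This completes the identification and hence the proof.
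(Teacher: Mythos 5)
Your proof is correct and follows the same route as the paper: it applies \Cref{thm:localization_horst} (together with \eqref{eq:localization_horst_with_K}) to the ambient higher-order spectral triple $(C_c^\infty(M), L^2(M;E), D)$ with $\mathcal{J}=C_c^\infty(\Omega)$, checks locality via a bump function $k$ that is $1$ on $\supp j$, and identifies $C_0(M)/C_0(\Omega)^\perp\cong C_0(\overline{\Omega})$, invoking \Cref{remark:extra_commutator_is_compact_condition} for the extra hypothesis of the localization theorem. The only substantive addition is that you make explicit the identification $\overline{D|_{C_c^\infty(\Omega)\dom D}}=D_{\min}$, which the paper treats as immediate; your mollifier argument works, although one can avoid elliptic-regularity language entirely by approximating $f\in\dom D$ by $f_n\in C_c^\infty(M;E)$ in graph norm and noting $jf_n\to jf$ and $D(jf_n)=jDf_n+[D,j]f_n\to D(jf)$ using only the boundedness of $[D,j](1+D^2)^{-1/2}$.
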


Note that in \Cref{thm:diff_op_horst} there is no assumption on the regularity of the boundary \(\partial \Omega\) and that localization procedure does not explicitly depend on the differential operator \(D\) having smooth coefficients.

\begin{example}
    \label{ex:construct_formally_selfadjoint_diff_op}
    Let \(\Omega\subseteq M\) be an open set in a smooth manifold and \(D\in \Diff(M; E, F)\) a differential operator that is elliptic in any pseudo-differential calculus presented in \Cref{sec:host_examples}. We follow the same procedure as in \Cref{ex:construct_formally_selfadjoint_diff_op_host} and then localize as in \Cref{def:localization_horst} to obtain that
    \begin{equation}
        \label{eq:odd_horst_from_diff_op}
        \left(C_c^\infty(\Omega)\vartriangleleft C_c^\infty(\overline{\Omega}), L^2(\Omega;E\oplus F), \begin{pmatrix}0 & D^\dagger_{\min} \\ D_{\min} & 0\end{pmatrix}\right)
    \end{equation}
    is an even higher-order relative spectral triple for \(C_0(\Omega)\vartriangleleft C_0(\overline{\Omega})\). 
    
    We will later denote the relative \(K\)-homology class (see \Cref{thm:horst_defines_kcykle}) obtained from the spectral triple in \eqref{eq:odd_horst_from_diff_op} as    
    \begin{equation}
        [D] \coloneqq  \left[\begin{pmatrix}0 & D^\dagger_{\min} \\ D_{\min} & 0\end{pmatrix}\right] \in K^0(C_0(\Omega)\vartriangleleft C_0(\overline{\Omega})).
    \end{equation}
\end{example}

The next example is fundamentally different from a localization as in \Cref{def:localization_horst}.

\begin{example}
    \label{ex:dirac_and_higson_corona}
    Let \(M\) be a complete Riemannian \(\text{spin}^c\)~ manifold without boundary. Define the \emph{Higson compactification} with respect to the complete metric as compactification of \(M\) corresponding to the unital commutative \ensuremath{C^*}-algebra
    \begin{equation}
        C_h(M)=\setcond{a\in C(M)}{V_r(a)\in C_0(M) \text{ for all } r>0}
    \end{equation}
    where \(V_r\colon C(M)\to C(M)\) is a map defined as 
    \begin{equation}
        V_r(a)(x)=\sup_{y\in B_r(x)} |a(x)-a(y)|
    \end{equation}
    \cite[Chapter 5, Section 5.1]{Roe_1993}.
    Note that \(C_0(M)\subseteq C_h(M)\subseteq C_b(M)\) and \(C_h^\infty(M)\coloneqq \setcond{a\in C_b^\infty(M)}{ da\in C_0^\infty(M;T^*M)}\) is a dense \(*\)-subalgebra of \(C_h(M)\), where \(C_b(\Omega)=\setcond{a\in C(\Omega)}{\norm{a}_\infty<\infty}\).

    Let \(\slashed{D}\in \Diff(M;\slashed{S})\) be the Dirac operator constructed from the complete metric and the \(\text{spin}^c\)~ structure. Then \(\slashed{D}\) has a unique realization that is self-adjoint \cite[Chapter 2, Theorem 5.7]{Lawson_Michelsohn_1989}. Furthermore, \([\slashed{D}, a]\) is Clifford multiplication by \(da\) for any \(a\in C^\infty(M)\) \cite[Chapter 2, Lemma 5.5]{Lawson_Michelsohn_1989}, so if \(da\in C_0^\infty(M;TM)\) we can approximate \([\slashed{D}, a]\) with compactly supported functions and hence \([\slashed{D},a]\colon \dom \slashed{D} \to L^2(M;\slashed{S})\) is compact. We obtain that
    \begin{equation}
        \label{eq:dirac_spectral_triple}
        \left(C_c^\infty(M)\vartriangleleft C_h^\infty(M), L^2(M;\slashed{S}), \overline{\slashed{D}}\right)
    \end{equation}
    defines a relative spectral triple for \(C_0(M)\vartriangleleft C_h(M)\) where \(C_h(M)\) acts on \(L^2(\Omega;E)\) as point-wise multiplication. Note that \(\overline{\slashed{D}}\) is self-adjoint in this example.
    
    For the relative spectral triple \eqref{eq:dirac_spectral_triple}, \(a(1+\mathcal{D}^*\mathcal{D})^{-\frac{1}{2}}\) is not necessarily compact for \(a\in \mathcal{A}\) in contrast to the localization in \Cref{def:localization_horst}. However, if \(\slashed{D}\) is defined from a spin structure and the metric on \(M\) has properly positive scalar curvature, then \(\slashed{D}\) has compact resolvent \cite[Proposition 2.7]{Haskell_Wahl_2009} as a result of Bochner-Lichnerowicz formula \(\slashed{D}^2 = \nabla^\dagger \nabla + \kappa/4\) where \(\kappa\) is the scalar curvature \cite[Theorem 8.8]{Lawson_Michelsohn_1989}.
\end{example}

\begin{example}
    \label{example:odd_odd_tensor_product_of_horst}

    Given two odd higher-order relative spectral triples \((\mathcal{J}_i \vartriangleleft \mathcal{A}_i, \mathcal{H}_i, \mathcal{D}_i)\) for \(i=1,2\), we can construct an even higher-order relative spectral triple as
    \begin{equation}
        \begin{split}
            \label{eq:odd_odd_tensor_product_of_horst}
            &(\mathcal{J}_1 \vartriangleleft \mathcal{A}_1, \mathcal{H}_1, \mathcal{D}_1) \hat{\otimes} (\mathcal{J}_2 \vartriangleleft \mathcal{A}_2, \mathcal{H}_2, \mathcal{D}_2) \\
            &=\left(  \mathcal{J}_1 \otimes_{\textup{alg}} \mathcal{J}_2 \vartriangleleft \mathcal{A}_1\otimes_{\textup{alg}} \mathcal{A}_2, \mathcal{H}_1\otimes \mathcal{H}_2 \otimes \mathbb{C}^2, \mathcal{D} \right)
        \end{split}
    \end{equation}
    where 
    \begin{equation}
        \mathcal{D} = \mathcal{D}_1\otimes \mathds{1}\otimes \sigma_1 + \mathds{1}\otimes \mathcal{D}_2\otimes \sigma_2
    \end{equation}
    and \((\mathcal{H}_1\otimes \mathcal{H}_2)\otimes \mathbb{C}^2\) has grading operator \(\mathds{1} \otimes \mathds{1}\otimes \sigma_3\) for
    \begin{equation}
        \sigma_1 = \begin{pmatrix}0 & -i \\ i & 0\end{pmatrix},~\sigma_2 = \begin{pmatrix}0 & 1 \\ 1 & 0\end{pmatrix} \text{ and } \sigma_3 = \begin{pmatrix}1 & 0 \\ 0 & -1\end{pmatrix}.
    \end{equation}
    The order of \eqref{eq:odd_odd_tensor_product_of_horst} will be the maximum of the orders of the higher-order relative spectral triples it was constructed from. Indeed, \eqref{eq:odd_odd_tensor_product_of_horst} is the exterior Kasparov product and \([\mathcal{D}]=[\mathcal{D}_1]\hat{\otimes}[\mathcal{D}_2]\) in \(K\)-homology.

    Note that the continuity of \(j_i\colon \dom \mathcal{D}_i^* \to \dom \mathcal{D}_i\)  as in \Cref{remark:domain_inclusion_is_continuous} is needed in order to show the domain inclusion \(j \dom \mathcal{D}^* \subseteq \dom \mathcal{D}\) for any \(j\in \mathcal{J}_1 \otimes_{\textup{alg}} \mathcal{J}_2\). This is done by showing that
    \begin{equation}
        \dom \mathcal{D}^* = \overline{(\dom \mathcal{D}_1^*\otimes_{\textup{alg}} \mathcal{H}_2\otimes_{\textup{alg}} \mathbb{C}^2)\cap (\mathcal{H}_1\otimes_{\textup{alg}} \dom \mathcal{D}_2^*\otimes_{\textup{alg}} \mathbb{C}^2)},
    \end{equation}
    with the argument used in \cite[Theorem 1.3]{Deeley_Goffeng_Mesland_2018} and noting that any \(j_1\otimes j_2 \in \mathcal{J}_1 \otimes_{\textup{alg}} \mathcal{J}_2\) preserves the dense subspace \((\dom \mathcal{D}_1^*\otimes_{\textup{alg}} \mathcal{H}_2\otimes_{\textup{alg}} \mathbb{C}^2)\cap (\mathcal{H}_1\otimes_{\textup{alg}} \dom \mathcal{D}_2^*\otimes_{\textup{alg}} \mathbb{C}^2)\).
\end{example}

\subsection{Interior regularity}
\label{sec:interior_regularity}

This section in its entirety will be devoted to proving the following theorem, which is the last ingredient needed to prove that the localization of higher-order spectral triples presented in \Cref{def:localization_horst} indeed produces higher-order relative spectral triples.

\begin{theorem}[Interior regularity]
    \label{thm:interior_regularity}
    Let \((\mathcal{J}\vartriangleleft \mathcal{A}, \mathcal{H}_\mathcal{J}, \mathcal{D}_\mathcal{J})\) be a localization of \((\mathcal{A},\mathcal{H},\mathcal{D})\) as in \Cref{def:localization_horst}, then \(j\cdot \dom \mathcal{D}_\mathcal{J}^* \subseteq \dom \mathcal{D}_\mathcal{J}\) for any \(j\in \mathcal{J}\).
\end{theorem}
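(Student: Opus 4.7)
The strategy is to construct, for $j \in \mathcal{J}$ and $f \in \dom \mathcal{D}_\mathcal{J}^*$, a sequence in $\mathcal{J}\dom \mathcal{D}$ converging to $jf$ in the graph norm of $\mathcal{D}$. Closedness of $\mathcal{D}_\mathcal{J}$ will then yield $jf \in \dom \mathcal{D}_\mathcal{J}$. The argument splits into two stages: first I establish $jf \in \dom \mathcal{D}$ as a vector in the ambient Hilbert space $\mathcal{H}$ with $\mathcal{D}(jf) \in \mathcal{H}_\mathcal{J}$; second, I approximate in the graph norm by a Friedrichs mollifier built from $R_n := (1+n^{-1}\mathcal{D}^2)^{-1}$.

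Write $g := \mathcal{D}_\mathcal{J}^* f \in \mathcal{H}_\mathcal{J}$. By locality, select $k \in \mathcal{J}$ with $j=jk=kj$ and $\mathcal{D} j = k\mathcal{D} j$, $j\mathcal{D} = j\mathcal{D} k$ on $\dom \mathcal{D}$, together with an analogous $\tilde{k}\in \mathcal{J}$ for $j^*$. From $\mathcal{D} j^* = \tilde{k}\mathcal{D} j^*$ and $\tilde{k}j^* = j^*$ one derives the factorization $[\mathcal{D},j^*] = \tilde{k}[\mathcal{D},j^*]$ on $\dom \mathcal{D}$, extended by continuity to $\dom |\mathcal{D}|^{1-1/m}$. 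For $\phi \in \dom \mathcal{D}$ the element $j^*\phi$ lies in $\mathcal{J}\dom \mathcal{D} \subseteq \dom \mathcal{D}_\mathcal{J}$, so
\[
\langle \mathcal{D}\phi,\, jf\rangle = \langle \mathcal{D}(j^*\phi),\, f\rangle - \langle [\mathcal{D},j^*]\phi,\, f\rangle = \langle \phi,\, jg\rangle - \langle [\mathcal{D},j^*]\phi,\, \tilde{k}^* f\rangle.
\]
The first term is $O(\|\phi\|)$. Bounding the second by $O(\|\phi\|)$ would give $jf \in \dom \mathcal{D}^* = \dom \mathcal{D}$. The higher-order commutator condition alone only yields $|\langle [\mathcal{D},j^*]\phi, f\rangle| \lesssim \|\phi\|_{|\mathcal{D}|^{1-1/m}}$, which is weaker by $1-1/m$ derivatives. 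Closing this gap is the central analytic obstacle.

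The resolution is to bootstrap along a nested chain $k_0 := j, k_1, k_2, \dots, k_m \in \mathcal{J}$ obtained by iterating locality (each $k_i$ chosen so that $k_{i-1}k_i = k_{i-1}$ and $\mathcal{D} k_{i-1} = k_i\mathcal{D} k_{i-1}$ on $\dom \mathcal{D}$). Each locality relation yields the factorization $[\mathcal{D},k_{i-1}] = k_i[\mathcal{D},k_{i-1}]$, which together with the higher-order commutator bound allows one to improve the regularity of the effective argument of $f$ by $1/m$ derivative per iteration along the Sobolev-like tower $\dom |\mathcal{D}|^{s/m}$, using interpolation between the given commutator bound and its Hilbert-space adjoint. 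After $m$ iterations the regularity has been raised by a full derivative, the pairing is bounded by $O(\|\phi\|)$, and hence $jf \in \dom \mathcal{D}$. The analogous factorizations $[\mathcal{D},j] = k[\mathcal{D},j] = [\mathcal{D},j]k$ (and their versions for $j^*$) force $\mathcal{D}(jf) = jg + ([\mathcal{D},j^*])^* f$ to lie in $k\mathcal{H} + \tilde{k}^*\mathcal{H} \subseteq \mathcal{H}_\mathcal{J}$. This iterative bootstrap, which relies essentially on the positive parameter $1/m$ in the higher-order commutator condition, is the technical heart of the proof.

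For the final approximation, pick $k' \in \mathcal{J}$ from locality so that $k'j = j$ and $k'$ acts as the identity on $\mathcal{D}(jf)$ (the latter arranged by choosing $k'$ compatible with both $k$ and $\tilde{k}$). Set $h_n := k' R_n(jf) \in \mathcal{J}\dom \mathcal{D}$. Since $jf \in \dom \mathcal{D}$, standard spectral theory gives $R_n(jf) \to jf$ in the graph norm of $\mathcal{D}$, so $h_n \to jf$ in $\mathcal{H}$, and
\[
\mathcal{D} h_n = k' R_n\mathcal{D}(jf) + [\mathcal{D},k']R_n(jf),
\]
where the first summand converges to $k'\mathcal{D}(jf) = \mathcal{D}(jf)$. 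Crucially, the two identity actions of $k'$ give $[\mathcal{D},k'](jf) = \mathcal{D}(k'jf) - k'\mathcal{D}(jf) = \mathcal{D}(jf) - \mathcal{D}(jf) = 0$, so $[\mathcal{D},k']R_n(jf) = [\mathcal{D},k']\bigl(R_n(jf) - jf\bigr) \to 0$ in $\mathcal{H}$ by the higher-order commutator condition applied to $R_n(jf) - jf$, which converges to $0$ in the graph norm. Consequently $\mathcal{D} h_n \to \mathcal{D}(jf)$ in $\mathcal{H}$, establishing $jf \in \dom \mathcal{D}_\mathcal{J}$.
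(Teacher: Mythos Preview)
Your strategy is sound and, once made precise, does prove the theorem, but it differs from the paper's argument. The paper works in an abstract Sobolev scale $\mathcal{H}^s=\dom(1+\mathcal{D}^2)^{s/2m}$, shows $[\mathcal{D},a]\in op^{m-1}_{0,m-1}$ by interpolation (your ``commutator bound and its adjoint''), and then builds a \emph{localized parametrix} $\mathcal{Q}=\sum_{i=1}^m(1-q_1\mathcal{D})\cdots(1-q_{i-1}\mathcal{D})q_i$ from corrected copies $q_i=q+[q,j_{2i-1}]$ of $q=\mathcal{D}(1+\mathcal{D}^2)^{-1}$, so that $1-\mathcal{Q}\mathcal{D}\in op^{-m}_{0,0}$ and $j\mathcal{Q}=j\mathcal{Q}k$ for some $k\in\mathcal{J}$; together with the description $\dom\mathcal{D}_\mathcal{J}^*=\{u\in\mathcal{H}_\mathcal{J}:j'\mathcal{D} u\in\mathcal{H}\text{ for all }j'\in\mathcal{J}\}$ this gives $ju=j(1-\mathcal{Q}\mathcal{D})u+j\mathcal{Q}k\mathcal{D} u\in j\dom\mathcal{D}\subseteq\dom\mathcal{D}_\mathcal{J}$ in one line. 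Your bootstrap is the same $m$-fold iteration unpackaged: at step $i$ one shows $k'f\in\mathcal{H}^i$ for every $k'\in\mathcal{J}$ by writing $\mathcal{D}(k'f)=k'\mathcal{D} f+[\mathcal{D},k']kf$ with $kf\in\mathcal{H}^{i-1}$ from the previous step and $[\mathcal{D},k']:\mathcal{H}^{i-1}\to\mathcal{H}^{i-m}$, then invoking elliptic regularity. Two remarks: first, your chain condition should read $k_ik_{i-1}=k_{i-1}$ (left action), matching $\mathcal{D} k_{i-1}=k_i\mathcal{D} k_{i-1}$, so that $[\mathcal{D},k_{i-1}]=k_i[\mathcal{D},k_{i-1}]$ actually follows; second, your Stage~2 is unnecessary---once $jf\in\dom\mathcal{D}$, locality gives $k\in\mathcal{J}$ with $kj=j$, hence $jf=k(jf)\in\mathcal{J}\dom\mathcal{D}\subseteq\dom\mathcal{D}_\mathcal{J}$ directly. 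The paper's parametrix packaging is cleaner and makes the regularity gain at each step explicit; your formulation is more elementary but leaves the inductive step (and the needed elliptic-regularity ingredient $\mathcal{D} v\in\mathcal{H}^{i-m}\Rightarrow v\in\mathcal{H}^i$) to the reader.
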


\begin{remark}
    The term interior regularity (taken from \cite[p. 434]{Grubb_1968}) usually references to that \(\dom D_{\max}\) for an elliptic differential operator \(D\) on an open set \(\Omega\subseteq M\) has a continuous dense injection into a local Sobolev space, an inverse limit over Sobolev spaces on compact subsets of \(\Omega\). However, this is equivalent to the statement in \Cref{thm:interior_regularity} in the case of differential operators, where the statement says that
    \begin{equation}
        j\colon \dom D_{\max} \to \dom D_{\min}
    \end{equation}
    is well-defined and continuous for any \(j\in C_c^\infty(\Omega)\). Continuity is automatic as in \Cref{remark:domain_inclusion_is_continuous} and from this one obtains a Gårding type inequality. That \(\mathcal{D}_\mathcal{J}^*\) corresponds to \(D_{\max}\) is clarified by \Cref{eq:abstract_maximal_domain_description}.
\end{remark}

To prove \Cref{thm:interior_regularity}, we will construct a sort of parametrix of \(\mathcal{D}\) with enough regularity that is local with respect to a fixed element of \(\mathcal{J}\). To make sense of regularity in this abstract setting we will take inspiration from \cite[Appendix B]{Connes_Moscovici_1995} and introduce a scale of Hilbert spaces \(\mathcal{H}^s\) for \(s\in \mathbb{R}\) induced by \(\mathcal{D}\). That is, let 
\begin{equation}
    \mathcal{H}^s = \dom (1 + \mathcal{D}^2)^\frac{s}{2m}
\end{equation}
for \(s\geq 0\) and let \(\mathcal{H}^{-s}\) be dual of \(\mathcal{H}^s\) identified using the innerproduct of \(\mathcal{H}\), meaning that \(\mathcal{H}^{-s}\) is the completion of \(\mathcal{H}\) in the norm \(\sup_{g\in \mathcal{H}^s\setminus \set{0}} \frac{\abs{\braket{f, g}}}{\norm{g}_{\mathcal{H}^s}}\). Note that by \eqref{eq:range_interpolation_space} together with duality we have that \([\mathcal{H}^s, \mathcal{H}^t]_\theta=\mathcal{H}^{(1-\theta)s+\theta t}\) and that \(\mathcal{H}^{sm} = \dom \mathcal{D}^s\) for \(s=0,1,2,\dots\). Note also that \(\mathcal{H}^\infty\coloneq \bigcap \mathcal{H}^s\) is dense in any \(\mathcal{H}^s\) since \(\dom e^{\mathcal{D}^2}\subseteq \mathcal{H}^\infty\) which is dense using the spectral theorem \cite[Section 7.3]{Weidmann_1980}. 

For \(m\in \mathbb{Z}\), we let \(op^m\) denote the space of all linear operators \(T\) on \(\mathcal{H}^\infty\) that extends to continuous operators \(T\colon\mathcal{H}^s\to \mathcal{H}^{s-m}\) for all \(s\in \mathbb{R}\). Note that \(\bigcup op^m\) is a \(\mathbb{Z}\)-filtered algebra and that \(\mathcal{D}\in op^m\) and \((1+\mathcal{D}^2)^{-1}\in op^{-2m}\). 

Elements of \(\mathcal{A}\) are automatically continuous as operators on \(\dom \mathcal{D}\), but not necessarily on the domain of powers of \(\mathcal{D}\) so \(\mathcal{A}\) cannot necessarily be represented as operators in \(op^0\). To handle the regularity of the action of \(\mathcal{A}\), we introduce operators with a looser connection with the scale \(\mathcal{H}^s\). Namely, let \(op^m_{l,h}\) denote operators \(T\colon \mathcal{H}^h\to \mathcal{H}^{h-m}\) that extend to continuous operators  \(T\colon \mathcal{H}^s\to \mathcal{H}^{s-m}\) for \(l\leq s \leq h\). Note that \(op^{m-1}_{l,h} \subseteq op^m_{l,h+1}\) and hence \(op^m + op^{m-1}_{l,h} \subseteq op^m_{l,h+1}\). As an example of what operators in \(op^m_{l,h}\) could be, \(C^k(M)\subseteq op^0_{-k,k}\) if \(\mathcal{H}^s\) are the classical Sobolev spaces \(H^s(M;E)\) on some closed manifold \(M\).

\begin{lemma}
    Let \((\mathcal{A}, \mathcal{H}, \mathcal{D})\) be a higher-order spectral triple of order \(m\), then \(\mathcal{A}\subseteq op_{-m,m}^0\) and \([\mathcal{D}, \mathcal{A}]\subseteq op_{0,m-1}^{m-1}\).
\end{lemma}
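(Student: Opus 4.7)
The plan is to verify each inclusion at two endpoints of the parameter $s$ and fill in the intermediate range via the complex interpolation identity $[\mathcal{H}^s,\mathcal{H}^t]_\theta=\mathcal{H}^{(1-\theta)s+\theta t}$ quoted in the paragraph above, using also that $(1+\mathcal{D}^2)^{-s/(2m)}\colon\mathcal{H}\to\mathcal{H}^s$ is an isomorphism for $s\geq 0$ (which follows from \eqref{eq:range_interpolation_space} applied to $T=\mathcal{D}$ combined with the identification $\mathcal{H}^m=\dom\mathcal{D}$).

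For $\mathcal{A}\subseteq op^0_{-m,m}$, fix $a\in\mathcal{A}$. At $s=0$ the map $a\colon\mathcal{H}\to\mathcal{H}$ is bounded because $\rho$ is a $*$-representation. At $s=m$, condition (1) of \Cref{def:host} gives $a\cdot\dom\mathcal{D}\subseteq\dom\mathcal{D}$, and the closed graph argument of \Cref{remark:domain_inclusion_is_continuous} upgrades this to continuity of $a\colon\mathcal{H}^m\to\mathcal{H}^m$. Interpolating between these two endpoints covers $0\leq s\leq m$. Since $\mathcal{A}$ is a $*$-subalgebra, the same conclusion holds with $a$ replaced by $a^*$, and then dualizing with the identification $(\mathcal{H}^s)^*=\mathcal{H}^{-s}$ extends $a$ continuously to $\mathcal{H}^s$ for $-m\leq s\leq 0$.

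For $[\mathcal{D},\mathcal{A}]\subseteq op^{m-1}_{0,m-1}$, the key observation is that $-\tfrac12+\tfrac{1}{2m}=-\tfrac{m-1}{2m}$, so $(1+\mathcal{D}^2)^{-\frac12+\frac{1}{2m}}$ is an isomorphism $\mathcal{H}\to\mathcal{H}^{m-1}$. Condition (3) of \Cref{def:host} then says precisely that $[\mathcal{D},a]$ extends continuously to $\mathcal{H}^{m-1}\to\mathcal{H}$, which is the $s=m-1$ endpoint. For the $s=0$ endpoint, apply the previous reasoning to $a^*$ to obtain continuity of $[\mathcal{D},a^*]\colon\mathcal{H}^{m-1}\to\mathcal{H}$; since $\mathcal{D}$ is self-adjoint, on the common dense core $\mathcal{H}^\infty$ one computes $([\mathcal{D},a^*])^*=-[\mathcal{D},a]$, and duality therefore promotes $[\mathcal{D},a]$ to a continuous map $\mathcal{H}\to\mathcal{H}^{-(m-1)}$. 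A final interpolation between the two endpoints yields continuity $\mathcal{H}^s\to\mathcal{H}^{s-(m-1)}$ for every $0\leq s\leq m-1$.

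The only delicate point is the duality argument: one must check that $[\mathcal{D},a]$ and $-([\mathcal{D},a^*])^*$ agree on the dense core $\mathcal{H}^\infty$ so that they define the same extension before complex interpolation is invoked. Once this is settled, the rest is a routine application of \eqref{eq:range_interpolation_space} together with standard interpolation theory.
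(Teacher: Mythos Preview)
Your proposal is correct and follows essentially the same route as the paper: endpoint bounds via \Cref{remark:domain_inclusion_is_continuous} and condition~(3) of \Cref{def:host}, the passage to $a^*$ followed by duality to reach the negative endpoint, and then complex interpolation to fill in the intermediate range. The paper handles your ``delicate point'' in one line by writing $\bigl([\mathcal{D},-a^*](1+\mathcal{D}^2)^{-\frac{1}{2}+\frac{1}{2m}}\bigr)^* = \overline{(1+\mathcal{D}^2)^{-\frac{1}{2}+\frac{1}{2m}}[\mathcal{D},a]}$, which encodes exactly the identification you flag.
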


\begin{proof}
    For \(a\in \mathcal{A}\), we have that \(a\) is bounded on \(\dom \mathcal{D} = \mathcal{H}^m\) as in \Cref{remark:domain_inclusion_is_continuous}. Since the same holds for \(a^*\) we can use duality and interpolation to obtain that \(a\in op_{-m,m}^0\). 

    Since \([\mathcal{D}, a](1+\mathcal{D}^2)^{-\frac{1}{2}+\frac{1}{2m}}\) and \(\left([\mathcal{D}, -a^*](1+\mathcal{D}^2)^{-\frac{1}{2}+\frac{1}{2m}}\right)^* = \overline{(1+\mathcal{D}^2)^{-\frac{1}{2}+\frac{1}{2m}}[\mathcal{D}, a]}\) are both bounded, \([\mathcal{D}, a]\) is continuous as a map \(\mathcal{H}^{m-1}\to \mathcal{H}^0\) and \(\mathcal{H}^0\to \mathcal{H}^{-(m-1)}\). Using interpolation this implies that \([\mathcal{D}, a]\in op_{0,m-1}^{m-1}\).
\end{proof}

\begin{lemma}
    \label{thm:localized_parametrix}
    Let \((\mathcal{A}, \mathcal{H}, \mathcal{D})\) be a higher-order spectral triple of order \(m\) and \(\mathcal{J}\subseteq \mathcal{A}\) sub-\(*\)-algebra such that \(\mathcal{D}\) is local with respect to \(\mathcal{J}\) in the sense presented in \Cref{def:localization_horst}. Then for any \(j\in\mathcal{J}\), there is a \(\mathcal{Q}\in op^{-m}_{-m, 0}\) and a \(k\in\mathcal{J}\) such that \(1 - \mathcal{Q} \mathcal{D} \in op^{-m}_{0,0}\) and \(j \mathcal{Q} = j \mathcal{Q} k\).
\end{lemma}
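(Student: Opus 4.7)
The plan is to build $\mathcal{Q}$ from the natural parametrix $\mathcal{Q}_0 := \mathcal{D}(1+\mathcal{D}^2)^{-1}$, which satisfies $\mathcal{Q}_0 \mathcal{D} = 1 - S$ with $S := (1+\mathcal{D}^2)^{-1} \in op^{-2m}$, together with cutoffs in $\mathcal{J}$ supplied by the locality hypothesis. The key observation is that any term in $1 - \mathcal{Q}\mathcal{D}$ carrying a factor of $S$ automatically lies in $op^{-m}_{0,0}$ (since $S$ regularizes by $2m \geq m$); this suggests absorbing the non-regularizing cutoff-gap terms that arise by prefixing them with $S$ via Neumann corrections.

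First, I would iterate the locality hypothesis applied to $j$ to produce a nested chain $j = k_0, k_1, \ldots, k_N \in \mathcal{J}$ with $k_{i-1} = k_{i-1} k_i$ and $k_{i-1} \mathcal{D} = k_{i-1} \mathcal{D} k_i$ on $\dom \mathcal{D}$. By transitivity, $k_i k_N = k_i$ for all $i \leq N$, so any operator ending in a factor $k_N$ automatically satisfies $\mathcal{Q} k = \mathcal{Q}$ for $k := k_{N+1}$ the next element of the chain, ensuring condition (c) of the lemma.

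A naive attempt $\mathcal{Q} := \mathcal{Q}_0 k_1$, unwound using $k_1 \mathcal{D} = \mathcal{D} k_1 - [\mathcal{D}, k_1]$, yields $1 - \mathcal{Q}\mathcal{D} = (1-k_1) + Sk_1 + \mathcal{Q}_0[\mathcal{D}, k_1]$, where $(1-k_1) \in op^0$ is bounded but not regularizing, and the commutator $\mathcal{Q}_0[\mathcal{D}, k_1]$ lies only in $op^{-1}$ rather than $op^{-m}$. Augmenting to $\mathcal{Q} := \mathcal{Q}_0 + S\mathcal{Q}_0 k_1$ instead gives $1 - \mathcal{Q}\mathcal{D} = S(1-k_1) + S^2 k_1 + S\mathcal{Q}_0[\mathcal{D}, k_1]$, in which every term now carries $S$ as a left factor and consequently lies in $op^{-m}_{0,0}$. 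Multiplying on the right by $k_N$ to enforce (c) reintroduces cutoff-gap and commutator errors at a new level, which are compensated by iterating the Neumann correction along the chain $k_1, \ldots, k_N$ with $N$ chosen of order $m$.

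The main obstacle I expect is the combinatorial bookkeeping required to simultaneously secure (a), (b), and (c). Each right-multiplication by a cutoff from the chain introduces a new layer of error terms of the form $[\mathcal{D}, k_i]$ and $(1 - k_i)$, which must be compensated by further Neumann corrections of the form $S^j \mathcal{Q}_0 k_i$. One must verify that after $O(m)$ iterations the residual error has every term prefixed by at least one factor of $S$ (placing it in $op^{-m}_{0,0}$) while $\mathcal{Q}$ itself stays in $op^{-m}_{-m,0}$ and ends in $k_N$. The commutator structure $[\mathcal{D}, \mathcal{A}] \subseteq op^{m-1}_{0,m-1}$ established earlier is what makes this bookkeeping viable, and the simplification mentioned for $m=1$ reflects the fact that a single Neumann step then already suffices, since the commutator itself is already bounded.
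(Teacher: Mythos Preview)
Your Neumann-series mechanism, as described, cannot simultaneously secure conditions (b) and (c). The trouble is visible already in your own calculation: the construction $\mathcal{Q} = \mathcal{Q}_0 + S\mathcal{Q}_0 k_1$ does make $1 - \mathcal{Q}\mathcal{D}$ land in $op^{-m}_{0,0}$, but the leading term $\mathcal{Q}_0$ is not localized, so $j\mathcal{Q}_0 \neq j\mathcal{Q}_0 k$ for any $k$ and (c) fails. Your proposed fix --- right-multiplying the whole $\mathcal{Q}$ by $k_N$ --- forces $1 - \mathcal{Q} k_N \mathcal{D}$ to carry a summand $(1-k_N)$, which is of order $0$ and cannot be absorbed by any $S$-prefix; iterating only reproduces such a term at the next cutoff. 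More abstractly: if $\mathcal{Q}$ globally ends in a cutoff then $1 - \mathcal{Q}\mathcal{D}$ has a non-regularizing $(1-k_N)$; if it does not, then $j\mathcal{Q}$ has no reason to be localized. Your corrections $S^\ell \mathcal{Q}_0 k_i$ handle the commutator errors $[\mathcal{D}, k_i]$ but never this cutoff-gap.

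The paper resolves this tension by a different device: it replaces $q := \mathcal{Q}_0$ not by $q k_\ell$ but by $q_\ell := q + [q, j_{2\ell-1}]$. The point is that for any $j'$ with $j' j_{2\ell-1} = j'$ one has $j'(q - j_{2\ell-1}q) = 0$, hence $j' q_\ell = j' q j_{2\ell-1}$ is localized on the right \emph{without} $q_\ell$ itself ending in a cutoff. At the same time one checks $[q, a] \in op^{-m-1}_{-m,-1}$, so $1 - q_\ell \mathcal{D} = S - [q, j_{2\ell-1}]\mathcal{D} \in op^{-1}_{0,m-1}$ gains exactly one order globally. Telescoping, $\mathcal{Q} := \sum_{i=1}^m (1-q_1\mathcal{D})\cdots(1-q_{i-1}\mathcal{D}) q_i$ yields $1 - \mathcal{Q}\mathcal{D} = \prod_{i=1}^m (1 - q_i\mathcal{D}) \in op^{-m}_{0,0}$, while the cutoff chain propagates through each factor so that $j_0 \mathcal{Q} = j_0 \mathcal{Q} j_{2m-1}$. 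The commutator trick is the missing idea: it is precisely what lets a modified parametrix be localized when left-multiplied by a smaller cutoff while remaining a parametrix globally.
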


\begin{proof}

    Let \(q=\mathcal{D}(1+\mathcal{D}^2)^{-1}\in op^{-m}\), then \(1-q\mathcal{D}=(1+\mathcal{D}^2)^{-1} \in op^{-m}\). Using that for each \(a\in \mathcal{A}\)
    \begin{equation}
        \begin{split}
            [(1 + \mathcal{D}^2)^{-1},a] 
            =& -\mathcal{D}^2(1+\mathcal{D}^2)^{-1}a(1+\mathcal{D}^2)^{-1} \\ 
            & +(1+\mathcal{D}^2)^{-1}a \mathcal{D}^2(1+\mathcal{D}^2)^{-1} \\
            =& -\mathcal{D}(1+\mathcal{D}^2)^{-1}[\mathcal{D},a](1+\mathcal{D}^2)^{-1} \\ 
            & - (1+\mathcal{D}^2)^{-1} [\mathcal{D}, a]\mathcal{D}(1+\mathcal{D}^2)^{-1}
        \end{split}
    \end{equation}
    we obtain that 
    \begin{equation}
        [q, a] = (1+\mathcal{D}^2)^{-1}[\mathcal{D},a](1+\mathcal{D}^2)^{-1} - q [\mathcal{D}, a] q
    \end{equation}
    from which we can conclude that \([q,a]\in op^{-m-1}_{-m, -1}\). Furthermore, note that \(q + [q, a] \in op^{-m}_{-m, 0}\) and \(1-(q + [q, a]) \mathcal{D}\in op^{-1}_{0,m-1}\subseteq op^0_{0,m}\).

    Fix a \(j_0\in \mathcal{J}\) and inductivly for \(i=1,\dots, 2m-1\) let \(j_i\in \mathcal{J}\) be such that \(j_i=j_i j_{i+1}\) and \(j_i\mathcal{D}=j_i\mathcal{D} j_{i+1}\). Let \(q_i = q + [q, j_{2i-1}]\) for \(i=1, \dots, m\), then \(j_{2i} q_i = j_{2i} q_i j_{2i+1}\) and \(j_{2i} (1-q_i \mathcal{D}) = j_{2i} (1-q_i \mathcal{D}) j_{2i+2}\). We let 
    \begin{equation}
        \mathcal{Q} = \sum_{i=1}^m (1 - q_1 \mathcal{D})\dots (1-q_{i-1}\mathcal{D}) q_i 
    \end{equation}
    then \(\mathcal{Q}\in op^{-m}_{-m,0}\), \(1-\mathcal{Q} \mathcal{D}= (1 - q_1 \mathcal{D})\dots (1-q_m \mathcal{D})\in op^{-m}_{0,0}\) and \(j_0 \mathcal{Q} = j_0 \mathcal{Q} j_{2m-1}\).
\end{proof}

\begin{remark}
    In the case of differential operators on a manifold \(M\) and \(\mathcal{J}=C_c^\infty(\Omega)\), that we can find a parametrix that is local enough as \(\mathcal{Q}\) in \Cref{thm:localized_parametrix} means that given a compact \(K\subseteq \Omega\) we can find a paramterix with Schwartz kernel that is zero on \((M\setminus K')\times K\) for some compact \(K'\subseteq \Omega\).
\end{remark}

Using our notions of regularity with respect to \(\mathcal{D}\) can give a better description of \(\dom \mathcal{D}_\mathcal{J}^*\).

\begin{lemma}
    \label{thm:abstract_maximal_domain_description}
    Let \((\mathcal{J}\vartriangleleft \mathcal{A}, \mathcal{H}_\mathcal{J}, \mathcal{D}_\mathcal{J})\) be a localization of \((\mathcal{A},\mathcal{H},\mathcal{D})\) as in \Cref{def:localization_horst}, then 
    \begin{equation}
        \label{eq:abstract_maximal_domain_description}
        \dom \mathcal{D}_\mathcal{J}^* = \setcond{u\in \mathcal{H}_\mathcal{J}}{j\mathcal{D} u\in \mathcal{H} \text{ and } \norm{j \mathcal{D} u}\lesssim \norm{j}_J \text{ for all } j\in \mathcal{J}}
    \end{equation}
    and \(\mathcal{D}_\mathcal{J}^* u = \lim_n j_n \mathcal{D} u\) for any approximate unit. Here we see \(\mathcal{D}\) as an operator from \(\mathcal{H}\) to \(\mathcal{H}^{-m}=\left(\dom \mathcal{D}\right)^*\) and \(j\in \mathcal{J}\) as operators on \(\mathcal{H}^{-m}\).
\end{lemma}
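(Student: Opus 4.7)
The plan is to prove the two inclusions in \eqref{eq:abstract_maximal_domain_description} separately; both directions rest on self-adjointness of the ambient $\mathcal{D}$ and the duality pairing $\langle\cdot,\cdot\rangle_{\mathcal{H}^m,\mathcal{H}^{-m}}$ built from the scale $\{\mathcal{H}^s\}$. The locality condition does not need to be invoked beyond what is already built into the definition of $\mathcal{D}_\mathcal{J}$ in \Cref{def:localization_horst}.

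For the inclusion $\subseteq$, I begin with $u\in \dom \mathcal{D}_\mathcal{J}^*$ and put $v = \mathcal{D}_\mathcal{J}^* u\in \mathcal{H}_\mathcal{J}$. Since $\mathcal{J}\dom\mathcal{D}$ is by definition a core for $\mathcal{D}_\mathcal{J}$, the adjoint identity tested against $w=j\phi$ with $j\in \mathcal{J}$, $\phi\in \dom\mathcal{D}$ rearranges, after using self-adjointness of $\mathcal{D}\colon \mathcal{H}\to \mathcal{H}^{-m}$, to
\[\langle \phi, j^*\mathcal{D} u\rangle_{\mathcal{H}^m, \mathcal{H}^{-m}} = \langle \phi, j^* v\rangle_\mathcal{H} \quad\text{for all } \phi\in \dom\mathcal{D}=\mathcal{H}^m.\]
Hence the distribution $j^*\mathcal{D} u\in \mathcal{H}^{-m}$ is represented by $j^* v\in \mathcal{H}$, so $j\mathcal{D} u = jv\in \mathcal{H}$ with $\norm{j\mathcal{D} u}\leq \norm{j}_J\norm{v}$. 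The approximation formula then comes for free: for any approximate unit $\{j_n\}$ of $J$, $j_n\mathcal{D} u = j_n v\to v$ in $\mathcal{H}$ because $v\in \mathcal{H}_\mathcal{J}$ and an approximate unit acts as the identity on this closure.

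For the inclusion $\supseteq$, suppose $u$ satisfies the right-hand side of \eqref{eq:abstract_maximal_domain_description}. The linear map $\Phi\colon \mathcal{J}\to \mathcal{H}$, $\Phi(j)=j\mathcal{D} u$, extends by the assumed bound to a continuous map $\Phi\colon J\to \mathcal{H}$. Fixing an approximate unit $\{j_n\}$ in $J$, the norm-bounded sequence $\Phi(j_n)$ has a weak accumulation point $v\in \mathcal{H}$, and the algebraic relation $\rho(j)\Phi(j_n) = \Phi(jj_n)$ together with $jj_n\to j$ in $J$ forces $jv = j\mathcal{D} u$ for every $j\in J$ (read as an equality in $\mathcal{H}$). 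Such a $v$ is uniquely determined in $\mathcal{H}_\mathcal{J}=\overline{J\mathcal{H}}$ because the approximate unit acts as the identity there, and this uniqueness upgrades weak accumulation to the norm convergence $\Phi(j_n)=j_n v\to v$. On the core, for $w=k\phi\in \mathcal{J}\dom\mathcal{D}$ I then compute
\[\langle \mathcal{D}_\mathcal{J} w, u\rangle = \langle \mathcal{D}(k\phi), u\rangle = \langle \phi, k^*\mathcal{D} u\rangle_{\mathcal{H}^m,\mathcal{H}^{-m}} = \langle \phi, k^* v\rangle_\mathcal{H} = \langle w, v\rangle,\]
using self-adjointness of $\mathcal{D}$, the hypothesis $k^*\mathcal{D} u\in \mathcal{H}$, and the identity $k^* v = k^*\mathcal{D} u$ from the previous step. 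Density of $\mathcal{J}\dom\mathcal{D}$ in $\dom\mathcal{D}_\mathcal{J}$ and the estimate $\abs{\langle w,v\rangle}\leq \norm{w}\norm{v}$ conclude $u\in \dom\mathcal{D}_\mathcal{J}^*$ with $\mathcal{D}_\mathcal{J}^* u = v$.

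The point of care I anticipate is the coherence between two a priori different meanings of $j\mathcal{D} u$: the distributional one as the action of $\rho(j)\in op^0_{-m,m}$ on $\mathcal{D} u\in \mathcal{H}^{-m}$, and the $\mathcal{H}$-valued one delivered by the continuous extension $\Phi\colon J\to \mathcal{H}$. The two descriptions agree on $\mathcal{J}$ by construction and on $J$ by continuity, so the chain of duality-pairing equalities can be manipulated without ambiguity, but it is worth stating this identification cleanly at the outset of the proof.
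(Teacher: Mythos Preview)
Your proof is correct. The $\subseteq$ inclusion and the approximate-unit identity are argued essentially as in the paper (test against the core $\mathcal{J}\dom\mathcal{D}$, use self-adjointness of $\mathcal{D}\colon\mathcal{H}\to\mathcal{H}^{-m}$, and then observe $j_n\mathcal{D} u=j_n\mathcal{D}_\mathcal{J}^*u\to\mathcal{D}_\mathcal{J}^*u$).

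For the $\supseteq$ inclusion your route is genuinely different. The paper rephrases both sides as the two estimates
\[
|\langle u,\mathcal{D} jv\rangle|\lesssim\|jv\|\quad\text{and}\quad |\langle u,\mathcal{D} j^* v\rangle|\lesssim\|j\|_J\|v\|
\]
for all $j\in\mathcal{J}$, $v\in\dom\mathcal{D}$, and passes from the second to the first by the locality assumption $j=kj$: writing $jv=k(jv)$ turns the bound $\|k\|_J\|jv\|$ into the desired $\lesssim\|jv\|$. You instead first manufacture the candidate adjoint $v\in\mathcal{H}_\mathcal{J}$ via the bounded extension $\Phi\colon J\to\mathcal{H}$, a weak compactness argument on $\Phi(j_n)$, and the module identity $\rho(j)\Phi(k)=\Phi(jk)$; only then do you verify $\langle\mathcal{D}_\mathcal{J} w,u\rangle=\langle w,v\rangle$ on the core. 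This buys you the observation that locality is not needed again once $\mathcal{D}_\mathcal{J}$ has been defined, at the cost of a longer argument. The paper's one-line use of $j=kj$ is quicker and makes transparent exactly which structural hypothesis drives the reverse inclusion. One small point worth making explicit in your write-up: the weak accumulation point $v$ lies in $\mathcal{H}$ a priori, and you are silently replacing it by $P_{\mathcal{H}_\mathcal{J}}v$ (which still satisfies $jv=\Phi(j)$ since $\rho(j)$ annihilates $\mathcal{H}_\mathcal{J}^\perp$) before invoking $j_n v\to v$.
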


\begin{proof}

    Unraveling the definitions of the domains, what we want to show is that for \(u\in \mathcal{H}_\mathcal{J}\) the statement
    \begin{equation}
        \abs{\braket{u, \mathcal{D} j v}}\lesssim \norm{j v} \text{ for all } j\in \mathcal{J} \text{ and } v\in \dom \mathcal{D}
    \end{equation}
    is equivalent to the statement
    \begin{equation}
        \abs{\braket{u, \mathcal{D} j^* v}}\lesssim \norm{j}_J \norm{v} \text{ for all } j\in \mathcal{J} \text{ and } v\in \dom \mathcal{D}
    \end{equation}
    which is immediate the assumption made in \Cref{def:localization_horst} that for each \(j\in \mathcal{J}\) there is a \(k\in \mathcal{J}\) such that \(j=kj\).

    Let \(\set{j_n}\subseteq \mathcal{J}\) be a approximate unit and \(u\in \dom \mathcal{D}_\mathcal{J}^*\), then for any \(j\in \mathcal{J}\) and \(v\in \dom \mathcal{D}\)
    \begin{equation}
        \braket{\lim j_n \mathcal{D} u, j v} = \lim \braket{j_n \mathcal{D} u, j v} = \lim  \braket{u, \mathcal{D} j_n^* j v} = \lim  \braket{\mathcal{D}_\mathcal{J}^* u, j_n^* j v} = \braket{\mathcal{D}_\mathcal{J}^* u, j v}.
    \end{equation}
\end{proof}

\begin{proof}[Proof of \Cref{thm:interior_regularity}]
    Fix a \(j\in \mathcal{J}\) and using \Cref{thm:localized_parametrix}, let \(\mathcal{Q}\in op^{-m}_{-m, 0}\) and \(k\in\mathcal{J}\) be such that \(1 - \mathcal{Q}\mathcal{D} \in op^{-m}_{0,0}\) and \(j \mathcal{Q} = j \mathcal{Q} k\). For \(u\in \dom \mathcal{D}_\mathcal{J}^*\), we have that \(u\in \mathcal{H}\) and \(\mathcal{D} u\in \mathcal{H}^{-m}\). Also, by \Cref{eq:abstract_maximal_domain_description} we have that \(k\mathcal{D} u \in \mathcal{H}\) and therefore
    \begin{equation}
        ju=j (1-\mathcal{Q}\mathcal{D}) u + j \mathcal{Q}\mathcal{D} u = j (1-\mathcal{Q}\mathcal{D}) u + j \mathcal{Q} k \mathcal{D} u \in j \dom \mathcal{D} \subseteq \dom \mathcal{D}_\mathcal{J}.
    \end{equation}
\end{proof}

\begin{remark}
    \label{remark:firstorder_interior_regularity_simplificatoin}
    In the case \(m=1\) \Cref{thm:interior_regularity} is considerably easier to show since then \(j\cdot\dom \mathcal{D}_\mathcal{J}^* \subseteq \dom \mathcal{D}\) follows immediately since 
    \begin{equation}
        \abs{\braket{j u, \mathcal{D} v}} = \abs{\braket{u, \mathcal{D} j^* v} - \braket{u, [\mathcal{D}, j^*] v}} \lesssim \norm{\mathcal{D}_\mathcal{J}^* u}~\norm{j^*v} + \norm{u}~\norm{[\mathcal{D}, j^*] v}
    \end{equation}
    for \(u\in \dom \mathcal{D}_\mathcal{J}^*\) and \(v\in \dom \mathcal{D}\). Note that a condition like \(\mathcal{J}^2=\mathcal{J}\) is needed since \(\dom \mathcal{D}_\mathcal{J} \subseteq \dom \mathcal{D} \cap \mathcal{H}_\mathcal{J}\) is not necessarily an equality, which will be explored further later in \Cref{sec:smooth_approximation}.
\end{remark}

\subsection{Smooth approximation}
\label{sec:smooth_approximation}

Let \(\Omega\subseteq M\) be an open set in a smooth manifold \(M\) and let \(D\in \Diff(M;E, F)\). Although it is clear that \(C_c^\infty(\overline{\Omega}; E)\subseteq \dom D_{\max}\), whether the inclusion \(\overline{C_c^\infty(\overline{\Omega}; E)}^{\norm{\cdot}_D} \subseteq \dom D_{\max}\) is an equality or not depends on the regularity of \(\partial \Omega\). We will characterize this property both for an abstract localization of a spectral triple and for differential operators. If \(D\) is elliptic in the classical case, smooth approximation holds under very weak regularity assumptions of \(\partial \Omega\).

\begin{lemma}
    \label{thm:abstract_smooth_approximation}
    Let \((\mathcal{J}\vartriangleleft \mathcal{A}, \mathcal{H}_\mathcal{J}, \mathcal{D}_\mathcal{J})\) be a localization of \((\mathcal{A},\mathcal{H},\mathcal{D})\) as in \Cref{def:localization_horst}, then 
    \begin{equation}
        \dom \mathcal{D}_\mathcal{J}^* = \overline{P_{\mathcal{H}_\mathcal{J}} \dom \mathcal{D}}^{\norm{\cdot}_{\mathcal{D}_\mathcal{J}^*}}
    \end{equation}
    if and only if
    \begin{equation}
        \dom \mathcal{D}_\mathcal{J} = \dom \mathcal{D} \cap \mathcal{H}_\mathcal{J}
    \end{equation}
    where \(P_{\mathcal{H}_\mathcal{J}}\in \mathbb{B}(\mathcal{H})\) is the orthogonal projection on \(\mathcal{H}_\mathcal{J}\).
\end{lemma}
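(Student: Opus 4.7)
The plan is to reformulate both conditions as equivalent forms of an adjoint identity, exploiting that \(\mathcal{D}_\mathcal{J}\) is closed so \(\mathcal{D}_\mathcal{J}^{**} = \mathcal{D}_\mathcal{J}\). Writing \(P := P_{\mathcal{H}_\mathcal{J}}\), I would first verify that \(P \dom \mathcal{D} \subseteq \dom \mathcal{D}_\mathcal{J}^*\) with \(\mathcal{D}_\mathcal{J}^*(Pv) = P\mathcal{D} v\) for \(v \in \dom \mathcal{D}\), via the pairing
\[
\langle Pv,\, \mathcal{D}(jw)\rangle = \langle v, \mathcal{D}(jw)\rangle = \langle \mathcal{D} v, jw\rangle
\]
for \(j \in \mathcal{J}\) and \(w \in \dom\mathcal{D}\), where the first equality uses \(\mathcal{D}(jw) \in \mathcal{H}_\mathcal{J}\) from the locality hypothesis, combined with \Cref{thm:abstract_maximal_domain_description}. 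Evaluating this formula at \(v \in \dom\mathcal{D}\cap\mathcal{H}_\mathcal{J}^\perp\) (where \(Pv = 0\)) forces \(P\mathcal{D} v = 0\), producing the \emph{dual locality} \(\mathcal{D}(\dom\mathcal{D}\cap\mathcal{H}_\mathcal{J}^\perp) \subseteq \mathcal{H}_\mathcal{J}^\perp\).

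Next I would introduce the closable operator \(T_0 := \mathcal{D}_\mathcal{J}^*|_{P\dom\mathcal{D}}\) on \(\mathcal{H}_\mathcal{J}\) and note that Condition 1 is precisely \(\overline{T_0} = \mathcal{D}_\mathcal{J}^*\); taking adjoints (using \(\overline{T_0}^* = T_0^*\) and \(\mathcal{D}_\mathcal{J}^{**} = \mathcal{D}_\mathcal{J}\)) converts this into the equivalent statement \(T_0^* = \mathcal{D}_\mathcal{J}\). A Riesz representation argument on the linear functional \(v \mapsto \langle u, \mathcal{D} v\rangle\) on \(\dom\mathcal{D}\) (bounded by a multiple of \(\norm{Pv}\) for \(u \in \dom T_0^*\), hence vanishing on \(\dom\mathcal{D}\cap\mathcal{H}_\mathcal{J}^\perp\) and factoring through the dense subspace \(P\dom\mathcal{D} \subseteq \mathcal{H}_\mathcal{J}\)) then yields
\[
\dom T_0^* = \{u \in \dom\mathcal{D}\cap\mathcal{H}_\mathcal{J} : \mathcal{D} u \in \mathcal{H}_\mathcal{J}\}
\]
with \(T_0^* u = \mathcal{D} u\).

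The proof is completed by establishing \(\mathcal{D}(\dom\mathcal{D}\cap\mathcal{H}_\mathcal{J}) \subseteq \mathcal{H}_\mathcal{J}\), so that the above set equals \(\dom\mathcal{D}\cap\mathcal{H}_\mathcal{J}\) and Condition 1 becomes \(\dom\mathcal{D}_\mathcal{J} = \dom\mathcal{D}\cap\mathcal{H}_\mathcal{J}\), i.e.\ Condition 2. Self-adjointness combined with the dual locality gives \(\langle \mathcal{D} u, v\rangle = \langle u, \mathcal{D} v\rangle = 0\) for \(u \in \dom\mathcal{D}\cap\mathcal{H}_\mathcal{J}\) and \(v \in \dom\mathcal{D}\cap\mathcal{H}_\mathcal{J}^\perp\), so \(\mathcal{D} u\) annihilates the subspace \(\dom\mathcal{D}\cap\mathcal{H}_\mathcal{J}^\perp\). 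The main obstacle will be upgrading this orthogonality to \(\mathcal{D} u \in \mathcal{H}_\mathcal{J}\), which requires density of \(\dom\mathcal{D}\cap\mathcal{H}_\mathcal{J}^\perp\) in \(\mathcal{H}_\mathcal{J}^\perp\); in the abstract setting this can be extracted from elements of the form \(v - j_n v \in \dom\mathcal{D}\) for \(v \in \dom\mathcal{D}\) and \(\{j_n\} \subseteq \mathcal{J}\) an approximate unit, whose \((1-P)\)-parts equal \((1-P)v\) independently of \(n\) while their \(P\)-parts \(Pv - j_n v\) vanish in norm, combined with the density of \((1-P)\dom\mathcal{D}\) in \(\mathcal{H}_\mathcal{J}^\perp\).
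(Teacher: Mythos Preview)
Your strategy is essentially the paper's: the paper introduces the intermediate operator \(\mathcal{D}_e\) with \(\dom\mathcal{D}_e=\dom\mathcal{D}\cap\mathcal{H}_\mathcal{J}\), verifies via the same pairing computation that \(\dom\mathcal{D}_e^*=\overline{P\dom\mathcal{D}}\), and then reads off the equivalence from \(\mathcal{D}_\mathcal{J}\subseteq\mathcal{D}_e\). Your operator \(T_0^*\) is exactly this \(\mathcal{D}_e\), and your route through \(\overline{T_0}=\mathcal{D}_\mathcal{J}^*\Leftrightarrow T_0^*=\mathcal{D}_\mathcal{J}\) is the adjoint of the paper's step. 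You are also right that one needs \(\mathcal{D}(\dom\mathcal{D}\cap\mathcal{H}_\mathcal{J})\subseteq\mathcal{H}_\mathcal{J}\) for this to work; the paper uses it silently when it declares \(\mathcal{D}_e\) to be ``the closed extension of \(\mathcal{D}_\mathcal{J}\) with \(\dom\mathcal{D}_e=\dom\mathcal{D}\cap\mathcal{H}_\mathcal{J}\)'' as an operator on \(\mathcal{H}_\mathcal{J}\). Your computation \(\dom T_0^*=\{u\in\dom\mathcal{D}\cap\mathcal{H}_\mathcal{J}:\mathcal{D} u\in\mathcal{H}_\mathcal{J}\}\) makes this dependence explicit, and in particular shows cleanly that Condition~2 \(\Rightarrow\) Condition~1 without it, since \(\dom\mathcal{D}_\mathcal{J}\subseteq\dom T_0^*\subseteq\dom\mathcal{D}\cap\mathcal{H}_\mathcal{J}\).

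The gap is in your density argument for \(\mathcal{D}(\dom\mathcal{D}\cap\mathcal{H}_\mathcal{J})\subseteq\mathcal{H}_\mathcal{J}\). You want \(\dom\mathcal{D}\cap\mathcal{H}_\mathcal{J}^\perp\) dense in \(\mathcal{H}_\mathcal{J}^\perp\), but the witnesses \(v-j_nv\) you produce lie in \(\dom\mathcal{D}\), not in \(\dom\mathcal{D}\cap\mathcal{H}_\mathcal{J}^\perp\): their \(P\)-part \(Pv-j_nv\) is nonzero for each \(n\), it only tends to zero. Convergence \(v-j_nv\to(1-P)v\) in \(\mathcal{H}\) does not place \((1-P)v\) in \(\dom\mathcal{D}\) (indeed \(P\) typically does not preserve \(\dom\mathcal{D}\); think of multiplication by \(\chi_\Omega\) on \(H^m(M)\)), nor does it let you pass the orthogonality \(\langle\mathcal{D} u,\,\cdot\,\rangle=0\) through the limit, since you have only established that orthogonality on \(\dom\mathcal{D}\cap\mathcal{H}_\mathcal{J}^\perp\). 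In the differential-operator examples the density is immediate from \(C_c^\infty(M\setminus\overline{\Omega};E)\subseteq\dom\mathcal{D}\cap\mathcal{H}_\mathcal{J}^\perp\), but in the abstract localization setting your argument as written does not close; the paper simply takes the inclusion \(\mathcal{D}(\dom\mathcal{D}\cap\mathcal{H}_\mathcal{J})\subseteq\mathcal{H}_\mathcal{J}\) for granted.
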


\begin{proof}
    Note that for \(g\in \dom \mathcal{D}\) and \(f\in \dom \mathcal{D}_\mathcal{J}\) 
    \begin{equation}
        \label{eq:projecting_domain}
        \braket{P_{\mathcal{H}_\mathcal{J}} g, \mathcal{D}_\mathcal{J} f} = \braket{g, \mathcal{D}_\mathcal{J} f} = \braket{\mathcal{D} g, f} = \braket{P_{\mathcal{H}_\mathcal{J}} \mathcal{D} g, f}.
    \end{equation}
    which shows that \(P_{\mathcal{H}_\mathcal{J}} \dom \mathcal{D} \subseteq \dom \mathcal{D}_\mathcal{J}^*\) and \(\mathcal{D}_\mathcal{J}^* P_{\mathcal{H}_\mathcal{J}} = P_{\mathcal{H}_\mathcal{J}} \mathcal{D}\) on \(\dom \mathcal{D}\). Let \(\mathcal{D}_e\) be the closed extension of \(\mathcal{D}_\mathcal{J}\) with \(\dom \mathcal{D}_e = \dom \mathcal{D} \cap \mathcal{H}_\mathcal{J}\), then exchanging \(\mathcal{D}_\mathcal{J}\) for \(\mathcal{D}_e\) in \eqref{eq:projecting_domain} shows that \(\dom \mathcal{D}_e^* = \overline{P_{\mathcal{H}_\mathcal{J}} \dom \mathcal{D}}^{\norm{\cdot}_{\mathcal{D}_e^*}}\) which is equivalent to the statement we want to show.
\end{proof}

\begin{remark}
    \label{remark:differens_to_FGMR_localization}
    \Cref{thm:abstract_smooth_approximation} highlights the difference between our approach localization in \Cref{def:localization_horst} and the one found in \cite[Example 3.23]{Forsyth_Goffeng_Mesland_Rennie_2019} for the case when \(m=1\), where they essentially work with \(\mathcal{D}_e\) in the proof on \Cref{thm:abstract_smooth_approximation}.
\end{remark}

The analogous statement for differential operators is the following and is proven similarly as \Cref{thm:abstract_smooth_approximation}.

\begin{lemma}
    \label{thm:smooth_approximation}
    Let \(\Omega\subseteq M\) be an open set and \(D\in \Diff(M;E,F)\) be elliptic of order \(m>0\) in any of the calculi presented in \Cref{sec:host_examples}, then
    \begin{equation}
        \label{eq:smooth_approximation}
        \dom D_{\max} = \overline{C_c^\infty(\overline{\Omega};E)}^{\norm{\cdot}_D}
    \end{equation}
    if and only if
    \begin{equation}
        \label{eq:support_on_domain_condition}
        \dom D^\dagger_{\min} = L^2(\Omega;F)\cap \dom \overline{D^\dagger}
    \end{equation}
    where we see \(D^\dagger\) as an unbounded operator from \(L^2(M;F)\) to \(L^2(M;E)\).
\end{lemma}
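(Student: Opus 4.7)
The plan is to mirror \Cref{thm:abstract_smooth_approximation} by introducing an intermediate closed extension \(D^\dagger_e\) of \(D^\dagger_{\min}\), defined by
\[
\dom D^\dagger_e \coloneqq \setcond{f \in L^2(\Omega;F)}{\iota_0 f \in \dom \overline{D^\dagger}}
\]
with \(\iota_0\) denoting extension by zero to \(M\), acting by \(D^\dagger_e f = \overline{D^\dagger}(\iota_0 f)|_\Omega\). Locality of \(D^\dagger\) makes \(\overline{D^\dagger}(\iota_0 f)\) essentially supported in \(\overline{\Omega}\), so the restriction lies in \(L^2(\Omega;E)\) and closedness of \(\overline{D^\dagger}\) transfers to \(D^\dagger_e\). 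Since both \(D^\dagger_{\min}\) and \(D^\dagger_e\) act as \(D^\dagger\) distributionally, \eqref{eq:support_on_domain_condition} is exactly \(D^\dagger_{\min} = D^\dagger_e\); taking adjoints and using \eqref{eq:max_is_adjoint_of_min_of_formal_adjoint} rephrases this as \(D_{\max} = (D^\dagger_e)^*\), and since both \((D^\dagger_e)^*\) and \(D_{\max}\) also act as \(D\) distributionally, the theorem reduces to the single identity
\[
\dom (D^\dagger_e)^* = \overline{C_c^\infty(\overline{\Omega};E)}^{\norm{\cdot}_D}.
\]

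For the inclusion \(\supseteq\), the first step is a Green-type calculation analogous to \eqref{eq:projecting_domain}: for \(g \in \dom \overline{D}\) and \(f \in \dom D^\dagger_e\), routing the inner product through \(M\) via zero extension gives \(\braket{P_\Omega g, D^\dagger_e f}_{L^2(\Omega;E)} = \braket{P_\Omega \overline{D} g, f}_{L^2(\Omega;F)}\), showing \(P_\Omega g \in \dom (D^\dagger_e)^*\) with \((D^\dagger_e)^* P_\Omega g = P_\Omega \overline{D} g\), and that \(P_\Omega\) is continuous in graph norms. Ellipticity of \(D\) in the calculus on the boundaryless \(M\) forces \(\overline{D}\) to be the unique closed realization with \(C_c^\infty(M;E)\) as a graph-norm core, and \(P_\Omega C_c^\infty(M;E) = C_c^\infty(\overline{\Omega};E)\) by definition of the latter, giving the inclusion.

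The main obstacle is the reverse inclusion \(\subseteq\), which I plan to handle by Hahn-Banach on the Hilbert space \(\dom (D^\dagger_e)^*\) with its graph norm. If some \(u \in \dom (D^\dagger_e)^*\) lies outside the closure, Riesz provides \(v_0 \in L^2(\Omega;E)\) and \(w_0 \in L^2(\Omega;F)\) so that the functional \(u' \mapsto \braket{u',v_0} + \braket{(D^\dagger_e)^* u', w_0}\) annihilates \(C_c^\infty(\overline{\Omega};E)\) but is non-zero at \(u\). Testing this against arbitrary \(\tilde u' \in C_c^\infty(M;E)\) via restriction, then transferring both pairings to \(M\) through the zero extensions \(\iota_0 v_0, \iota_0 w_0\) and integrating by parts distributionally, the vanishing relation becomes
\[
\iota_0 v_0 + D^\dagger(\iota_0 w_0) = 0 \quad\text{as distributions on } M.
\]
Ellipticity of \(D^\dagger\) on \(M\) ensures \(\dom \overline{D^\dagger} = \setcond{h \in L^2(M;F)}{D^\dagger h \in L^2(M;E)}\), so \(\iota_0 w_0 \in \dom \overline{D^\dagger}\) and hence \(w_0 \in \dom D^\dagger_e\) with \(D^\dagger_e w_0 = -v_0\). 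The defining adjoint relation for \((D^\dagger_e)^*\) then collapses the functional at \(u\) to \(\braket{u, v_0} + \braket{u, -v_0} = 0\), contradicting non-triviality.

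The delicate point throughout is the bookkeeping for zero extensions and distributional support: one must verify both that \(\overline{D^\dagger}(\iota_0 f)\) is supported in \(\overline{\Omega}\) (so \(D^\dagger_e\) is well-defined as an operator into \(L^2(\Omega;E)\)) and that the distributional equation on \(M\), combined with ellipticity and the absence of boundary, promotes \(\iota_0 w_0\) to \(\dom \overline{D^\dagger}\); once these are in place, the iff chain from the first paragraph closes the argument.
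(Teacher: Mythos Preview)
Your proposal is correct and follows essentially the same route the paper indicates: the paper's own proof is the single sentence ``proven similarly as \Cref{thm:abstract_smooth_approximation}'', and your intermediate extension \(D^\dagger_e\) is precisely the differential-operator analogue of the \(\mathcal{D}_e\) used there, with \(P_\Omega\) playing the role of \(P_{\mathcal{H}_\mathcal{J}}\). Your Hahn--Banach argument for the inclusion \(\dom (D^\dagger_e)^* \subseteq \overline{C_c^\infty(\overline{\Omega};E)}^{\norm{\cdot}_D}\) unpacks exactly what the abstract proof leaves implicit when it says that exchanging \(\mathcal{D}_\mathcal{J}\) for \(\mathcal{D}_e\) in \eqref{eq:projecting_domain} yields the equality of domains; the key step where ellipticity on \(M\) promotes \(\iota_0 w_0\) into \(\dom \overline{D^\dagger}\) corresponds to the use of self-adjointness of \(\mathcal{D}\) (i.e.\ \(\dom\mathcal{D}^*=\dom\mathcal{D}\)) in the abstract setting.
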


The property \eqref{eq:smooth_approximation} in \Cref{thm:smooth_approximation} is called \emph{complete} in \cite[Definition 1.1]{Ballmann_Bar_2012}.

\begin{remark}
    \label{remark:smooth_boundary_has_smooth_approximation}
    The property \eqref{eq:support_on_domain_condition} in \Cref{thm:smooth_approximation} holds in particular for classical Sobolev spaces on compact smooth manifolds with boundary \cite[Chapter 4, Proposition 5.1]{Taylor_2011}, which will be used in \Cref{sec:even_boundary_map_geometric_setting}. It also holds for the much weaker condition that \(\partial \Omega\) is locally the graph of a \(C_0\)-function \cite[Theorem 3.29]{McLean_2000}.
\end{remark}

\section{\texorpdfstring{\(K\)}{K}-homology of higher-order relative spectral triples}
\label{sec:horst}

\subsection{Relative \texorpdfstring{\(K\)}{ K}-homology}

In this subsection, we recall relative \(K\)-homology following \cite{Higson_Roe_2000}.

\begin{definition}[Relative \(K\)-cycles and relative \(K\)-homology]
    Let \(A\) be a \ensuremath{C^*}-algebra and \(J\vartriangleleft A\) be a closed ideal.

    An even relative \(K\)-cycle is a triple \((\rho, \mathcal{H}, F)\) where \(\mathcal{H}\) is a \(\mathbb{Z}/2\)-graded separable Hilbert space, \(\rho\colon A\to\mathbb{B}(\mathcal{H})\) is an even \(*\)-representation and \(F\in \mathbb{B}(\mathcal{H})\) an odd operator satisfying
    \begin{equation}
        \label{eq:relative_Kcycle_conditions}
        \rho(j)(F-F^*),\rho(j)(1-F^2),[F,\rho(a)]\in \mathbb{K}(\mathcal{H})
    \end{equation}
    for any \(j\in J\) and \(a\in A\). We say that a relative \(K\)-cycle is degenerate if all three operators in \eqref{eq:relative_Kcycle_conditions} are zero for any \(j\in J\) and \(a\in A\).

    Let addition of even relative \(K\)-cycles be direct sum. Unitary equivalence of even relative \(K\)-cycles \((\rho,\mathcal{H},F)\sim_U(\rho',\mathcal{H}',F')\) is defined from an even unitary \(U\colon \mathcal{H}\to \mathcal{H}'\) such that \(\rho'(a)=U\rho(a) U^*\) for any \(a\in A\) and \(F'=UFU^*\). To avoid set theoretical issues, we will consider the set of unitary equivalence classes of even relative \(K\)-cycles. An operator homotopy \((\rho,\mathcal{H},F_t)\) is an even relative \(K\)-cycle for each \(t\in [0,1]\) such that \(t\mapsto F_t\) is a norm continuous path.

    Let \(K^0(J\vartriangleleft A)\) be the set of equivalence classes of even relative \(K\)-cycles where \(x\) and \(y\) are equivalent if there are degenerate even relative \(K\)-cycles \(x',y'\) and an operator homotopy \((\rho,\mathcal{H},F_t)\) of even relative \(K\)-cycle such that
    \begin{equation}
        x\oplus x' \sim_U (\rho,\mathcal{H},F_0) \text{ and } (\rho,\mathcal{H},F_1)\sim_U y\oplus y'.
    \end{equation}
    The addition of even relative \(K\)-cycles makes \(K^0(J\vartriangleleft A)\) into a group that we call the even relative \(K\)-homology of \(J\vartriangleleft A\) \cite[Section 8.5]{Higson_Roe_2000}.

    An odd relative \(K\)-cycle is defined in the same way except with \(\mathcal{H}\) trivially graded and \(F\) not required to be odd. The same operations and equivalence relations give rise to the odd relative \(K\)-homology group \(K^1(J\vartriangleleft A)\) of \(J\vartriangleleft A\).

    We will write \(K^*(J\vartriangleleft A)\) when the parity of the relative \(K\)-homology is clear from the context. If \(J=A\), we obtain \(K\)-cycles and \(K\)-homology groups that are not relative. We denote these groups as \(K^*(A)\) which we call \(K\)-homology.
\end{definition}

The advantage of using a relative \(K\)-cycles is that it is feasible to construct the boundary map \(\partial\colon K^*(J\vartriangleleft A)\to K^{*-1}(A/J)\) at the level of cycles. Note that the forgetful map from \(K^*(J\vartriangleleft A)\) to \(K^*(J)\) is an isomorphism by excision \cite[p. 216]{Higson_Roe_2000}. The boundary map will be discussed more later in \Cref{sec:boundary_map}.

A useful way to find another representative of a \(K\)-homology class is by \(J\)-compact perturbations. We say that \(F'\in \mathbb{B}(\mathcal{H})\) is a \(J\)-compact perturbation of \(F\in \mathbb{B}(\mathcal{H})\) if \(\rho(j)(F-F')\in \mathbb{K}(\mathcal{H})\) for all \(j\in J\). If \((\rho, \mathcal{H}, F)\) is a \(K\)-cycle for \(J\) and \(F'\) is a \(J\)-compact perturbation of \(F\), then \((\rho, \mathcal{H}, F')\) is a \(K\)-cycle for \(J\) and \([(\rho, \mathcal{H}, F')]=[(\rho, \mathcal{H}, F)]\) in \(K^*(J)\) using the operator homotopy \((\rho, \mathcal{H}, tF + (1-t)F')\).

\subsection{Odd \texorpdfstring{\(K\)}{ K}-homology and Busby invariants}

In this subsection, we recall Busby invariants and their relation to odd \(K\)-homology. It is based on \cite[Chapter 1 and 2]{Higson_Roe_2000} and \cite[Chapter 3]{Jensen_Thomsen_1991}.

\begin{definition}
    Let \(A\) be a separable \ensuremath{C^*}-algebra.

    A Busby invariant of \(A\) is a \(*\)-homomorphism \(\beta\colon A\to \mathcal{Q}(\mathcal{H})\) for some separable Hilbert space \(\mathcal{H}\), where \(\mathcal{Q}(\mathcal{H})\coloneqq \mathbb{B}(\mathcal{H})/\mathbb{K}(\mathcal{H})\) is the Calkin algebra. Denote the quotient map as \(\pi\colon \mathbb{B}(\mathcal{H})\to\mathcal{Q}(\mathcal{H})\) and say that a Busby invariant \(\beta\) is degenerate if there is a representation \(\rho\) of \(A\) such that \(\beta=\pi\circ \rho\). Note that each Busby invariant corresponds to a short exact sequence \(\begin{tikzcd}[column sep=tiny]0 \ar[r] & \mathbb{K} \ar[r] & E \ar[r] & A \ar[r] & 0\end{tikzcd}\) where degenerate Busby invariants give split exact sequences.
    
    Let addition of Busby invariants be direct sum. Unitary equivalence of Busby invariants \(\beta\sim_U\eta\) is defined from a unitary \(U\in\mathbb{B}(\mathcal{H},\mathcal{H}')\) such that \(\pi(U)\beta(\cdot)=\eta(\cdot)\pi(U)\). To avoid set theoretical issues, we will consider the set of unitary equivalence classes of Busby invariants.

    Let \(\Ext(A)\) be the set of equivalence classes of Busby invariants where \(\beta\) and \(\eta\) are equivalent if there are degenerate Busby invariants \(\beta',\eta'\) such that
    \begin{equation}
        \beta\oplus \beta' \sim_U \eta\oplus \eta'.
    \end{equation}
    The addition of Busby invariants makes \(\Ext(A)\) into a semigroup where zero is the equivalence class of degenerate Busby invariants. Let \(\Ext^{-1}(A)\) denote the group of invertible elements in \(\Ext(A)\), then one can show that \(\Ext^{-1}(A)\cong K^1(A)\) \cite[p. 126]{Higson_Roe_2000}. By the Choi-Effros theorem, \(\Ext(A)=\Ext^{-1}(A)\) if \(A\) is nuclear.
\end{definition}

\begin{lemma}
    \label{thm:similar_busby_invariants}
    Let \(\beta_i\colon A\to\mathcal{Q}(\mathcal{H}_i)\) be Busby invariants for \(i=1,2\). If there is a Fredholm operator \(T\colon \mathcal{H}_1\to\mathcal{H}_2\) such that \(\pi(T)\beta_1(a)=\beta_2(a)\pi(T)\) for any \(a\in A\), where \(\pi\) is the quotient map, then \([\beta_1]=[\beta_2]\) in \(\Ext(A)\).
\end{lemma}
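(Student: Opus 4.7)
The plan is to show $[\beta_1]=[\beta_2]$ by producing a unitary equivalence between $\beta_1\oplus 0_{\mathcal{H}_2}$ and $0_{\mathcal{H}_1}\oplus \beta_2$ — the zero Busby invariants are trivially degenerate (take $\rho=0$) — via a Halmos-style dilation of the partial isometry extracted from the polar decomposition of $T$.

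First I would reduce from $T$ to its Calkin-unitary part: write $T=V|T|$ with $V$ a partial isometry. Since $T$ is Fredholm, $|T|$ is Fredholm and positive, so $\pi(|T|)$ is invertible in $\mathcal{Q}(\mathcal{H}_1)$, while $1-V^*V$ and $1-VV^*$ are finite rank projections onto $\Ker T$ and $\Ker T^*$. The key observation is that $\beta_1(a)$ commutes with $\pi(T^*T)$: taking adjoints of the hypothesis gives $\pi(T^*)\beta_2(a)=\beta_1(a)\pi(T^*)$, and combining the two relations yields $\pi(T^*T)\beta_1(a)=\beta_1(a)\pi(T^*T)$. By continuous functional calculus applied to the positive invertible element $\pi(T^*T)\in\mathcal{Q}(\mathcal{H}_1)$, $\beta_1(a)$ also commutes with $\pi(|T|)$ and hence with $\pi(|T|)^{-1}$; writing $\pi(V)=\pi(T)\pi(|T|)^{-1}$ then gives the intertwining $\pi(V)\beta_1(a)=\beta_2(a)\pi(V)$.

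Next I would build the self-adjoint unitary
\[
    U=\begin{pmatrix} 1-V^*V & V^* \\ V & 1-VV^* \end{pmatrix}\in\mathbb{B}(\mathcal{H}_1\oplus\mathcal{H}_2),
\]
checking $U^2=I$ directly from the partial-isometry identity $VV^*V=V$. In the Calkin algebra the finite-rank diagonal corners vanish, so $\pi(U)$ reduces to the antidiagonal with entries $\pi(V^*)$ and $\pi(V)$. Setting $\sigma_1=\beta_1\oplus 0$ and $\sigma_2=0\oplus \beta_2$ on $\mathcal{H}_1\oplus\mathcal{H}_2$, a block-matrix calculation using the intertwining of $\pi(V)$ from the previous step gives $\pi(U)\sigma_1(a)=\sigma_2(a)\pi(U)$ for every $a\in A$. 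This is exactly the unitary equivalence $\sigma_1\sim_U \sigma_2$, and since both $0_{\mathcal{H}_1}$ and $0_{\mathcal{H}_2}$ are degenerate, $[\beta_1]=[\beta_2]$ in $\Ext(A)$.

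The main obstacle is the first step: extracting a Calkin-unitary intertwiner from the mere Fredholm intertwiner $T$. The Halmos dilation and the subsequent block verifications are elementary once $\pi(V)$ is known to intertwine the two Busby invariants. The crucial algebraic input is that Fredholmness of $T$ simultaneously provides invertibility of $\pi(|T|)$ and, via commutation of $\beta_1(a)$ with the positive part $\pi(|T|)$, forces $\pi(V)$ to inherit the intertwining property from $\pi(T)$.
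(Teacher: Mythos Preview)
Your proof is correct: the reduction from $T$ to the partial isometry $V$ via the commutation of $\beta_1(a)$ with $\pi(|T|)$, and the Halmos dilation producing a genuine unitary on $\mathcal{H}_1\oplus\mathcal{H}_2$ that implements the equivalence $\beta_1\oplus 0\sim_U 0\oplus\beta_2$, are both standard and carried out accurately. The paper does not give its own proof of this lemma, simply citing \cite[Lemma~4.21]{Forsyth_Goffeng_Mesland_Rennie_2019}, so there is nothing to compare against; your argument is a clean self-contained version of the folklore proof.
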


A proof can be found in \cite[Lemma 4.21]{Forsyth_Goffeng_Mesland_Rennie_2019}. The Busby invariants encountered later in the text will come from projections as in the following example.

\begin{example}
    \label{ex:busby_from_projection}
    If \(P\in \mathbb{B}(\mathcal{H})\) is a projection and \(\phi\colon A\to \mathbb{B}(\mathcal{H})\) is a \(*\)-linear map satisfying \([\phi(a),P]\in \mathbb{K}(\mathcal{H})\) and \(P(\phi(ab) - \phi(a)\phi(b))P\in \mathbb{K}(\mathcal{H})\) for all \(a,b\in A\), then \(P\) and \(\phi\) together defines a Busby invariant as
    \begin{equation}
        a \mapsto P \phi(a) P + \mathbb{K}(P \mathcal{H}),
    \end{equation}
    and we denote the class in \(\Ext(A)\) simply as \([P]\) and leave \(\phi\) implicit.
\end{example}

\subsection{Relative \texorpdfstring{\(K\)}{ K}-cycles constructed from higher-order relative spectral triples}

To construct a relative \(K\)-cycle from a higher-order relative spectral triple, we note that for any closed densely defined operator \(\mathcal{D}\) on \(\mathcal{H}\), \((1+\mathcal{D}^*\mathcal{D})^{-\frac{1}{2}}\) is a unitary operator between \(\mathcal{H}\) and \(\dom \mathcal{D}\) with the graph norm. Hence, \(\mathcal{D}(1+\mathcal{D}^*\mathcal{D})^{-\frac{1}{2}}\) is a bounded operator on \(\mathcal{H}\), that we will call the \emph{bounded transform} of \(\mathcal{D}\). The bounded transform retains a lot of information of \(\mathcal{D}\), for example, it has the same kernel and range. 

\begin{theorem}
\label{thm:horst_defines_kcykle}
A higher-order relative spectral triple \((\mathcal{J}\vartriangleleft\mathcal{A},\mathcal{H},\mathcal{D})\) of \(J\vartriangleleft A\) defines a relative \(K\)-cycle \((\rho, \mathcal{H}, \mathcal{D}(1+\mathcal{D}^*\mathcal{D})^{-\frac{1}{2}})\) which we will denote \([\mathcal{D}]\in K^*(J\vartriangleleft A)\) when no confusion arises.
\end{theorem}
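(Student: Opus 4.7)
The plan is to directly verify the three compactness conditions defining a relative \(K\)-cycle for the bounded transform \(F = \mathcal{D}(1+\mathcal{D}^*\mathcal{D})^{-1/2}\). I would first work with \(j\in\mathcal{J}\) and \(a\in\mathcal{A}\) and then extend to \(J\) and \(A\) by density (the subalgebras are norm-dense, \(\rho\) is continuous, and \(\mathbb{K}(\mathcal{H})\) is norm-closed). The starting toolkit is the set of identities \(F^* = \mathcal{D}^*(1+\mathcal{D}\mathcal{D}^*)^{-1/2}\), \(1 - F^*F = (1+\mathcal{D}^*\mathcal{D})^{-1}\), and the intertwining \(\mathcal{D}(1+\mathcal{D}^*\mathcal{D})^{-s} = (1+\mathcal{D}\mathcal{D}^*)^{-s}\mathcal{D}\) on \(\dom \mathcal{D}\) for \(s>0\), all obtained from the functional calculus of the positive self-adjoint operators \(\mathcal{D}^*\mathcal{D}\) and \(\mathcal{D}\mathcal{D}^*\). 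Since \(1 - F^2 = (1-F^*F) + (F^*-F)F\), the condition \(\rho(j)(1-F^2)\in\mathbb{K}(\mathcal{H})\) reduces to \(\rho(j)(F-F^*)\in\mathbb{K}(\mathcal{H})\) together with the easy factorisation \(\rho(j)(1-F^*F) = [\rho(j)(1+\mathcal{D}^*\mathcal{D})^{-1/2}](1+\mathcal{D}^*\mathcal{D})^{-1/2}\), which is compact by condition \ref{item:condition_j_compact_resolvent}.

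For \(\rho(j)(F-F^*)\) the key input is condition \ref{item:condition_j_maps_to_domain}: by \Cref{remark:domain_inclusion_is_continuous} the map \(\rho(j)\colon\dom \mathcal{D}^*\to\dom \mathcal{D}\) is continuous in the graph norms, and this lets me define a bounded extension \(C_j\) of \([\mathcal{D},\rho(j)]\) from \(\dom \mathcal{D}\) to \(\dom \mathcal{D}^*\) via \(C_j v := \mathcal{D}\rho(j)v - \rho(j)\mathcal{D}^* v\). Applying this with \(v=(1+\mathcal{D}\mathcal{D}^*)^{-1/2}u\) gives the identity \(\rho(j)F^* = \mathcal{D}\rho(j)(1+\mathcal{D}\mathcal{D}^*)^{-1/2} - C_j(1+\mathcal{D}\mathcal{D}^*)^{-1/2}\), which combined with \(\rho(j)F = \mathcal{D}\rho(j)(1+\mathcal{D}^*\mathcal{D})^{-1/2} - [\mathcal{D},\rho(j)](1+\mathcal{D}^*\mathcal{D})^{-1/2}\) and compactness of the last term by condition \ref{item:condition_D_commutator_a_compact} leaves, modulo \(\mathbb{K}(\mathcal{H})\), the task of controlling \(\mathcal{D}\rho(j)\bigl[(1+\mathcal{D}^*\mathcal{D})^{-1/2}-(1+\mathcal{D}\mathcal{D}^*)^{-1/2}\bigr]\) together with \(C_j(1+\mathcal{D}\mathcal{D}^*)^{-1/2}\). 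Both of these I would expand via the integral representation \(x^{-1/2} = \pi^{-1}\int_0^\infty\lambda^{-1/2}(x+\lambda)^{-1}\,d\lambda\) and the resolvent intertwining, reducing each to an absolutely convergent integral whose integrands carry a compact factor supplied by condition \ref{item:condition_j_compact_resolvent} or \ref{item:condition_D_commutator_a_compact}.

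The principal obstacle is the commutator condition \([F,\rho(a)]\in\mathbb{K}(\mathcal{H})\) for \(a\in\mathcal{A}\), since condition \ref{item:condition_D_commutator_a_bounded} supplies only a \emph{bounded} extension of \([\mathcal{D},\rho(a)](1+\mathcal{D}^*\mathcal{D})^{-\frac{1}{2}+\frac{1}{2m}}\), which is \(\frac{1}{2m}\) short of the decay required by a naive expansion of \([\mathcal{D}(1+\mathcal{D}^*\mathcal{D})^{-1/2},\rho(a)]\). To recover the missing regularity, my plan is to insert the integral representation
\begin{equation*}
    (1+\mathcal{D}^*\mathcal{D})^{-1/2} = \frac{1}{\pi}\int_0^\infty \lambda^{-1/2}(1+\lambda+\mathcal{D}^*\mathcal{D})^{-1}\,d\lambda
\end{equation*}
into \([F,\rho(a)]\), expand the inner commutator using \([(1+\lambda+\mathcal{D}^*\mathcal{D})^{-1},\rho(a)] = -(1+\lambda+\mathcal{D}^*\mathcal{D})^{-1}\bigl(\mathcal{D}^*[\mathcal{D},\rho(a)]+[\mathcal{D}^*,\rho(a)]\mathcal{D}\bigr)(1+\lambda+\mathcal{D}^*\mathcal{D})^{-1}\), and then shuffle factors of \(\mathcal{D}\) and \(\mathcal{D}^*\) past the resolvents using the intertwining relation. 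Each resulting integrand factorises as a uniformly bounded product times \([\mathcal{D},\rho(a)](1+\mathcal{D}^*\mathcal{D})^{-\frac{1}{2}+\frac{1}{2m}}\) (or its adjoint analogue), with a remaining factor of size \((1+\lambda)^{-1-\frac{1}{2m}}\) coming from the leftover resolvent powers. This gives absolute convergence of the integral, and combined with pointwise compactness of the integrands supplied by condition \ref{item:condition_D_commutator_a_compact}, the integral is a norm limit of compact operators and hence compact. This strategy mirrors the arguments developed for \(\varepsilon\)-unbounded \(KK\)-cycles in \cite[Appendix A]{Goffeng_Mesland_2015}.
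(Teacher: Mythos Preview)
Your overall architecture---density reduction to $\mathcal{J}$ and $\mathcal{A}$, the decomposition $1-F^2=(1-F^*F)+(F^*-F)F$, and the Baaj--Julg integral formula---matches the paper's. Your route to $j(F-F^*)\in\mathbb{K}(\mathcal{H})$ via the extended commutator $C_j$ on $\dom\mathcal{D}^*$ differs from the paper (which proves the more general $j(F-F_e)\in\mathbb{K}(\mathcal{H})$ for any $\mathcal{D}\subseteq\mathcal{D}_e\subseteq\mathcal{D}^*$, reused later for independence of extension), but is a reasonable alternative, even if the compactness of $C_j(1+\mathcal{D}\mathcal{D}^*)^{-1/2}$ and of $\mathcal{D}\rho(j)\bigl[(1+\mathcal{D}^*\mathcal{D})^{-1/2}-(1+\mathcal{D}\mathcal{D}^*)^{-1/2}\bigr]$ would still need to be spelled out.

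There is, however, a genuine gap in the $[F,\rho(a)]$ argument. Your Leibniz expansion
\[
[(1+\lambda+\mathcal{D}^*\mathcal{D})^{-1},\rho(a)]=-(1+\lambda+\mathcal{D}^*\mathcal{D})^{-1}\bigl(\mathcal{D}^*[\mathcal{D},\rho(a)]+[\mathcal{D}^*,\rho(a)]\mathcal{D}\bigr)(1+\lambda+\mathcal{D}^*\mathcal{D})^{-1}
\]
presupposes that $[\mathcal{D}^*,\rho(a)]$ makes sense on $\ran\bigl(\mathcal{D}(1+\lambda+\mathcal{D}^*\mathcal{D})^{-1}\bigr)\subseteq\dom\mathcal{D}^*$, i.e.\ that $\rho(a)$ preserves $\dom\mathcal{D}^*$. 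For general $a\in\mathcal{A}$ only condition~\ref{item:condition_a_conserves_domain} is available ($\rho(a)\dom\mathcal{D}\subseteq\dom\mathcal{D}$); the inclusion into $\dom\mathcal{D}^*$ is assumed only for $j\in\mathcal{J}$ (condition~\ref{item:condition_j_maps_to_domain}) and fails already for $\mathcal{D}=\Delta_{\min}$ on the unit disc, as shown explicitly in \Cref{remark:a_domDadj_notin_domD}. This is precisely the obstacle that distinguishes the higher-order case from $m=1$.

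The paper avoids $[\mathcal{D}^*,a]$ altogether: starting from the bounded-operator identity
\[
[(\mu+\mathcal{D}^*\mathcal{D})^{-1},a]=-\mathcal{D}^*\mathcal{D}(\mu+\mathcal{D}^*\mathcal{D})^{-1}a(\mu+\mathcal{D}^*\mathcal{D})^{-1}+(\mu+\mathcal{D}^*\mathcal{D})^{-1}a\,\mathcal{D}^*\mathcal{D}(\mu+\mathcal{D}^*\mathcal{D})^{-1},
\]
each term of which is a composition of bounded maps requiring only $a\dom\mathcal{D}\subseteq\dom\mathcal{D}$, one uses the intertwining $\mathcal{D}^*\mathcal{D}(\mu+\mathcal{D}^*\mathcal{D})^{-1}=\mathcal{D}^*(\mu+\mathcal{D}\mathcal{D}^*)^{-1}\mathcal{D}$ and rewrites the second summand as an adjoint, arriving at
\[
[(\mu+\mathcal{D}^*\mathcal{D})^{-1},a]=-\mathcal{D}^*(\mu+\mathcal{D}\mathcal{D}^*)^{-1}[\mathcal{D},a](\mu+\mathcal{D}^*\mathcal{D})^{-1}+\bigl(\mathcal{D}^*(\mu+\mathcal{D}\mathcal{D}^*)^{-1}[\mathcal{D},a^*](\mu+\mathcal{D}^*\mathcal{D})^{-1}\bigr)^{*}.
\]
This expresses everything through $[\mathcal{D},a]$ and $[\mathcal{D},a^*]$, both of which live on $\dom\mathcal{D}$, and your subsequent estimates (compactness via condition~\ref{item:condition_D_commutator_a_compact}, integrability via condition~\ref{item:condition_D_commutator_a_bounded} and the extra $(\mu)^{-1/(2m)}$) then go through. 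With this substitution your plan is correct; as written, the central identity is only formal.
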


The remaining part of this subsection will be devoted to the proof of \Cref{thm:horst_defines_kcykle} which is one of the main results of this paper. However, the proof is not needed for understanding the remaining parts of the paper. The proof is broken into three parts. In \Cref{thm:F_is_almost_selfadjoint} it is proven that \(j(F-F^*)\in \mathbb{K}(\mathcal{H})\) for any \(j\in J\), which then used in \Cref{thm:F_is_almost_idempotent} to prove that \(j(F^2-1)\in \mathbb{K}(\mathcal{H})\). Lastly, it is proven in \Cref{thm:commutator_F_is_compact} that \([F,a]\in \mathbb{K}(\mathcal{H})\) for any \(a\in A\). \Cref{thm:F_is_almost_selfadjoint,,thm:commutator_F_is_compact} are stated more generally for later use.

In the upcoming proofs, the following facts relating to the bounded transform will be used repeatedly without reference.

\begin{lemma}
\label{thm:properties_of_bounded_transform}
Let \(T\) be a closed densely defined operator on a Hilbert space \(\mathcal{H}\) and equip \(\dom T\) with the graph norm of \(T\) making it a Hilbert space. Then for any \(\mu\geq 1\) the following holds:
\begin{enumerate}
    \item \((\mu+T^*T)^{-\frac{1}{2}}\colon \mathcal{H}\to \dom T\) is bounded and \(T(\mu+T^*T)^{-\frac{1}{2}}\in\mathbb{B}(\mathcal{H})\);
    \item \((\mu+T^*T)^{-1} \cdot \dom V = \dom VT^*T\) for any unbounded operator \(V\) on \(\mathcal{H}\);
    \item \(T(\mu+T^*T)^{-\frac{1}{2}}=(\mu+TT^*)^{-\frac{1}{2}}T\) on \(\dom T\) and \(\left(T(\mu+T^*T)^{-\frac{1}{2}}\right)^*=T^*(\mu+TT^*)^{-\frac{1}{2}}\) as elements in \(\mathbb{B}(\mathcal{H})\);
    \item \(\norm{(\mu+T^*T)^{-r}}_{\mathbb{B}(\mathcal{H})}\leq \mu^{-r}\) for any \(r>0\).
\end{enumerate}
\end{lemma}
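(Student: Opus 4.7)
The plan is to reduce all four items to the Borel functional calculus applied to the non-negative self-adjoint operator $T^*T$ (which is self-adjoint by von Neumann's theorem) together with the polar decomposition $T=U|T|$, where $|T|\coloneqq (T^*T)^{1/2}$, $\dom|T|=\dom T$, and $U$ is a partial isometry intertwining $|T|$ with $|T^*|\coloneqq (TT^*)^{1/2}$ via $U|T|=|T^*|U$ on $\dom T$.

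Item (4) is immediate from the spectral theorem: $\sigma(T^*T)\subseteq [0,\infty)$ and the bounded Borel function $x\mapsto (\mu+x)^{-r}$ is majorised by $\mu^{-r}$ there, giving the norm bound. For item (1), the same calculus shows that $(\mu+T^*T)^{-1/2}$ maps $\mathcal{H}$ bijectively onto $\dom(\mu+T^*T)^{1/2}=\dom|T|=\dom T$, with continuity into the graph norm following from the estimate $\norm{Tf}^2=\norm{|T|f}^2\leq \norm{(\mu+T^*T)^{1/2}f}^2$. Boundedness of $T(\mu+T^*T)^{-1/2}$ is then obtained by writing it as $U\cdot h(|T|)$ with $h(x)=x(\mu+x^2)^{-1/2}\in[0,1)$, which is manifestly bounded by functional calculus.

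Item (3) is the main obstacle and uses the intertwining $Uf(|T|)=f(|T^*|)U$ valid for any bounded Borel $f$, a direct consequence of the polar decomposition. Applied to $f(x)=x(\mu+x^2)^{-1/2}$ this yields
\begin{equation*}
T(\mu+T^*T)^{-1/2}=U|T|(\mu+|T|^2)^{-1/2}=(\mu+|T^*|^2)^{-1/2}U|T|=(\mu+TT^*)^{-1/2}T
\end{equation*}
on $\dom T$. The adjoint identity then follows by taking Hilbert-space adjoints of the bounded operator $T(\mu+T^*T)^{-1/2}$, using self-adjointness of $(\mu+T^*T)^{-1/2}$, and then invoking the same intertwining with $T$ replaced by $T^*$, so that the resulting a priori unbounded expression $(\mu+T^*T)^{-1/2}T^*$ coincides with the bounded operator $T^*(\mu+TT^*)^{-1/2}$.

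For item (2), the bounded inverse $(\mu+T^*T)^{-1}\colon \mathcal{H}\to \dom T^*T$ is a bijection, and for $g=(\mu+T^*T)^{-1}f\in\dom T^*T$ the identity $T^*Tg=f-\mu g$ together with the linearity of the subspace $\dom V$ makes the claimed bijection between $\dom V$ and $\dom VT^*T$ a routine unravelling of the definition of the domain of a composition.
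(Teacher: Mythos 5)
Your treatment of items (1), (3), and (4) is correct and uses exactly the approach the paper indicates: the Borel functional calculus for the non-negative self-adjoint operator $T^*T$, together with the polar decomposition $T=U|T|$ and the intertwining relation $Uf(|T|)=f(|T^*|)U$ for bounded Borel $f$. The computations you give (boundedness of $(\mu+T^*T)^{-1/2}$ into the graph norm via $\norm{|T|f}^2+\mu\norm{f}^2=\norm{(\mu+T^*T)^{1/2}f}^2$, boundedness of $T(\mu+T^*T)^{-1/2}=Uh(|T|)$ with $h(x)=x(\mu+x^2)^{-1/2}\leq 1$, the intertwining argument for (3), and the spectral bound for (4)) are all sound, and the paper's proof is a one-line gesture at the same ideas.

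Item (2), however, is where your ``routine unravelling'' does not close. Writing $g=(\mu+T^*T)^{-1}f$ with $f\in\dom V$, the identity $T^*Tg=f-\mu g$ gives, by linearity of $\dom V$, the \emph{equivalence} $T^*Tg\in\dom V \Leftrightarrow g\in\dom V$, but it does not give the unconditional conclusion $T^*Tg\in\dom V$ that membership in $\dom(VT^*T)$ requires, since $g=(\mu+T^*T)^{-1}f$ need not lie in $\dom V$ for an arbitrary unbounded $V$. Indeed, what the bijectivity of $(\mu+T^*T)^{-1}\colon\mathcal{H}\to\dom T^*T$ actually yields is
\begin{equation*}
(\mu+T^*T)^{-1}\,\dom V = \dom\bigl(V(\mu+T^*T)\bigr),
\end{equation*}
and this coincides with $\dom(VT^*T)$ only under an extra hypothesis such as $\dom T^*T\subseteq\dom V$ (or more generally $(\mu+T^*T)^{-1}\dom V\subseteq\dom V$). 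Taken at face value, the stated identity fails already for $T=0$ and $V$ a proper unbounded operator: the left side is $\mu^{-1}\dom V=\dom V$, while $\dom(V\cdot 0)=\mathcal{H}$. So either this item of the lemma is meant with $\dom\bigl(V(\mu+T^*T)\bigr)$ on the right, or it carries the tacit assumption $\dom T^*T\subseteq\dom V$ (which does hold for the bounded $V$ appearing in the paper's applications). Your write-up should make that hypothesis explicit rather than absorbing it into a claim of routineness.
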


\begin{proof}
    The first two statements can be shown directly, and the last two can be shown using functional calculus.
\end{proof}

The main tool in the proofs of \Cref{thm:F_is_almost_selfadjoint,,thm:commutator_F_is_compact} will be the integral formula for fractional powers 
\begin{equation}
    (1+T^*T)^{-\frac{1}{2}} = \frac{1}{\pi} \int_1^\infty (\mu-1)^{-\frac{1}{2}}(\mu+T^*T)^{-1}~d\mu
\end{equation}
originating from \cite{Baaj_Julg_1983}, which is shown by functional calculus and residue calculus. How it is used is somewhat delicate. For \(x\in \dom T\), 
\begin{equation}
    \left(\frac{1}{\pi} \int_1^\infty (\mu-1)^{-\frac{1}{2}}(\mu+TT^*)^{-1}~d\mu\right) Tx =
    \frac{1}{\pi} \int_1^\infty (\mu-1)^{-\frac{1}{2}}(\mu+TT^*)^{-1}Tx~d\mu
\end{equation}
and therefore
\begin{equation}
    \label{eq:integral_formula}
    T(1+T^*T)^{-\frac{1}{2}} = \frac{1}{\pi} \int_1^\infty (\mu-1)^{-\frac{1}{2}}T(\mu+T^*T)^{-1}~d\mu
\end{equation}
holds point-wise on \(\dom T\). The integral in \eqref{eq:integral_formula} does not necessarily converge in operator norm and only makes sense point-wise, but using such point-wise expressions on a dense subspace and arriving at a norm convergent integral would show equality as bounded operators.

\begin{lemma}
\label{thm:F_is_almost_selfadjoint}
Let \((\mathcal{J}\vartriangleleft\mathcal{A},\mathcal{H},\mathcal{D})\) be a higher-order relative spectral triple for \(J\vartriangleleft A\) of order \(m\) and \(\mathcal{D}_e\) any extension of \(\mathcal{D}\) satisfying \(\mathcal{D}\subseteq \mathcal{D}_e\subseteq \mathcal{D}^*\), then
\begin{equation}
    j(F - F_e)\in \mathbb{K}(\mathcal{H}) \quad\text{for all } j\in J
\end{equation}
where \(F = \mathcal{D}(1+\mathcal{D}^*\mathcal{D})^{-\frac{1}{2}}\) and \(F_e = \mathcal{D}_e(1+\mathcal{D}_e^*\mathcal{D}_e)^{-\frac{1}{2}}\).

In particular, if \(\mathcal{D}_e=\mathcal{D}^*\) we obtain that \(j(F - F^*)\in \mathbb{K}(\mathcal{H})\) for all \(j\in J\).
\end{lemma}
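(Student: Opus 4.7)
The plan is to reduce $j(F-F_e)$ to a manageable form using the symmetry of $\mathcal{D}$, apply the Baaj--Julg integral representation, and then establish both compactness and operator-norm decay of the integrand. First I would exploit the inclusions $\mathcal{D}\subseteq\mathcal{D}_e\subseteq\mathcal{D}^*$. Writing $G:=(1+\mathcal{D}^*\mathcal{D})^{-1/2}$ and $G_e:=(1+\mathcal{D}_e^*\mathcal{D}_e)^{-1/2}$, the range of $G$ lies in $\dom\mathcal{D}$ and that of $G_e$ in $\dom\mathcal{D}_e$; on these subspaces $\mathcal{D}$ and $\mathcal{D}_e$ both agree with $\mathcal{D}^*$. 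Hence $F=\mathcal{D}^*G$ and $F_e=\mathcal{D}^*G_e$ as elements of $\mathbb{B}(\mathcal{H})$, giving the clean identity
\[
j(F-F_e)=j\mathcal{D}^*(G-G_e).
\]

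Next, I would represent $G-G_e$ by means of the Baaj--Julg integral formula,
\[
G-G_e=\frac{1}{\pi}\int_1^\infty (\mu-1)^{-1/2}(R_\mu-R_\mu^e)\,d\mu,
\]
with $R_\mu:=(\mu+\mathcal{D}^*\mathcal{D})^{-1}$ and $R_\mu^e:=(\mu+\mathcal{D}_e^*\mathcal{D}_e)^{-1}$, so that pointwise on $\mathcal{H}$
\[
j(F-F_e)=\frac{1}{\pi}\int_1^\infty (\mu-1)^{-1/2}\,j\mathcal{D}^*(R_\mu-R_\mu^e)\,d\mu.
\]
It then suffices to show that for each $\mu\geq 1$ the integrand $j\mathcal{D}^*(R_\mu-R_\mu^e)$ is compact and its operator norm decays like $O(\mu^{-1/2-\epsilon})$ for some $\epsilon>0$, so that the integral converges in norm to a compact limit.

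For each fixed $\mu$ the key identity is
\[
j\mathcal{D}^*(R_\mu-R_\mu^e)=\mathcal{D}\,j(R_\mu-R_\mu^e)-[\mathcal{D},j]^\sim(R_\mu-R_\mu^e),
\]
where the extension $[\mathcal{D},j]^\sim y:=\mathcal{D}(jy)-j\mathcal{D}^*y$ of $[\mathcal{D},j]$ from $\dom\mathcal{D}$ to $\dom\mathcal{D}^*$ is well-defined thanks to condition~\ref{item:condition_j_maps_to_domain} of \Cref{def:higher_order_relative_spectral_triple}. Compactness of each summand follows from conditions~\ref{item:condition_j_compact_resolvent} and~\ref{item:condition_D_commutator_a_compact} via functional calculus, yielding $jR_\mu,\,[\mathcal{D},j]R_\mu\in\mathbb{K}(\mathcal{H})$, together with analogous statements for $R_\mu^e$ obtained using condition~\ref{item:condition_j_maps_to_domain} to compare $jG_e$ with $jG$. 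For the norm bound, condition~\ref{item:condition_D_commutator_a_bounded} combined with complex interpolation controls $[\mathcal{D},j]^\sim$ by an interpolated fractional power of $(1+\mathcal{D}^*\mathcal{D})$, while the bootstrap relation
\[
R_\mu-R_\mu^e=-\mu^{-1}(\mathcal{D}^*)^2(R_\mu-R_\mu^e),
\]
obtained by subtracting the two resolvent equations and observing that $\mathcal{D}^*\mathcal{D}$ and $\mathcal{D}_e^*\mathcal{D}_e$ both agree with $(\mathcal{D}^*)^2$ on their respective domains, upgrades the elementary $O(\mu^{-1/2})$-decay of the resolvents into the improved decay required for integrability.

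The hard part will be closing this bootstrap in the general higher-order regime $m>1$: there $[\mathcal{D},j]$ is bounded only on an interpolation scale rather than on $\mathcal{H}$, and the unbounded factor $(\mathcal{D}^*)^2$ appearing in the bootstrap must be matched carefully to the domain shifts provided by condition~\ref{item:condition_j_maps_to_domain}, which is precisely the ingredient that makes a higher-order \emph{relative} spectral triple genuinely well-behaved.
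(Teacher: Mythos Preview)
Your opening reduction $j(F-F_e)=j\mathcal{D}^*(G-G_e)$ is fine, and the integral representation is the right tool. The genuine gap is in the decay estimate: your ``bootstrap relation''
\[
R_\mu-R_\mu^e=-\mu^{-1}(\mathcal{D}^*)^2(R_\mu-R_\mu^e)
\]
is correct but vacuous as an improvement device. Since $(\mathcal{D}^*)^2R_\mu=\mathcal{D}^*\mathcal{D} R_\mu=1-\mu R_\mu$ and likewise for $R_\mu^e$, the identity simply rewrites $-\mu(R_\mu-R_\mu^e)$ as itself; substituting it into either $j\mathcal{D}^*(R_\mu-R_\mu^e)$ or $\mathcal{D} j(R_\mu-R_\mu^e)$ returns exactly the expression you started with. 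Nor can you iterate it: $(\mathcal{D}^*)^2(R_\mu-R_\mu^e)$ has range only in $\mathcal{H}$, not in $\dom\mathcal{D}^*$, so composing with a further $\mathcal{D}^*$ is undefined. Consequently your split $\mathcal{D} j(R_\mu-R_\mu^e)-[\mathcal{D},j]^\sim(R_\mu-R_\mu^e)$ never acquires decay better than $\mu^{-1/2}$ on the first summand, and the integral $\int_1^\infty(\mu-1)^{-1/2}\mu^{-1/2}\,d\mu$ diverges. Your final paragraph already concedes that ``closing this bootstrap'' is the hard part; in fact it cannot be closed along these lines.

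The paper's argument is structurally different. It passes to the adjoint $(F_e^*-F^*)j$ and, through a chain of resolvent identities (inserting $(\mu+\mathcal{D}\mathcal{D}^*)(\mu+\mathcal{D}\mathcal{D}^*)^{-1}$ and $(\mu+\mathcal{D}_e^*\mathcal{D}_e)(\mu+\mathcal{D}_e^*\mathcal{D}_e)^{-1}$, then subtracting a term that vanishes because $j\dom\mathcal{D}^*\subseteq\dom\mathcal{D}$), arrives at an integrand of the form
\[
-\bigl(\mathcal{D}_e^*(\mu+\mathcal{D}_e\mathcal{D}_e^*)^{-1}-\mathcal{D}^*(\mu+\mathcal{D}\mathcal{D}^*)^{-1}\bigr)\,[\mathcal{D},j]\,\mathcal{D}^*(\mu+\mathcal{D}\mathcal{D}^*)^{-1}.
\]
The point is that $[\mathcal{D},j]$ now appears as an explicit \emph{factor} sandwiched between resolvent-type operators. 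Condition~\ref{item:condition_D_commutator_a_compact} then gives compactness directly, and condition~\ref{item:condition_D_commutator_a_bounded} yields the extra $\mu^{-1/(2m)}$ via $[\mathcal{D},j](\mu+\mathcal{D}^*\mathcal{D})^{-1/2}=[\mathcal{D},j](\mu+\mathcal{D}^*\mathcal{D})^{-1/2+1/(2m)}(\mu+\mathcal{D}^*\mathcal{D})^{-1/(2m)}$. Producing the commutator as a factor---rather than as one summand in a decomposition whose other summand carries the full difficulty---is the missing idea.
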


The following proof is based on \cite[Lemma 3.1]{Hilsum_2010}, where the case \(m=1\) was considered.

\begin{proof}
By density, it suffices to consider \(j\in \mathcal{J}\). We will show that \((F_e^*-F^*)j\in \mathbb{K}(\mathcal{H})\), which is equivalent. Using the integral formula \eqref{eq:integral_formula},
\begin{equation}
    (F_e^*-F^*)j = \frac{1}{\pi} \int_1^\infty (\mu-1)^{-\frac{1}{2}} (\mathcal{D}_e^*(\mu + \mathcal{D}_e\mathcal{D}_e^*)^{-1} - \mathcal{D}^*(\mu+\mathcal{D}\mathcal{D}^*)^{-1})j ~d\mu
\end{equation}
point-wise on \(\dom \mathcal{D}\). If we show that the integral is norm convergent and the integrand is compact, we are done.

Let 
\begin{equation}
    R_\mu \coloneqq  (\mathcal{D}_e^*(\mu + \mathcal{D}_e\mathcal{D}_e^*)^{-1} - \mathcal{D}^*(\mu+\mathcal{D}\mathcal{D}^*)^{-1})j,
\end{equation}
then
\begin{equation}
\begin{split}
\label{eq:F_almost_selfadjoint_first_rewrite}
    R_\mu
    =& \mathcal{D}_e^*(\mu + \mathcal{D}_e\mathcal{D}_e^*)^{-1}(\mu+\mathcal{D}\mathcal{D}^*)(\mu+\mathcal{D}\mathcal{D}^*)^{-1}j \\
    &- (\mu + \mathcal{D}_e^*\mathcal{D}_e)(\mu + \mathcal{D}_e^*\mathcal{D}_e)^{-1}\mathcal{D}^*(\mu+\mathcal{D}\mathcal{D}^*)^{-1}j \\
    =& \mu \mathcal{D}_e^*(\mu + \mathcal{D}_e\mathcal{D}_e^*)^{-1}(\mu+\mathcal{D}\mathcal{D}^*)^{-1}j \\
    &- \mu (\mu + \mathcal{D}_e^*\mathcal{D}_e)^{-1}\mathcal{D}^*(\mu+\mathcal{D}\mathcal{D}^*)^{-1}j
\end{split}
\end{equation}
using that \((\mu + \mathcal{D}_e\mathcal{D}_e^*)^{-1}\mathcal{D}=\mathcal{D}_e(\mu + \mathcal{D}_e^*\mathcal{D}_e)^{-1}\) on \(\dom \mathcal{D}=\ran \mathcal{D}^*(\mu+\mathcal{D}\mathcal{D}^*)^{-1}\). Similarly, \((\mu + \mathcal{D}_e^*\mathcal{D}_e)^{-1}\mathcal{D}^*=\mathcal{D}_e^*(\mu + \mathcal{D}_e\mathcal{D}_e^*)^{-1}\) on \(j \dom \mathcal{D}^* \subseteq \dom \mathcal{D}\). Hence,
\begin{equation}
\begin{split}
    0 =& \mu \mathcal{D}_e^*(\mu + \mathcal{D}_e\mathcal{D}_e^*)^{-1}j(1+\mathcal{D}\mathcal{D}^*)^{-1} \\
    &- \mu(\mu + \mathcal{D}_e^*\mathcal{D}_e)^{-1}\mathcal{D}^*j(1+\mathcal{D}\mathcal{D}^*)^{-1}
\end{split}
\end{equation} 
which we subtract from \(R_\mu\) to obtain
\begin{equation}
\begin{split}
    R_\mu 
    =& \mu \mathcal{D}_e^*(\mu+\mathcal{D}_e\mathcal{D}_e^*)^{-1}[(\mu+\mathcal{D}\mathcal{D}^*)^{-1},j] \\
    &- \mu (\mu+\mathcal{D}_e^*\mathcal{D}_e)^{-1}\mathcal{D}^*[(\mu+\mathcal{D}\mathcal{D}^*)^{-1},j].
\end{split}
\end{equation}
If we rewrite
\begin{equation} 
\label{eq:commutator_inverse_and_operator} 
\begin{split}
    [(\mu+\mathcal{D}\mathcal{D}^*)^{-1},j] 
    =& -\mathcal{D}\mathcal{D}^*(\mu+\mathcal{D}\mathcal{D}^*)^{-1}j(\mu+\mathcal{D}\mathcal{D}^*)^{-1} \\ 
    &+(\mu+\mathcal{D}\mathcal{D}^*)^{-1}j \mathcal{D}\mathcal{D}^*(\mu+\mathcal{D}\mathcal{D}^*)^{-1} \\
    =& -\mathcal{D}(\mu+\mathcal{D}^*\mathcal{D})^{-1}[\mathcal{D}^*,j](\mu+\mathcal{D}\mathcal{D}^*)^{-1} \\
    & - (\mu+\mathcal{D}\mathcal{D}^*)^{-1}[\mathcal{D},j]\mathcal{D}^*(\mu+\mathcal{D}\mathcal{D}^*)^{-1}
\end{split}
\end{equation}
and gather the terms of \(R_\mu\) as
\begin{equation}
    R_\mu = T_1 [\mathcal{D}^*,j](\mu+\mathcal{D}\mathcal{D}^*)^{-1} + T_2 [\mathcal{D},j]\mathcal{D}^*(\mu+\mathcal{D}\mathcal{D}^*)^{-1},
\end{equation}
we see that
\begin{equation}
\begin{split}
    T_1 
    =& -\mu \mathcal{D}_e^*(\mu+\mathcal{D}_e\mathcal{D}_e^*)^{-1}\mathcal{D}(\mu+\mathcal{D}^*\mathcal{D})^{-1} \\
    &+ \mu(\mu+\mathcal{D}_e^*\mathcal{D}_e)^{-1}\mathcal{D}^*\mathcal{D}(\mu+\mathcal{D}^*\mathcal{D})^{-1} = 0
\end{split}
\end{equation}
by again using that \((\mu + \mathcal{D}_e\mathcal{D}_e^*)^{-1}\mathcal{D}=\mathcal{D}_e(\mu + \mathcal{D}_e^*\mathcal{D}_e)^{-1}\) on \(\dom \mathcal{D}\), and  
\begin{equation}
\begin{split}
    T_2 
    =& -\mu \mathcal{D}_e^*(\mu+\mathcal{D}_e\mathcal{D}_e^*)^{-1}(\mu+\mathcal{D}\mathcal{D}^*)^{-1} \\
    &+ \mu(\mu+\mathcal{D}_e^*\mathcal{D}_e)^{-1}\mathcal{D}^*(\mu+\mathcal{D}\mathcal{D}^*)^{-1} \\
    =& -(\mathcal{D}_e^*(\mu + \mathcal{D}_e\mathcal{D}_e^*)^{-1} - \mathcal{D}^*(\mu+\mathcal{D}\mathcal{D}^*)^{-1})
\end{split}
\end{equation}
by noting the similarity to \eqref{eq:F_almost_selfadjoint_first_rewrite} and reversing that computation. We arrive at
\begin{equation}
    R_\mu = -(\mathcal{D}_e^*(\mu + \mathcal{D}_e\mathcal{D}_e^*)^{-1} - \mathcal{D}^*(\mu+\mathcal{D}\mathcal{D}^*)^{-1}) [\mathcal{D},j]\mathcal{D}^*(\mu+\mathcal{D}\mathcal{D}^*)^{-1}
\end{equation}
which is compact by condition \ref{item:condition_D_commutator_a_compact} in \Cref{def:higher_order_relative_spectral_triple} since
\begin{equation}
    [\mathcal{D},j]\mathcal{D}^*(\mu+\mathcal{D}\mathcal{D}^*)^{-1} = [\mathcal{D},j](\mu+\mathcal{D}^*\mathcal{D})^{-\frac{1}{2}} \mathcal{D}^*(\mu+\mathcal{D}\mathcal{D}^*)^{-\frac{1}{2}}.
\end{equation}
Furthermore, from
\begin{equation}
\label{eq:commutator_j_and_Dadj_stuff_expansion}
    [\mathcal{D},j](\mu+\mathcal{D}^*\mathcal{D})^{-\frac{1}{2}}=[\mathcal{D},j](\mu+\mathcal{D}^*\mathcal{D})^{-\frac{1}{2}+\frac{1}{2m}} (\mu+\mathcal{D}^*\mathcal{D})^{-\frac{1}{2m}}
\end{equation}
we obtain the norm estimate \(\norm{ R_\mu } \lesssim \mu^{-\frac{1}{2}-\frac{1}{2m}}\), and we are done.
\end{proof}

\begin{lemma}
\label{thm:F_is_almost_idempotent}
Let \((\mathcal{J}\vartriangleleft\mathcal{A},\mathcal{H},\mathcal{D})\) be a higher-order relative spectral triple for \(J\vartriangleleft A\), then \(j(1-F^2)\in \mathbb{K}(\mathcal{H})\) for all \(j\in J\), where \(F=\mathcal{D}(1+\mathcal{D}^*\mathcal{D})^{-\frac{1}{2}}\).
\end{lemma}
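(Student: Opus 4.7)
The plan is to reduce $j(1-F^2)$ to operators we have already shown to be compact, by splitting symmetrically into a ``self-adjoint part'' handled by the resolvent, and an ``asymmetry part'' handled by \Cref{thm:F_is_almost_selfadjoint}. Write
\begin{equation}
    1 - F^2 = (1 - F^*F) + (F^* - F)F.
\end{equation}
This reduces the problem to showing that $j(1-F^*F)\in\mathbb{K}(\mathcal{H})$ and $j(F^*-F)F\in\mathbb{K}(\mathcal{H})$ for any $j\in J$.

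First I would compute $F^*F$ explicitly. From \Cref{thm:properties_of_bounded_transform} we have $F^* = \mathcal{D}^*(1+\mathcal{D}\mathcal{D}^*)^{-\frac{1}{2}}$ and the intertwining $(1+\mathcal{D}\mathcal{D}^*)^{-\frac{1}{2}}\mathcal{D} = \mathcal{D}(1+\mathcal{D}^*\mathcal{D})^{-\frac{1}{2}}$ on $\dom\mathcal{D}$. Since $(1+\mathcal{D}^*\mathcal{D})^{-\frac{1}{2}}$ maps $\mathcal{H}$ into $\dom \mathcal{D}$, we may compose and obtain
\begin{equation}
    F^*F = \mathcal{D}^*\mathcal{D}(1+\mathcal{D}^*\mathcal{D})^{-1},
\end{equation}
hence $1 - F^*F = (1+\mathcal{D}^*\mathcal{D})^{-1}$ by functional calculus. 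Then
\begin{equation}
    j(1-F^*F) = \bigl(j(1+\mathcal{D}^*\mathcal{D})^{-\frac{1}{2}}\bigr)\,(1+\mathcal{D}^*\mathcal{D})^{-\frac{1}{2}},
\end{equation}
which is compact by condition \ref{item:condition_j_compact_resolvent} of \Cref{def:higher_order_relative_spectral_triple} together with density of $\mathcal{J}$ in $J$.

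For the second term, \Cref{thm:F_is_almost_selfadjoint} applied to the extension $\mathcal{D}_e=\mathcal{D}^*$ gives $j(F^*-F)\in\mathbb{K}(\mathcal{H})$ for all $j\in J$; multiplying on the right by the bounded operator $F$ preserves compactness, so $j(F^*-F)F\in\mathbb{K}(\mathcal{H})$. Combining both terms yields $j(1-F^2)\in\mathbb{K}(\mathcal{H})$.

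This argument is essentially formal; the only nontrivial ingredients are the intertwining identity on $\dom\mathcal{D}$ (which is where one must be careful about domains when multiplying unbounded operators) and the already-proved \Cref{thm:F_is_almost_selfadjoint}. I expect no serious obstacle here, since the heavy analytic work (norm convergence of the Baaj--Julg integral, exploiting the $\tfrac{1}{2}+\tfrac{1}{2m}$ commutator bound) has already been carried out in \Cref{thm:F_is_almost_selfadjoint}.
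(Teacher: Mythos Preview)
Your proposal is correct and follows essentially the same approach as the paper: decompose $1-F^2=(1-F^*F)+(F^*-F)F$, use $1-F^*F=(1+\mathcal{D}^*\mathcal{D})^{-1}$ together with condition~\ref{item:condition_j_compact_resolvent} for the first term, and invoke \Cref{thm:F_is_almost_selfadjoint} for the second. Your write-up is slightly more detailed in justifying the identity $1-F^*F=(1+\mathcal{D}^*\mathcal{D})^{-1}$, but the argument is the same.
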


\begin{proof}
By density, it suffices to consider \(j\in \mathcal{J}\). We have that
\begin{equation}
    j(1-F^2) = j(1-F^*F) - j(F-F^*)F\in \mathbb{K}(\mathcal{H})
\end{equation}
using that \(1-F^*F = (1+\mathcal{D}^*\mathcal{D})^{-1}\) and \Cref{thm:F_is_almost_selfadjoint}.
\end{proof}

\begin{lemma}
\label{thm:commutator_F_is_compact}
Let \((\mathcal{J}\vartriangleleft\mathcal{A},\mathcal{H},\mathcal{D})\) be a higher-order relative spectral triple for \(J\vartriangleleft A\), then \([\mathcal{D}(\delta+\mathcal{D}^*\mathcal{D})^{-\frac{1}{2}},a]\in \mathbb{K}\) for all \(a\in A\) and any \(\delta>0\).

In particular, \([F,a]\in \mathbb{K}\) for all \(a\in A\) where \(F=\mathcal{D}(1+\mathcal{D}^*\mathcal{D})^{-\frac{1}{2}}\).
\end{lemma}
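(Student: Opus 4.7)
The plan is to adapt the integral-representation strategy from the proof of \Cref{thm:F_is_almost_selfadjoint}. By density of $\mathcal{A}$ in $A$, continuity of $a \mapsto [\mathcal{D}(\delta + \mathcal{D}^*\mathcal{D})^{-1/2}, a]$, and closedness of $\mathbb{K}(\mathcal{H})$ in $\mathbb{B}(\mathcal{H})$, it suffices to consider $a \in \mathcal{A}$. By the same derivation as for \eqref{eq:integral_formula} one has
\begin{equation*}
\bigl[\mathcal{D}(\delta+\mathcal{D}^*\mathcal{D})^{-1/2}, a\bigr] = \frac{1}{\pi}\int_\delta^\infty (\mu-\delta)^{-1/2}\bigl[\mathcal{D}(\mu+\mathcal{D}^*\mathcal{D})^{-1}, a\bigr]\,d\mu
\end{equation*}
pointwise on $\dom\mathcal{D}$; I aim to show the integrand is compact with $\norm{\bigl[\mathcal{D}(\mu+\mathcal{D}^*\mathcal{D})^{-1}, a\bigr]} \lesssim \mu^{-1/2-1/(2m)}$, so that the integral converges in operator norm to a compact operator.

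On the $a$-invariant dense subspace $\dom\mathcal{D}$ (condition \ref{item:condition_a_conserves_domain}) one has the splitting
\begin{equation*}
\bigl[\mathcal{D}(\mu+\mathcal{D}^*\mathcal{D})^{-1}, a\bigr] = \mathcal{D}\bigl[(\mu+\mathcal{D}^*\mathcal{D})^{-1}, a\bigr] + [\mathcal{D}, a](\mu+\mathcal{D}^*\mathcal{D})^{-1},
\end{equation*}
which extends to $\mathcal{H}$ by continuity. The second term is compact with the required decay by factoring the compact $[\mathcal{D}, a](1+\mathcal{D}^*\mathcal{D})^{-1/2}$ (condition \ref{item:condition_D_commutator_a_compact}) out on the left and applying the bounded functional-calculus estimate already used in \eqref{eq:commutator_j_and_Dadj_stuff_expansion}.

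The main obstacle is the first term $\mathcal{D}\bigl[(\mu+\mathcal{D}^*\mathcal{D})^{-1}, a\bigr]$, because in the relative setting $a$ is not assumed to preserve $\dom\mathcal{D}^*\mathcal{D}$ and the naive resolvent identity $[(\mu+\mathcal{D}^*\mathcal{D})^{-1}, a] = -(\mu+\mathcal{D}^*\mathcal{D})^{-1}[\mathcal{D}^*\mathcal{D}, a](\mu+\mathcal{D}^*\mathcal{D})^{-1}$ is unavailable. To handle this I would pass to a weak formulation: for $\xi \in \mathcal{H}$, set $\phi := (\mu+\mathcal{D}^*\mathcal{D})^{-1}a\xi$ and $\psi := (\mu+\mathcal{D}^*\mathcal{D})^{-1}\xi$, so that $\phi-a\psi = [(\mu+\mathcal{D}^*\mathcal{D})^{-1}, a]\xi \in \dom\mathcal{D}$. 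For $w \in \dom\mathcal{D}$, expanding $\mathcal{D} a\psi = a\mathcal{D}\psi + [\mathcal{D}, a]\psi$ and rewriting $a^*\mathcal{D} w = \mathcal{D} a^*w - [\mathcal{D}, a^*]w$ (both valid since $a, a^* \in \mathcal{A}$ preserve $\dom\mathcal{D}$) yields after cancellation
\begin{equation*}
\mu\braket{\phi-a\psi, w} + \braket{\mathcal{D}(\phi-a\psi), \mathcal{D} w} = \braket{F_\mu\xi, [\mathcal{D}, a^*]w} - \braket{[\mathcal{D}, a](\mu+\mathcal{D}^*\mathcal{D})^{-1}\xi, \mathcal{D} w},
\end{equation*}
where $F_\mu = \mathcal{D}(\mu+\mathcal{D}^*\mathcal{D})^{-1}$. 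Choosing $w = \mathcal{D}^*(\mu+\mathcal{D}\mathcal{D}^*)^{-1}v$ (which lies in $\dom\mathcal{D}$ since $(\mu+\mathcal{D}\mathcal{D}^*)^{-1}v \in \dom\mathcal{D}\mathcal{D}^*$) collapses the left-hand side to $\braket{\mathcal{D}(\phi-a\psi), v}$ and produces
\begin{equation*}
\mathcal{D}\bigl[(\mu+\mathcal{D}^*\mathcal{D})^{-1}, a\bigr] = K_1^* F_\mu - K_3^* B,
\end{equation*}
with $K_1 = [\mathcal{D}, a^*]\mathcal{D}^*(\mu+\mathcal{D}\mathcal{D}^*)^{-1}$, $K_3 = \mathcal{D}\mathcal{D}^*(\mu+\mathcal{D}\mathcal{D}^*)^{-1}$, and $B = [\mathcal{D}, a](\mu+\mathcal{D}^*\mathcal{D})^{-1}$. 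Exactly as in the proof of \Cref{thm:F_is_almost_selfadjoint}, $K_1$ and $B$ are compact with $\norm{K_1}\lesssim \mu^{-1/(2m)}$ and $\norm{B}\lesssim \mu^{-1/2-1/(2m)}$ (using conditions \ref{item:condition_D_commutator_a_compact} and \ref{item:condition_D_commutator_a_bounded} applied to both $a$ and $a^*$), while $\norm{F_\mu}\le \mu^{-1/2}$ and $\norm{K_3}\le 1$, giving compactness of the integrand with the claimed norm decay; since $\int_\delta^\infty (\mu-\delta)^{-1/2}\mu^{-1/2-1/(2m)}\,d\mu < \infty$, the integral converges in norm to a compact operator.
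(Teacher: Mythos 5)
Your proof is correct, and the overall strategy matches the paper's: reduce to $a \in \mathcal{A}$, apply the Baaj--Julg integral formula, show the integrand $[\mathcal{D}(\mu+\mathcal{D}^*\mathcal{D})^{-1},a]$ is compact with norm $\lesssim \mu^{-1/2 - 1/(2m)}$. Where you diverge is in the execution of the key step, namely expressing $\mathcal{D}[(\mu+\mathcal{D}^*\mathcal{D})^{-1},a]$ in a form where the commutators $[\mathcal{D},a]$, $[\mathcal{D},a^*]$ carry enough resolvent factors. The paper proceeds by pure operator algebra: it starts from the identity $[(\mu+\mathcal{D}^*\mathcal{D})^{-1},a] = -\mathcal{D}^*\mathcal{D}(\mu+\mathcal{D}^*\mathcal{D})^{-1}a(\mu+\mathcal{D}^*\mathcal{D})^{-1} + (\mu+\mathcal{D}^*\mathcal{D})^{-1}a\mathcal{D}^*\mathcal{D}(\mu+\mathcal{D}^*\mathcal{D})^{-1}$ (valid among bounded operators with no domain issues), manipulates the first piece by commuting $a$ past one $\mathcal{D}$ via $[\mathcal{D},a]$, handles the second piece by taking an adjoint to land the analogous manipulation on $a^*$, and observes that the two ``$a$'' terms cancel exactly. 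You instead derive the same identity by a weak formulation: pair $\eta := [(\mu+\mathcal{D}^*\mathcal{D})^{-1},a]\xi$ against test vectors $w \in \dom\mathcal{D}$, integrate by parts using that $a,a^*$ preserve $\dom\mathcal{D}$ (but not $\dom\mathcal{D}^*$), and then feed in the specific $w = \mathcal{D}^*(\mu+\mathcal{D}\mathcal{D}^*)^{-1}v$ to collapse the pairing and read off $\mathcal{D}\eta$. Your final expression $\mu(\mu+\mathcal{D}\mathcal{D}^*)^{-1}[\mathcal{D},a](\mu+\mathcal{D}^*\mathcal{D})^{-1} + K_1^*F_\mu$ (after using $K_3^*=K_3$ and $1-K_3 = \mu(\mu+\mathcal{D}\mathcal{D}^*)^{-1}$) coincides with the paper's \eqref{eq:bounded_transform_commutator_expression}, so the remaining compactness and decay estimates proceed identically. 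The paper's route is shorter once the cancellation is noticed; your weak-formulation route is more explicit about exactly why the absence of $a\dom\mathcal{D}^*\subseteq\dom\mathcal{D}^*$ (the obstruction flagged in \Cref{remark:a_domDadj_notin_domD}) is harmless -- the ``bad'' term never needs to be formed.
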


A proof when \(m=1\) can be found in \cite[Lemma B.1]{Forsyth_Goffeng_Mesland_Rennie_2019}.

\begin{proof} 

By density, it suffices to consider \(a\in \mathcal{A}\). With a change of variables in the functional calculus calculation used in the integral formula \eqref{eq:integral_formula},
\begin{equation}
    [\mathcal{D}(\delta+\mathcal{D}^*\mathcal{D})^{-\frac{1}{2}},a] = \quad \frac{1}{\pi}\int_{\delta}^\infty (\mu-\delta)^{-\frac{1}{2}}[\mathcal{D}(\mu+\mathcal{D}^*\mathcal{D})^{-1},a]  d\mu
\end{equation}
point-wise on \(\dom \mathcal{D}\). If we show that the integral is norm convergent and the integrand is compact, we are done.

Taking into account that \(\mathcal{D}^*a\mathcal{D}\) is not defined on \(\dom \mathcal{D}^*\mathcal{D}\) in general, we can similarly as in \eqref{eq:commutator_inverse_and_operator} rewrite
\begin{equation}
\begin{split}
    \label{eq:commutator_inverse_and_operator_a}
    [(\mu +\mathcal{D}^*\mathcal{D})^{-1},a] 
    =& -\mathcal{D}^*\mathcal{D}(\mu+\mathcal{D}^*\mathcal{D})^{-1}a(\mu+\mathcal{D}^*\mathcal{D})^{-1} \\ 
    & +(\mu+\mathcal{D}^*\mathcal{D})^{-1}a \mathcal{D}^*\mathcal{D}(\mu+\mathcal{D}^*\mathcal{D})^{-1} \\
    =& -\mathcal{D}^*(\mu+\mathcal{D}\mathcal{D}^*)^{-1}[\mathcal{D},a](\mu+\mathcal{D}^*\mathcal{D})^{-1} \\ 
    & + \left( \mathcal{D}^*(\mu+\mathcal{D}\mathcal{D}^*)^{-1} [\mathcal{D}, a^*](\mu+\mathcal{D}^*\mathcal{D})^{-1} \right)^*
\end{split}
\end{equation}
and hence
\begin{equation}
\begin{split}
    \label{eq:bounded_transform_commutator_expression}
    [\mathcal{D}(\mu+\mathcal{D}^*\mathcal{D})^{-1},a] 
    =& [\mathcal{D},a](\mu+\mathcal{D}^*\mathcal{D})^{-1} + \mathcal{D}[(\mu+\mathcal{D}^*\mathcal{D})^{-1},a] \\ 
    =& \mu(\mu+\mathcal{D}\mathcal{D}^*)^{-1}[\mathcal{D},a](\mu+\mathcal{D}^*\mathcal{D})^{-1} \\ 
    & + \left( \mathcal{D}^*(\mu+\mathcal{D}\mathcal{D}^*)^{-1}[\mathcal{D}, a^*](\mu+\mathcal{D}^*\mathcal{D})^{-1} \mathcal{D}^*\right)^*.
\end{split}
\end{equation}
Now we see that \eqref{eq:bounded_transform_commutator_expression} is compact and has a norm estimate that makes the integral norm convergent in the same way as at the end of the proof of \Cref{thm:F_is_almost_selfadjoint}.
\end{proof}

\begin{remark}
\label{remark:a_domDadj_notin_domD}

In the first order case, that is \(m=1\), we have that \(a\dom \mathcal{D}^* \subseteq \dom \mathcal{D}^*\) for any \(a\in \mathcal{A}\), which can be shown from that \([\mathcal{D},a]\) is bounded. For higher orders, this inclusion does not necessarily hold and makes \eqref{eq:commutator_inverse_and_operator_a} slightly more subtle than \eqref{eq:commutator_inverse_and_operator}. 

For example, the property \(a\dom \mathcal{D}^*\subseteq \dom \mathcal{D}^*\) fails for \(\mathcal{D}=\Delta_{\min}\) on \(L^2(\Omega)\) and \(\mathcal{A}=C^\infty(\Omega)\) where \(\Omega\subseteq \mathbb{C}\) is the unit disc. Explicitly, we can see that it fails for \(-\log(1-z)+\log(1-\bar{z})\in \Ker \Delta_{\max}\) and \(|z|^2\in C^\infty(\Omega)\) since
\begin{equation}
    \Delta |z|^2(-\log(1-z)+\log(1-\bar{z})) = -4\left(\frac{z - \bar{z}}{|1-z|^2} - \log(1-z) + \log(1-\bar{z})\right)
\end{equation}
is not in \(L^2(\Omega)\).
\end{remark}

\begin{remark}
    The definition of a higher-order spectral triple is a weakening of the requirement of bounded commutator in a spectral triple. We could weaken condition \ref{item:condition_D_commutator_a_bounded} further and still prove \Cref{thm:horst_defines_kcykle} with the same methods by assuming that for each \(a\in \mathcal{A}\) there is a Borel measurable function \(f\colon [0,\infty)\to\mathbb{R}\) satisfying \(\int_1^\infty (\mu-1)^{-\frac{1}{2}}\mu^{-\frac{1}{2}} \sup_{x\geq 0} \left|\frac{1}{f(x)}(\mu+x)^{-\frac{1}{2}}\right| d\mu < \infty\) such that \([\mathcal{D},a]f(\mathcal{D}^*\mathcal{D})\) extends to an element in \(\mathbb{B}(\mathcal{H})\). For example, \(f(x) = (1+x)^{-\frac{1}{2}} \log(2+x)^2\) would work.
\end{remark}

\subsection{The \texorpdfstring{\(K\)}{ K}-homology boundary map}
\label{sec:boundary_map}

Given a short exact sequence of separable \ensuremath{C^*}-algebras
\begin{equation}
    \begin{tikzcd}
    0 \ar[r] & J \ar[r] & A \ar[r] & A/J \ar[r] & 0
    \end{tikzcd}
\end{equation}
such that \(A\to A/J\) has a completely positive right-inverse \cite[Definition 3.1.1]{Higson_Roe_2000}, the boundary map in \(K\)-homology is a map \(\partial\) fitting in the exact sequence
\begin{equation}
    \begin{tikzcd}
    \cdots \ar[r] & K^*(A) \ar[r] & K^*(J) \ar[r,"\partial"] & K^{*-1}(A/J) \ar[r] & K^{*-1}(A) \ar[r] & \cdots 
    \end{tikzcd}
\end{equation}
See \cite{Higson_Roe_2000} for further description of the boundary map.

Although the boundary map is defined on \(x\in K^*(J)\), it is generally not feasible to calculate \(\partial x\in K^{*-1}(A/J)\) without finding a representative in the equivalence class \(x\) which is a relative \(K\)-cycle, and hence defines a class in \(K^*(J\vartriangleleft A)\). This is the core reason why we consider a relative version of higher-order spectral triples.

This subsection will describe the boundary map applied to even higher-order relative spectral triples in an abstract setting. Later in \Cref{sec:even_boundary_map_geometric_setting} we will further dismantle this image in the geometric setting for a classically elliptic differential operator on a compact smooth manifold with boundary.

For an odd operator \(T\) on a \(\mathbb{Z}/2\)-graded Hilbert \(\mathcal{H}\), we will write \(T_\pm\) for the operators in \(T=\begin{pmatrix}0 & T_- \\ T_+ & 0\end{pmatrix}\) with respect to the decomposition \(\mathcal{H}=\mathcal{H}_+\oplus \mathcal{H}_-\). That is, the sign denotes what part of the Hilbert space that the operator maps from. Note that \((T_-)^* = (T^*)_+\).

\begin{lemma}
    \label{thm:description_of_boundary_cycle_for_selfadjointpartialisometry}
    Let \(J\vartriangleleft A\) be separable \ensuremath{C^*}-algebras such that \(A\to A/J\) has a completely positive right-inverse and \((\rho,\mathcal{H},F)\) be an even relative \(K\)-cycle of \(J\vartriangleleft A\) such that \(F\) is a self-adjoint partial isometry. Then
    \begin{equation}
        \partial [(\rho,\mathcal{H},F)] = [P_{\Ker F_+}] - [P_{\Ker F_-}] \quad\text{in}\quad K^1(A/J)
    \end{equation}
    where the classes in \(\Ext^{-1}(A/J)\cong K^1(A/J)\) are constructed as in \Cref{ex:busby_from_projection} using \(\rho_\pm \circ s\) respectively for any \(*\)-linear right-inverse \(s\colon A/J \to A\) of the quotient map.
\end{lemma}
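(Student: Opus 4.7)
The plan is to translate the hypotheses into concrete properties of the off-diagonal block $F_+\colon \mathcal{H}_+\to \mathcal{H}_-$ and its kernel projection, then match the resulting Busby data with the standard description of the boundary map. Since $F$ is odd and self-adjoint we write $F=\begin{pmatrix} 0 & F_+^* \\ F_+ & 0\end{pmatrix}$; the partial isometry condition is equivalent to saying that both $F_+^*F_+$ and $F_+F_+^*$ are projections, so $F_+$ itself is a partial isometry between Hilbert spaces with $1-F_-F_+=P_{\Ker F_+}$ and $1-F_+F_-=P_{\Ker F_-}$. Consequently $1-F^2$ is block-diagonal with entries $P_\pm:=P_{\Ker F_\pm}$, which is the main structural observation on which the rest of the argument hinges.

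First, I would verify that $(\rho_\pm\circ s,P_\pm)$ fall within the framework of \Cref{ex:busby_from_projection} and represent the announced classes in $\Ext(A/J)$. The relative cycle condition $\rho(j)(1-F^2)\in\mathbb{K}$ reads blockwise as $\rho_\pm(j)P_\pm\in\mathbb{K}$ for all $j\in J$; combined with the fact that $s$ is a $*$-linear section of the quotient, this makes $P_\pm \rho_\pm(s(\cdot))P_\pm$ well-defined modulo compacts on $A/J$ and independent of $s$. The condition $[F,\rho(a)]\in\mathbb{K}$, expanded blockwise, gives $F_+\rho_+(a)-\rho_-(a)F_+\in\mathbb{K}$, from which a direct algebraic manipulation yields $[F_-F_+,\rho_+(a)]\in\mathbb{K}$, hence $[P_+,\rho_+(a)]\in\mathbb{K}$; the symmetric computation handles $P_-$. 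This gives the two Busby invariants $[P_\pm]\in \Ext(A/J)$.

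Next, I would identify $\partial[(\rho,\mathcal{H},F)]$ with $[P_+]-[P_-]$ by constructing an explicit intertwiner. The guiding idea is that the obstruction to $F$ defining a $K$-cycle for all of $A$ (rather than just $J$) is the failure of $\rho(a)(1-F^2)$ to be compact, and $1-F^2=P_+\oplus P_-$ isolates this obstruction on the two kernel spaces. Following the Higson--Roe description of the boundary map, one produces a representative of $\partial[(\rho,\mathcal{H},F)]$ by compressing $\rho\circ s$ to $\Ker F=P_+\mathcal{H}_+\oplus P_-\mathcal{H}_-$; the grading on $\mathcal{H}$ induces opposite signs on the two summands, which is exactly the minus sign in $[P_+]-[P_-]$. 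To confirm this matches the abstract boundary, I would invoke \Cref{thm:similar_busby_invariants}: the inclusion $P_+\mathcal{H}_+\oplus P_-\mathcal{H}_- \hookrightarrow \mathcal{H}$ together with the symmetry $G:=F+P_+\oplus P_-$ (a self-adjoint unitary modulo $J$-compacts) provides Fredholm intertwiners between the Busby invariant produced by the Higson--Roe construction and the direct sum of the two kernel Busby invariants with appropriate grading signs.

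The main obstacle I anticipate is Step 3: the abstract boundary map in \cite[Section 8.5]{Higson_Roe_2000} is defined via an explicit extension algebra that is not immediately identifiable with the kernel-projection extension, and the bookkeeping of the grading (which produces the sign difference between $[P_+]$ and $[P_-]$) must be done carefully. The self-adjoint partial isometry hypothesis is what makes this tractable: it lets us replace $F$ by the symmetry $G=F+P_+\oplus P_-$ and localize the entire discussion to the finite spectral data $\{-1,0,1\}$, at which point the comparison with $[P_+]-[P_-]$ reduces to applying \Cref{thm:similar_busby_invariants} to an explicit Fredholm intertwiner.
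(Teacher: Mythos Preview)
The paper does not actually prove this lemma: immediately after the statement it simply writes ``This can be found in \cite[Remark 8.5.7]{Higson_Roe_2000} as well as in \cite[p.~784]{Baum_Douglas_Taylor_1989}.'' So there is no argument in the paper to compare against; the result is quoted as standard.

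Your sketch is a reasonable outline of how such a proof goes, and your Steps~1--2 are correct and routine. As you yourself flag, Step~3 is where the actual content lies, and what you have written there is more a heuristic than an argument: invoking \Cref{thm:similar_busby_invariants} with ``the inclusion together with the symmetry $G=F+P_+\oplus P_-$'' does not by itself produce the required Fredholm intertwiner between the Higson--Roe boundary extension and the kernel Busby invariants, and the sign bookkeeping is asserted rather than checked. If you want to turn this into a self-contained proof you should follow the explicit construction in \cite[Section~8.5]{Higson_Roe_2000} (in particular the cited Remark~8.5.7), where the boundary map of an even relative cycle with $F$ a self-adjoint partial isometry is identified directly with the difference of the two kernel extensions; the grading sign enters because the odd boundary class is represented by the Busby invariant on $\mathcal{H}_+$ minus that on $\mathcal{H}_-$. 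Alternatively, since the paper treats this as a citation, it would be entirely in keeping with the paper's style to do the same.
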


This can be found in \cite[Remark 8.5.7]{Higson_Roe_2000} as well as in \cite[p. 784]{Baum_Douglas_Taylor_1989}.

\begin{lemma}
    \label{thm:phase_is_limit}
    Let \(T\) be a closed densely defined operator from \(\mathcal{H}_1\) to \(\mathcal{H}_2\) with closed range, then
    \begin{equation}
        V = \lim_{\delta\to 0} T(\delta+T^*T)^{-\frac{1}{2}}
    \end{equation}
    converges in operator norm, where \(V\) is the partial isometry in the polar decomposition \(T=V|T|\) with \(\Ker V = \Ker T\).
\end{lemma}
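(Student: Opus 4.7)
The plan is to reduce everything to the functional calculus for the positive self-adjoint operator $|T| = \sqrt{T^*T}$ via the polar decomposition, and then exploit the spectral gap at zero that comes from the closed range hypothesis.

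First I would write $T = V|T|$ where $V$ is the partial isometry with initial space $(\Ker T)^\perp = \overline{\ran |T|}$ and final space $\overline{\ran T}$, and $\Ker V = \Ker T = \Ker |T|$. Since $(\delta + T^*T)^{-1/2}$ maps $\mathcal{H}_1$ into $\dom|T|$, the identity $T(\delta+T^*T)^{-1/2} = V f_\delta(|T|)$ holds as an identity of bounded operators, where
\begin{equation}
    f_\delta(x) \coloneqq \frac{x}{\sqrt{\delta+x^2}}, \quad x\geq 0.
\end{equation}

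Next I would establish the spectral gap: because $V$ is isometric on $\overline{\ran |T|}$ and $\ran T = V \cdot \ran|T|$, the range of $|T|$ is closed in $\mathcal{H}_1$ whenever $\ran T$ is closed in $\mathcal{H}_2$. For the positive self-adjoint operator $|T|$ this means $|T|$ is bounded below on its initial space $(\Ker|T|)^\perp$, so there exists $c>0$ with $\sigma(|T|) \subseteq \{0\} \cup [c,\infty)$. On this set the functions $f_\delta$ converge pointwise to $\chi_{(0,\infty)}$, and a direct estimate using $\sqrt{\delta+x^2}-x = \delta/(\sqrt{\delta+x^2}+x) \leq \delta/(2x)$ for $x \geq c$ gives $|f_\delta(x)-\chi_{(0,\infty)}(x)| \leq \delta/(2c^2)$ uniformly on $\sigma(|T|)$, while $f_\delta(0) = 0$ matches the limit at zero.

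By the spectral theorem, uniform convergence of bounded Borel functions on $\sigma(|T|)$ yields norm convergence of the functional calculus, so
\begin{equation}
    f_\delta(|T|) \xrightarrow{\delta\to 0} \chi_{(0,\infty)}(|T|) = P_{(\Ker|T|)^\perp}
\end{equation}
in operator norm. Multiplying on the left by the bounded operator $V$ and using $V P_{(\Ker T)^\perp} = V$ (since $V$ annihilates $\Ker T = \Ker|T|$), I conclude $T(\delta+T^*T)^{-1/2} \to V$ in norm, as required.

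The only nontrivial point is the spectral gap step: if $T$ did not have closed range, the functions $f_\delta$ would only converge strongly (not uniformly on the spectrum), and no norm limit would exist in general. Everything else is a routine polar decomposition and functional calculus manipulation.
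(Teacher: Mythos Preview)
Your proof is correct and follows essentially the same approach as the paper: closed range gives a spectral gap for $|T|$, the functions $f_\delta(x)=x(\delta+x^2)^{-1/2}$ converge uniformly on $\{0\}\cup[c,\infty)$ to $\chi_{(0,\infty)}$ with the same $\delta/(2c^2)$ bound, and one then multiplies by $V$. The only cosmetic difference is that the paper restricts to $(\Ker T)^\perp$ first and handles the kernel separately via $P_{\Ker T}=\delta^{1/2}(\delta+T^*T)^{-1/2}P_{\Ker T}$, whereas you treat both spectral pieces at once using $f_\delta(0)=0$.
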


\begin{proof}

    Note that \(\Ker |T| = \Ker T\), \(\ran |T| = \ran T^*\) and that if \(\ran T\) is closed, then so is \(\ran T^*=\ran |T|\) \cite[Theorem 2.19]{Brezis_2011}. By \cite[Theorem 2.20]{Brezis_2011} \(|T|\) has a lower bound \(C>0\) on \(\ran T^*=(\Ker T)^\perp\). Restricting to \((\Ker T)^\perp\), by functional calculus
    \begin{equation}
        \norm{1 - |T|(\delta+T^*T)^{-\frac{1}{2}}} \leq \sup_{x\geq C} |1-x(\delta + x^2)^{-\frac{1}{2}}| \leq \frac{\delta}{2 C^2}
    \end{equation}
    and hence
    \begin{equation}
        (1-P_{\Ker T}) = \lim_{\delta\to 0} |T|(\delta+T^*T)^{-\frac{1}{2}} (1-P_{\Ker T})
    \end{equation}
    converges in operator norm on \(\mathcal{H}_1\). Now, 
    \begin{equation}
        V = V (1-P_{\Ker T}) = V \lim_{\delta\to 0} |T|(\delta+T^*T)^{-\frac{1}{2}} (1-P_{\Ker T}) = \lim_{\delta\to 0} T(\delta+T^*T)^{-\frac{1}{2}}
    \end{equation}
    where the last equality uses that \(T=V|T|\) and \(P_{\Ker T}=\delta^{\frac{1}{2}}(\delta + T^*T)^{-\frac{1}{2}}P_{\Ker T}\).
\end{proof}

\begin{lemma}
\label{thm:boundary_map_of_even_horst}
Let \(J\vartriangleleft A\) be separable \ensuremath{C^*}-algebras such that \(A\to A/J\) has a completely positive right-inverse and \((\mathcal{J}\vartriangleleft\mathcal{A},\mathcal{H},\mathcal{D})\) be an even higher-order relative spectral triple for \(J\vartriangleleft A\). If \(\ran \mathcal{D}_-\) is closed, then
\begin{equation}
    \partial [\mathcal{D}] = [P_{\Ker ((\mathcal{D}_-)^*)}] - [P_{\Ker (\mathcal{D}_-)}] \quad\text{in}\quad K^1(A/J),
\end{equation}
or if \(\ran \mathcal{D}_+\) is closed, then
\begin{equation}
    \partial [\mathcal{D}] = [P_{\Ker (\mathcal{D}_+)}] - [P_{\Ker ((\mathcal{D}_+)^*)}] \quad\text{in}\quad K^1(A/J),
\end{equation}
where the classes in \(\Ext^{-1}(A/J)\cong K^1(A/J)\) are constructed as in \Cref{ex:busby_from_projection} using \(\rho_\pm \circ s\) respectively for any \(*\)-linear right-inverse \(s\colon A/J \to A\) of the quotient map. 
\end{lemma}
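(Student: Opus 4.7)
The plan is to construct a self-adjoint partial isometry representing \([\mathcal{D}]\in K^0(J\vartriangleleft A)\) and invoke \Cref{thm:description_of_boundary_cycle_for_selfadjointpartialisometry}. I focus on the case \(\ran \mathcal{D}_-\) closed; the other case follows by a symmetric argument that uses the polar phase \(W_+ := \lim_{s \to 0^+} \mathcal{D}_+(s + \mathcal{D}_+^*\mathcal{D}_+)^{-1/2}\) of \(\mathcal{D}_+\) and interchanges the roles of \(+\) and \(-\), which explains the sign flip in the formula.

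Write \((F_s)_- := \mathcal{D}_-(s + \mathcal{D}_-^*\mathcal{D}_-)^{-1/2}\) for \(s \in (0,1]\), so that \((F_1)_-\) is the lower-left block of the bounded transform \(F\). By \Cref{thm:phase_is_limit}, \(V_- := \lim_{s \to 0^+}(F_s)_-\) exists in operator norm and is the polar decomposition phase of \(\mathcal{D}_-\), with \(\Ker V_- = \Ker \mathcal{D}_-\) and \(\Ker V_-^* = (\ran \mathcal{D}_-)^\perp = \Ker \mathcal{D}_-^*\). Putting \((F_0)_- := V_-\), for \(s \in [0,1]\) I define the odd self-adjoint operator
\begin{equation*}
    G_s := \begin{pmatrix} 0 & (F_s)_- \\ (F_s)_-^* & 0 \end{pmatrix}.
\end{equation*}
Functional calculus on \(|\mathcal{D}_-|\), which by the closed-range hypothesis is bounded below on \((\Ker \mathcal{D}_-)^\perp\) by some \(C>0\), gives \(\sup_{x \geq C}|x/\sqrt{s+x^2} - x/\sqrt{t+x^2}| \to 0\) uniformly, so \(s \mapsto G_s\) is operator-norm continuous on \([0,1]\).

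The technical heart is verifying that every \(G_s\) is a relative \(K\)-cycle. Self-adjointness makes the \(j(F - F^*)\) condition trivial. The commutator \([G_s, a]\) for \(a \in A\) decomposes entrywise into \([F_s, a]_-\) and \(-([F_s, a^*]_-)^*\), both compact by \Cref{thm:commutator_F_is_compact} applied to the shifted bounded transform \(\mathcal{D}(s + \mathcal{D}^*\mathcal{D})^{-1/2}\). For \(j(1 - G_s^2)\) with \(s > 0\), a direct computation produces
\begin{equation*}
    1 - G_s^2 = \diag\bigl(s(s + \mathcal{D}_-\mathcal{D}_-^*)^{-1},\; s(s + \mathcal{D}_-^*\mathcal{D}_-)^{-1}\bigr).
\end{equation*}
The lower block is compact after multiplication by \(j\) via condition \ref{item:condition_j_compact_resolvent} and functional calculus on \(\mathcal{D}_-^*\mathcal{D}_-\). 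The upper block is the main obstacle: condition \ref{item:condition_j_compact_resolvent} controls \((1+\mathcal{D}^*\mathcal{D})^{-1/2}\) and not the ``wrong-side'' resolvent \((1+\mathcal{D}\mathcal{D}^*)^{-1/2}\). The fix I propose is to bootstrap from the identity \(1 - FF^* = (1 - F^2) + F(F - F^*)\): \Cref{thm:horst_defines_kcykle}, \Cref{thm:F_is_almost_selfadjoint} and \Cref{thm:commutator_F_is_compact} jointly yield \(j(1 - FF^*) = j(1 + \mathcal{D}\mathcal{D}^*)^{-1} \in \mathbb{K}(\mathcal{H})\), from which functional calculus on \(\mathcal{D}_-\mathcal{D}_-^*\) transfers the compactness to arbitrary \(s > 0\). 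For \(s = 0\) the relative \(K\)-cycle conditions are obtained as norm-convergent limits of compacts.

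Finally, \(G_1 - F\) has vanishing upper block and lower block \(F_-^* - F_+ = -(F - F^*)_+\), which is \(J\)-compact by \Cref{thm:F_is_almost_selfadjoint}; hence \(G_1\) is a \(J\)-compact perturbation of \(F\), and the linear path \(tF + (1-t)G_1\) is an operator homotopy through relative \(K\)-cycles. Combined with the homotopy \(s \mapsto G_s\), this yields \([\mathcal{D}] = [G_1] = [G_0]\) in \(K^0(J \vartriangleleft A)\). Since \(G_0\) is a self-adjoint partial isometry with \((G_0)_+ = V_-^*\) and \((G_0)_- = V_-\), \Cref{thm:description_of_boundary_cycle_for_selfadjointpartialisometry} then delivers the desired formula \(\partial [\mathcal{D}] = [P_{\Ker \mathcal{D}_-^*}] - [P_{\Ker \mathcal{D}_-}]\).
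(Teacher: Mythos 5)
Your proof is correct and reaches the target self-adjoint partial isometry \(W = G_0\) with \((G_0)_- = V_-\) by the same fundamental route as the paper: the phase-limit \Cref{thm:phase_is_limit}, the commutator compactness \Cref{thm:commutator_F_is_compact}, and finally \Cref{thm:description_of_boundary_cycle_for_selfadjointpartialisometry}. The difference is in how you establish \([\mathcal{D}]=[(\rho,\mathcal{H},W)]\). The paper writes down the identity \(V - F = V(1+|F|)^{-1}(1-F^*F)\) for the polar phase \(V\) of the full operator \(\mathcal{D}\), which shows at a stroke that \(V\), and hence \(W\), is a \(J\)-compact perturbation of \(F\); the \(J\)-conditions on \(W\) then come for free, and only the commutator condition \([W,a]\in\mathbb{K}\) requires the closed-range hypothesis via the norm-convergent limit. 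You instead verify from scratch that each \(G_s\), \(s\in[0,1]\), is a relative \(K\)-cycle and run a two-stage operator homotopy (linear from \(F\) to \(G_1\), then \(s\mapsto G_s\)). This forces you to confront the ``wrong-side'' block \(s(s+\mathcal{D}_-\mathcal{D}_-^*)^{-1}\), and your bootstrap \(j(1-FF^*) = j(1-F^2)+jF(F-F^*)\in\mathbb{K}\), giving \(j(1+\mathcal{D}\mathcal{D}^*)^{-1}\in\mathbb{K}\), is a correct and somewhat illuminating observation --- but it is work that the \(J\)-compact perturbation argument avoids. In short: same skeleton, same essential lemmas, but the paper's perturbation identity is the more economical route, while your homotopy argument spells out more explicitly why each intermediate operator is a legitimate relative cycle.
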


\begin{proof}
Let \(V\in \mathbb{B}(\mathcal{H})\) be the partial isometry in the polar decomposition \(\mathcal{D} = V|\mathcal{D}|\) with \(\Ker V = \Ker \mathcal{D}\). For the bounded transform \(F=\mathcal{D}(1+\mathcal{D}^*\mathcal{D})^{-\frac{1}{2}}\), noting that \(F=V|F|\) and \(V-F=V(1+|F|)^{-1}(1-F^*F)\) we see that \(V\) is a \(J\)-compact perturbation of \(F\). Also, \(W=\begin{pmatrix}0 & V_- \\ (V_-)^* & 0\end{pmatrix}\) is a self-adjoint partial isometry that is a \(J\)-compact perturbation of \(V\). Hence, we obtain that \([\mathcal{D}]=[(\rho, \mathcal{H}, W)]\) in \(K^0(J)\).

In the case when \(\ran \mathcal{D}_-\) is closed, we can use \Cref{thm:phase_is_limit} to obtain that
\begin{equation}
    [V_-,a] = \lim_{\delta \to 0} [\mathcal{D}_-(\delta + \mathcal{D}_-^*\mathcal{D}_-)^{-\frac{1}{2}}, a]
\end{equation}
converges in operator norm for any \(a\in A\), and is hence compact by \Cref{thm:commutator_F_is_compact}. Hence, \((\rho, \mathcal{H}, W)\) is a relative \(K\)-cycle for \(J\vartriangleleft A\) and 
\begin{equation}
    \partial [\mathcal{D}] = \partial[(\rho, \mathcal{H}, W)] = [P_{\Ker W_+}] - [P_{\Ker W_-}] = [P_{\Ker ((\mathcal{D}_-)^*)}] - [P_{\Ker (\mathcal{D}_-)}]
\end{equation}
using \Cref{thm:description_of_boundary_cycle_for_selfadjointpartialisometry}. The case where \(\ran D_+\) is closed is proven analogously.
\end{proof}

\begin{remark}
\label{remark:a_compact_resolvent_kills_one_term_of_boundary}
With the additional property of a higher-order relative spectral triple that \(a(1+\mathcal{D}^*\mathcal{D})^{-\frac{1}{2}}\in \mathbb{K}(\mathcal{H})\) for all \(a\in \mathcal{A}\), such as for localizations as in \Cref{def:localization_horst}, we obtain that 
\begin{equation}
    [P_{\Ker (\mathcal{D}_-)}] = [P_{\Ker (\mathcal{D}_+)}]=0
\end{equation} 
since the Busby invariant constructed from these projections is zero using that \(a P_{\Ker \mathcal{D}} = a (1+\mathcal{D}^*\mathcal{D})^{-\frac{1}{2}} P_{\Ker \mathcal{D}} \in \mathbb{K}(\mathcal{H})\) for any \(a\in A\).
\end{remark}

\begin{remark}
    In \Cref{thm:boundary_map_of_even_horst}, the assumption that \(\ran \mathcal{D}_-\) (or \(\ran \mathcal{D}_+\)) is closed can be replaced with
    \begin{equation}
        a(1+\mathcal{D}^*\mathcal{D})^{-\frac{1}{2}}(1-P_{\Ker \mathcal{D}})\in \mathbb{K}(\mathcal{H}) \quad\text{and}\quad a(1+\mathcal{D}\mathcal{D}^*)^{-\frac{1}{2}}(1-P_{\Ker \mathcal{D}^*})\in \mathbb{K}(\mathcal{H})
    \end{equation}
    for any \(a\in A\) and still give the same result. This is similar to what was done in \cite[Proposition 2.15]{Forsyth_Goffeng_Mesland_Rennie_2019}.
\end{remark}

Returning to the geometric setting, if \(\Omega\) is an open set in a smooth manifold, then there is an exact sequence
\begin{equation}
    \label{eq:boundary_short_exact_seq}
    \begin{tikzcd}
    0 \ar[r] & C_0(\Omega) \ar[r] & C_0(\overline{\Omega}) \ar[r, "\gamma_0"] & C_0(\partial \Omega) \ar[r] & 0
    \end{tikzcd}
\end{equation}
where \(\gamma_0(a)=a|_{\partial \Omega}\). One can easily construct a completely positive right-inverse of \(\gamma_0\), which is also guaranteed by \cite[Theorem 3.3.6]{Higson_Roe_2000} since commutative \ensuremath{C^*}-algebras are nuclear \cite[Example 3.3.3]{Higson_Roe_2000}.

\begin{corollary}
    \label{thm:boundary_of_diff_op_is_PkerDmax}
    Let \(\Omega\subseteq M\) be an open set in a smooth manifold \(M\) and \(D\in \Diff(M;E,F)\) as in \Cref{ex:construct_formally_selfadjoint_diff_op}. If \(\ran D^\dagger_{\min}\) is closed, as for example when \(\overline{\Omega}\) is compact (see \Cref{remark:finite_dim_kernel}), then
    \begin{equation}
        \partial [D] = [P_{\Ker D_{\max}}] \quad\text{in}\quad K^1(C_0(\partial \Omega))
    \end{equation}
    where the action of \(C_0(\partial \Omega)\) on \(L^2(\Omega;E\oplus F)\) is point-wise multiplication via any \(*\)-linear right-inverse \(s\colon C_0(\partial \Omega)\to C_0(\overline{\Omega})\) of \(\gamma_0\).
\end{corollary}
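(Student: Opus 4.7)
The plan is to invoke \Cref{thm:boundary_map_of_even_horst} on the even higher-order relative spectral triple constructed in \Cref{ex:construct_formally_selfadjoint_diff_op}, after verifying its hypotheses. Write the off-diagonal operator as \(\mathcal{D} = \begin{pmatrix} 0 & D^\dagger_{\min} \\ D_{\min} & 0 \end{pmatrix}\) with respect to the decomposition \(L^2(\Omega; E\oplus F) = L^2(\Omega;E)\oplus L^2(\Omega;F)\), so that \(\mathcal{D}_+ = D_{\min}\) and \(\mathcal{D}_- = D^\dagger_{\min}\). By \eqref{eq:max_is_adjoint_of_min_of_formal_adjoint}, \((\mathcal{D}_-)^* = (D^\dagger_{\min})^* = D_{\max}\), and the closedness hypothesis on \(\ran D^\dagger_{\min}\) is exactly the closedness hypothesis on \(\ran \mathcal{D}_-\) needed in \Cref{thm:boundary_map_of_even_horst}. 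That theorem then yields
\begin{equation*}
    \partial [D] = [P_{\Ker D_{\max}}] - [P_{\Ker D^\dagger_{\min}}] \quad\text{in}\quad K^1(C_0(\partial\Omega)),
\end{equation*}
where the Busby invariants are constructed as in \Cref{ex:busby_from_projection} from \(\rho_\pm \circ s\), that is, by pointwise multiplication on \(L^2(\Omega;E)\) and \(L^2(\Omega;F)\) respectively through any completely positive (or merely \(*\)-linear) right-inverse \(s\colon C_0(\partial\Omega)\to C_0(\overline{\Omega})\) of \(\gamma_0\); such an \(s\) exists by nuclearity of \(C_0(\overline{\Omega})\).

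To collapse the right-hand side to a single term, I would invoke \Cref{remark:a_compact_resolvent_kills_one_term_of_boundary}. The triple in \Cref{ex:construct_formally_selfadjoint_diff_op} is obtained as a localization in the sense of \Cref{def:localization_horst}, so by \Cref{thm:localization_horst} it enjoys the extra property
\begin{equation*}
    a(1+\mathcal{D}^*\mathcal{D})^{-\frac{1}{2}} \in \mathbb{K}(L^2(\Omega;E\oplus F)) \qquad \text{for all } a\in C_c^\infty(\overline{\Omega}),
\end{equation*}
and therefore, by norm density of \(C_c^\infty(\overline{\Omega})\) in \(C_0(\overline{\Omega})\) combined with the identity \(aP_{\Ker \mathcal{D}} = a(1+\mathcal{D}^*\mathcal{D})^{-\frac{1}{2}}P_{\Ker \mathcal{D}}\), for all \(a\in C_0(\overline{\Omega})\). \Cref{remark:a_compact_resolvent_kills_one_term_of_boundary} thus gives \([P_{\Ker D^\dagger_{\min}}] = 0\) in \(\Ext^{-1}(C_0(\partial\Omega))\cong K^1(C_0(\partial\Omega))\), proving the claim.

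Regarding the parenthetical assertion that \(\overline{\Omega}\) compact suffices for \(\ran D^\dagger_{\min}\) to be closed: when \(\overline{\Omega}\) is compact the unit lies in \(C_c^\infty(\overline{\Omega})\), so \((1+\mathcal{D}^*\mathcal{D})^{-\frac{1}{2}}\) is itself compact on \(L^2(\Omega;E\oplus F)\). By \Cref{remark:finite_dim_kernel}, \(\mathcal{D}\) is then semi-Fredholm, and in particular its off-diagonal components \(D_{\min}\) and \(D^\dagger_{\min}\) have closed range. There is no genuine obstacle in this proof: every ingredient (the abstract boundary formula, the adjoint identification, and the vanishing of the spurious term) has been set up in the preceding lemmas, and the only substantive check is that the density argument transporting the compactness of \(a(1+\mathcal{D}^*\mathcal{D})^{-\frac{1}{2}}\) from \(\mathcal{A}\) to \(A\) survives passage to the Busby-invariant class, which it does because \(\mathbb{K}(\mathcal{H})\) is norm-closed.
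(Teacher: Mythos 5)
Your proof is correct and matches the approach the paper intends: apply \Cref{thm:boundary_map_of_even_horst} with \(\mathcal{D}_- = D^\dagger_{\min}\), use \eqref{eq:max_is_adjoint_of_min_of_formal_adjoint} to identify \((\mathcal{D}_-)^* = D_{\max}\), and then invoke \Cref{remark:a_compact_resolvent_kills_one_term_of_boundary} (via \Cref{thm:localization_horst}) to kill the \([P_{\Ker D^\dagger_{\min}}]\) term. All the supporting checks — the grading convention so that \(\mathcal{D}_\pm\) are correctly identified, the density passage from \(C_c^\infty(\overline{\Omega})\) to \(C_0(\overline{\Omega})\), and the compact-resolvent argument for the parenthetical about compact \(\overline{\Omega}\) — are done properly.
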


\begin{remark}
    \label{remark:viewing_kerDmax_as_generalized_bergman}
    Let \(\mathcal{O}_D(\Omega)=\setcond{f\in \mathcal{D}(\Omega)}{Df=0}\), then \(\Ker D_{\max} = \mathcal{O}_D(\Omega)\cap L^2(\Omega;E)\) can be seen as a generalization of the Bergman space
    \(\mathcal{O}_{\overline{\partial}}(\Omega)\cap L^2(\Omega)\) constructed from \(\overline{\partial}\) in the case when \(\Omega\) is an open set in a complex manifold and \(\mathcal{O}_{\overline{\partial}}(\Omega)\) are the functions that are holomorphic in \(\Omega\).
\end{remark}

\begin{example}
    Given a higher-order spectral triple, one may ask if its \(K\)-homology class is the boundary map applied to the \(K\)-homology class of some higher-order relative spectral triple. This is indeed the case. Let \((\mathcal{A}, \mathcal{H}, \mathcal{D})\) be an odd higher-order spectral triple for \(A\) of order \(m\), then 
    \begin{equation}
        \partial \left[\begin{pmatrix}0 & (-\partial_t)_{\min} + \mathcal{D} \\ (\partial_t)_{\min} + \mathcal{D} & 0\end{pmatrix}\right] = [\mathcal{D}]
    \end{equation}
    where the boundary map acts as \(\partial\colon K^0(C_0((0,\infty)\otimes A)\to K^1(A)\) and the preimage is the exterior Kasparov product \([(-i\partial_t)_{\min}]\hat{\otimes} [\mathcal{D}]\) constructed as in \Cref{example:odd_odd_tensor_product_of_horst}. This is shown by the functorality of the boundary map and the exterior Kasparov product together with \(\partial [(-i\partial_t)_{\min}] = \mathds{1}_{\mathbb{C}}\).
\end{example}

\subsection{Independence of extension}
\label{sec:independence_of_extension}

Let \(\mathcal{D}\) be as in a higher-order relative spectral triple for \(J\vartriangleleft A\) and \(\mathcal{D}_e\) be an extension of \(\mathcal{D}\) satisfying \(\mathcal{D}\subseteq \mathcal{D}_e \subseteq \mathcal{D}^*\). Using \Cref{thm:F_is_almost_selfadjoint} we see that \(\mathcal{D}_e(1+\mathcal{D}_e^*\mathcal{D}_e)^{-\frac{1}{2}}\) is a \(J\)-compact perturbation of \(\mathcal{D}(1+\mathcal{D}^*\mathcal{D})^{-\frac{1}{2}}\).

\begin{corollary}[\(K\)-cycle is independent of extension]
\label{thm:Kcycle_independent_of_extension}
Let \((\mathcal{J}\vartriangleleft\mathcal{A},\mathcal{H},\mathcal{D})\) be a higher-order relative spectral triple for \(J\vartriangleleft A\) and \(\mathcal{D}_e\) be any extension of \(\mathcal{D}\) satisfying \(\mathcal{D}\subseteq \mathcal{D}_e \subseteq \mathcal{D}^*\). Then the bounded transform of \(\mathcal{D}_e\) defines a \(K\)-cycle for \(J\) (not relative) and 
\begin{equation}
    [(\rho, \mathcal{H}, \mathcal{D}_e(1+\mathcal{D}_e^*\mathcal{D}_e)^{-\frac{1}{2}})] = [(\rho, \mathcal{H}, \mathcal{D}(1+\mathcal{D}^*\mathcal{D})^{-\frac{1}{2}})] \quad\text{in}\quad  K^*(J).
\end{equation}
\end{corollary}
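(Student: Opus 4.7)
The plan is to present this as a short assembly of two facts already established earlier in the paper. First, I would note that the relative $K$-cycle $(\rho, \mathcal{H}, F)$ produced by \Cref{thm:horst_defines_kcykle} is, \emph{a fortiori}, a (non-relative) $K$-cycle for $J$: the conditions $\rho(j)(F - F^*), \rho(j)(1 - F^2) \in \mathbb{K}(\mathcal{H})$ for $j \in J$ are exactly two of the three defining axioms of a $K$-cycle for $J$, while the third axiom $[F, \rho(j)] \in \mathbb{K}(\mathcal{H})$ for $j \in J$ is a special case of the relative axiom $[F, \rho(a)] \in \mathbb{K}(\mathcal{H})$ for $a \in A$ applied to $a = j \in J \subseteq A$. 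Hence $[(\rho, \mathcal{H}, F)] \in K^*(J)$ is well defined from the outset.

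Second, I would invoke \Cref{thm:F_is_almost_selfadjoint}, which was stated in the generality of an arbitrary closed extension $\mathcal{D} \subseteq \mathcal{D}_e \subseteq \mathcal{D}^*$ precisely with this application in mind: it yields $\rho(j)(F - F_e) \in \mathbb{K}(\mathcal{H})$ for every $j \in J$, which is the definition of $F_e$ being a $J$-compact perturbation of $F$. The conclusion then follows from the general principle recalled in the subsection on relative $K$-homology: a $J$-compact perturbation of a $K$-cycle for $J$ is again a $K$-cycle for $J$ in the same class, with explicit operator homotopy $t \mapsto tF + (1-t)F_e$ realising the equality of classes.

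Since \Cref{thm:F_is_almost_selfadjoint} has already absorbed all the analytic content, no substantive obstacle remains; this corollary is essentially a packaging result. The only bookkeeping worth making explicit is that the hypothesis ``$K$-cycle for $J$'' in the perturbation principle is genuinely non-relative, which is precisely why the first step of extracting a $J$-cycle from the relative cycle is needed at all before the perturbation lemma can be applied.
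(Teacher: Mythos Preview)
Your proposal is correct and follows essentially the same route as the paper: the paper simply observes, just before stating the corollary, that \Cref{thm:F_is_almost_selfadjoint} shows $F_e$ is a $J$-compact perturbation of $F$, and then the corollary is stated without further argument. Your write-up makes explicit the (tacit) step that a relative $K$-cycle for $J\vartriangleleft A$ restricts to a $K$-cycle for $J$, but the substance---invoke \Cref{thm:F_is_almost_selfadjoint} and the $J$-compact perturbation principle---is identical.
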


In particular, since \(K^*(J)\) is isomorphic to \(K^*(J\vartriangleleft A)\) by excision, the classes in \Cref{thm:Kcycle_independent_of_extension} are mapped to the same element by the boundary map described in \Cref{sec:boundary_map}. Hence, if there would exist an extension \(\mathcal{D}_e\) such that \((\rho, \mathcal{H}, \mathcal{D}_e(1+\mathcal{D}_e^*\mathcal{D}_e)^{-\frac{1}{2}})\) defines a \(K\)-cycle not only for \(J\) but also for \(A\), then \(\partial [\mathcal{D}]=0\). In this sense, \(\partial [\mathcal{D}]\) is an obstruction of well-behaved extensions. For a differential operator \(D\) on an open set \(\Omega\) that is elliptic in some calculus presented in \Cref{sec:host_examples}, such extensions are closed realizations, and realizations of \(D\) corresponds to fixing a boundary condition for \(D\) in the sense of \cite{Bandara_Goffeng_Saratchandran_2023}. Hence, any choice of boundary condition of \(D\) gives rise to the same class in \(K^*(C_0(\Omega))\) and the boundary map is an obstruction of the existence of boundary conditions that give \(K\)-cycles that can be lifted to \(K^*(C_0(\overline{\Omega}))\). This further motivates our notation \([D]\) from \Cref{ex:construct_formally_selfadjoint_diff_op}. Usage of this type of independence of boundary condition can be found \cite{Baum_van_Erp_2021}, where they use a specific boundary condition of \(\overline{\partial} + \overline{\partial}^*\) for a \(K\)-theoretic proof of Boutet de Monvel's index theorem for Toeplitz operators on pseudoconvex domains in \(\mathbb{C}^n\). 

\subsection{Independence of lower order perturbations}

This section will show that the \(K\)-homology class of a higher-order relative spectral triple constructed from a differential operator in an appropriate calculus (see \Cref{sec:host_examples}) does not change if we perturb the differential operator by something lower order in said calculus. Hence, the \(K\)-homology class only depends on the symbol of the differential operator in that calculus.

\begin{lemma}
    \label{thm:abstract_independence_of_lot}
    Let \(T\) be a closed densely defined operator such that \((\rho, \mathcal{H}, T(1+T^*T)^{-\frac{1}{2}})\) defines a \(K\)-cycle for a \ensuremath{C^*}-algebra \(J\) and \(j(1+T^*T)^{-\frac{1}{2}}\in \mathbb{K}(\mathcal{H})\) for all \(j\in J\). Let \(S\) be a closed densely defined operator on \(\mathcal{H}\) such that \(\dom S = \dom T\) and \((T-S)(1+T^*T)^{-\frac{1}{2}+\frac{1}{2m}}\) and \((T-S)(1+S^*S)^{-\frac{1}{2}+\frac{1}{2m}}\) are bounded where they are defined for some \(m>0\). Then \((\rho, \mathcal{H}, S(1+S^*S)^{-\frac{1}{2}})\) defines a \(K\)-cycle for \(J\) and 
    \begin{equation}
        [(\rho, \mathcal{H}, S(1+S^*S)^{-\frac{1}{2}})] = [(\rho, \mathcal{H}, T(1+T^*T)^{-\frac{1}{2}})] \quad\text{in}\quad K^*(J).
    \end{equation}
\end{lemma}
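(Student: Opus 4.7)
The plan is to prove that $F_S := S(1+S^*S)^{-1/2}$ is a two-sided $J$-compact perturbation of $F_T := T(1+T^*T)^{-1/2}$, i.e.\ that both $j(F_T - F_S)$ and $(F_T - F_S)j$ lie in $\mathbb{K}(\mathcal{H})$ for every $j\in J$. Once this is established, the three $K$-cycle conditions for $F_S$ follow from those of $F_T$ by adding compact operators, and the straight-line operator homotopy $t\mapsto tF_T + (1-t)F_S$ is a norm-continuous path of $K$-cycles for $J$ that witnesses $[F_S]=[F_T]$ in $K^*(J)$.

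For the left-hand compactness I would follow the Baaj-Julg integral strategy used in \Cref{thm:F_is_almost_selfadjoint}. Starting from
\begin{equation*}
F_T - F_S = \frac{1}{\pi}\int_1^\infty (\mu-1)^{-1/2}\bigl[T(\mu+T^*T)^{-1} - S(\mu+S^*S)^{-1}\bigr]\,d\mu
\end{equation*}
pointwise on $\dom T = \dom S$, the algebraic identity $T(\mu+S^*S) - (\mu+TT^*)S = \mu(T-S) - T(T-S)^*S$ together with $(\mu+TT^*)^{-1}T = T(\mu+T^*T)^{-1}$ decomposes the integrand as
\begin{equation*}
\mu(\mu+TT^*)^{-1}(T-S)(\mu+S^*S)^{-1} \;-\; T(\mu+T^*T)^{-1}(T-S)^*S(\mu+S^*S)^{-1}.
\end{equation*}
Next I would record that $j(1+TT^*)^{-1/2}\in\mathbb{K}(\mathcal{H})$ follows from the $K$-cycle hypothesis on $F_T$: indeed $j(1+TT^*)^{-1}=j(1-F_TF_T^*) = j(1-F_T^2) + F_Tj(F_T-F_T^*) + [j,F_T](F_T-F_T^*)$, each summand compact, and $AA^*\in\mathbb{K}\Rightarrow A\in\mathbb{K}$ upgrades this to the stated form. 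Multiplying each summand of the integrand by $j$ on the left, I factor off the bounded operator $(T-S)(1+S^*S)^{-1/2+1/2m}$ (first summand) or the bounded adjoint $((T-S)(1+T^*T)^{-1/2+1/2m})^*$ (second summand), and apply the functional-calculus bound $\sup_{x\geq 0}(1+x)^{1/2-1/2m}(\mu+x)^{-1} \lesssim \mu^{-1/2-1/2m}$ valid for $\mu\geq 1$. The compact factors $j(\mu+TT^*)^{-1/2}$ and $jT(\mu+T^*T)^{-1/2}=j(\mu+TT^*)^{-1/2}\cdot T(\mu+T^*T)^{-1/2}$ then make each summand compact with operator norm $\lesssim \mu^{-1/2-1/2m}$. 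Because $(\mu-1)^{-1/2}\mu^{-1/2-1/2m}$ is integrable on $[1,\infty)$, the Bochner integral converges in operator norm to a compact operator, yielding $j(F_T-F_S)\in\mathbb{K}(\mathcal{H})$.

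The right-hand statement $(F_T-F_S)j\in\mathbb{K}(\mathcal{H})$ is equivalent, via adjoints and $J^*=J$, to $j(F_T^*-F_S^*)\in\mathbb{K}(\mathcal{H})$. I would prove this by the same argument applied to $F_T^*=T^*(1+TT^*)^{-1/2}$ and $F_S^*$, starting from the analogous integral representation of $F_T^*-F_S^*$ and using the symmetric expansion $T^*(\mu+SS^*)-(\mu+T^*T)S^* = \mu(T^*-S^*) - T^*(T-S)S^*$. Here the compactness factor is the hypothesized $j(1+T^*T)^{-1/2}\in\mathbb{K}(\mathcal{H})$ used directly, while $(T-S)(1+T^*T)^{-1/2+1/2m}$ plays the role of the bounded lower-order factor. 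This is precisely why both boundedness assumptions on $T-S$ appear in the statement: one for the left-compactness estimate and one for the right.

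The main technical obstacle is the careful interpretation of $(T-S)^*$, whose natural domain can be strictly larger than $\dom T^*\cap\dom S^*$. Every algebraic identity above must be read either pointwise on $\dom T=\dom S$ (where both $T$ and $S$ act genuinely) or with $(T-S)^*$ replaced by the bounded operator $((T-S)(1+T^*T)^{-1/2+1/2m})^*$ (or its $S$-analogue) furnished by the hypotheses; together with the resolvent identities $(\mu+TT^*)^{-1}T=T(\mu+T^*T)^{-1}$ and functional calculus on $\mathbb{R}_{\geq 0}$, this makes the norm estimates rigorous and the integral formula genuinely produces operator-norm-convergent Bochner integrals of compact operators.
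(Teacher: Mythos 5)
Your proposal follows the paper's strategy exactly -- the Baaj--Julg integral formula, an algebraic decomposition that isolates $T-S$ from the difference of resolvents, and the $\mu^{-1/2-1/2m}$ decay estimate -- and is correct in essence. The difference is that the paper contents itself with proving the one-sided statement $\bigl(T(1+T^*T)^{-1/2}-S(1+S^*S)^{-1/2}\bigr)j\in\mathbb{K}(\mathcal{H})$ and invokes the ``$J$-compact perturbation'' remark, whereas you establish both $j(F_T-F_S)\in\mathbb{K}$ and $(F_T-F_S)j\in\mathbb{K}$ before running the straight-line homotopy. Your extra care is actually warranted: the paper's remark defines a $J$-compact perturbation by the \emph{left} condition $j(F-F')\in\mathbb{K}$, but the lemma's proof produces the \emph{right}-hand compactness, and one does not follow from the other for free (even given that $F_T$ is a $K$-cycle). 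Checking the $K$-cycle conditions for $F_S$ genuinely uses both sides, so your two-sided argument closes a gap that the paper glosses over. Your derivation of $j(1+TT^*)^{-1/2}\in\mathbb{K}(\mathcal{H})$ from the given $j(1+T^*T)^{-1/2}\in\mathbb{K}(\mathcal{H})$ together with the $K$-cycle conditions on $F_T$ (via $1-F_TF_T^*=(1+TT^*)^{-1}$ and $AA^*\in\mathbb{K}\Rightarrow A\in\mathbb{K}$) is a clean observation that does not appear in the paper and is needed for the left-hand estimate.

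Two small corrections. First, the identity you write, $jT(\mu+T^*T)^{-1/2}=j(\mu+TT^*)^{-1/2}\cdot T(\mu+T^*T)^{-1/2}$, is off by a power of the resolvent: since $(\mu+TT^*)^{-1/2}T=T(\mu+T^*T)^{-1/2}$, the right-hand side equals $jT(\mu+T^*T)^{-1}$, not $jT(\mu+T^*T)^{-1/2}$ (and indeed $jT(\mu+T^*T)^{-1/2}=jF_T$-type factors need not be compact). Fortunately it is $jT(\mu+T^*T)^{-1}=j(\mu+TT^*)^{-1/2}\cdot T(\mu+T^*T)^{-1/2}$ that appears in your second summand, so the compactness conclusion is unaffected. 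Second, to make the second summand rigorous you should replace the unbounded $(T-S)^*$ in $T(\mu+T^*T)^{-1}(T-S)^*S(\mu+S^*S)^{-1}$ by the bounded adjoint via the factorisation $T(\mu+T^*T)^{-1}(T-S)^* = T(\mu+T^*T)^{-1/2}\bigl((T-S)(\mu+T^*T)^{-1/2}\bigr)^*$, exactly as the paper does in its second summand with $\bigl((T-S)(\mu+S^*S)^{-1/2}\bigr)^*$; your closing remark shows you are aware of this, but it is worth spelling out since it is the only place where the domain of $(T-S)^*$ could cause trouble.
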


\begin{proof}

    It is enough to show that \((T(1+T^*T)^{-\frac{1}{2}} - S(1+S^*S)^{-\frac{1}{2}})j\in \mathbb{K}(\mathcal{H})\) for all \(j\in J\). This will be proved similarly to \Cref{thm:F_is_almost_selfadjoint} by using the integral formula \eqref{eq:integral_formula}. Namely,
    \begin{equation}
        (T(1+T^*T)^{-\frac{1}{2}} - S(1+S^*S)^{-\frac{1}{2}})j = \frac{1}{\pi} \int_1^\infty (\mu-1)^{-\frac{1}{2}} R_\mu ~d\mu
    \end{equation}
    point-wise on \(\dom T = \dom S\), where
    \begin{equation}
        R_\mu \coloneqq  (T(\mu+T^*T)^{-1} - S(\mu + S^*S)^{-1})j.
    \end{equation}
    We can rewrite \(R_\mu\) as
    \begin{equation}
        \begin{split}
            R_\mu
            =& (\mu + SS^*)(\mu + SS^*)^{-1} T(\mu + T^*T)^{-1} j \\
            &- S(\mu + S^*S)^{-1} (\mu + T^*T)(\mu + T^*T)^{-1}j \\
            =& \mu (\mu + SS^*)^{-1}(T-S)(\mu + T^*T)^{-1}j \\
            &- S(\mu + S^*S)^{-\frac{1}{2}}\left( (T - S) (\mu + S^*S)^{-\frac{1}{2}} \right)^* T (\mu + T^*T)^{-1}j
        \end{split}
    \end{equation}
    and using that \((\mu + T^*T)^{-\frac{1}{2}}j\in\mathbb{K}(\mathcal{H})\) and similar norm estimates as in \Cref{thm:F_is_almost_selfadjoint} we see that the integral is norm convergent over compact operators.
\end{proof}

\begin{corollary}
    \label{thm:independent_of_lot}
    Let \(\Omega\subseteq M\) be an open set in a smooth manifold \(M\) and \(D_1, D_2\) be formally self-adjoint differential operators of order \(m>0\) in one of the calculi presetned in \Cref{sec:host_examples} such that they have the same principal symbol in that calculus, then
    \begin{equation}
        [D_{1, \min}] = [D_{2, \min}] \quad\text{in}\quad K^*(C_0(\Omega)\vartriangleleft C_0(\overline{\Omega})).
    \end{equation}
\end{corollary}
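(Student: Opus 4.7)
The plan is to verify the hypotheses of \Cref{thm:abstract_independence_of_lot} for $T = D_{1,\min}$ and $S = D_{2,\min}$ on $\mathcal{H} = L^2(\Omega;E)$, obtain an equality in $K^*(C_0(\Omega))$, and then transfer to $K^*(C_0(\Omega)\vartriangleleft C_0(\overline{\Omega}))$ via the excision isomorphism.

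First I note that by \Cref{thm:diff_op_horst} each $D_{i,\min}$ fits into a higher-order relative spectral triple, so by \Cref{thm:horst_defines_kcykle} its bounded transform $F_i$ is a relative $K$-cycle for $C_0(\Omega)\vartriangleleft C_0(\overline{\Omega})$, and in particular a $K$-cycle for $J := C_0(\Omega)$. The compact-resolvent condition $j(1+D_{i,\min}^*D_{i,\min})^{-1/2}\in\mathbb{K}(\mathcal{H})$ for all $j\in J$ is condition \ref{item:condition_j_compact_resolvent} of \Cref{def:higher_order_relative_spectral_triple}, so both $F_i$ satisfy the standing hypothesis of \Cref{thm:abstract_independence_of_lot}.

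Next I analyze the perturbation $R := D_1 - D_2$. Because $D_1$ and $D_2$ share the same principal symbol of order $m$ in the fixed calculus, $R$ is a differential operator of order at most $m-1$ in that calculus. The elliptic-regularity argument of \Cref{ex:classical_diff_op_as_host} and its analogues in \Cref{sec:host_examples} furnish a Sobolev scale on which $D_i$ and $R$ extend as continuous operators of respective orders $m$ and $m-1$. Testing on $f\in C_c^\infty(\Omega;E)$, extended by zero to the ambient manifold, one obtains an estimate of the form
\begin{equation}
    \norm{Rf}_{L^2} \lesssim \norm{f}_{H^{m-1}} \lesssim \norm{f}_{L^2}^{1/m}\,\norm{f}_{D_i}^{(m-1)/m},
\end{equation}
where the last inequality uses interpolation between $L^2$ and the graph norm of $D_i$. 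In particular the $D_1$- and $D_2$-graph norms are equivalent on $C_c^\infty(\Omega;E)$, so $\dom D_{1,\min} = \dom D_{2,\min}$ as Hilbert spaces. Combining the estimate with the isomorphism \eqref{eq:range_interpolation_space}, the operator $(1+D_{i,\min}^*D_{i,\min})^{-(m-1)/(2m)}$ maps $L^2(\Omega;E)$ isomorphically onto $[L^2(\Omega;E),\dom D_{i,\min}]_{(m-1)/m}$, on which $R$ is bounded into $L^2$; equivalently $R(1+D_{i,\min}^*D_{i,\min})^{-1/2+1/(2m)}\in\mathbb{B}(L^2(\Omega;E))$ for $i=1,2$.

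With all hypotheses verified, \Cref{thm:abstract_independence_of_lot} yields $[F_1] = [F_2]$ in $K^*(C_0(\Omega))$. Since both cycles are already relative and the forgetful map $K^*(C_0(\Omega)\vartriangleleft C_0(\overline{\Omega})) \to K^*(C_0(\Omega))$ is an isomorphism by excision, the conclusion $[D_{1,\min}] = [D_{2,\min}]$ in $K^*(C_0(\Omega)\vartriangleleft C_0(\overline{\Omega}))$ follows. The main technical obstacle is the perturbation bound: one has to pass from calculus-level estimates on $R$ to the abstract interpolation space $[L^2,\dom D_{i,\min}]_\theta$ in a situation where $\Omega$ need not have any boundary regularity. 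This is typically handled by localizing via a cutoff $\chi\in C_c^\infty(\Omega)$ and exploiting that $R$ is local together with interior elliptic regularity for $D_i$, reducing the question to a compactly supported estimate handled by the calculus.
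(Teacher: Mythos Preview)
Your proposal is correct and follows the same route as the paper: the corollary is stated there without proof, as an immediate application of \Cref{thm:abstract_independence_of_lot}, and you supply exactly the verification of its hypotheses (domain equality via equivalence of graph norms on $C_c^\infty(\Omega;E)$, the perturbation bound, and the excision step to pass from $K^*(C_0(\Omega))$ back to the relative group).

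One remark on the ``main technical obstacle'' you flag at the end: the interpolation bound can be obtained more cleanly than by cutoff arguments. Extension by zero gives continuous embeddings $L^2(\Omega;E)\hookrightarrow L^2(M;E)$ and $\dom D_{i,\min}\hookrightarrow \mathcal{H}^m$ (the ambient Sobolev scale of \Cref{sec:interior_regularity}), the latter because the graph norm of $D_{i,\min}$ is equivalent to the $\mathcal{H}^m$-norm on $C_c^\infty(\Omega;E)$ by ellipticity. Functoriality of complex interpolation then yields a continuous map $[L^2(\Omega;E),\dom D_{i,\min}]_{(m-1)/m}\to \mathcal{H}^{m-1}$, on which $R$ is bounded into $L^2(M;E)$; locality of $R$ returns you to $L^2(\Omega;E)$. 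This avoids any appeal to boundary regularity of $\Omega$.
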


Note that in the proof of \Cref{thm:abstract_independence_of_lot} the domains of \(T\) and \(S\) have to be treated carefully. In particular, we do not assume any relation between \(\dom T^*\) and \(\dom S^*\), as such domains correspond to the maximal domain of a differential operator in \Cref{thm:independent_of_lot}. The maximal domain of a differential operator can depend on lower-order terms as they affect the maximal kernel.

\Cref{thm:abstract_independence_of_lot} also hints at a way to handle differential operators with non-smooth coefficients.

\section{The even boundary map of a differential operator}
\label{sec:even_boundary_map_geometric_setting}

This section will consider the boundary map when applied to a higher-order relative spectral triple constructed from a classically elliptic differential operator on a compact smooth manifold with boundary. With the added assumption of smooth boundary we can refer to the classical results of Seeley in order to describe the image of the boundary map in boundary data only. This culminates in proving \Cref{thm:boundary_index_thm}.

\subsection{The trace operator}

This subsection will fix some notation for smooth manifolds with boundary and Lions-Magenes theorem. Let \(\Omega\subseteq M\) be a smooth manifold with boundary inside a smooth manifold \(M\). For vector bundles \(E\to \Omega\), we will abuse notation and write \(E\) for \(E|_{\partial \Omega}\to \partial \Omega\) when the base space is clear from the context. Fix a smooth inward-pointing normal \(\frac{\partial}{\partial x_n}\) of \(\partial \Omega\) and a cotangent section \(dx_n\) dual to \(\frac{\partial}{\partial x_n}\) near \(\partial \Omega\). Define the trace of order \(k\) as 
\begin{equation}
    \gamma_k\colon C^\infty(\overline{\Omega})\to C^\infty(\partial \Omega), f\mapsto \left.\left(\frac{\partial^k}{\partial x_n^k} f\right)\right|_{\partial \Omega}.
\end{equation}
We extend \(\gamma_k\) to sections of a vector bundle \(E\)
\begin{equation}
    \gamma_k\colon C^\infty(\overline{\Omega}; E)\to C^\infty(\partial \Omega; E),
\end{equation}
by trivializing to local charts. Define the \emph{full trace} of order \(m\geq 1\) as
\begin{equation}
    \gamma=\begin{pmatrix}\gamma_0 \\ \vdots \\ \gamma_{m-1}\end{pmatrix} \colon C^\infty(\overline{\Omega}; E)\to \bigoplus^m C^\infty(\partial \Omega; E).
\end{equation}
By the trace theorem \cite[Proposition 4.5, Chapter 4]{Taylor_2011} \(\gamma\) extends to a continuous surjection
\begin{equation}
    \label{eq:trace_theorem}
    \gamma\colon H^s(\overline{\Omega};E)\to \mathbb{H}^{s-\frac{1}{2}}(\partial \Omega;E\otimes \mathbb{C}^m)
\end{equation} 
for any \(s\in \mathbb{R}\) such that \(s>m-\frac{1}{2}\), where 
\begin{equation}
    \mathbb{H}^s(\partial \Omega; E\otimes \mathbb{C}^m) \coloneqq  \bigoplus_{0\leq l \leq m-1} H^{s-l}(\partial \Omega; E)
\end{equation}
is a \emph{mixed order Sobolev space}. Also, let 
\begin{equation}
    \mathbb{H}^s(\partial \Omega; E\otimes \mathbb{C}^m_ {\operatorname{op}} ) \coloneqq  \bigoplus_{0\leq l \leq m-1} H^{s+l}(\partial \Omega; E)
\end{equation}
which is the dual to \(\mathbb{H}^{-s}(\partial \Omega; E\otimes \mathbb{C}^m)\) in the \(L^2\)-pairing.

For the maximal domain of a classically elliptic differential operator which is a subset of \(L^2(\Omega;E)\), the trace theorem does not give a precise enough statement of regularity at the boundary. This will be remedied by the Lions-Magenes theorem.

Using the inward-pointing normal \(\frac{\partial}{\partial x_n}\) of \(\partial \Omega\) we can introduce coordinates close to the boundary \((x', x_n)\in \partial \Omega \times [0, 1) \subseteq \Omega\). A differential operator \(D\in \Diff(\Omega;E,F)\) can be written as
\begin{equation}
    \label{eq:diff_op_close_to_boundary}
    D = \sum_{j=0}^m A_j D_{x_n}^{m-j}
\end{equation}
close to the boundary where \(D_{x_n}=-i\frac{\partial}{\partial x_n}\) and \(A_j = (A_j(x_n))\) is a family of differential operators \(A_j(x_n)\in \Diff(\partial \Omega; E, F)\).

\begin{lemma}
    \label{thm:seeley_a}
    Let \(\Omega\subseteq M\) be a smooth manifold with boundary and \(D\in \Diff(M;E,F)\) be an operator of order \(m\). There is a matrix of differential operators \(\mathfrak{A}\colon C^\infty(\partial \Omega;E\otimes \mathbb{C}^m)\to C^\infty(\partial \Omega;F\otimes \mathbb{C}^m)\) on the form 
    \begin{equation}
        \mathfrak{A} = -i \begin{pmatrix}
            A_{m-1, 1} & A_{m-2, 1} & \cdots & A_{2,1} & A_{1,1} & A_0 \\ 
            A_{m-2, 2} & A_{m-3, 2} & \cdots & A_{1,2} & A_0 & 0 \\
            A_{m-3, 3} & &  & A_0 & 0 \\
            \vdots & & \iddots & & \vdots  \\
            A_{1, m-1} & A_0 & & & & \\
            A_0 & 0 & \cdots & & & 0
        \end{pmatrix} 
    \end{equation}
    satisfying
    \begin{equation}
        \label{eq:seeleya_formula}
        \braket{f, D^\dagger g}_{L^2(\Omega; E)} - \braket{D f, g}_{L^2(\Omega; F)} = \braket{\mathfrak{A}\gamma f, \gamma g}_{L^2(\partial \Omega; E\otimes \mathbb{C}^m)}
    \end{equation}
    for any \(f\in H^m(\overline{\Omega};E)\) and \(g\in H^m(\overline{\Omega};F)\), where \(A_{j,k}\in \Diff(\partial \Omega; E, F)\) is of order at most \(j\) and \(A_0=A_0|_{x_n=0}\in C^\infty(\partial \Omega; \Hom(E,F))\) is of order \(0\) as in \eqref{eq:diff_op_close_to_boundary}.
    
    Furthermore, if \(D\) is classically elliptic, then \(\mathfrak{A}\) is invertible and the inverse is also a matrix of differential operators. In particular, for any \(s\in \mathbb{R}\)
    \begin{equation}
        \mathfrak{A}\colon \mathbb{H}^s(\partial \Omega; E\otimes \mathbb{C}^m)\to \mathbb{H}^{s - (m - 1)}(\partial \Omega; E\otimes \mathbb{C}^m_ {\operatorname{op}} )
    \end{equation}
    is an isomorphism of Hilbert spaces.
\end{lemma}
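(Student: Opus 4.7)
The plan is to derive the Green's formula \eqref{eq:seeleya_formula} by localizing to a collar neighborhood of $\partial\Omega$ and iterating one-dimensional integration by parts in the normal direction, read off the structure of $\mathfrak{A}$ from the resulting boundary terms, and then deduce invertibility from the fact that the antidiagonal entry $A_0$ is essentially the principal symbol of $D$ in the conormal direction.

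Using a cutoff supported in a collar $\partial\Omega \times [0,\varepsilon) \subseteq \overline{\Omega}$, I would first reduce to $f,g$ supported in the collar, since the complementary contribution pairs sections where $D$ and $D^\dagger$ already satisfy the formal adjoint relation \eqref{eq:formal_adjoint_pairing} with no boundary contribution. In collar coordinates write $D = \sum_{j=0}^m A_j(x_n) D_{x_n}^{m-j}$ as in \eqref{eq:diff_op_close_to_boundary}. Integration by parts in the tangential variables produces no boundary contribution, so each summand reduces, after Fubini, to the one-dimensional identity
\begin{equation}
\int_0^\infty (D_{x_n}^k u)\,\bar v\, dx_n - \int_0^\infty u\,\overline{D_{x_n}^k v}\, dx_n = \sum_{p+q = k-1} c_{p,q}\,(\gamma_p u)\,\overline{\gamma_q v},
\end{equation}
proved by induction on $k$ from the base case $k=1$. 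Collecting all contributions and tracking derivatives, each pair $(\gamma_p f, \gamma_q g)$ with $p+q \leq m-1$ is coupled by a differential operator on $\partial\Omega$; the highest antidiagonal $p+q = m-1$ receives contributions only from the leading term $j=0$, producing exactly $-iA_0$ on each antidiagonal entry, while entries strictly above the antidiagonal arise from the terms with $j\geq 1$ and from subprincipal pieces $A_j(x_n)-A_j(0)$, and an order count shows that the operator coupling $(\gamma_p f, \gamma_q g)$ with $p+q = m-1-\ell$ has order at most $\ell$. Rearranging into matrix form yields $\mathfrak{A}$ with the advertised antidiagonal structure and order bounds. Both sides of \eqref{eq:seeleya_formula} then extend by continuity from $C^\infty(\overline{\Omega};\cdot)$ to $H^m(\overline{\Omega};\cdot)$ using the trace theorem \eqref{eq:trace_theorem}, giving the formula in the stated generality.

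For invertibility under classical ellipticity, the principal symbol in collar coordinates is $\sigma_m(D)(x',0;\xi',\xi_n) = \sum_j \sigma_{j}(A_j(0))(x',\xi')\,\xi_n^{m-j}$, and specializing to $\xi' = 0$ forces the bundle map $A_0$ to be pointwise invertible, so $A_0^{-1} \in C^\infty(\partial\Omega;\Hom(F,E))$. Reversing the order of the rows of $\mathfrak{A}$ turns it into a lower-triangular matrix with diagonal entries $-iA_0$; back substitution row by row then produces $\mathfrak{A}^{-1}$ as a matrix of differential operators on $\partial\Omega$ built polynomially out of $A_0^{-1}$ and the $A_{r,s}$, with the antitriangular order pattern preserved. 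A column-by-column order check then yields the claimed isomorphism $\mathfrak{A}\colon \mathbb{H}^s(\partial\Omega; E\otimes\mathbb{C}^m) \to \mathbb{H}^{s-(m-1)}(\partial\Omega; E\otimes\mathbb{C}^m_{\operatorname{op}})$. The main obstacle is the combinatorial bookkeeping in the previous step: one must identify, with correct signs and order bounds, how each term $A_j(x_n)D_{x_n}^{m-j}$ distributes derivatives between $f$ and $g$ when iterating Leibniz and the one-dimensional Green identity, and verify in particular that no higher-order entries sneak onto the antidiagonal. Once this bookkeeping is organized (e.g.\ by a downward induction on $p+q$), the remainder of the argument is essentially formal.
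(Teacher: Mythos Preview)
Your proposal is correct and follows essentially the same approach as the paper: construct $\mathfrak{A}$ via integration by parts in the normal direction near $\partial\Omega$, extend \eqref{eq:seeleya_formula} to $H^m$ by continuity using the trace theorem, and deduce invertibility from ellipticity by noting that $A_0 = \sigma^m(D)(dx_n)$ is pointwise invertible so the skew-diagonal matrix can be inverted algebraically. The paper's proof is a terse sketch (deferring details to \cite{Seeley_1966}, \cite{Grubb_1996}, \cite{Bandara_Goffeng_Saratchandran_2023}), while you have spelled out the one-dimensional Green identity and the back-substitution more explicitly, but there is no substantive difference in method.
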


\begin{proof}

    One constructs \(\mathfrak{A}\) from \eqref{eq:diff_op_close_to_boundary} using integration by parts and obtain \eqref{eq:seeleya_formula} for smooth functions. We can then extend \eqref{eq:seeleya_formula} to functions in \(H^m(\overline{\Omega};E)\) and \(H^m(\overline{\Omega};F)\) by continuity since the maps \(\mathfrak{A}\gamma\colon H^m(\overline{\Omega};E)\to \mathbb{H}^{\frac{1}{2}}(\partial \Omega; F\otimes \mathbb{C}^m_ {\operatorname{op}} )\) and \(\gamma:H^m(\overline{\Omega};F)\to \mathbb{H}^{-\frac{1}{2}}(\partial \Omega; F\otimes \mathbb{C}^m)\) are continuous and \(\mathbb{H}^{\frac{1}{2}}(\partial \Omega; F\otimes \mathbb{C}^m_ {\operatorname{op}} )\) is the dual of \(\mathbb{H}^{-\frac{1}{2}}(\partial \Omega; F\otimes \mathbb{C}^m)\) in the \(L^2\)-pairing. 
    
    Assuming that \(D\) is elliptic, we see that \(A_0=\sigma_0(A_0)=\sigma^m(D)(dx_n)\) on \(\partial \Omega\) is invertible. So the skew-diagonal in \(\mathfrak{A}\) is invertible, and we can algebraically obtain a matrix inverse with entries as polynomials in the entries of \(\mathfrak{A}\) multiplied by \((A_0)^{-1}\). 

\end{proof}

The construction of \(\mathfrak{A}\) in \Cref{thm:seeley_a} can be found in \cite[p. 794]{Seeley_1966}, \cite[Proposition 1.3.2]{Grubb_1996} and \cite[Proposition 2.15]{Bandara_Goffeng_Saratchandran_2023}.

\begin{theorem}[Lions-Magenes theorem]
    \label{thm:Lions_Magenes}
Let \(\overline{\Omega}\subseteq M\) be a compact smooth manifold with boundary and \(D\in \Diff(M;E,F)\) be a classically elliptic differential operator of order \(m\), then \(\gamma\) extends to a continuous map
\begin{equation}
    \label{eq:Lions_Magenes}
    \gamma\colon\dom D_{\max}\to \mathbb{H}^{-\frac{1}{2}}(\partial \Omega;E\otimes \mathbb{C}^m)
\end{equation} 
with dense range and \(\Ker \gamma = \dom D_{\min}=H_0^m(\Omega;E)\). 
\end{theorem}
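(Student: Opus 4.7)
The plan is to extend $\gamma$ to $\dom D_{\max}$ by duality using Seeley's Green's formula \eqref{eq:seeleya_formula} from \Cref{thm:seeley_a}, and then to identify the kernel via the smooth approximation result (\Cref{thm:smooth_approximation} together with \Cref{remark:smooth_boundary_has_smooth_approximation}) applied to the formal adjoint $D^\dagger$. For $f\in\dom D_{\max}$, the antilinear functional
\begin{equation}
    L_f(g)\coloneqq \braket{f, D^\dagger g}_{L^2(\Omega;E)} - \braket{Df, g}_{L^2(\Omega;F)}
\end{equation}
on $H^m(\overline{\Omega};E)$ satisfies $\abs{L_f(g)}\leq \norm{f}_{D_{\max}}\norm{g}_{H^m}$, and vanishes on $H^m_0(\Omega;F)\subseteq \dom D^\dagger_{\min}$ by \eqref{eq:max_is_adjoint_of_min_of_formal_adjoint}. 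Since the trace \eqref{eq:trace_theorem} is a continuous surjection $\gamma\colon H^m(\overline{\Omega};F)\to \mathbb{H}^{m-\frac{1}{2}}(\partial\Omega;F\otimes\mathbb{C}^m)$ with kernel $H^m_0(\Omega;F)$, the open mapping theorem makes $L_f$ descend to a continuous functional on $\mathbb{H}^{m-\frac{1}{2}}(\partial\Omega;F\otimes\mathbb{C}^m)$, i.e.\ an element of $\mathbb{H}^{\frac{1}{2}-m}(\partial\Omega;F\otimes\mathbb{C}^m_{\operatorname{op}})$. For $f\in H^m(\overline{\Omega};E)$ the Green's formula identifies $L_f = \mathfrak{A}\gamma f$, and \Cref{thm:seeley_a} makes $\mathfrak{A}\colon \mathbb{H}^{-\frac{1}{2}}(\partial\Omega;E\otimes\mathbb{C}^m)\to \mathbb{H}^{\frac{1}{2}-m}(\partial\Omega;F\otimes\mathbb{C}^m_{\operatorname{op}})$ an isomorphism of Hilbert spaces. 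Setting $\gamma f\coloneqq \mathfrak{A}^{-1}L_f$ therefore yields a continuous extension $\gamma\colon \dom D_{\max}\to \mathbb{H}^{-\frac{1}{2}}(\partial\Omega;E\otimes\mathbb{C}^m)$, and its range already contains $\gamma(H^m(\overline{\Omega};E))=\mathbb{H}^{m-\frac{1}{2}}(\partial\Omega;E\otimes\mathbb{C}^m)$, which is dense in the codomain.

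For the kernel, $\dom D_{\min}\subseteq \Ker\gamma$ is immediate from continuity of $\gamma$ and its vanishing on $C_c^\infty(\Omega;E)$. Conversely, $\gamma f = 0$ gives $L_f\equiv 0$, so $\braket{f,D^\dagger g} = \braket{Df, g}$ for all $g\in H^m(\overline{\Omega};F)$. Because $\overline{\Omega}$ is a compact smooth manifold with boundary, \Cref{thm:smooth_approximation} combined with \Cref{remark:smooth_boundary_has_smooth_approximation} applied to the classically elliptic operator $D^\dagger$ makes $C^\infty(\overline{\Omega};F)\subseteq H^m(\overline{\Omega};F)$ dense in $\dom D^\dagger_{\max}$ for the graph norm. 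Passing to the limit, $\braket{f,D^\dagger g}=\braket{Df, g}$ extends to all $g\in \dom D^\dagger_{\max}$, and since $D_{\min}=(D^\dagger_{\max})^*$, this gives $f\in \dom D_{\min}$ with $D_{\min}f = Df$. Finally, classical ellipticity yields the Gårding inequality $\norm{f}_{H^m}\lesssim \norm{Df}_{L^2}+\norm{f}_{L^2}$ on $C_c^\infty(\Omega;E)$, so the $D$-graph norm is equivalent to the $H^m$-norm there; closing in either norm produces $\dom D_{\min}=H^m_0(\Omega;E)$.

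The main obstacle is the nontrivial direction $\Ker\gamma\subseteq \dom D_{\min}$, which hinges on smoothness of $\partial\Omega$: smoothness is what lets \eqref{eq:support_on_domain_condition} apply to $D$ and therefore furnishes smooth approximation for $D^\dagger_{\max}$, thereby providing enough test functions in $H^m(\overline{\Omega};F)$ to saturate the dual pairing defining $(D^\dagger_{\max})^*$. Under weaker regularity of $\partial\Omega$ this density can fail and $\Ker\gamma$ may strictly contain $\dom D_{\min}$; the extension of $\gamma$ itself, however, still makes sense whenever one has the Seeley factorization of \Cref{thm:seeley_a}.
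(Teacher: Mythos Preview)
Your proof is correct and follows essentially the same route as the paper: both hinge on Green's formula \eqref{eq:seeleya_formula} to control $\gamma f$ by the graph norm, and both identify $\Ker\gamma$ via smooth approximation (\Cref{thm:smooth_approximation} and \Cref{remark:smooth_boundary_has_smooth_approximation}). The only cosmetic difference is that the paper first proves the estimate on $C^\infty(\overline{\Omega};E)$ using an explicit right inverse $\mathcal{E}$ of the trace and then extends by density of smooth sections in $\dom D_{\max}$, whereas you define the extension directly by duality as $\mathfrak{A}^{-1}L_f$, with the open mapping theorem replacing the choice of $\mathcal{E}$.
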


\begin{proof}

    Let \(\mathfrak{A}\) be as in \Cref{thm:seeley_a}. Note that \(\mathfrak{A}^\dagger\) has the same form as \(\mathfrak{A}\) and is also invertible. By the trace theorem in \eqref{eq:trace_theorem} \(\gamma\colon H^m(\partial \Omega; F)\to\mathbb{H}^{m-\frac{1}{2}}(\partial \Omega; F\otimes \mathbb{C}^m)\) is a continuous surjection, and hence there is a linear right inverse \(\mathcal{E}\colon\mathbb{H}^{m-\frac{1}{2}}(\partial \Omega; F\otimes \mathbb{C}^m)\to H^m(\overline{\Omega}; F)\) of \(\gamma\) which is continuous by the open mapping theorem \cite[Theorem 2.6]{Brezis_2011}. Using \Cref{thm:seeley_a} we obtain that both \(\mathcal{E} (\mathfrak{A}^\dagger)^{-1}\) and \(D^\dagger \mathcal{E} (\mathfrak{A}^\dagger)^{-1}\) are continuous maps \(\mathbb{H}^{\frac{1}{2}}(\partial \Omega; E\otimes \mathbb{C}^m_ {\operatorname{op}} )\to L^2(\Omega;F)\).

    By \Cref{thm:seeley_a}
    \begin{equation}
        \label{eq:lionsmagenes_boundary_pairing}
        \braket{\gamma f, v}_{L^2(\partial \Omega; E\otimes \mathbb{C}^m)} = \braket{D f, \mathcal{E} (\mathfrak{A}^\dagger)^{-1} v}_{L^2(\Omega; F)} - \braket{f, D^\dagger \mathcal{E} (\mathfrak{A}^\dagger)^{-1} v}_{L^2(\Omega; E)}
    \end{equation}
    for any \(f\in C^\infty(\overline{\Omega};E)\) and \(v\in C^\infty(\partial \Omega;E\otimes \mathbb{C}^m)\) since \(v=\mathfrak{A}^\dagger \gamma \mathcal{E} (\mathfrak{A}^\dagger)^{-1} v\) and \(\mathcal{E} (\mathfrak{A}^\dagger)^{-1} v\in H^m(\overline{\Omega};F)\). It follows from \eqref{eq:lionsmagenes_boundary_pairing} that \(\norm{\gamma f}_{\mathbb{H}^{-\frac{1}{2}}(\partial \Omega; E\otimes \mathbb{C}^m)}\lesssim \norm{f}_D\) for any \(f\in C^\infty(\overline{\Omega};E)\) since \(\mathbb{H}^{\frac{1}{2}}(\partial \Omega; E\otimes \mathbb{C}^m_ {\operatorname{op}} )\) is the dual of \(\mathbb{H}^{-\frac{1}{2}}(\partial \Omega; E\otimes \mathbb{C}^m)\) in the \(L^2\)-pairing. We have that \(C^\infty(\overline{\Omega};E)\subseteq \dom D_{\max}\) is dense by \Cref{remark:smooth_boundary_has_smooth_approximation} and \Cref{thm:smooth_approximation}, hence we can extend \(\gamma\) to \(\dom D_{\max}\).

    Now that we have shown that \eqref{eq:Lions_Magenes} is continuous, we can extend \eqref{eq:seeleya_formula} to \(f\in \dom D_{\max}\) and \(g\in H^m(\overline{\Omega};F)\) in the same way as in the proof of \Cref{thm:seeley_a}. In particular, for any \(f\in \dom D_{\max}\) we see that \(\braket{D f, g}_{L^2(\Omega; F)} = \braket{f, D^\dagger g}_{L^2(\Omega; E)}\) for any \(g\in H^m(\overline{\Omega};F)\) which implies that \(\Ker\gamma \subseteq \dom (D^\dagger_{\max})^*=\dom D_{\min}\). The opposite inclusion follows from density of \(C_c^\infty(\Omega;E)\) in \(\dom D_{\min}\).

    That the range is dense follows from that \(C^\infty(\partial \Omega;E\otimes \mathbb{C}^m)= \gamma(C^\infty(\overline{\Omega};E))\).
\end{proof}

This is a classical result from \cite{Lions_Magenes_1972} and is also proven in \cite{Seeley_1966}. Note that we need to use the specific property of the classical Sobolev spaces in \Cref{thm:smooth_approximation}.

\subsection{The Caldéron projector}

This subsection will present the Calderón projector constructed by Seeley, which is an idempotent with range \(H_D\coloneqq\gamma(\Ker D_{\max}) \subseteq \mathbb{H}^{-\frac{1}{2}}(\partial \Omega; E\otimes \mathbb{C}^m)\). Moreover, the Calderón projector is a pseudo-differential operator in the Douglis-Nirenberg calculus which is essential for our later use in relating it to the class \([P_{\Ker D_{\max}}]\in K^1(C(\partial \Omega))\) that appeared in \Cref{thm:boundary_of_diff_op_is_PkerDmax}. We will also present Hörmander's symbol calculation for the Calderón projector.

\begin{definition}[Douglis-Nirenberg calculus]
    Let \(Y\) be a closed smooth manifold and \(E\to Y\) a Hermitian vector bundle. The set of pseudo-differential operators of order \(n\) in the \emph{Douglis-Nirenberg calculus} \(\Psi_{\textup{DN}}^n(Y;E\otimes \mathbb{C}^m)\) consists of matrices \(T=(T_{jk})\) of operators \(T_{jk}\in \Psi_{\textup{cl}}^{n + j - k}(Y; E)\). We see that
    \begin{equation}
        T\colon \mathbb{H}^s(Y; E\otimes \mathbb{C}^m)\to \mathbb{H}^{s-n}(\partial \Omega; E\otimes \mathbb{C}^m)
    \end{equation}
    is continuous for any \(s\in \mathbb{R}\). The principal symbol of \(\sigma_{\textup{DN}}^n(T)\) of \(T\in \Psi_{\textup{DN}}^n(Y;E)\) is defined as the matrix \(\sigma^m_{\textup{DN}}(T)_{jk} = \sigma^{m + j - k}(T_{jk})\). Hence, one can view it as \(\sigma^m_{\textup{DN}}(T)\in C^\infty(S^*\partial \Omega; M_m(\End(\varphi^*(E))))\) where \(\varphi\colon S^*\partial \Omega\to \partial \Omega\) is the fiber map. 
    
    See \cite[Chapter XIX.5]{Hormander_2007} for more on the Douglis-Nirenberg calculus, where it is defined in greater generality.
\end{definition}

\begin{theorem}
\label{thm:seeleys}
    Let \(\Omega\) be a compact smooth manifold with boundary and \(D\in \Diff(\Omega; \newline E, F)\) be a classically elliptic differential operator of order \(m\). There is an idempotent \(P_C \in \mathbb{B}(\mathbb{H}^{-\frac{1}{2}}(\partial \Omega; E\otimes \mathbb{C}^m))\) with range \(H_D\) such that \(P_C\in \Psi_{\textup{DN}}^0(\partial \Omega; E\otimes \mathbb{C}^m)\) \cite{Seeley_1966}.
\end{theorem}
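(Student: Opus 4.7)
The plan is to construct $P_C$ as the Calderón-Seeley Poisson operator followed by a boundary trace, using an ambient parametrix of $D$. First, I embed $\overline{\Omega}$ as a regular open subset of a closed smooth manifold $\tilde{M}$, extend the bundles to $\tilde{E}, \tilde{F} \to \tilde{M}$ and the operator to a classically elliptic $\tilde{D} \in \Diff(\tilde{M}; \tilde{E}, \tilde{F})$, and take a classical parametrix $Q \in \Psi_{\textup{cl}}^{-m}(\tilde{M}; \tilde{F}, \tilde{E})$ with $\tilde{D}Q - 1$ smoothing. The Green's identity of \Cref{thm:seeley_a} gives, for $u \in H^m(\overline{\Omega}; E)$, the distributional equation
\begin{equation}
    \tilde{D}(e^+ u) = e^+(Du) + T \gamma u
\end{equation}
on $\tilde{M}$, where $e^+$ is extension by zero and $T$ has the local form $T\phi = \sum_{k=0}^{m-1} (B_k \phi) \otimes \delta^{(k)}(x_n)$ in a collar coordinate system $(x', x_n)$, for tangential differential operators $B_k$ built from $\mathfrak{A}$ and the coefficients of $D$. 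By duality $T$ extends to a continuous map $\mathbb{H}^{-1/2}(\partial\Omega; E\otimes\mathbb{C}^m) \to H^{-m-1/2}(\tilde{M}; \tilde{F})$ whose range is supported on $\partial\Omega$.

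Define $K \coloneqq r^+ Q T$ and $P_C \coloneqq \gamma K$. Since $\tilde{D} Q T \phi = T \phi$ modulo a smoothing error, restricting to $\Omega$ kills $T\phi$ and gives $DK\phi \in C^\infty(\Omega; F)$, so $K\phi \in \dom D_{\max}$ and $K\phi \in \Ker D_{\max}$ up to a smoothing correction. Conversely, for $u \in \Ker D_{\max}$ that is smooth up to $\partial\Omega$ (a dense subclass, by elliptic regularity together with \Cref{remark:smooth_boundary_has_smooth_approximation}), applying $Q$ to Green's identity $\tilde{D} e^+ u = T\gamma u$ yields $e^+ u = Q T \gamma u$ modulo a smoothing term, so $u = K \gamma u$ on $\Omega$ up to a smoothing contribution and hence $\gamma u = P_C \gamma u$ modulo a smoothing operator on the boundary. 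These two facts show that $P_C^2 - P_C$ is smoothing and that $P_C$ is the identity on $H_D$ modulo smoothing. A standard Riesz-projector correction, spectrally isolating the eigenvalue $1$ of the almost-idempotent $P_C$, replaces it by an exact idempotent with range exactly $H_D$ and differing from $P_C$ by a smoothing operator, hence lying in the same pseudo-differential class.

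The technical heart of the proof is to show $P_C \in \Psi_{\textup{DN}}^0(\partial\Omega; E\otimes\mathbb{C}^m)$. By a partition of unity, localize to a boundary chart and straighten the boundary, reducing to the analysis of each matrix entry $\gamma_j r^+ Q \circ (\,\cdot\, \otimes \delta^{(k)}(x_n))$ on $\mathbb{R}^n_+$. Because $Q$ is a parametrix of a differential operator, its local full symbol $q(x, \xi', \xi_n) \in S^{-m}$ is classical and rational in $\xi_n$, so Hörmander's transmission condition holds. Writing the Schwartz kernel of each matrix entry as an oscillatory integral in $\xi_n$ and evaluating the contour in the upper $\xi_n$-half-plane picks up exactly the residues from the poles on the $\Omega$-side, producing a classical tangential pseudo-differential operator on $\partial\Omega$ of order precisely $j - k$, matching the Douglis-Nirenberg grading. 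Assembling over $(j, k)$ yields $P_C \in \Psi_{\textup{DN}}^0$, and the same contour computation delivers Hörmander's formula for the principal symbol as the spectral projection onto the upper-half-plane root subspace of the principal symbol of $D$ reorganized via $\mathfrak{A}$. The main obstacle is verifying polyhomogeneity of the resulting symbol and uniform control of all lower-order terms through the contour computation; this is the substance of Seeley's original analysis, later generalized through the Boutet de Monvel transmission condition.
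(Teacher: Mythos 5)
The statement is presented in the paper as a citation to Seeley (and Grubb), so your task was effectively to reconstruct Seeley's proof. The outline you give — extend $D$ to a closed manifold, take a parametrix $Q$, form the Poisson operator $K = r^+QT$ from the distributional Green's/jump formula, set $P_C=\gamma K$, then exploit the rational $\xi_n$-dependence of the parametrix symbol (the transmission condition) and residue calculus to land in $\Psi_{\textup{DN}}^0$ — is indeed Seeley's strategy, and it matches the construction the paper recalls after the theorem ($K = r_\Omega Q \gamma^\dagger\mathfrak{A}$; your $T$ is $\gamma^\dagger\mathfrak{A}$). The symbol computation you describe also matches \Cref{thm:hormander_calderon_symbol}.

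There is, however, a concrete gap in the step that upgrades the almost-idempotent to the claimed $P_C$. Because you use a parametrix rather than an exact inverse, $\tilde{D}Q - 1$ is smoothing but nonzero on $\Omega$, so $DK\phi = r^+(RT\phi)$ need not vanish: your $K$ maps into $\Ker D_{\max}$ only modulo smoothing errors, and in particular $\ran P_C$ is not a priori contained in $H_D$. The ``Riesz-projector correction'' you invoke then does not deliver what is claimed. For a non-self-adjoint operator with $P_C^2 - P_C$ compact, a contour integral around $1$ does produce an idempotent commuting with $P_C$, but nothing in that construction forces its range to be exactly $H_D = \gamma(\Ker D_{\max})$, nor is the difference from $P_C$ obviously smoothing (the resolvent on the contour is not a pseudodifferential operator in any class you control). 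Note also that the closedness of $H_D$ is stated in the paper as a \emph{consequence} of this theorem, so you cannot presuppose a bounded projection onto $H_D$ exists and then match it. The standard repair — and the one the paper's reference to Grubb's Example 1.3.5 uses — is to arrange $\tilde{D}$ (e.g., on the double of $\overline{\Omega}$, possibly after a lower-order or finite-rank modification away from $\overline{\Omega}$) to be invertible, and take $Q = \tilde{D}^{-1} \in \Psi_{\textup{cl}}^{-m}$. Then $\tilde{D}Q = 1$ exactly, $DK\phi = 0$ on $\Omega$, so $\ran K \subseteq \Ker D_{\max}$ genuinely; and for $u \in \Ker D_{\max}$ one gets $K\gamma u - u \in \Ker D_{\min}$, a finite-dimensional defect, from which $\ran K = \Ker D_{\max}$ and idempotency of $P_C = \gamma K$ follow by elementary linear algebra rather than a spectral correction. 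With $Q$ an exact inverse you also preserve the residue/transmission argument, since $\tilde{D}^{-1}$ is still a classical pseudodifferential operator on the closed manifold. I would replace the Riesz-projector paragraph by this exact-inverse argument; the rest of your proposal then goes through as written.
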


We refer to \(P_C\) constructed in \cite{Seeley_1966} as \emph{the} Calderón projector. Note that \Cref{thm:seeleys} ensures that \(H_D\) is a closed subspace of \(\mathbb{H}^{-\frac{1}{2}}(\partial \Omega; E\otimes \mathbb{C}^m)\). We call \(H_D\) the \emph{Hardy space}, terminology taken from \cite{Bandara_Goffeng_Saratchandran_2023}.

The Calderón projector is constructed as \(P_C=\gamma K\) where \(K\in \mathbb{B}(\mathbb{H}^{-\frac{1}{2}}(\partial \Omega; E\otimes \mathbb{C}^m), L^2(\Omega;E)\) is a Poisson operator with range \(\Ker D_{\max}\) such that \(K\gamma f - f \in \Ker D_{\min}\) for any \(f\in \Ker D_{\max}\) (see \cite[Example 1.3.5]{Grubb_1996}). In turn, \(K=r_{\Omega} Q \gamma^\dagger \mathfrak{A}\) where \(r_\Omega\colon L^2(M;E)\to L^2(\Omega;E)\) is the restriction from the closed manifold \(M\) that \(\Omega\) is a domain in, \(Q\) is a certain choice of parametrix of \(D\) on \(M\), \(\gamma^\dagger\) is the distributional adjoint of \(\gamma\) from \(M\) to \(\partial \Omega\) and \(\mathfrak{A}\) that appeared in \Cref{thm:seeley_a} \cite[(8)]{Seeley_1966}, \cite[(1.3.21)]{Grubb_1996}. For later use of \(K\), we give the following lemma.

\begin{lemma}
    \label{thm:Poisson_commutator_property}
    For any \(a\in C^\infty(\partial \Omega)\),
    \begin{equation}
        Ka - s(a) K\in \mathbb{K}(\mathbb{H}^{-\frac{1}{2}}(\partial \Omega; E \otimes \mathbb{C}^m), L^2(\Omega;E))
    \end{equation}
    where \(s\colon C(\partial \Omega)\to C(\overline{\Omega})\) is any \(*\)-linear right-inverse of \(\gamma_0\) that conserves smoothness and extends functions to be constant in the orthogonal direction of the boundary close to the boundary (such an \(s\) can easily be constructed).
\end{lemma}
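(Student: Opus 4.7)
The plan is to exploit the factorization $K = r_\Omega Q \gamma^\dagger \mathfrak{A}$ recalled in the paragraph before the lemma and commute $a$ through each factor, collecting commutator remainders that turn out to be compact by a smoothing-then-Rellich argument. First I would fix a smooth extension $\tilde a \in C^\infty(M)$ of $s(a)$; since $s(a)$ is chosen to be constant in the inward-normal coordinate $x_n$ near $\partial\Omega$, every higher normal derivative of $\tilde a$ vanishes on $\partial\Omega$. Pairing against a test function $\phi$, the Leibniz expansion of $\partial_{x_n}^l(\tilde a \phi)|_{\partial\Omega}$ collapses to $a \cdot \gamma_l \phi$ because every cross term contains a factor $(\partial_{x_n}^j \tilde a)|_{\partial\Omega}$ with $j\geq 1$. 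This yields the key identity $\tilde a \gamma^\dagger = \gamma^\dagger a$ of maps from distributions on $\partial\Omega$ into distributions on $M$.

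With this identity in hand, and using $\tilde a r_\Omega = r_\Omega \tilde a$, the algebraic manipulation
\begin{align*}
Ka - s(a) K &= r_\Omega Q \gamma^\dagger (a \mathfrak{A} + [\mathfrak{A}, a]) - r_\Omega \tilde a Q \gamma^\dagger \mathfrak{A} \\
&= r_\Omega [Q, \tilde a] \gamma^\dagger \mathfrak{A} + r_\Omega Q \gamma^\dagger [\mathfrak{A}, a]
\end{align*}
splits the difference into two commutator remainders. The point of introducing these commutators is that each one gains a degree of Sobolev regularity compared to $K$: $[Q, \tilde a]$ is a classical pseudo-differential operator on $M$ of order $-m-1$ (one better than $Q$), and $[\mathfrak{A}, a]$ is of Douglis-Nirenberg type one lower than $\mathfrak{A}$ (each entry $A_{j,k}$ has order dropped by one under $[\cdot, a]$, while $[A_0, a] = 0$ since $A_0$ is of order zero).

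Tracking the orders through $\gamma^\dagger$, $Q$, and $\mathfrak{A}$ exactly as in the construction of $K$, both remainders extend to continuous operators $\mathbb{H}^{-\frac{1}{2}}(\partial\Omega; E\otimes\mathbb{C}^m) \to H^1(\Omega; E)$ (instead of only into $L^2(\Omega;E)$, as for $K$ itself). Since $\overline\Omega$ is a compact smooth manifold with boundary, Rellich's theorem gives a compact inclusion $H^1(\Omega;E) \hookrightarrow L^2(\Omega;E)$, proving that both remainders are compact.

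The main delicate step is the identity $\tilde a \gamma^\dagger = \gamma^\dagger a$, where the normal-direction flatness of $s(a)$ is used essentially; with a less careful right inverse $s$ one would pick up extra boundary jets of $\tilde a$ that would need to be absorbed into a separate compactness argument. The rest of the proof is essentially bookkeeping of orders in the Douglis-Nirenberg calculus combined with Rellich's theorem.
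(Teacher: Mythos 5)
Your proof takes essentially the same route as the paper: factor \(K=r_\Omega Q\gamma^\dagger\mathfrak{A}\), establish the key identity \(\gamma^\dagger a = s(a)\gamma^\dagger\) from the normal flatness of \(s\), and split \(Ka-s(a)K\) into the two commutator remainders \(r_\Omega[Q,\tilde a]\gamma^\dagger\mathfrak{A}\) and \(r_\Omega Q\gamma^\dagger[\mathfrak{A},a]\). The algebra is correct.

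Where you differ is in \emph{where} Rellich's theorem is applied. You want to track Sobolev orders through the full chain and land in \(H^1(\Omega;E)\) before invoking compactness, whereas the paper applies Rellich at the commutator itself, noting \([\mathfrak{A},a]\in\mathbb{K}(\mathbb{H}^{-\frac{1}{2}},\mathbb{H}^{-m+\frac{1}{2}}_{\operatorname{op}})\) and \([Q,\tilde a]\in\mathbb{K}(H^{-m}(M),L^2(M))\), and then composes with the remaining continuous factors. For your first remainder the two are equivalent: the order gain happens at \([Q,\tilde a]\), downstream of \(\gamma^\dagger\), so nothing subtle occurs. For the second remainder the tracking is not ``exactly as in the construction of \(K\)'': \(\gamma^\dagger\) is obtained by dualizing the trace theorem, so it only improves regularity by \(\frac{1}{2}\) on \(\mathbb{H}^{\tau}_{\operatorname{op}}\) for \(\tau<-m+1\), and \([\mathfrak{A},a]\) maps into \(\mathbb{H}^{-m+\frac{3}{2}}_{\operatorname{op}}\), which is outside that range — the obstruction being precisely the highest-jet component. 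Your own observation that \([A_0,a]=0\) is exactly the fact that makes the \((m-1)\)-st output component of \([\mathfrak{A},a]\) vanish, which in turn is what allows \(\gamma^\dagger\) to still improve regularity; but the proposal never connects the two, and as written the claim that both remainders land continuously in \(H^1(\Omega;E)\) reads as automatic when it is not. Either spell out that the last component vanishes and therefore only \(\gamma_0^\dagger,\dots,\gamma_{m-2}^\dagger\) need to act (for which the admissible range of Sobolev indices is larger), or switch to the paper's cleaner bookkeeping and invoke compactness of \([\mathfrak{A},a]\colon\mathbb{H}^{-\frac{1}{2}}\to\mathbb{H}^{-m+\frac{1}{2}}_{\operatorname{op}}\) directly, letting the rest of the chain be merely continuous.
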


\begin{proof}
    Fix an \(a\in C^\infty(\partial \Omega)\). The choice of \(s\) ensures that \(\gamma_k(s(a)) = 0\) for \(k>0\) where we use \(\gamma_k\) as a map \(C^\infty(\overline{\Omega})\to C^\infty(\partial \Omega)\). Hence, \(a\gamma = \gamma s(a) \) acting on \(C^\infty(\overline{\Omega};E)\). Therefore, \(\gamma^\dagger a = s(a)\gamma^\dagger\) where both sides extends to maps \(\mathbb{H}^{-m+\frac{1}{2}}(\partial \Omega; E\otimes \mathbb{C}^m_ {\operatorname{op}} )\to H^{-m}(M;F)\). The statement now follows from looking at the construction of \(K\) and noting that \([\mathfrak{A}, a]\in \mathbb{K}(\mathbb{H}^{-\frac{1}{2}}(\partial \Omega; E\otimes \mathbb{C}^m), \mathbb{H}^{-m+\frac{1}{2}}(\partial \Omega; F\otimes \mathbb{C}^m_ {\operatorname{op}} ))\) and \([Q, s(a)]\in \mathbb{K}(H^{-m}(M;F), L^2(M;E))\).
\end{proof}

A similar construction to that of the Calderón projector can be found in \cite[Equation 20.1.7]{Hormander_2007} where an arbitrary parametrix is used, resulting in \(P_C\) up to some smoothing operator. In \cite{Hormander_2007}, an explicit description of the symbol of \(P_C\) is also given which we now will present.

Let \((x', x_n)\in \partial \Omega \times [0, 1) \subseteq \overline{\Omega}\) be coordinates close to the boundary coherent with \eqref{eq:diff_op_close_to_boundary} and decompose \(T^*\overline{\Omega}\) close to the boundary as \((x', x_n, \xi', \xi_n)\in T^* \partial \Omega \oplus \Span(dx_n)\subseteq T^* \overline{\Omega}\).

\begin{definition}
    \label{def:boundary_symbol}
Define the \emph{boundary symbol}
\begin{equation}
    \sigma_\partial (D) \colon  T^* \partial \Omega \to \Diff(\mathbb{R})\otimes \Hom(\varphi^*(E|_{\partial \Omega}), \varphi^*(F|_{\partial \Omega}))
\end{equation}
of \(D\in \Diff(\Omega; E, F)\) as point-wise being the ordinary differential operator in a free variable \(t\in \mathbb{R}\)
\begin{equation}
    \sigma_\partial (D) (x',\xi') = \sum a_j(x', \xi') D_t^{m-j}
\end{equation}
where \(a_j\) is the principal symbol of \(A_j|_{x_n=0}\) constructed in \eqref{eq:diff_op_close_to_boundary} and \(\varphi\colon T^*\partial \Omega\to \partial \Omega\) is the fiber map. Note that the free variable \(t\) is associated with the coordinate \(x_n\) close to the boundary, but they are not the same.
\end{definition}

\begin{lemma}
    \label{thm:boundary_symbol_diff_eq_has_solutions}
    Let \(\Omega\) be a compact smooth manifold with boundary and \(D\in \Diff(\Omega; \newline E, F)\) be an elliptic differential operator of order \(m\). For fixed \((x',\xi')\in S^*\partial \Omega\), the ordinary differential equation 
\begin{equation}
    \label{eq:homogenus_ode_of_boundary_symbol}
    \sigma_\partial(D)(x', \xi') v = 0
\end{equation}
has a unique solution \(v\in C^\infty(\mathbb{R}; \varphi^*(E|_{\partial \Omega})_{(x',\xi')})\) given any initial condition 
\begin{equation}
    \begin{pmatrix}v(0) \\ D_t v(0) \\ \vdots \\ D_t^{m-1} v(0)\end{pmatrix} \in \varphi^*(E|_{\partial \Omega})_{(x',\xi')}\otimes \mathbb{C}^m.
\end{equation}
Furthermore, the solution can be uniquely decomposed as \(v = v_+ + v_-\) where \(v_\pm\) decays exponentially as \(t\to \pm\infty\).
\end{lemma}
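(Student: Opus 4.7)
The plan is to reduce the ODE $\sigma_\partial(D)(x',\xi') v = 0$ to a first-order linear system with constant coefficients, and then read off existence, uniqueness, and the decay decomposition from elementary linear algebra; ellipticity enters only to ensure that the associated companion matrix has no real eigenvalues.

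First, I would use that $a_0(x',\xi') = \sigma^m(D)(x',0,\xi',dx_n)$ is invertible by classical ellipticity, as already noted in the proof of \Cref{thm:seeley_a}. Multiplying the ODE on the left by $a_0(x',\xi')^{-1}$ and introducing the companion variable $w(t) = (v(t), D_t v(t), \ldots, D_t^{m-1} v(t))^T \in \mathbb{C}^{m\cdot \operatorname{rank}(E)}$ converts $\sigma_\partial(D)(x',\xi') v = 0$ into a first-order system $D_t w = A(x',\xi') w$ with a constant matrix $A$. Standard linear ODE theory then gives the unique smooth solution $w(t) = e^{itA} w(0)$ for any prescribed initial condition, and the first assertion follows by taking the first block of $w$.

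Next, I would observe that the eigenvalues of $A$ are precisely the roots $\tau \in \mathbb{C}$ of the characteristic polynomial $\det \sigma^m(D)(x',0,\xi',\tau) = 0$. Since $D$ is classically elliptic and $\xi' \neq 0$ on $S^*\partial \Omega$, this polynomial has no real roots, so $\operatorname{spec}(A) \subseteq \mathbb{C}\setminus \mathbb{R}$. Let $V_\pm$ be the sum of the generalized eigenspaces of $A$ for eigenvalues with $\pm \operatorname{Im}\lambda > 0$; then $\mathbb{C}^{m\cdot\operatorname{rank}(E)} = V_+ \oplus V_-$ and each $V_\pm$ is $A$-invariant. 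On $V_+$ each Jordan block contributes terms of the form $e^{it\lambda} p(t)$ with $p$ polynomial and $\operatorname{Im}\lambda > 0$, which decays exponentially as $t\to +\infty$; symmetrically on $V_-$ as $t \to -\infty$.

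Finally, splitting $w(0) = w_+(0) + w_-(0)$ along $V_+ \oplus V_-$ and propagating by $e^{itA}$ gives $w(t) = w_+(t) + w_-(t)$ with the correct decay in each direction, and taking the first component yields the desired decomposition $v = v_+ + v_-$. For uniqueness, any solution that decays at $+\infty$ must have its initial data in $V_+$, since a non-trivial $V_-$-component would necessarily grow in norm by the Jordan analysis above, and symmetrically at $-\infty$; this pins down $w_\pm(0)$ uniquely and hence pins down $v_\pm$. The only real subtlety is bookkeeping with the sign convention $D_t = -i\partial_t$ so that $e^{it\lambda}$ with $\operatorname{Im}\lambda > 0$ correctly pairs with decay at $+\infty$, and checking that the companion reduction yields the characteristic polynomial in the claimed form; beyond this, there is no serious analytic obstacle.
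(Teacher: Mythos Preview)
Your proposal is correct and follows essentially the same route as the paper: both reduce to a first-order companion system, invoke elementary ODE theory for existence and uniqueness, and use ellipticity to exclude eigenvalues on the critical line so that the generalized eigenspace splitting yields the exponential decay decomposition. Your write-up is somewhat more explicit about the Jordan block analysis and the uniqueness of the $v_\pm$ decomposition, but the underlying argument is identical.
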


\begin{proof}
    Note that \(a_0=\sigma_m(D)(dx_n)\) is invertible. Therefore, we can rewrite \eqref{eq:homogenus_ode_of_boundary_symbol} as \(\partial_t V = A V\) where \(V=\begin{pmatrix}v & \partial_t v & \cdots & \partial_t^{m-1}v\end{pmatrix}^T\) which Picard-Lindlöf theorem guarantees has a unique solution for any initial condition.
    
    Furthermore, the solutions will be in the form \(e^{At}V_0\). These solutions contain exponentials with exponent corresponding to the eigenvalues of \(A\), which are exactly the zeros of the characteristic polynomial
    \begin{equation}
        \det \left(\sum a_j(x', \xi') (-iz)^{m-j}\right).
    \end{equation}
    This has no purely imaginary roots \(z=ir\) for \(r\in\mathbb{R}\), since then it is equal to 
    \begin{equation}
        \det(\sigma^m(D)(x',0,\xi', r))    
    \end{equation}
    which is non-zero by ellipticity. The solutions can therefore be decomposed as a sum of solutions exponentially decaying as either \(t\to +\infty\) or \(t\to -\infty\).
\end{proof}

\begin{definition}
    \label{def:calderon_bundle_decomposition}
    From the homogenous differential equations of the boundary symbol in \Cref{thm:boundary_symbol_diff_eq_has_solutions} we obtain a decomposition of vector bundles over \(S^*\partial \Omega\) as
    \begin{equation}
        \label{eq:calderon_bundle_decomposition}
        \varphi^*(E|_{\partial \Omega})\otimes \mathbb{C}^m = E_+(D) \oplus E_-(D)
    \end{equation}
    where \(\varphi\colon S^*\partial \Omega\to \partial \Omega\) is the fiber map and \(E_\pm(D)\) corresponds to initial conditions that give solutions decaying exponentially as \(t\to \pm\infty\). Note however that the decomposition \eqref{eq:calderon_bundle_decomposition} is not orthogonal in general.
\end{definition}

\begin{theorem}
    \label{thm:hormander_calderon_symbol}
    The principal symbol of the Calderón projector can be calculated as
    \begin{equation}
        \sigma^0_{\textup{DN}}(P_C)_{jk} = \sum_{\imag \xi_n >0} \Res_{\xi_n}\left[ \sum_{l=0}^{m-k-1} \xi_n^{j+l} a^{-1}(x',0,\xi',\xi_n)  a_{(m-k-1)-l}(x',0,\xi')  \right]
    \end{equation}
    where
    \begin{equation}
        \sigma^m(D) = a(x',x_n,\xi',\xi_n) = \sum_{l=0}^m a_l(x',x_n,\xi') \xi_n^{m-l}
    \end{equation}
    \cite[Equation 20.1.8]{Hormander_2007}.

    Moreover, \(\sigma^0_{\textup{DN}}(P_C)\) seen as an element in \(C^\infty(S^*\partial \Omega; M_m(\End(\varphi^*(E|_{\partial \Omega}))))\) is the projector onto \(E_+(D)\) along \(E_-(D)\) \cite[Theorem 20.1.3]{Hormander_2007}.
\end{theorem}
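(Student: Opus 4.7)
The statement is extracted from Hörmander's treatment (\cite[Equation 20.1.8 and Theorem 20.1.3]{Hormander_2007}), so the plan is to indicate the essential steps rather than reproduce the full calculation. I would begin by reducing to a local computation in a coordinate chart near a boundary point with \(E\) trivialized, where \(D\) takes the form \eqref{eq:diff_op_close_to_boundary} and its principal symbol is the polynomial \(a(x',x_n,\xi',\xi_n)=\sum a_l(x',x_n,\xi')\xi_n^{m-l}\) in \(\xi_n\). In this chart, the parametrix \(Q\) of \(D\) used in building \(K=r_\Omega Q\gamma^\dagger\mathfrak{A}\) has principal symbol \(a^{-1}\) modulo lower order terms.

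Next, I would compute \(P_C=\gamma K\) microlocally. After Fourier transform in the tangential variables \(x'\), the action of \(Q\) becomes multiplication by \(a^{-1}(x',x_n,\xi',\xi_n)\), while \(\gamma^\dagger\) corresponds to pairing boundary data against a delta at \(x_n=0\) and \(\gamma\) to restriction at \(x_n=0\). The matrix entries of \(P_C\) therefore reduce to one-dimensional integrals in \(\xi_n\) over \(\mathbb{R}\) of expressions of the form \(\xi_n^{j+l}a^{-1}(x',0,\xi',\xi_n)\) multiplied by the coefficients of \(\mathfrak{A}\), which by \Cref{thm:seeley_a} are precisely the \(a_{(m-k-1)-l}(x',0,\xi')\). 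The ellipticity secured in the proof of \Cref{thm:boundary_symbol_diff_eq_has_solutions} ensures that \(\det a(x',0,\xi',\xi_n)\) has no real zeros in \(\xi_n\), so each such integral is evaluated by closing the contour in the upper half-plane \(\imag\xi_n>0\) and picking up the residues there, producing the stated formula.

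For the second assertion, observe that the poles of \(a^{-1}(x',0,\xi',\cdot)\) lying in \(\imag\xi_n>0\) correspond under the substitution \(\xi_n=-iz\) to eigenvalues \(z\) of the companion matrix with \(\textup{Re}\,z<0\) appearing in the proof of \Cref{thm:boundary_symbol_diff_eq_has_solutions}, that is, to the exponents of the solutions \(e^{zt}v_0\) of \(\sigma_\partial(D)(x',\xi')v=0\) decaying as \(t\to+\infty\). Consequently the image of \(\sigma^0_{\textup{DN}}(P_C)(x',\xi')\) equals \(E_+(D)_{(x',\xi')}\). Since \(P_C\) is idempotent its principal symbol is idempotent, and running the analogous residue evaluation on initial data corresponding to \(E_-(D)\) yields zero (those exponents have \(\textup{Re}\,z>0\) and the corresponding poles lie in the lower half-plane, outside the contour), so the kernel is identified as \(E_-(D)\).

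The main obstacle is purely bookkeeping: matching the Douglis-Nirenberg block indices \((j,k)\) with the positions of the coefficients \(a_{(m-k-1)-l}\) in \(\mathfrak{A}\) and with the asymptotic expansion of \(a^{-1}\) at \(\xi_n=\infty\). The analytic content reduces to elementary residue calculus, so I would refer to \cite[Chapter XX.1]{Hormander_2007} for the detailed computation rather than reproduce it.
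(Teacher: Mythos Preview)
The paper does not prove this theorem; it simply states the result with citations to \cite[Equation 20.1.8]{Hormander_2007} and \cite[Theorem 20.1.3]{Hormander_2007} embedded in the statement itself and moves on. Your proposal is consistent with this---you ultimately defer to the same reference---but you go further by sketching the residue-calculus mechanism behind the formula and the link between upper-half-plane poles and the decomposition \(E_+(D)\oplus E_-(D)\), which the paper omits entirely.
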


\begin{remark}
    For \(m>1\), note that \(P_C\) is not generally an orthogonal projection as an operator in \(\mathbb{B}(\mathbb{H}^{-\frac{1}{2}}(\partial \Omega; E\otimes \mathbb{C}^m))\). One can see this on the symbol since the \(E_+(D)\) and \(E_-(D)\) need not be orthogonal as subbundles of \(\varphi^*(E|_{\partial \Omega})\otimes \mathbb{C}^m\).
\end{remark}

\subsection{Order reduction}

When constructing an element in \(K^1(C(\partial \Omega))\), the natural representation of \(C(\partial \Omega)\) on some function space is multiplication. In order to have the adjoint of a multiplication operator to be complex conjugate, the function space needs to be some sort of \(L^2\)-space, while with \Cref{thm:Lions_Magenes} we instead land in \(\mathbb{H}^{-\frac{1}{2}}(\partial \Omega; E\otimes \mathbb{C}^m)\). This section will construct a Hilbert space isomorphism to remedy this issue.

Let \(\Lambda_\beta\in\Psi_{\textup{cl}}^\beta(\partial \Omega;E)\) be any invertible positive elliptic operator with symbol 
\begin{equation}
    \sigma^0(\Lambda_\beta)=|\xi'|_E^\beta \mathds{1}_{\End(\varphi^*(E))}.
\end{equation}
For example, one could construct these from the Bochner-Laplacian, that is, take any connection \(\nabla_E\colon C^\infty(\partial \Omega;E) \to C^\infty(\partial \Omega; T^*\partial \Omega \otimes E)\) and let \(\Lambda_2=1+\nabla_E^\dagger \nabla_E\) and \(\Lambda_\beta = (\Lambda_2)^{\frac{\beta}{2}}\) up to some smoothing operator. 

Now each
\begin{equation}
    \Lambda_\beta\colon H^s(\partial \Omega;E)\to H^{s-\beta}(\partial \Omega; E)
\end{equation}
is an isomorphism for any \(s\in \mathbb{R}\), which means that
\begin{equation}
    \lambda_\alpha = \diag_{0\leq j\leq m-1}( \Lambda_{\alpha+j} ) = \begin{pmatrix}\Lambda_{\alpha} & & \\ & \ddots & \\ & & \Lambda_{\alpha+m-1}\end{pmatrix}
\end{equation}
defines an isomorphism
\begin{equation}
    \lambda_\alpha \colon  H^s(\partial \Omega ; E \otimes \mathbb{C}^m) \to \mathbb{H}^{s-\alpha}(\partial \Omega ; E \otimes \mathbb{C}^m)
\end{equation}
for any \(s\in \mathbb{R}\). Note that \(\lambda_{-\alpha} \neq \lambda_\alpha^{-1}\).

\begin{lemma}
    \label{thm:orderreduction_is_pseudo_and_has_same_symbol}
    We have that \(\lambda_{\frac{1}{2}}^{-1}P_C \lambda_{\frac{1}{2}}\in \Psi_{\textup{cl}}^0(\partial \Omega; E\otimes \mathbb{C}^m)\) is an idempotent in \(\mathbb{B}(L^2(\partial \Omega; E\otimes \mathbb{C}^m))\) with \(\sigma^0_{\textup{DN}}(P_C) = \sigma^0(\lambda_{\frac{1}{2}}^{-1}P_C \lambda_{\frac{1}{2}})\) as projection valued elements in \(C^\infty(S^*\partial \Omega; M_m(\End(\varphi_*(E|_{\partial \Omega}))))\).
\end{lemma}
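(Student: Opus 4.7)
The plan is to verify the three claims -- membership in $\Psi^0_{\textup{cl}}$, idempotency on $L^2$, and equality of symbols -- by reducing everything to a matrix-entrywise computation in which the order-reducing factors $\Lambda_{\frac{1}{2}+j}$ absorb the Douglis--Nirenberg shifts.

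First I would recall that $P_C = ((P_C)_{jk})_{0\le j,k\le m-1}$ with $(P_C)_{jk}\in \Psi^{j-k}_{\textup{cl}}(\partial\Omega; E)$ by \Cref{thm:seeleys} and the definition of $\Psi^0_{\textup{DN}}$. Since each $\Lambda_\beta$ is invertible with $\Lambda_\beta \in \Psi^\beta_{\textup{cl}}$ and $\Lambda_\beta^{-1}\in \Psi^{-\beta}_{\textup{cl}}$ (with principal symbols $|\xi'|_E^{\pm\beta}\mathds{1}$), the $(j,k)$ entry of $\lambda_{\frac{1}{2}}^{-1}P_C\lambda_{\frac{1}{2}}$ is
\begin{equation}
    \Lambda_{\frac{1}{2}+j}^{-1}\,(P_C)_{jk}\,\Lambda_{\frac{1}{2}+k}\in \Psi^{-(\frac{1}{2}+j)+(j-k)+(\frac{1}{2}+k)}_{\textup{cl}}(\partial\Omega;E) = \Psi^0_{\textup{cl}}(\partial\Omega;E),
\end{equation}
so the whole matrix lies in $\Psi^0_{\textup{cl}}(\partial\Omega;E\otimes\mathbb{C}^m)$. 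Boundedness on $L^2(\partial\Omega;E\otimes\mathbb{C}^m)$ then follows from the standard $L^2$-boundedness of classical order-zero pseudo-differential operators.

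For idempotency, I would use that $\lambda_{\frac{1}{2}}\colon L^2(\partial\Omega;E\otimes\mathbb{C}^m)\to \mathbb{H}^{-\frac{1}{2}}(\partial\Omega;E\otimes\mathbb{C}^m)$ is a Hilbert space isomorphism and that $P_C$ is an idempotent in $\mathbb{B}(\mathbb{H}^{-\frac{1}{2}}(\partial\Omega;E\otimes\mathbb{C}^m))$ by \Cref{thm:seeleys}; conjugation by the isomorphism preserves the relation $P^2 = P$.

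For the symbol identity, I would compute the principal symbol entrywise using the multiplicativity of classical principal symbols:
\begin{equation}
    \sigma^0\!\left(\Lambda_{\frac{1}{2}+j}^{-1}(P_C)_{jk}\Lambda_{\frac{1}{2}+k}\right)(x',\xi') = |\xi'|_E^{-(\frac{1}{2}+j)}\,\sigma^{j-k}((P_C)_{jk})(x',\xi')\,|\xi'|_E^{\frac{1}{2}+k}.
\end{equation}
On $S^*\partial\Omega$ we have $|\xi'|_E=1$, so the scalar factors cancel and leave $\sigma^{j-k}((P_C)_{jk})|_{S^*\partial\Omega} = \sigma^0_{\textup{DN}}(P_C)_{jk}$ by definition of the Douglis--Nirenberg principal symbol. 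Assembling the entries yields the claimed equality in $C^\infty(S^*\partial\Omega; M_m(\End(\varphi^*(E|_{\partial\Omega}))))$, and the projection-valuedness on both sides is automatic once the operators are idempotent (alternatively, one may read it off directly from $\sigma^0_{\textup{DN}}(P_C)$ being the projection onto $E_+(D)$ along $E_-(D)$ via \Cref{thm:hormander_calderon_symbol}).

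I do not expect any real obstacle here; the only minor subtlety is making sure the scalar factors $|\xi'|^{\pm(1/2+j)}$ commute past the matrix-valued symbol entries, which they do because $\sigma^0(\Lambda_\beta)$ is a scalar multiple of the identity on $\varphi^*(E|_{\partial\Omega})$. Everything else is bookkeeping on pseudo-differential orders.
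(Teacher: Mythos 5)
Your proposal is correct and follows essentially the same route as the paper's proof: an entrywise order/symbol computation using $(\lambda_{\frac{1}{2}}^{-1}P_C\lambda_{\frac{1}{2}})_{jk}=\Lambda_{\frac{1}{2}+j}^{-1}(P_C)_{jk}\Lambda_{\frac{1}{2}+k}$, showing membership in $\Psi^0_{\textup{cl}}$ and matching $\sigma^0$ with $\sigma^0_{\textup{DN}}(P_C)$ after restriction to $S^*\partial\Omega$. You simply spell out the idempotency-under-conjugation and $L^2$-boundedness steps that the paper leaves implicit.
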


\begin{proof}
    This follows from that \(P_C\in \Psi_{\textup{DN}}^0(\partial \Omega; E\otimes \mathbb{C}^m)\) by \Cref{thm:seeleys} and that
    \begin{equation}
        \sigma^0(\lambda_{\frac{1}{2}}^{-1}P_C \lambda_{\frac{1}{2}})_{j,k} = |\xi'|_E^{-\frac{1}{2}-j}\sigma^{j-k}({P_C}_{j,k})|\xi'|_E^{\frac{1}{2}+k} = |\xi'|_E^{-j+k}\sigma^0_{\textup{DN}}(P_C)_{jk}
    \end{equation}
    in \(C^\infty(T^*\partial \Omega;\End(\varphi^*(E|_{\partial \Omega})))\).
\end{proof}

\subsection{Projections corresponding to projectors}

This short subsection will present tools for later use to remedy the fact that the Calderón projector is not a projection, only an idempotent. 

\begin{lemma}
    \label{thm:idempotent_and_projection}
    Let \(A\) be a \ensuremath{C^*}-algebra and \(e\in A\) be an idempotent. Let \begin{equation}
        \label{eq:kaplansky_formula}
        p=ee^*(1+(e-e^*)(e^*-e))^{-1},
    \end{equation}
    then \(p\) is a projection, \(ep=p\), \(pe=e\) and \([e]=[p]\in K_0(A)\).
\end{lemma}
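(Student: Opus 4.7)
The plan is to proceed by direct verification using the algebraic identity $z := 1+(e-e^*)(e^*-e) = 1-(e-e^*)^2$, followed by a short similarity argument for the $K$-theory claim.

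First I would check that $z$ is invertible. Since $e-e^*$ is skew-adjoint, $-(e-e^*)^2 = (e-e^*)(e-e^*)^* \ge 0$, so $z \ge 1$ in the $C^*$-algebra sense, which gives $z \in A^{\times}$ (after unitization if needed). Next I would record the key commutation identity: using $e^2 = e$ one computes
\begin{equation}
    ez = e - e + ee^*e + ee^*e - ee^* = ee^*e \quad\text{and}\quad ze = e - e + ee^*e + e^*e - e^*e = ee^*e,
\end{equation}
so $ez = ze = ee^*e$. Hence $e$ (and by taking adjoints $e^*$) commutes with $z$, and therefore also with $z^{-1}$.

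With this in hand, the four conclusions about $p = ee^*z^{-1}$ are immediate and I would dispatch them in turn. Self-adjointness follows from $z = z^*$ and the fact that $z^{-1}$ commutes with $ee^*$. The identity $ep = p$ is just $e\cdot ee^* z^{-1} = ee^* z^{-1}$ from $e^2 = e$. For $pe = e$, write $pe = ee^*z^{-1}e = ee^*ez^{-1} = ez\cdot z^{-1} = e$. Then $p^2 = (pe)e^* z^{-1} = ee^* z^{-1} = p$, so $p$ is a projection.

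The main (small) obstacle is the $K$-theory claim $[e] = [p]$, but it follows from a neat nilpotent trick: from $pe = e$ and $ep = p$ one obtains
\begin{equation}
    (e-p)^2 = e^2 - ep - pe + p^2 = e - p - e + p = 0.
\end{equation}
Therefore $u := 1 + (e-p)$ is invertible with $u^{-1} = 1 - (e-p)$, and a direct computation gives
\begin{equation}
    u^{-1} p u = (1-(e-p))\,p\,(1+(e-p)) = (p - ep + p^2)(1 + (e-p)) = p(1 + e - p) = e,
\end{equation}
using $ep = p$ and $p^2 = p$. Thus $p$ and $e$ are similar idempotents in $A$ (or its unitization), so they define the same class $[e] = [p] \in K_0(A)$ by the standard fact that similar idempotents determine isomorphic finitely generated projective modules.
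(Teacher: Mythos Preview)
Your argument is correct and is essentially the standard proof (the one in Blackadar's \emph{K-Theory for Operator Algebras}, Proposition~4.6.2, which the paper cites in lieu of a proof). One small slip: in your displayed computation of \(ez\), the intermediate expansion should read \(e + ee^* - e - ee^* + ee^*e\) (from \(z = 1 + ee^* - e - e^* + e^*e\)), not \(e - e + ee^*e + ee^*e - ee^*\); the conclusion \(ez = ee^*e\) is nonetheless correct, as is everything downstream.
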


This formula \eqref{eq:kaplansky_formula} is due to Kaplansky \cite[Theorem A]{Kaplansky_Berberian_1955} and a proof can also be found in \cite[Proposition 4.6.2]{Blackadar_1986}.

\begin{corollary}
    \label{thm:projection_is_pseudo_if_projector_is}
    For an idempotent \(P\in\mathbb{B}\), the orthogonal projection onto its range \(P_{\ran P}\) is equal to \(PP^*(1+(P-P^*)(P^*-P))^{-1}\).
    
    As a consequence, if \([a,P],[a,P^*]\in\mathbb{K}\) then \([a, P_{\ran P}]\in\mathbb{K}\). If furthermore \(P\in \Psi_{\textup{cl}}^0(Y;E)\), then so is \(P_{\ran P}\) and 
    \begin{equation}
        [\sigma^0(P)]=[\sigma^0(P_{\ran P})] \quad\text{in}\quad K_0(C(S^*Y)).
    \end{equation}
\end{corollary}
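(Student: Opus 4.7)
The plan is to deduce the corollary from \Cref{thm:idempotent_and_projection} applied to the $C^*$-algebra $\mathbb{B}(\mathcal{H})$, together with standard closure properties of the classical pseudodifferential calculus. Setting $e = P$ in that lemma yields a projection $p = PP^*(1+(P-P^*)(P^*-P))^{-1}$ with $Pp = p$ and $pP = P$; the former identity gives $\ran p \subseteq \ran P$, the latter $\ran P \subseteq \ran p$, so $p$ must be the orthogonal projection onto $\ran P$, establishing the first display.

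To propagate compactness, I would rely on $[a,\cdot]$ being a derivation together with the identity $[a, X^{-1}] = -X^{-1}[a,X]X^{-1}$ for invertible $X$. Writing $X = 1 + (P-P^*)(P^*-P)$, the commutator $[a, X]$ is a finite sum of compact operators by the hypothesis on $[a,P]$ and $[a,P^*]$, so $[a, X^{-1}] \in \mathbb{K}$, and then $[a, P_{\ran P}] = [a, PP^*X^{-1}] \in \mathbb{K}$.

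For the pseudodifferential claim, I would observe that $\Psi_{\textup{cl}}^0(Y;E)$ is closed under products and adjoints, so $X \in \Psi_{\textup{cl}}^0(Y;E)$. The operator $X$ is self-adjoint with $X \geq 1$, hence invertible on $L^2$, and its principal symbol $\sigma^0(X) = 1 + (\sigma^0(P) - \sigma^0(P)^*)(\sigma^0(P)^* - \sigma^0(P))$ is bounded below by $1$ pointwise on $S^*Y$, hence elliptic. Spectral invariance of the classical calculus on a closed manifold then gives $X^{-1} \in \Psi_{\textup{cl}}^0(Y;E)$, so $P_{\ran P} = PP^*X^{-1}$ belongs to $\Psi_{\textup{cl}}^0(Y;E)$ with principal symbol $\sigma^0(P)\sigma^0(P)^*\sigma^0(X)^{-1}$.

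Finally, since $P^2 = P$ forces $\sigma^0(P)^2 = \sigma^0(P)$ fiberwise, the principal symbol formula just obtained is precisely the Kaplansky expression applied pointwise on $S^*Y$. A second invocation of \Cref{thm:idempotent_and_projection}, now in the $C^*$-algebra of matrix-valued continuous sections over $S^*Y$ with idempotent $\sigma^0(P)$, identifies $\sigma^0(P_{\ran P})$ as the corresponding orthogonal projection and yields $[\sigma^0(P)] = [\sigma^0(P_{\ran P})]$ in $K_0(C(S^*Y))$. The only non-formal ingredient is the inversion of $X$ inside $\Psi_{\textup{cl}}^0(Y;E)$; this is the expected main obstacle, but it is routine via spectral invariance (or a direct parametrix argument) on a closed manifold.
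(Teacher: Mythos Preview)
Your proposal is correct and follows exactly the approach the paper intends: the corollary is stated without proof immediately after \Cref{thm:idempotent_and_projection}, and your argument simply unpacks that lemma together with the standard closure and spectral-invariance properties of $\Psi_{\textup{cl}}^0$ on a closed manifold. The only point worth flagging is that the inversion of $X$ inside $\Psi_{\textup{cl}}^0(Y;E)$, which you rightly identify as the one non-formal step, is indeed routine here since $X$ is elliptic of order zero and invertible on $L^2$.
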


\subsection{A \texorpdfstring{\(K\)}{ K}-cycle from the Calderón projector}
\label{sec:Kcycle_from_calderon}

In this subsection, we construct a representative of the class \([P_{\Ker D_{\max}}]\in K^1(C(\partial \Omega))\) from \Cref{thm:boundary_of_diff_op_is_PkerDmax} in terms of boundary data. Since \(\ran P_C = H_D \coloneqq \gamma (\Ker D_{\max})\), we will do this using the Calderón projector and the trace operator. However, since \(P_C\) is not an orthogonal projection and \(H_D\) is not an \(L^2\)-space, we will not build a \(K\)-homology class from \(P_C\) directly.

\begin{lemma}
    \label{thm:boundary_data_kcycle_is_welldefined_and_symbol_ktheory}
    Let \(P_{\widehat{H}_D}\) be the orthogonal projection onto \(\widehat{H}_D\coloneqq \lambda_{\frac{1}{2}}^{-1}(H_D)\) in \(L^2(\partial \Omega; \newline E\otimes \mathbb{C}^m)\), then \(P_{\widehat{H}_D}\in \Psi_{\textup{cl}}^0(\partial \Omega; E\otimes \mathbb{C}^m)\). In particular, \([P_{\widehat{H}_D}] \in K^1(C(\partial \Omega))\) is well-defined with representation of \(C(\partial \Omega)\) on \(L^2(\partial \Omega; E\otimes \mathbb{C}^m)\) as point-wise multiplication.

    Furthermore, \([\sigma^0(P_{\widehat{H}_D})] = [E_+(D)]\) in \(K_0(C(S^*\partial \Omega))\).
\end{lemma}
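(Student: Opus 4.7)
The plan is to transport everything from the mixed-order side to the $L^2$ side using $\lambda_{\frac{1}{2}}$, turn the Calderón idempotent into an orthogonal projection by Kaplansky's formula, and then read off the symbol information.

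First I will produce an idempotent model of $P_{\widehat{H}_D}$ inside the classical calculus. By construction, $\lambda_{\frac{1}{2}}\colon L^2(\partial\Omega;E\otimes\mathbb{C}^m)\to \mathbb{H}^{-\frac{1}{2}}(\partial\Omega;E\otimes\mathbb{C}^m)$ is a unitary isomorphism, so $\widehat{P}_C\coloneqq \lambda_{\frac{1}{2}}^{-1}P_C\lambda_{\frac{1}{2}}$ is an idempotent on $L^2(\partial\Omega;E\otimes\mathbb{C}^m)$ with range $\widehat{H}_D$. By \Cref{thm:orderreduction_is_pseudo_and_has_same_symbol}, $\widehat{P}_C\in\Psi^0_{\mathrm{cl}}(\partial\Omega;E\otimes\mathbb{C}^m)$ and its principal symbol, as a section of $M_m(\End(\varphi^*(E|_{\partial\Omega})))$ over $S^*\partial\Omega$, equals $\sigma^0_{\mathrm{DN}}(P_C)$.

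Next I will pass from the idempotent to the projection. Since $P_{\widehat{H}_D}$ is the orthogonal projection onto $\ran \widehat{P}_C$, \Cref{thm:projection_is_pseudo_if_projector_is} gives
\begin{equation}
P_{\widehat{H}_D}=\widehat{P}_C\widehat{P}_C^*\bigl(1+(\widehat{P}_C-\widehat{P}_C^*)(\widehat{P}_C^*-\widehat{P}_C)\bigr)^{-1}.
\end{equation}
The classical calculus is closed under adjoint, composition, and inversion of $1+x$ when $x$ is non-negative and of order $0$, so $P_{\widehat{H}_D}\in\Psi^0_{\mathrm{cl}}(\partial\Omega;E\otimes\mathbb{C}^m)$ as well, with $[\sigma^0(P_{\widehat{H}_D})]=[\sigma^0(\widehat{P}_C)]$ in $K_0(C(S^*\partial\Omega))$. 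By \Cref{thm:hormander_calderon_symbol}, $\sigma^0_{\mathrm{DN}}(P_C)$ is the idempotent with range $E_+(D)$ along $E_-(D)$, so its class in $K_0(C(S^*\partial\Omega))$ equals $[E_+(D)]$. Combining the two identifications yields $[\sigma^0(P_{\widehat{H}_D})]=[E_+(D)]$.

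Finally, to verify that $[P_{\widehat{H}_D}]\in K^1(C(\partial\Omega))$ is well-defined via \Cref{ex:busby_from_projection}, I have to check $[\rho(a),P_{\widehat{H}_D}]\in\mathbb{K}$ for all $a\in C(\partial\Omega)$, where $\rho$ is pointwise multiplication on $L^2(\partial\Omega;E\otimes\mathbb{C}^m)$. For $a\in C^\infty(\partial\Omega)$, multiplication is a scalar order-$0$ operator in the classical calculus and commutes with $P_{\widehat{H}_D}$ to principal symbol since both symbols are scalar-valued in the relevant slot; hence $[\rho(a),P_{\widehat{H}_D}]\in\Psi^{-1}_{\mathrm{cl}}$ and is compact on $L^2$ by Rellich. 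The commutator map $a\mapsto[\rho(a),P_{\widehat{H}_D}]$ is then a norm-continuous map from the dense subalgebra $C^\infty(\partial\Omega)$ into $\mathbb{K}$, so it extends to all of $C(\partial\Omega)$. The multiplicativity condition $P_{\widehat{H}_D}(\rho(ab)-\rho(a)\rho(b))P_{\widehat{H}_D}=0$ is automatic since $\rho$ is already a representation, and this completes the verification.

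The main conceptual step is the first one: conjugating by $\lambda_{\frac{1}{2}}$ has to match the principal symbols of an operator between different mixed-order Sobolev spaces with an operator on $L^2$, and it is here that the Douglis–Nirenberg framework of \Cref{thm:orderreduction_is_pseudo_and_has_same_symbol} is indispensable. Everything after that is formal manipulation with Kaplansky's formula and symbol calculus.
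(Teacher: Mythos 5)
Your proof follows the paper's argument exactly: conjugate $P_C$ by $\lambda_{\frac{1}{2}}^{-1}$ using \Cref{thm:orderreduction_is_pseudo_and_has_same_symbol}, pass to the orthogonal projection via Kaplansky's formula through \Cref{thm:projection_is_pseudo_if_projector_is}, and identify the symbol class with $[E_+(D)]$ via \Cref{thm:hormander_calderon_symbol}. The only difference is that you spell out the Busby-invariant verification (compactness of $[\rho(a),P_{\widehat{H}_D}]$ for $a\in C^\infty(\partial\Omega)$ via the classical symbol calculus and density), which the paper leaves implicit as a standard consequence of $P_{\widehat{H}_D}\in\Psi^0_{\mathrm{cl}}$ on a compact manifold.
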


\begin{proof}
    By \Cref{thm:orderreduction_is_pseudo_and_has_same_symbol}, \(\lambda_{\frac{1}{2}}^{-1}P_C \lambda_{\frac{1}{2}}\in \Psi_{\textup{cl}}^0(\partial \Omega; E\otimes \mathbb{C}^m)\) which has range \(\widehat{H}_D\) and symbol \(\sigma^0_{\textup{DN}}(P_C)\). Now \Cref{thm:projection_is_pseudo_if_projector_is} give us that \(P_{\widehat{H}_D}\in \Psi_{\textup{cl}}^0(\partial \Omega; E\otimes \mathbb{C}^m)\) and \([\sigma^0(P_{\widehat{H}_D})] = [\sigma^0_{\textup{DN}}(P_C)]\). Lastly, note that \([E_+(D)]\in K_0(C(S^*\partial \Omega))\) coincides with the class of any projection in \(M_N(C(S^*\partial \Omega))\), for some \(N\), with range isomorphic to \(E_+(D)\), which is fulfilled by \(\sigma^0_{\textup{DN}}(P_C)\) by \Cref{thm:hormander_calderon_symbol}.
\end{proof}

\begin{theorem}
\label{thm:boundary_data_kcycle_equality}
Let \(\overline{\Omega}\) be a compact smooth manifold with boundary and \(D\in \Diff(\Omega; \newline E,F)\) be a classically elliptic differential operator of order \(m\), then
\begin{equation}
    [P_{\Ker D_{\max}}] = [P_{\widehat{H}_D}] \quad\text{in}\quad  K^1(C(\partial \Omega)),
\end{equation}
where \(P_{\widehat{H}_D}\) is as in \Cref{thm:boundary_data_kcycle_is_welldefined_and_symbol_ktheory}.
\end{theorem}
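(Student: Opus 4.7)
The strategy is to apply Lemma~\ref{thm:similar_busby_invariants} with a carefully constructed Fredholm intertwiner between the two Busby invariants $\beta_1(a)=P_{\widehat{H}_D}aP_{\widehat{H}_D}$ on $\widehat{H}_D\subseteq L^2(\partial\Omega;E\otimes\mathbb{C}^m)$ and $\beta_2(a)=P_{\Ker D_{\max}}s(a)P_{\Ker D_{\max}}$ on $\Ker D_{\max}\subseteq L^2(\Omega;E)$. I fix $s$ to be of the kind required in Lemma~\ref{thm:Poisson_commutator_property}, which is legitimate since the class $[P_{\Ker D_{\max}}]$ in $\mathrm{Ext}$ does not depend on the particular choice of $\ast$-linear right inverse of $\gamma_0$. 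As intertwiner I take
\begin{equation*}
T_0 \coloneqq K\lambda_{\frac{1}{2}}\big|_{\widehat{H}_D}\colon \widehat{H}_D\to \Ker D_{\max},
\end{equation*}
where $K$ is the Poisson operator used in the construction of $P_C$. This is well-defined since $\lambda_{\frac{1}{2}}$ restricts to a Banach isomorphism $\widehat{H}_D\to H_D$ by the definition $\widehat{H}_D = \lambda_{\frac{1}{2}}^{-1}(H_D)$, and $K(H_D)\subseteq\Ker D_{\max}$ thanks to the Poisson identity $K\gamma f - f\in\Ker D_{\min}$.

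To show $T_0$ is Fredholm, note that $K\gamma = \mathrm{id}+N$ on $\Ker D_{\max}$ where $N$ takes values in $\Ker D_{\min}$. Since $\overline{\Omega}$ is compact and $D$ is elliptic, $\Ker D_{\min}$ is finite-dimensional, so $N$ is of finite rank and $K\gamma$ is Fredholm on $\Ker D_{\max}$ with closed range of finite codimension. Its image equals $K(H_D)$. Injectivity of $K|_{H_D}$ follows from Lions-Magenes (\Cref{thm:Lions_Magenes}): if $Kh=0$ for $h=\gamma f\in H_D$, then $f=-N(f)\in\Ker D_{\min}$, whence $h=\gamma f=0$ since $\gamma$ vanishes on $\dom D_{\min}$. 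Composing with the Banach isomorphism $\lambda_{\frac{1}{2}}|_{\widehat{H}_D}$ shows $T_0$ is Fredholm.

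For the intertwining condition $T_0\beta_1(a)\equiv\beta_2(a)T_0$ modulo $\mathbb{K}(\widehat{H}_D,\Ker D_{\max})$, I chain together four compactness facts: (i) $[P_{\widehat{H}_D},a]\in\mathbb{K}(L^2(\partial\Omega;E\otimes\mathbb{C}^m))$ since $P_{\widehat{H}_D}\in\Psi^0_{\textup{cl}}$ by \Cref{thm:boundary_data_kcycle_is_welldefined_and_symbol_ktheory}; (ii) $K[\lambda_{\frac{1}{2}},a]$ is compact, because the diagonal commutator $[\lambda_{\frac{1}{2}},a]=\diag_j([\Lambda_{\frac{1}{2}+j},a])$ has $j$-th entry of order $j-\frac{1}{2}$, mapping $L^2\to H^{\frac{1}{2}-j}$ which embeds compactly into $H^{-\frac{1}{2}-j}$ by Rellich, so $[\lambda_{\frac{1}{2}},a]\colon L^2\to\mathbb{H}^{-\frac{1}{2}}$ is compact and composition with the bounded $K\colon\mathbb{H}^{-\frac{1}{2}}\to L^2(\Omega;E)$ preserves compactness; (iii) $(Ka-s(a)K)\lambda_{\frac{1}{2}}$ is compact by \Cref{thm:Poisson_commutator_property} applied to $a\in C^\infty(\partial\Omega)$ and extended to $C(\partial\Omega)$ by density and norm continuity; (iv) $[s(a),P_{\Ker D_{\max}}]\in\mathbb{K}(L^2(\Omega;E))$ by \Cref{thm:boundary_of_diff_op_is_PkerDmax}. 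Using (i) to replace $P_{\widehat{H}_D}aP_{\widehat{H}_D}$ with $aP_{\widehat{H}_D}$, then (ii) to commute $a$ past $\lambda_{\frac{1}{2}}$, then (iii) to pass from $Ka$ to $s(a)K$, and finally (iv) together with $P_{\Ker D_{\max}}K=K$ to insert $P_{\Ker D_{\max}}$, one obtains
\begin{equation*}
T_0\beta_1(a)\equiv K\lambda_{\frac{1}{2}}aP_{\widehat{H}_D}\equiv Ka\lambda_{\frac{1}{2}}P_{\widehat{H}_D}\equiv s(a)K\lambda_{\frac{1}{2}}P_{\widehat{H}_D}\equiv \beta_2(a)T_0 \pmod{\mathbb{K}}.
\end{equation*}
\Cref{thm:similar_busby_invariants} then gives $[\beta_1]=[\beta_2]$ in $\mathrm{Ext}^{-1}(C(\partial\Omega))\cong K^1(C(\partial\Omega))$, proving the theorem.

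The main technical hurdle is assembling ingredient (iii), which requires the precise control over how the Poisson operator interacts with boundary multiplication encoded in \Cref{thm:Poisson_commutator_property}; the remaining pieces drop out cleanly from the pseudodifferential structure of $P_{\widehat{H}_D}$, $\lambda_{\frac{1}{2}}$, and $P_{\Ker D_{\max}}$, while the Fredholm property of $T_0$ reduces to the Seeley-Calderón identity $K\gamma=\mathrm{id}+N$ with $N$ of finite rank.
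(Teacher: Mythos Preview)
Your proof is correct and follows essentially the same strategy as the paper: apply \Cref{thm:similar_busby_invariants} with a Fredholm intertwiner built from the Calder\'on--Poisson machinery, the key analytic input being \Cref{thm:Poisson_commutator_property}. The only difference is the direction of the intertwiner: the paper uses $\lambda_{\frac{1}{2}}^{-1}\gamma\colon \Ker D_{\max}\to \widehat{H}_D$, whereas you use $K\lambda_{\frac{1}{2}}\colon \widehat{H}_D\to \Ker D_{\max}$, which is a quasi-inverse of the paper's map (since $K\gamma=1+N$ on $\Ker D_{\max}$ and $\gamma K=P_C$); this is a cosmetic variation rather than a genuinely different route.
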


To prove \Cref{thm:boundary_data_kcycle_equality} we will use \Cref{thm:similar_busby_invariants} with the operator \(\lambda_{\frac{1}{2}}^{-1}\gamma\colon \Ker D_{\max} \to \widehat{H}_D\). The following lemma shows the needed properties.

\begin{lemma}
    \label{thm:trace_map_order_reduction_is_fredholm_and_so_on}
    The map
    \begin{equation}
        \label{eq:trace_map_order_reduction_map}
        \lambda_{\frac{1}{2}}^{-1}\gamma\colon \Ker D_{\max} \to \widehat{H}_D
    \end{equation} 
    is Fredholm and for all \(a\in C(\partial \Omega)\),
    \begin{equation}
        P_{\widehat{H}_D} a \lambda_{\frac{1}{2}}^{-1}\gamma - \lambda_{\frac{1}{2}}^{-1}\gamma P_{\Ker D_{\max}} s(a) \in \mathbb{K}(\Ker D_{\max}, \widehat{H}_D)
    \end{equation}
    where \(s\colon C(\partial \Omega)\to C(\overline{\Omega})\) is any \(*\)-linear right-inverse of \(\gamma_0\).
\end{lemma}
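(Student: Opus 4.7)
The plan is to first establish Fredholmness of $\lambda_{\frac{1}{2}}^{-1}\gamma$ and then prove the compact commutator relation through a chain of compact-perturbation identities that reduce $P_{\Ker D_{\max}}$ to the Calderón projector via Seeley's Poisson operator $K$.

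For Fredholmness, by the Lions-Magenes theorem (\Cref{thm:Lions_Magenes}) the full trace extends continuously to $\gamma\colon \dom D_{\max} \to \mathbb{H}^{-\frac{1}{2}}(\partial \Omega; E\otimes \mathbb{C}^m)$ with kernel $\dom D_{\min}$. Restricting to the closed subspace $\Ker D_{\max}\subseteq L^2(\Omega;E)$, on which the graph norm of $D_{\max}$ reduces to the $L^2$ norm, I obtain a continuous map whose image is $H_D$ by definition, and whose kernel is $\Ker D_{\max} \cap \dom D_{\min} = \Ker D_{\min}$. The latter is finite-dimensional because on the compact manifold $\overline{\Omega}$ the inclusion $\dom D_{\min} \hookrightarrow L^2$ is compact (Rellich, after elliptic regularity), so $D_{\min}$ has compact resolvent and is semi-Fredholm (cf.\ \Cref{remark:finite_dim_kernel}). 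Composing with the isomorphism $\lambda_{\frac{1}{2}}^{-1}\colon H_D \to \widehat{H}_D$ then gives the desired Fredholm map.

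For the commutator relation I would first reduce to $a\in C^\infty(\partial \Omega)$ by density, noting that both operators depend linearly and norm-continuously on $a$ (for a fixed bounded linear right-inverse $s$), and that norm limits of compact operators are compact. The key ingredients are then: (i) on $\Ker D_{\max}$, $K\gamma - 1$ takes values in the finite-dimensional space $\Ker D_{\min}$ and so is compact; (ii) $P_{\Ker D_{\max}} K = K$, since $\ran K \subseteq \Ker D_{\max}$; (iii) $\gamma K = P_C$; (iv) $s(a)K - Ka \in \mathbb{K}(\mathbb{H}^{-\frac{1}{2}}, L^2)$ by \Cref{thm:Poisson_commutator_property}; (v) $[\lambda_{\frac{1}{2}}^{-1}, a] \in \mathbb{K}(\mathbb{H}^{-\frac{1}{2}}, L^2)$, since $\lambda_{\frac{1}{2}}^{-1}$ is block diagonal with $j$-th entry $\Lambda_{-(\frac{1}{2}+j)}$ of order $-\frac{1}{2}-j$, whose commutator with the smooth multiplier $a$ has order $-\frac{3}{2}-j$, mapping $H^{-\frac{1}{2}-j}$ into $H^{1}$ and hence compactly into $L^2$ by Rellich; (vi) the idempotent $\lambda_{\frac{1}{2}}^{-1}P_C\lambda_{\frac{1}{2}}$ and the orthogonal projection $P_{\widehat{H}_D}$ are both classical order-zero pseudodifferential operators with the same principal symbol (\Cref{thm:orderreduction_is_pseudo_and_has_same_symbol}, \Cref{thm:projection_is_pseudo_if_projector_is}), so their difference has order $-1$ and is compact on $L^2$.

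Chaining these, for $f\in \Ker D_{\max}$ I would compute, modulo $\mathbb{K}(\Ker D_{\max}, \widehat{H}_D)$,
\begin{align*}
    \lambda_{\frac{1}{2}}^{-1}\gamma P_{\Ker D_{\max}} s(a) f
    &\stackrel{(i)}{\equiv} \lambda_{\frac{1}{2}}^{-1}\gamma P_{\Ker D_{\max}} s(a) K\gamma f
    \stackrel{(iv)}{\equiv} \lambda_{\frac{1}{2}}^{-1}\gamma P_{\Ker D_{\max}} K a \gamma f \\
    &\stackrel{(ii)}{=} \lambda_{\frac{1}{2}}^{-1}\gamma K a \gamma f
    \stackrel{(iii)}{=} \lambda_{\frac{1}{2}}^{-1} P_C a \gamma f
    = (\lambda_{\frac{1}{2}}^{-1} P_C \lambda_{\frac{1}{2}})\,\lambda_{\frac{1}{2}}^{-1} a \gamma f \\
    &\stackrel{(v)}{\equiv} (\lambda_{\frac{1}{2}}^{-1} P_C \lambda_{\frac{1}{2}})\,a\,\lambda_{\frac{1}{2}}^{-1}\gamma f
    \stackrel{(vi)}{\equiv} P_{\widehat{H}_D}\,a\,\lambda_{\frac{1}{2}}^{-1}\gamma f.
\end{align*}
The main technical obstacle is step (v): one must carefully track the mixed-order block structure of $\lambda_{\frac{1}{2}}$ to confirm that every component commutator gains enough Sobolev regularity to be compact between the correct spaces; the other steps are essentially a bookkeeping exercise with the already-established lemmas, after which density in $a$ concludes.
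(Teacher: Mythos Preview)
Your overall strategy matches the paper's, and the Fredholmness argument plus steps (i)--(v) are essentially identical to what the paper does. There is, however, a genuine error in the justification of step (vi).

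You claim that \(\lambda_{\frac{1}{2}}^{-1}P_C\lambda_{\frac{1}{2}}\) and \(P_{\widehat{H}_D}\) have the \emph{same principal symbol}, and hence their difference is order \(-1\) and compact. This is false in general: \Cref{thm:projection_is_pseudo_if_projector_is} only asserts equality of the \(K\)-theory classes \([\sigma^0(P)]=[\sigma^0(P_{\ran P})]\), not of the symbols themselves. Indeed the paper explicitly remarks that \(\sigma^0_{\textup{DN}}(P_C)\) need not be self-adjoint, since the splitting \(E_+(D)\oplus E_-(D)\) is not orthogonal. Running the Kaplansky formula \(p=ee^*(1+(e-e^*)(e^*-e))^{-1}\) at the symbol level therefore does \emph{not} return the same symbol, and the difference \(\lambda_{\frac{1}{2}}^{-1}P_C\lambda_{\frac{1}{2}}-P_{\widehat{H}_D}\) is typically only order \(0\), hence not compact on \(L^2\).

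The step is easily repaired, and the paper's route shows how: use instead that both operators are idempotents with the \emph{same range} \(\widehat{H}_D\). Since \(\lambda_{\frac{1}{2}}^{-1}\gamma f\in\widehat{H}_D\) you may insert \(P_{\widehat{H}_D}\) for free, and since \([P_{\widehat{H}_D},a]\in\mathbb{K}\) (order-zero pseudo) you can commute it past \(a\); finally \((\lambda_{\frac{1}{2}}^{-1}P_C\lambda_{\frac{1}{2}})P_{\widehat{H}_D}=P_{\widehat{H}_D}\) because the first factor is the identity on \(\widehat{H}_D\). The paper packages this slightly differently (writing \(P_{\widehat{H}_D}=\lambda_{\frac{1}{2}}^{-1}P_{H_D}\lambda_{\frac{1}{2}}\), using \(P_{H_D}P_C=P_C\) and \([P_C,a]\in\mathbb{K}\)), but it is the same mechanism.

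A smaller point: the lemma is stated for \emph{any} \(*\)-linear right-inverse \(s\), not a fixed one, and \(s\) need not be continuous, so your density argument in \(a\) does not immediately handle arbitrary \(s\). The paper disposes of this by observing that two choices of \(s\) differ by an element of \(C_0(\Omega)\), and \(P_{\Ker D_{\max}}\,b\,P_{\Ker D_{\max}}\in\mathbb{K}\) for \(b\in C_0(\Omega)\) (interior regularity plus Rellich), so one may pick the specific \(s\) required by \Cref{thm:Poisson_commutator_property} without loss of generality.
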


\begin{proof}
    Note that \(\gamma\colon \Ker D_{\max} \to H_D\coloneqq \gamma(\Ker D_{\max})\) is surjective by definition and using \Cref{thm:Lions_Magenes} we see that
    \begin{equation}
        \Ker (\gamma \colon \dom D_{\max} \to \mathbb{H}^{-\frac{1}{2}}(\partial \Omega; E\otimes \mathbb{C}^m) )|_{\Ker D_{\max}} = \dom D_{\min} \cap \Ker D_{\max} = \Ker D_{\min}
    \end{equation}
    which is finite-dimensional (see \Cref{remark:finite_dim_kernel}). Since \(\lambda_{\frac{1}{2}}^{-1} \colon H_D \to \widehat{H}_D \coloneqq \lambda_{\frac{1}{2}}^{-1} (H_D)\) is an isomorphism we obtain Fredholmness of \eqref{eq:trace_map_order_reduction_map}.

    For the second statement, firstly note that by density we can restrict to look at \(a\in C^\infty(\partial \Omega)\) since \(\Ker D_{\max}\subseteq L^2(\Omega;E)\), \(\widehat{H}_D\subseteq L^2(\partial \Omega; E\otimes \mathbb{C}^m)\) and the operator norm of \(a\) and \(s(a)\) acting on \(L^2\)-spaces is the supremum norm. Secondly note that we can choose \(s\colon C(\partial \Omega)\to C(\overline{\Omega})\) freely since \(P_{\Ker D_{\max}} b P_{\Ker D_{\max}}\in \mathbb{K}(L^2(\Omega;E))\) for any \(b\in C_0(\Omega)\). We choose an \(s\) satisfying the property needed in \Cref{thm:Poisson_commutator_property}. 
    
    Fix an \(a\in C^\infty(\partial \Omega)\). Since \([\lambda_{\frac{1}{2}}^{-1}, a] \in \mathbb{K}(\mathbb{H}^{-\frac{1}{2}}(\partial \Omega; E \otimes \mathbb{C}^m), L^2(\partial \Omega; E \otimes \mathbb{C}^m))\), \([P_C, a] \in \mathbb{K}(\mathbb{H}^{-\frac{1}{2}}(\partial \Omega; E \otimes \mathbb{C}^m))\) and \(P_{\widehat{H}_D} = \lambda_{\frac{1}{2}}^{-1} P_{H_D} \lambda_{\frac{1}{2}}\) and \(\ran P_C = H_D\) we have that 
    \begin{equation}
        P_{\widehat{H}_D} a \lambda_{\frac{1}{2}}^{-1} \gamma - \lambda_{\frac{1}{2}}^{-1} P_C a \gamma \in \mathbb{K}(\Ker D_{\max}, \widehat{H}_D).
    \end{equation}
    Using the construction of \(P_C\) as \(P_C=\gamma K\) where \(\ran K = \Ker D_{\max}\) we have that 
    \begin{equation}
        P_C a \gamma = \gamma K a \gamma = \gamma P_{\Ker D_{\max}}K a \gamma
    \end{equation}
    on \(\Ker D_{\max}\). Using \Cref{thm:Poisson_commutator_property} and that \(K\gamma - 1\in \mathbb{K}(\Ker D_{\max})\) since it has finite-dimensional range \(\Ker D_{\min}\), we get that
    \begin{equation}
        K a \gamma - P_{\Ker D_{\max}} s(a) P_{\Ker D_{\max}} \in \mathbb{K}(\Ker D_{\max}),
    \end{equation}
    and we are done.
\end{proof}

\begin{remark}
    In the first-order case, the proof of \Cref{thm:trace_map_order_reduction_is_fredholm_and_so_on} could be made less convoluted since then \(\gamma s(a)\) can be extended to act on \(\Ker D_{\max}\). This is not the case for higher orders since then \(C^\infty(\overline{\Omega})\) do not necessarily preserve \(\Ker D_{\max}\) (see \Cref{remark:a_domDadj_notin_domD}) and \(\gamma\) is not defined on \(L^2(\Omega;E)\).
\end{remark}

\subsection{The cap product}
\label{sec:cap_product}

This subsection will further dismantle the \(K\)-cycle \([P_{\widehat{H}_D}]\in K^1(C(\partial \Omega))\) in \Cref{thm:boundary_data_kcycle_is_welldefined_and_symbol_ktheory} with the use of the cap product in order to conclude the proof of \Cref{thm:boundary_index_thm}.

\begin{definition}
For a commutative, separable \ensuremath{C^*}-algebra \(A\) define the \emph{cap product} 
\begin{align}
    K_p(A)\times K^q(A) &\to K^{p+q}(A) \\
    (x,y) &\mapsto x\cap y \coloneqq  (x\hat{\otimes} \mathds{1}_A)\otimes_{A\otimes A} [\diag^*] \otimes_{A} y
\end{align}
using the identifications \(K_p(A)\cong KK^p(\mathbb{C},A)\) and \(K^q(A)\cong KK^q(A,\mathbb{C})\), where \(\hat{\otimes}\) and \(\otimes_B\) denotes is the exterior and interior Kasparov product respectively and \(\diag^*\colon A\otimes A\to A, ~a_1\otimes a_2 \mapsto a_1a_2\). Note that is a construction exclusively for commutative \ensuremath{C^*}-algebras, as \(\diag^*\) comes from \(\diag\colon X \to X\times X, ~ x \to (x,x)\) when \(A=C_0(X)\).
\end{definition}

In our case, we will look at \(p=0\) and \(q=1\).

\begin{lemma}
    \label{thm:cap_prod_calculation}
    Let \(A\) be a commutative, separable, trivially graded \ensuremath{C^*}-algebra, \([p]\in K^0(A)\) for a projection \(p\in M_N(A)\) and \([\beta]\in K^1(A)\) for a Busby invariant \(\beta\colon A\to\mathcal{Q}(\mathcal{H})\), then
    \begin{equation}
        [p]\cap [\beta] = [\beta_p] \quad\text{in}\quad K^1(A),
    \end{equation}
    where \(\beta_p\colon A\to\mathcal{Q}(\mathcal{H}^N),~a\mapsto \beta(pap)\).
\end{lemma}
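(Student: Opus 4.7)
The plan is to unpack the cap product
\[
[p]\cap[\beta] = \bigl([p]\hat\otimes \mathds{1}_A\bigr) \otimes_{A\otimes A} [\diag^*] \otimes_A [\beta]
\]
stepwise at the level of Kasparov modules, and identify it with the Busby invariant obtained by precomposing the matrix amplification of $\beta$ with the $*$-homomorphism $\alpha_p \colon A \to M_N(A)$, $\alpha_p(a) \coloneqq pap$. That $\alpha_p$ really is multiplicative, $\alpha_p(a)\alpha_p(b) = papbp = pabp = \alpha_p(ab)$, uses $[p, a\mathds{1}_N]=0$, which is precisely where commutativity of $A$ enters. Once $\alpha_p$ is in hand, the claim reduces to the identity $\beta_N(\alpha_p(a)) = \beta(pap) = \beta_p(a)$, where $\beta_N \colon M_N(A) \to \mathcal{Q}(\mathcal{H}^N)$ denotes the matrix extension of $\beta$.

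The main step is to verify the intermediate identification
\[
C \coloneqq \bigl([p]\hat\otimes \mathds{1}_A\bigr) \otimes_{A\otimes A} [\diag^*] = [\alpha_p] \quad \text{in } KK^0(A,A),
\]
where the right-hand side denotes the class of $\alpha_p$ viewed in $KK^0(A, M_N(A))$ and then translated to $KK^0(A,A)$ via Morita equivalence. Since all operators in sight are zero, this reduces to matching Hilbert module structures through balanced tensor products. Starting from the Kasparov module $(pA^N,0)$ for $[p]$ with $\mathbb{C}$ acting by scalars, the exterior product with $\mathds{1}_A = (A,\mathrm{id},0)$ gives the Hilbert $A\otimes A$-module $pA^N \otimes_\mathbb{C} A$ with left $A$-action $a\cdot(x\otimes b) = x\otimes ab$ and right $A\otimes A$-action $(x\otimes a)(b\otimes c) = xb \otimes ac$. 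A direct check shows $(x\otimes a)\otimes b \mapsto xab$ defines a right-$A$-module isomorphism $(pA^N\otimes A) \otimes_{A\otimes A} A \cong pA^N$, under which the induced left $A$-action becomes $a\cdot y = ay$; commutativity then identifies this with right multiplication, which is exactly the correspondence attached to $\alpha_p$ after Morita descent.

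Finally, for any $*$-homomorphism $\phi \colon A\to B$ and Busby invariant $\eta\colon B \to \mathcal{Q}(\mathcal{H})$, the Kasparov product $[\phi]\otimes_B[\eta]$ is represented by the Busby invariant $\eta\circ\phi$. Applying this with $\phi = \alpha_p$ and $\eta = \beta_N$, and noting that $[\beta_N]$ corresponds to $[\beta]$ under Morita equivalence, yields $[p]\cap[\beta] = [\alpha_p]\otimes[\beta_N] = [\beta_N \circ \alpha_p] = [\beta_p]$ in $K^1(A)$.

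The chief obstacle is the identification $C = [\alpha_p]$: one must carefully track how the diagonal map $\diag^*$ collapses the two tensor factors of $A$ into a single action, and recognize that it is commutativity of $A$ that makes the induced left and right $A$-actions on $pA^N$ coincide, so that $C$ genuinely represents the $A$-$A$ correspondence determined by $\alpha_p$. The concluding step with $\beta_p$ is more formal, via standard functoriality of the Busby invariant under Kasparov composition with a $*$-homomorphism.
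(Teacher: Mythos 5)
Your proof is correct. Note, however, that the paper does not actually prove this lemma: it simply cites Gerontogiannis (Corollary 4.4) and calls the result well known, so there is no proof in the paper against which to compare the structure of your argument. Taken on its own merits, your argument is a clean, self-contained verification. The decomposition into (i) the identification $([p]\hat\otimes\mathds{1}_A)\otimes_{A\otimes A}[\diag^*]=[\alpha_p]$ in $KK^0(A,A)$, and (ii) pulling back the Busby invariant along the $*$-homomorphism $\alpha_p$, is exactly the right way to slice it, and you correctly flag the two places where commutativity of $A$ is load-bearing: that $\alpha_p$ is multiplicative, and that the balanced-tensor map $(x\otimes a)\otimes b\mapsto xab$ is well defined (one needs $ba=ab$ to kill the relations coming from $\diag^*$). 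Your observation that $(A^N,\alpha_p,0)$ decomposes as $(pA^N,\text{left mult.},0)$ plus the degenerate $((1-p)A^N,0,0)$ closes the Morita step, and the final identification $[m]\otimes_A[\beta]=[\beta_N]$ is the standard Morita invariance of $\mathrm{Ext}$. The only places you wave your hands are routine (completions of the algebraic tensor products, and the standard fact that $[\phi]\otimes_B[\eta]=[\eta\circ\phi]$ for a $*$-homomorphism $\phi$ and Busby invariant $\eta$), and neither hides a real gap.
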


The proof is well known and can be found in for instance in \cite[Corollary 4.4]{Gerontogiannis_2022}.

On a closed \(\text{spin}^c\)~ manifold \(Z\), one can construct a Dirac operator \(\slashed{D}\) given a fixed Riemannian metric. From \(\slashed{D}\) we can construct a spectral triple for \(C(Z)\) and hence a class in \(K^*(C(Z))\), where the parity matches the parity of the dimension of \(Z\). This class is independent of the choice of metric used to construct \(\slashed{D}\) \cite[Definition 11.2.10]{Higson_Roe_2000} and generates \(K^*(C(Z))\) as a \(K_*(C(Z))\)-module using the cap product \cite[Exercise 11.8.11]{Higson_Roe_2000}. Denote class constructed from \(\slashed{D}\) as \([Z]\in K^*(C(Z))\) and call it the fundamental class of \(Z\). 

For a closed smooth manifold \(Y\), we have that \(S^*Y\) is an odd-dimensional closed \(\text{spin}^c\)~ manifold. The fundamental class \([S^*Y]\) is equal to the class from the Toeplitz extension
\begin{equation}
    \begin{tikzcd}
        0 \ar[r] & \mathbb{K}(H^2(B^*Y)) \ar[r] & \mathcal{T}(S^*Y) \ar[r] & C(S^*Y) \ar[r] & 0
    \end{tikzcd}
\end{equation}
by \cite{Baum_van_Erp_2021}, which in turn is equal to the class from the pseudo-differential extension
\begin{equation}
    \begin{tikzcd}
        0 \ar[r] & \mathbb{K}(L^2(Y)) \ar[r] & \ensuremath{C^*} \Psi_{\textup{cl}}^0(Y) \ar[r, "\sigma^0"] & C(S^*Y) \ar[r] & 0
    \end{tikzcd}
\end{equation}
by \cite[Theorem 12.11]{Cuntz_Meyer_Rosenberg_2007}. Note that \(\sigma^0\colon  \Psi_{\textup{cl}}^0(Y)\to C^\infty(S^*Y)\) is a \(*\)-homomorphsim and hence extends to a map \(\sigma^0\colon \ensuremath{C^*}\Psi_{\textup{cl}}^0(Y)\to C(S^*Y)\). Also, \(\sigma^0\) has a left-inverse \(\Op\colon C^\infty(S^*Y)\to\Psi_{\textup{cl}}^0(Y)\) which is \(*\)-linear and multiplicative up to compacts. Hence, 
\begin{equation}
    \label{eq:pseudo_exension}
    C^\infty(S^*Y)\to \mathcal{Q}(L^2(Y)),~a\mapsto \Op(a) + \mathbb{K}(L^2(Y))
\end{equation}
extends by continuity to a Busby invariant \(C(S^*Y)\to \mathcal{Q}(L^2(Y))\) corresponding to the pseudo-differential extension.

\begin{lemma}
    \label{thm:cap_product_for_pseudo_projection}
    Let \(Y\) be a closed smooth manifold and \(E\to Y\) a smooth Hermitian vector bundle. Let \(P\in \Psi_{\textup{cl}}^0(Y;E)\) be a projection, then
    \begin{equation}
        \varphi_*([\sigma^0(P)]\cap [S^*Y]) = [P] \quad\text{in}\quad K^1(C(Y)),
    \end{equation}
    where \(\varphi\colon S^*Y\to Y\) the fiber map and the representation of \(C(Y)\) accompanying \(P\) is point-wise multiplication.
\end{lemma}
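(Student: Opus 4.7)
The plan is to compute $\varphi_*([\sigma^0(P)]\cap [S^*Y])$ at the level of Busby invariants and match the result with the Busby invariant representing $[P]$ from \Cref{ex:busby_from_projection}. The key input is that, as recalled just before the lemma, $[S^*Y]\in K^1(C(S^*Y))$ is represented by the pseudo-differential extension, i.e.\ by the Busby invariant $\beta\colon C(S^*Y)\to \mathcal{Q}(L^2(Y))$ sending $a\mapsto \Op(a)+\mathbb{K}$.

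To set up, I would embed $E$ as a subbundle of a trivial bundle $\underline{\mathbb{C}}^N$ via a projection $e\in M_N(C(Y))$, so that $\sigma^0(P)$, extended by zero on the complement of $\varphi^*E$, defines a projection in $M_N(C(S^*Y))$ and a class $[\sigma^0(P)]\in K_0(C(S^*Y))$. Applying \Cref{thm:cap_prod_calculation} with $\beta$ extended entrywise to $M_N$ yields that $[\sigma^0(P)]\cap [S^*Y]$ is represented by the Busby invariant $C(S^*Y)\to \mathcal{Q}(L^2(Y)^N)$, $a\mapsto \beta(\sigma^0(P)a\sigma^0(P))$. The pushforward $\varphi_*$ corresponds to precomposing the representation with $\varphi^*\colon C(Y)\to C(S^*Y)$, and since $\varphi^*(f)\cdot\mathbf{1}_N$ is central in $M_N(C(S^*Y))$ and $\sigma^0(P)$ is idempotent, the resulting Busby invariant for $\varphi_*([\sigma^0(P)]\cap [S^*Y])$ reduces to $f\mapsto \beta(\varphi^*(f)\sigma^0(P))$.

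I would then unwind $\beta$ using multiplicativity of $\Op$ modulo compacts. Viewing $P$ as an element of $\Psi^0_{\textup{cl}}(Y;\underline{\mathbb{C}}^N)$ extended by zero and $\pi(f)\in\Psi^0_{\textup{cl}}(Y)$ as multiplication by $f$, both operators have the expected principal symbols $\sigma^0(P)$ and $\varphi^*(f)\cdot\mathbf{1}$, so $\Op(\sigma^0(P))\equiv P \bmod \mathbb{K}$ and $\Op(\varphi^*(f))\equiv \pi(f) \bmod \mathbb{K}$, giving
\begin{equation*}
    \beta(\varphi^*(f)\sigma^0(P)) = \Op(\varphi^*(f))\Op(\sigma^0(P)) + \mathbb{K} = \pi(f)P + \mathbb{K}(L^2(Y)^N).
\end{equation*}

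Finally, the Busby invariant representing $[P]$ from \Cref{ex:busby_from_projection} is $f\mapsto P\pi(f)P+\mathbb{K}(PL^2(Y;E))$, which by $[\pi(f),P]\in\mathbb{K}$ also equals $f\mapsto \pi(f)P+\mathbb{K}$ on $PL^2(Y;E)$. Extending this Busby invariant to all of $L^2(Y)^N$ by the zero representation on the orthogonal complement of $PL^2(Y;E)$ adds a degenerate summand and hence preserves the $\Ext$-class; the extended Busby invariant is precisely $f\mapsto \pi(f)P+\mathbb{K}(L^2(Y)^N)$, matching the calculation above and delivering the desired equality. The main hurdle will be the careful bookkeeping needed to shuttle between the scalar pseudo-differential extension on $L^2(Y)$ and the matrix-valued set-up on $L^2(Y)^N$ while keeping track of the vector bundle $E$ and its embedding; once this is in place, the argument is a direct unwinding of the cap product together with multiplicativity of $\Op$ modulo compacts.
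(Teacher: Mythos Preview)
Your proposal is correct and follows essentially the same route as the paper: embed $E$ into a trivial $\mathbb{C}^N$ via a projection $e$, apply \Cref{thm:cap_prod_calculation} to the pseudo-differential Busby invariant, and then use that $\Op$ is multiplicative modulo compacts to identify the result with the Busby invariant of $P$. The only cosmetic differences are that the paper first simplifies in $K^1(C(S^*Y))$ to the form $a\mapsto P\Op(a)P+\mathbb{K}(L^2(Y;E))$ and then precomposes with $\varphi^*$, whereas you precompose first and then simplify to $f\mapsto \pi(f)P+\mathbb{K}$; the paper also notes explicitly that $e$ can be chosen smooth so that $Pe\in\Psi^0_{\textup{cl}}(Y;\mathbb{C}^N)$, which you should include when filling in the details.
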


\begin{proof}

    To shorten notation, we write the class given by a Busby invariant \(\beta\colon A\to \mathcal{Q}(\mathcal{H})\) as \([a\mapsto \beta(a)]\), removing the need to name \(\beta\). We also allow for the expression \(a\mapsto \beta(a)\) to make sense only on a dense \(*\)-subalgebra of \(A\). We will show that
    \begin{equation}
        [\sigma^0(P)]\cap [S^*Y] = [a\mapsto P\Op(a)P + \mathbb{K}(L^2(Y;E))]
    \end{equation}
    in \(K^1(C(S^*Y))\), then result follows from that \(\varphi^*(a)\) is the symbol of multiplication by \(a\in C(Y)\).

    Let \(e\in M_N(C(Y))\) be a projection such that \(eC(Y)^N=C(Y;E)\). Since \(E\) is a smooth vector bundle we can choose \(e\in M_N(C^\infty(Y))\). We have that \(e\in \Psi_{\textup{cl}}^0(Y;\mathbb{C}^N)\) and as bounded operator on \(L^2(Y)^N\) it has range \(L^2(Y;E)\). Now we can extend \(P\) from \(L^2(Y;E)\) to \(L^2(Y)^N\) as \(Pe\). Note that \(Pe\) has symbol \(\sigma^0(P)\varphi^*(e)\) which is a projection in \(M_N(C^\infty(S^*Y))\). 

    Using the description of \([S^*Y]\) in \eqref{eq:pseudo_exension} and applying \Cref{thm:cap_prod_calculation} 
    \begin{equation}
        \begin{split}
        [\sigma^0(P)]\cap [S^*Y] 
        &= [\sigma^0(P)\varphi^*(e)]\cap [a\mapsto \Op(a) + \mathbb{K}(L^2(Y))] \\
        &= [a\mapsto \Op(\sigma^0(P)\varphi^*(e)a\sigma^0(P)\varphi^*(e)) + \mathbb{K}(L^2(Y)^N)] \\
        &= [a\mapsto \Op(\sigma^0(P)a\sigma^0(P)) + \mathbb{K}(L^2(Y;E))] \\
        &= [a\mapsto P\Op(a)P + \mathbb{K}(L^2(Y;E))]
    \end{split}
    \end{equation}
    where the last equality uses that \(\sigma^0(P\Op(a)P)=\sigma^0(P)a\sigma^0(P)\).
\end{proof}

Now for a classically elliptic differential operator \(D\) on a compact smooth manifold with boundary, combining \Cref{thm:boundary_data_kcycle_equality}, \Cref{thm:cap_product_for_pseudo_projection} and \Cref{thm:boundary_data_kcycle_is_welldefined_and_symbol_ktheory} we obtain that
\begin{equation}
    \label{eq:PkerDmax_equal_to_boundary_data}
    [P_{\Ker D_{\max}}] = \varphi_*\left([E_+(D)]\cap[S^*\partial \Omega]\right) \quad\text{in}\quad K^1(C(S^*\partial \Omega)).
\end{equation}
With \Cref{thm:boundary_of_diff_op_is_PkerDmax} we see that \(\partial [D]\) is the same class as in \eqref{eq:PkerDmax_equal_to_boundary_data} which concludes the proof of \Cref{thm:boundary_index_thm}.

\begin{remark}
    For a closed odd-dimensional \(\text{spin}^c\)~ manifold \(Z\) and \(\alpha\in\mathrm{GL}_N(C^\infty(Z))\), then  
    \begin{equation}
        \label{eq:atiyah_singer_index_theorem_thing}
        \braket{[\alpha],[Z]} = \int_Z \textup{ch}(\alpha)\wedge \textup{Td}(Z)
    \end{equation}
    \cite{Baum_van_Erp_2021} where \(\braket{\cdot,\cdot}\colon K_1(C(Z))\times K^1(C(Z))\to \mathbb{Z}\) is the index pairing, \(\textup{ch}(\alpha)\in H^{\text{odd}}_{\text{dR}} (Z;\mathbb{C})\) the Chern character of \(\alpha\) and \(\textup{Td}(Z)\in H^{\text{even}}_{\text{dR}} (Z;\mathbb{C})\) is the Todd class of \(Z\). Since \(\ind(P_{\Ker D_{\max}} \alpha P_{\Ker D_{\max}})=\braket{[\alpha], [P_{\Ker D_{\max}}]}\), we can use \eqref{eq:atiyah_singer_index_theorem_thing}, \eqref{eq:PkerDmax_equal_to_boundary_data} and functoriality to obtain \Cref{thm:intro_maximal_kernel_index_theorem}.
\end{remark}

\bibliographystyle{abbrv}
\bibliography{refs.bib}

\end{document}